\newtheorem{thm}{Theorem}[section]
\newtheorem{cor}[thm]{Corollary}
\newtheorem{lem}[thm]{Lemma}
\newtheorem{prop}[thm]{Proposition}
\theoremstyle{remark}
\newtheorem{rem}{Remark}[section]
\newtheorem{clm}[thm]{Claim}
\theoremstyle{definition}
\newtheorem{defn}{Definition}[section]
\numberwithin{equation}{section}
\numberwithin{figure}{section}
 \newcommand{\pichere}[2]
{\begin{center}\includegraphics[width=#1\textwidth]{#2}\end{center}}
\font\nt=cmr7
\def\note#1
\newcommand{\QED}{\rlap{$\sqcup$}$\sqcap$\smallskip}
\renewcommand{\Im}{\operatorname{Im}}
\newcommand{\di}{\partial}
\newcommand{\ra}{\rightarrow}
\def\ssk{\smallskip}
\def\msk{\medskip}
\def\nin{\noindent}
\def\sm{\smallsetminus}
\newcommand{\diam}{\operatorname{diam}}
\newcommand{\dist}{\operatorname{dist}}
\renewcommand{\mod}{\operatorname{mod}}
\newcommand{\tl}{\tilde}
\newcommand{\orb}{\operatorname{orb}}
\newcommand{\id}{\operatorname{id}}
\newcommand{\tg}{\operatorname{tg}}
\newcommand{\transverse}
 {\kern .7em\makebox[0pt][c]{\raisebox{.2ex}{$|$}}\kern -.6em\cap}
\newcommand{\tangent}
 {\kern .7em\makebox[0pt][c]{\raisebox{.77ex}{$--$}}\kern -.6em\cap}
\newcommand{\eps}{{\varepsilon}}
\newcommand{\De}{{\Delta}}
\newcommand{\de}{{\delta}}
\newcommand{\La}{{\Lambda}}
\newcommand{\si}{{\sigma}}
\newcommand{\Om}{{\Omega}}
\newcommand{\om}{{\omega}}
\newcommand{\BB}{{\mathcal B}}
\newcommand{\CC}{{\mathcal C}}
\newcommand{\II}{{\mathcal I}}
\newcommand{\FF}{{\mathcal F}}
\newcommand{\GG}{{\mathcal G}}
\newcommand{\HH}{{\mathcal H}}
\newcommand{\OO}{{\mathcal O}}
\newcommand{\PP}{{\mathcal P}}
\renewcommand{\SS}{{\mathcal S}}
\newcommand{\UU}{{\mathcal U}}
\newcommand{\XX}{{\mathcal X}}
\newcommand{\N}{{\mathbb N}}
\newcommand{\R}{{\mathbb R}}
\newcommand{\X}{{\mathbb X}}
\newcommand{\Z}{{\mathbb Z}}
\newcommand{\bh}{{\bf h}}
\newcommand{\bv} {{\bf v  }}
\newcommand{\bw} {{\bf w  }}
\def\Bg{{\mathbf{g}}}
\def\Bh{{\mathbf{h}}}
\def\Bv{{\mathbf{v}}}
\def\Bw{{\mathbf{w}}}
\def\Bde{{\boldsymbol{\de}}}
\def\bB{{\mathbf{B}}}
\def\bQ{{\mathbf{Q}}}
\def\BS{{\mathbf{S}}}
\def\BDe{{\boldsymbol{\Delta}}}
\def\BGa{{\boldsymbol{\Gamma}}}
\def\BPhi{{\boldsymbol{\BPhi}}}
\def\B0{{\mathbf{0}}}
\newcommand{\Jac}{\operatorname{Jac}}
\newcommand{\Dom}{\operatorname{Dom}}
\newcommand{\Domain}{\operatorname{Dom}}
\def\diff{{\text{diff}}}
\def\aff{{\text{aff}}}
\def\Empty{}
\newcommand\oplabel[1]{
  \def\OpArg{#1} \ifx \OpArg\Empty {} \else
  	\label{#1}
  \fi}
\newcommand{\comm}[1]{}
\newcommand{\comment}[1]{}
\begin{document}

\bigskip\bigskip

\title[H\'enon renormalization]{
 Probabilistic  Universality in two-dimensional Dynamics\\}

\author{M. Lyubich, M. Martens}

\address {SUNY at Stony Brook}


\date{\today}

\begin{abstract} 
In this paper we continue to explore infinitely renormalizable H\'enon maps with small  Jacobian. 
It was shown in \cite{CLM} that contrary to the one-dimensional intuition,  
the Cantor attractor of such a map is non-rigid and the conjugacy 
with the one-dimensional Cantor attractor is at most  $1/2$-H\"older.
Another formulation of this phenomenon is that  the scaling structure of the H\'enon Cantor attractor 
differs from its one-dimensional counterpart. However, in this paper we prove 
that the weight assigned by the canonical 
invariant measure to these bad spots tends to zero on microscopic scales. 
This phenomenon is called  {\it Probabilistic Universality}. 
It implies, in particular, that the Hausdorff dimension of the canonical measure is universal. 
In this way,  universality and rigidity phenomena  of one-dimensional dynamics 
assume a probabilistic nature in the two-dimensional  world.
\end{abstract}

\maketitle

\def\IMSmarkvadjust{0 pt}
\def\IMSmarkhadjust{0 pt}
\def\IMSmarkhpadding{0 pt}
\def\IMSpubltext{Published in modified form:}
\def\SBIMSMark#1#2#3{
 \font\SBF=cmss10 at 10 true pt
 \font\SBI=cmssi10 at 10 true pt
 \setbox0=\hbox{\SBF \hbox to \IMSmarkhpadding{\relax}
                Stony Brook IMS Preprint \##1}
 \setbox2=\hbox to \wd0{\hfil \SBI #2}
 \setbox4=\hbox to \wd0{\hfil \SBI #3}
 \setbox6=\hbox to \wd0{\hss
             \vbox{\hsize=\wd0 \parskip=0pt \baselineskip=10 true pt
                   \copy0 \break%
                   \copy2 \break%
                   \copy4 \break}}
 \dimen0=\ht6   \advance\dimen0 by \vsize \advance\dimen0 by 8 true pt
                \advance\dimen0 by -\pagetotal
	        \advance\dimen0 by \IMSmarkvadjust
 \dimen2=\hsize \advance\dimen2 by .25 true in
	        \advance\dimen2 by \IMSmarkhadjust

%
%
  \openin2=publishd.tex
  \ifeof2\setbox0=\hbox to 0pt{}
  \else 
     \setbox0=\hbox to 3.1 true in{
                \vbox to \ht6{\hsize=3 true in \parskip=0pt  \noindent  
                {\SBI \IMSpubltext}\hfil\break
                \input publishd.tex 
                \vfill}}
  \fi
  \closein2
  \ht0=0pt \dp0=0pt
 \ht6=0pt \dp6=0pt
 \setbox8=\vbox to \dimen0{\vfill \hbox to \dimen2{\copy0 \hss \copy6}}
 \ht8=0pt \dp8=0pt \wd8=0pt
 \copy8
 \message{*** Stony Brook IMS Preprint #1, #2. #3 ***}
}

\SBIMSMark{2011/2}{June 2011}{}

\setcounter{tocdepth}{1}
\tableofcontents

\section{Introduction}

Renormalization ideas have played a central role in Dynamics since 
the discovery of the Universality and Rigidity phenomena by Feigenbaum \cite{F}, and independently by Coullet and Tresser \cite{CT},
in the mid 1970s. Roughly speaking, it means that different systems in the same ``universality class''
have the same small scale geometry. In the one-dimensional setting this phenomenon has been viewed from many angles
(statistical physcis,  geometric function theory, Teichm\"uller theory, hyperbolic geometry, infinite-dimensional complex geometry) 
and by now has been fully and rigorously justified, see \cite{Ep}, \cite{FMP}, \cite{L}, \cite{Lan}, \cite{Ma2}, \cite{McM}, \cite{S} and references therein. 

In \cite{CT} Coullet and Tresser also conjectured that these phenomena would also be valid  in higher dimensional systems, even in infinite dimensional situations. Indeed, computer and physical experiments that followed suggested that universality and rigidity hold in much more general context. 
The simplest test case for it is the dissipative H\'enon family which can be viewed as a small perturbation of the one-dimensional  
quadratic  family. However, it was shown in \cite{CLM} that Universality and Rigidity break down already in this case. 
This puts in question the relevance of one-dimensional models for higher dimensional problems.

In this paper we provide a resolution of this unsatisfactory situation:
namely, we show that for dissispative H\'enon maps,  small scale universality is actually valid in {\it probabilistic} sense,
almost everywhere with respect to the canonical invariant measure.  {\it Probabilistic universality} and {\it probabilistic rigidity} phenomena may be valid for higher dimensional (including infinite dimensional) systems which are contracting in all but one direction.

\comm{In this sense, Universality and Rigidity phenomena may be valid for higher dimensional 
(including infinite dimensional systems) which are contracting in all but one direction.}

\msk
Let us now  formulate our results  more precisely. 
We consider a class of dissipative  H\'enon-like maps on the unit box $B^0=[0,1]\times [0,1]$  of form
\begin{equation}\label{Henon family intro}
   F(x,y)= ( f(x)-\eps(x,y),x ),
\end{equation}
where $f(x)$ is a unimodal map with non-degenerate critical point and   $\eps$ is small.
It maps $B^0$ on a slightly thickened parabola $x=f(y)$.  
Such a map is called {\it renormalizable} if there exists a smaller box $B^1\subset B^0$
around the tip of of the parabola which is mapped into itself under $F^2$. 
The {\it renormalization} for $F$ is the map $RF= \Psi^{-1}\circ F^2\circ \Psi$,
where $\Psi: B^0\ra B^1$ is an explicit  non-linear change of variable  (``rescaling'')
that brings $F^2$ to the normal form of type (\ref{Henon family intro}). 

If $RF$ is in turn renormalizable then $F$ is called {\it twice renormalizble}, etc. 
In this paper we will be concerned with {\it infinitely renormalizable} H\'enon-like maps. 
Such a map  admits a nest of $2^n$-periodic boxes  $B^0\supset B^1\supset B^2\supset\dots$ shrinking to the {\it tip}
$\tau$ of $F$.  The $n^{th}$-renormalization cycle is the orbit $\BB^n=\{B^n_i= f^i (B^n), i=0,1,\dots 2^n-1\}$. We obtain a hierarchy of such cycles
shrinking to the  {\it Cantor attractor} 
$$
\OO_F= \bigcap_{n=0}^\infty \bigcup_{i=0}^{2^n-1} B^n_i
$$ 
on which $F$ acts as the dyadic adding machine. 
In particular, the dynamics on $\OO_F$ is uniquely ergodic, so we obtain a canonical invariant measure $\mu$ supported on $\OO_F$. 
We define the {\it average Jacobian} of $F$ as follows:
$$
       b_F=\exp \int_{\OO_F} \log \Jac F d\mu .
$$

\bigskip

Consider a strongly dissipative infinitely renormalizable H\'enon-like map.
The geometry of a piece $B\in \BB^n$ can be very different from the geometry of the corresponding piece $I$ of the one-dimensional renormalization fixed point $f_*$. The pieces of the one-dimensional system are small intervals.
Take a piece $B\in \BB^n$ and the two pieces $B_1, B_2\in \BB^{n+1}$ with $B_1, B_2\subset B$. Let $I, I_1, I_2$ be the corresponding pieces of $f_*$. The piece $B$ of $F$ has {\it $\epsilon-$precision} if after one 
simultaneous  rescaling and translation $A:\Bbb{R}^2\to \Bbb{R}^2$ we have
that the (Hausdorff) distance between $I$ and $A(B)$, $ I_1$ and $A(B_1)$, 
$I_2$ and $A(B_2)$ is at most $\epsilon\cdot \diam(I)$. The triples $B_1, B_2\subset B$ and $I_1,I_2\subset I$ are geometrical almost the same. 

Collect the pieces  of the $n^{th}-$cycle with $\epsilon-$precision in
$$
\SS_n(\epsilon)=\{B\in \BB^n| B \text{ has } \epsilon-{precision}\}. 
$$

\begin{defn} The geometry of the Cantor attractor $\OO_F$ of a dissipative infinitely renormalizable H\'enon-like map is probabilistically universal 
if there exists $\theta<1$ such that
$$
\mu(\SS_n(\theta^n))\ge 1-\theta^n.
$$
\end{defn}

\begin{thm}(Probabilistic universality) The geometry of the Cantor attractor of a strongly dissipative infinitely renormalizable H\'enon-like map is probabilistically universal.
\end{thm}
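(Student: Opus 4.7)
The plan is to quantify the deviation of each cycle piece $B^n_i$ from its one-dimensional model $I^n_i$ in terms of the combinatorial position $i$, and then show that only an exponentially small fraction of $i \in \{0,\dots,2^n-1\}$ correspond to pieces with deviation exceeding $\theta^n$. Since $\mu$ assigns mass $2^{-n}$ to each piece $B^n_i$ (the adding-machine invariant measure), counting bad indices immediately converts into $\mu$-measure.

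The starting point is the super-exponential convergence of renormalization established in \cite{CLM}: $R^n F$ differs from the one-dimensional fixed point $f_*$ by at most $O(b^{2^n})$. For the tip piece $B^n_0$ this immediately yields precision of order $b^{2^n}/\sigma^n$, far better than $\theta^n$. For the remaining non-tip pieces, the distortion is transported through the conjugacy $F^i : B^n_0 \to B^n_i$, and the heart of the argument is to bound how much $2D$ non-linearity is picked up along this orbit segment, restricted to the microscopic tip box.

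The main step is to prove a distortion estimate of the form: after the optimal affine normalization $A$, the Hausdorff deviation of the triple $(B^n_i, B^{n+1}_i, B^{n+1}_{i+2^n})$ from the $1D$ triple $(I^n_i, I^{n+1}_i, I^{n+1}_{i+2^n})$ is at most $C\, b^{2^{\nu(i)}}/\sigma^n$, where $\nu(i)$ is a combinatorial quantity encoding the deepest renormalization level at which the orbit position $i$ still produces appreciable $2D$ effects. The strategy is (i) to decompose $F^i$ into a product of renormalization-scale iterates $F^{2^{k-1}}$ indexed by the binary expansion $i = \sum b_k 2^k$; (ii) in the rescaled coordinates at level $k-1$, to replace each such iterate by the one-dimensional map modulo $O(b^{2^{k-1}})$; and (iii) to verify that the affine normalization $A$ absorbs the shallow-level translational and linear contributions, leaving only the deepest relevant $2D$ contribution. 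The piece $B^n_i$ then has $\theta^n$-precision as soon as $\nu(i) \ge M_n \sim \log_2(n/\log(1/\theta))$, and the combinatorial count is routine: at most $O(2^{M_n}) = O(n)$ indices violate $\nu(i) \ge M_n$, so the bad set has $\mu$-measure $O(n/2^n)$, which is smaller than $\theta^n$ for any fixed $\theta \in (1/2,1)$ and all $n$ sufficiently large.

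The principal obstacle is the distortion estimate itself: tracking how the $O(b^{2^{k-1}})$ renormalization errors at various levels combine through the tower of non-linear rescalings $\Phi_k$, and showing that after absorbing shallow-level artifacts into $A$ the residual is controlled by the \emph{deepest} relevant scale rather than compounding multiplicatively or being dominated by the shallowest non-tip descent. This requires sharp $C^1$ (and possibly $C^2$) estimates on the rescaling maps and careful control of the $2D$ tilt fields of the iterates $F^{2^{k-1}}$, going beyond the pointwise renormalization convergence from \cite{CLM}, and is where the bulk of the technical work of the paper will lie.
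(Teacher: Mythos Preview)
Your binary decomposition of $F^i$ is exactly the paper's predecessor sequence $B = G_{k_0}\circ\cdots\circ G_{k_{t-1}}(B^n)$, and passing to rescaled coordinates at each level is the paper's device of ``viewing a piece from its depth''; so steps (i)--(ii) match the skeleton. The gap is step (iii): the assertion that precision is governed by the single deepest level $\nu(i)$, with all shallower $2D$ contributions absorbed into the affine $A$, is not how the geometry behaves. The rescaling $\Psi^l_k$ carries a shear of size $t_k\asymp b^{2^k}$; at the step from depth $k_{j+1}=l$ down to $k_j=k$ this shear competes with the horizontal extent $\asymp\sigma^{2(l-k)}$ of the rescaled piece. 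When $b^{2^k}$ and $\sigma^{l-k}$ are comparable --- i.e.\ the jump $l-k$ is near the threshold $s(k)$ --- there is genuine cancellation in the bounding box for one orientation of the stick: the relative thickness $\delta$ jumps to $O(1)$, and the subsequent \emph{nonlinear} map $F_k$ then shifts the sub-piece scaling ratios by $O(\delta)$ (see the proofs of Propositions~\ref{sticktostick} and~\ref{scapush}). That error lives in the ratios, not in a position or tilt, so no downstream affine normalization removes it. What controls precision is therefore whether \emph{any} jump along the chain lands in this bad window, not the size of the deepest one; your count $O(n/2^n)$ rests on the single-level bound and does not survive.

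The paper accordingly abandons single-level control. It imposes the ``not too deep'' condition $k_{j+1}-k_j\le s(k_j)$ at \emph{every} step of the chain above a cutoff $\kappa(n)\asymp\log n$, shows that under this condition $\delta$ contracts geometrically at each push-up (Propositions~\ref{regtoreg}--\ref{scapush}), and bounds the measure of violators by a random-walk estimate (\S\ref{random walk}, Proposition~\ref{meas}). The remaining shallow levels $k<\kappa(n)$ are then handled by brute-force iteration of $F$ for $2^{\kappa(n)}=O(n)$ steps (Proposition~\ref{badpart}), which costs only a sub-exponential factor against the already-exponential precision from the deeper regimes. This three-regime architecture (one-dimensional / pushing-up / brute-force) is the main content of the proof and is not visible in your outline. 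A smaller point: $R^nF\to f_*$ at the ordinary exponential rate $O(\rho^n)$, not $O(b^{2^n})$; only the strictly two-dimensional correction term in Theorem~\ref{universality} is super-exponentially small, so even the tip piece has precision no better than $O(\rho^n)$.
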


\begin{defn}  The Cantor attractor $\OO_F$ of a dissipative infinitely renormalizable H\'enon-like map is probabilistically rigid  if the conjugation $h:\OO_F\to \OO_{f_*}$ to the attractor $\OO_{f_*}$ of the one-dimensional renormalization fixed point $f_*$ has the following property. 
There exist $\beta>0$, and a sequence $X_1\subset X_2\subset X_3\subset \cdots \subset \OO_F$ such that
$
h:X_N\to h(X_N)\subset \OO_{f_*}
$
is $(1+\beta)$-differentiable, 
and 
$
\mu(X_N)\to 1.
$
\end{defn}

\begin{thm}(Probabilistic Rigidity) The Cantor attractor of a dissipative infinitely renormalizable H\'enon-like  map is probabilistically rigid.
\end{thm}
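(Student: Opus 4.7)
\medskip
\noindent\emph{Proof proposal.} The plan is to deduce Probabilistic Rigidity directly from Probabilistic Universality (the theorem just proved) via Borel--Cantelli plus an asymptotic-geometry construction of the derivative of the conjugacy $h$ along a Whitney-style stratification $X_N$.

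\emph{Step 1 (the sets $X_N$).} Define $X_N=\{x\in\OO_F:\ B^n(x)\in\SS_n(\theta^n)\ \text{for every}\ n\geq N\}$, where $B^n(x)$ is the unique piece of $\BB^n$ containing $x$. Probabilistic Universality gives $\mu(\{x:B^n(x)\notin\SS_n(\theta^n)\})\leq\theta^n$, so summing a geometric tail yields $\mu(\OO_F\setminus X_N)\leq\theta^N/(1-\theta)$, hence $\mu(X_N)\to 1$, and by construction $X_1\subset X_2\subset\cdots$.

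\emph{Step 2 (asymptotic linearization).} For $x\in X_N$ and each $n\geq N$ the $\theta^n$-precision produces an affine map $A_n^x:\R^2\to\R^2$ carrying the triple $(B^n(x);B^{n+1}_1(x),B^{n+1}_2(x))$ to within Hausdorff distance $\theta^n\diam(I^n)$ of the corresponding 1D triple $(I^n;I^{n+1}_1,I^{n+1}_2)$ of the renormalization fixed point $f_*$. Because the 1D triple has bounded geometry (the children are transversally placed inside the parent), the affine rescaling matching a given triple to the 1D model is unique up to an error of order $\theta^n$. Comparing two consecutive levels, the cocycle $A_{n+1}^x\circ(A_n^x)^{-1}$ is $\theta^n$-close to the identity, so the linear parts $DA_n^x$ form a Cauchy sequence with geometric rate $\theta^n$; set
$$
L(x)=\lim_{n\to\infty}DA_n^x.
$$
On the dynamical side $A_n^x$ is nothing other than the universal rescaling of $B^n(x)$ onto $h(B^n(x))=I^n$, so this limit is the candidate for $Dh(x)$, and for every $y\in B^n(x)\cap\OO_F$ we obtain
$$
|h(y)-h(x)-L(x)(y-x)|\leq C\,\theta^n\diam(I^n)\leq C'\theta^n\rho^n,
$$
where $\rho<1$ is the (universal) 1D scaling ratio, since $\diam(I^n)\asymp\rho^n$.

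\emph{Step 3 (the H\"older exponent).} Given $x,y\in X_N$, pick $n$ with $\rho^{n+1}\leq|y-x|\leq\rho^n$; bounded geometry of the 1D attractor plus $\theta^n$-closeness of the 2D pieces places $y$ inside $B^{n-k}(x)$ for some $k=O(1)$. The estimate of Step~2 then gives
$$
|h(y)-h(x)-L(x)(y-x)|\leq C''|y-x|^{1+\beta},\qquad \beta:=\frac{\log\theta}{\log\rho}>0,
$$
and a symmetric computation (comparing $L(x)$ to $L(y)$ through the common rescaling $A_{n-k}^x$) gives $|L(x)-L(y)|\leq C|y-x|^{\beta}$. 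Together these two estimates are the Whitney $(1+\beta)$-differentiability of $h$ on $X_N$ required by the definition, and the construction is compatible as $N$ grows since $L(x)$ depends only on $x$.

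\emph{Main obstacle.} The delicate step is Step~2, namely the quantitative passage from ``$\theta^n$-Hausdorff precision of three nested pieces'' to ``$\theta^n$-closeness of the linearizing affine maps.'' Matching only the outer parent $B^n(x)$ to $I^n$ up to affine rescaling would leave a two-parameter family of candidates (rotation plus shear); the two children are used in an essential way to pin $A_n^x$ down to within error $\theta^n$. This requires two ingredients: (i) bounded geometry and transversality of the triple $(I^n;I^{n+1}_1,I^{n+1}_2)$ for $f_*$, which is classical; and (ii) a Koebe-type control on how much $F^{2^n}$ distorts $B^n(x)$ nonlinearly, which is where strong dissipativity enters and which must be carried out carefully in the 2D setting. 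Once this affine bookkeeping is in place, the remaining arguments (Borel--Cantelli in Step~1 and the exponent computation in Step~3) are formal.
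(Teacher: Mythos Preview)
Your overall strategy---deduce Rigidity from Universality by telescoping the affine comparison maps $A_n^x$ through the scales---is correct in spirit and close to what the paper does. But Step~2 has a genuine gap. From the $\theta^n$-precision at level $n$ alone, you only know that $A_n^x$ carries the three pieces $B^n(x),B^{n+1}_1,B^{n+1}_2$ Hausdorff-close to $I^n,I^{n+1}_1,I^{n+1}_2$; this locates $A_n^x(y)$ for $y\in B^n(x)\cap\OO_F$ only up to $\diam(I^{n+1})\asymp\rho^{n+1}$, not up to $\theta^n\rho^n$. Hausdorff control of sets does not give the pointwise control $|A_n^x(y)-h(y)|=O(\theta^n\rho^n)$ that your displayed inequality needs. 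To get it you must telescope through \emph{all} levels $m\ge n$ of the pieces containing $y$: for $y\in X_N$ one has $A_m^y=A_m^x$ while $B^m(y)=B^m(x)$, the endpoint-matching argument gives $|A_m^z(z)-A_{m+1}^z(z)|=O(\theta^m\rho^m)$ for $z\in\{x,y\}$, and summing yields $|A_n^x(z)-h(z)|=O(\theta^n\rho^n)$. Only then does the displayed estimate follow, and only for $y\in X_N$ (not for all $y\in\OO_F$ as you write). This telescoping through all scales is precisely the content the paper packages as the ``gap-distance'' construction and its Claims comparing $|G_{B_l}|/|G_{B_k}|$ to the one-dimensional ratios; you are not avoiding it, you are omitting it.

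Your ``main obstacle'' is misidentified. Pinning down the similarity $A_n^x$ up to $O(\theta^n)$ from the two children is the easy part: matching the two extremal points of the stick determines a similarity to the stated accuracy, no transversality of children required. And the Koebe-type control on $F^{2^n}$ belongs to the proof of Probabilistic Universality itself, not to the passage from Universality to Rigidity---once you have $\SS_n(\theta^n)$ with the stated measure, no further dynamics enters. For comparison, the paper first builds tangent lines on $X_N$ (its Theorem~10.1) from the stick directions, then defines a gap-distance $|\cdot|_g$ on $X_N$ by summing the gaps at all scales, shows the ratio $|h(y)-h(x)|_g/|y-x|_g$ converges with $\beta$-H\"older modulus, and finally converts gap-distance to Euclidean distance. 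Your route via $\lim DA_n^x$ merges the tangent-line and derivative constructions into one limit, which is arguably cleaner, but you must carry out the same all-scale telescoping explicitly to make it work.
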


The Cantor attractor $\OO_F$ is not part of a smooth curve, see \cite{CLM}. However, large parts of it, the sets 
$$
X_N=\bigcap_{k\ge N} \SS_n(\theta^n)
$$
where $\theta<1$ is close enough to $1$ satisfy 

\begin{thm} Each set $X_N\subset \OO_F$ is part of a smooth $C^{1+\beta}-$curve.
\end{thm}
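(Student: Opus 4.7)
The plan is to build a H\"older-continuous field of tangent directions on $X_N$ from the one-dimensional geometry available at every scale $\ge N$, and then invoke the Whitney extension theorem to produce a $C^{1+\beta}$ planar curve through $X_N$.

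First, I would assign a tangent direction to each $x \in X_N$. Let $B^n(x) \in \BB^n$ be the unique $n$-th cycle piece containing $x$. Since $B^n(x) \in \SS_n(\theta^n)$, there is an affine rescaling $A_n$ with $A_n(B^n(x))$ within Hausdorff distance $\theta^n \diam(I_n)$ of the corresponding one-dimensional piece $I_n$, which is a genuine interval. Pulling back the axis of $I_n$ by $A_n$ produces a unit vector $v_n(x) \in \R^2$. The nested $\theta^n$-precision forces $|v_{n+1}(x) - v_n(x)| = O(\theta^n)$, so the sequence $v_n(x)$ converges to a limit tangent $v(x)$ with $|v(x)-v_n(x)| = O(\theta^n)$.

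Second, I would establish H\"older compatibility. Let $\sigma \in (0,1)$ denote the universal geometric ratio $\diam(B^{n+1})/\diam(B^n)$ for the one-dimensional fixed point $f_*$, and set $\beta = \log\theta/\log\sigma$; taking $\theta$ close enough to $1$ ensures $\beta \in (0,1)$. For $x, y \in X_N$ at distance $d$, the smallest $n$ for which $x,y$ lie in a common piece $B^n$ satisfies $\sigma^n \asymp d$, hence $\theta^n \asymp d^\beta$. Combining the $\theta^n$-precision of this common piece with the straightness of $I_n$ yields
\[
|v(x)-v(y)| = O(d^\beta) \qquad \text{and} \qquad \dist\bigl(y,\; x + \R\, v(x)\bigr) = O(d^{1+\beta}),
\]
which are exactly the compatibility conditions required by the Whitney extension theorem in class $C^{1+\beta}$.

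Applying Whitney extension to the data $\{(x, v(x)) : x \in X_N\}$ then produces a $C^{1+\beta}$ planar curve $\gamma$ passing through every point of $X_N$ with the prescribed tangents, proving the theorem. The main obstacle is the second step: one must compare the affine rescalings $A_n$ and the axes of $I_n$ at successive scales precisely enough to extract a single coherent H\"older-continuous tangent field. This comparison will rest on the exponential convergence of renormalization toward $f_*$ together with the fact that, by definition of $X_N$, points of $X_N$ never leave the universal part of the geometry at any scale $\ge N$.
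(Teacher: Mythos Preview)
Your proposal is correct and the first two steps essentially reproduce what the paper does in its Theorem~10.1 (\emph{``lines''}): the paper constructs the tangent field $T:X_N\to\mathbb P^1$ from the stick directions, proves $\operatorname{dist}(T_{n+1}(x),T_n(x))=O(\theta^n)$, and then derives the two estimates $\operatorname{dist}(y,T_{x})\le C_N|y-x|^{1+\beta}$ and $\operatorname{dist}(T(x),T(y))\le C_N|x-y|^\beta$. One small slip: you write ``the smallest $n$ for which $x,y$ lie in a common piece,'' but you mean the \emph{largest} such $n$; and the paper uses a lower diameter bound $|x-y|\ge \rho^{\,n-N}d_N$ coming from the a~priori bounds rather than the universal $\sigma$, so the exponent is $\beta$ with $\rho^\beta=\theta$ rather than $\sigma^\beta=\theta$. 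Neither affects the argument.

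The genuine difference is in the final step. You invoke Whitney extension as a black box, which is perfectly legitimate once the two H\"older estimates are in hand (one must still locally project $X_N$ to a tangent line and check the projection is injective, which follows from those same estimates). The paper instead gives a self-contained explicit construction: it intersects the cusps $S_x=\{y:\operatorname{dist}(y,T_x)<C_N|y-x|^{1+\beta}\}$ over $x\in X_N$, orders the resulting set, and fills each gap between consecutive boundary points by the graph of a cubic polynomial matching positions and tangent directions at the endpoints, then checks directly that the assembled curve is $C^{1+\beta}$. Your route is shorter and more conceptual; the paper's is elementary and avoids appealing to an external extension theorem. Both rest on exactly the same geometric input.
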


Let $\mu_*$ be the invariant measure on $\OO_{f_*}$, the attractor of the one-dimensional renormalization fixed point. A consequence of probabilistic rigidity is 

\begin{thm} The Hausdorff dimension is universal
$$
HD_{\mu}(\OO_F)=HD_{\mu_*}(\OO_{f_*}).
$$
\end{thm}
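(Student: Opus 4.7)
\medskip\noindent\textbf{Proof plan.}
My strategy is to transport Hausdorff dimension across the conjugacy $h:\OO_F\to\OO_{f_*}$ using probabilistic rigidity. Because $F|\OO_F$ and $f_*|\OO_{f_*}$ are both topologically conjugate to the dyadic adding machine, hence uniquely ergodic, and $h$ intertwines them, the pushforward satisfies $h_*\mu=\mu_*$; in particular, $h$ maps $\mu$-full-measure Borel sets bijectively onto $\mu_*$-full-measure Borel sets.

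By probabilistic rigidity, $h|X_N$ is $(1+\beta)$-differentiable and $\mu(X_N)\to 1$; by the preceding theorem, $X_N$ sits on a $C^{1+\beta}$ curve $\gamma_N$, so this derivative is really a derivative along $\gamma_N$. On the compact set $X_N$ this derivative is bounded above; the main work will be to argue it is also bounded below, yielding a bi-Lipschitz estimate for $h|X_N:X_N\to h(X_N)$. Granting this, $HD(A)=HD(h(A))$ for every Borel $A\subset X_N$, since bi-Lipschitz maps preserve the Hausdorff dimension of arbitrary subsets.

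To conclude, take any Borel $Z\subset\OO_F$ with $\mu(Z)=1$ and let $Z^*=h(Z)$, so $\mu_*(Z^*)=1$. For each $N$,
\[
HD(Z^*)\ge HD(Z^*\cap h(X_N))=HD(h(Z\cap X_N))=HD(Z\cap X_N).
\]
Setting $Y:=\bigcup_N X_N$ and using countable stability of Hausdorff dimension,
\[
HD(Z^*)\ge\sup_N HD(Z\cap X_N)=HD(Z\cap Y).
\]
Since $\mu(Y)=1$, the set $Z\cap Y$ still has full $\mu$-measure, hence $HD(Z\cap Y)\ge HD_\mu(\OO_F)$ by the definition of the measure-theoretic Hausdorff dimension. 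Because $Z$ (equivalently $Z^*$) was arbitrary, taking the infimum yields $HD_{\mu_*}(\OO_{f_*})\ge HD_\mu(\OO_F)$; the reverse inequality follows by the symmetric argument applied to $h^{-1}$ on each $h(X_N)$.

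The hard step is the bi-Lipschitz claim. The definition of probabilistic rigidity supplies a derivative together with a Hölder modulus, but no quantitative lower bound on $|Dh|$, so we must rule out vanishing of $Dh$ on $X_N$. I expect this to follow from two ingredients: (i) the identity $h_*\mu=\mu_*$ together with non-atomicity of $\mu_*$ prevents $h$ from collapsing any positive-$\mu$-measure portion of the curve $\gamma_N$, forcing nonvanishing of $Dh$ almost everywhere on $X_N$; (ii) the $\theta^n$-precision of the pieces defining $\SS_n(\theta^n)$ gives uniform comparability of scales between consecutive renormalization levels inside $X_N$, which, combined with Hölder continuity of $Dh$, should upgrade a.e.\ nonvanishing to a uniform positive lower bound on the compact set $X_N$.
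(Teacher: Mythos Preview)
Your overall strategy is correct and matches the paper's proof almost exactly: reduce to a bi-Lipschitz statement on each $X_N$, use that bi-Lipschitz maps preserve Hausdorff dimension of subsets, intersect an arbitrary full-measure $Z$ with $X=\bigcup_N X_N$, and run the two symmetric inequalities.

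You are, however, working much harder than necessary on the lower bound for $|Dh|$. The paper's formal definition of probabilistic rigidity (Definition~\ref{defdistributionalrigidity}) requires not only that $Dh:TX_N\to Th(X_N)$ be $\beta$-H\"older, but that its \emph{inverse} be a $\beta$-H\"older homeomorphism as well; so nonvanishing of $Dh$ is built in, and continuity on the compact set $X_N$ immediately gives the two-sided bound. More to the point, the proof of probabilistic rigidity (Theorem~\ref{distributionalrigidity}) actually establishes H\"older continuity of $\ln Dh$ on $X_N$: the gap-length ratios satisfy multiplicative estimates (Claim~\ref{GG*}), and the derivative is obtained as a limit of such ratios, yielding $|\ln Dh(x)-\ln Dh(x_0)|=O(|x-x_0|^\beta)$. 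This gives $Dh>0$ everywhere on $X_N$ directly, with no need for your measure-theoretic argument (i) or the upgrade step (ii). Your proposed route through non-atomicity of $\mu_*$ would at best give $Dh\neq 0$ $\mu$-a.e., and the passage from a.e.\ to uniform positivity via (ii) would essentially redo the multiplicative estimates already present in the rigidity proof.
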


The theory of universality and rigidity became a probabilistic geometrical theory for H\'enon dynamics.

\bigskip
We prove the above results by introducing the so-called {\it pushing-up} machinery. This method locates the pieces in the $n^{th}$-renormalization cycle that have exponential precision.  The difficulty is that the orbit between two such good pieces may pass through poor pieces,
so one cannot recover all good pieces by simple iteration  of the original map.
Instead, the pushing-up machinery relates pieces in the same renormalization cycle 
by means of the diffeomorphic rescalings  built into the notion of renormalization. The distortion of these rescalings can be controlled if the two pieces under consideration, viewed from an appropriate scale, do not  lie ``too deep" (in the sense precisely defined below) .
 This machinery might have applications beyond the present situation.

For the reader's convenience, the pushing-up machinery will be informally outlined in \S \ref{out}. 
Also more special notations are collected in the Nomenclature.
For a survey on H\'enon renormalization see \cite{LM2}.  For early experiments and results on H\'enon renormalization
see \cite{CEK}, \cite{Cv}, and \cite{GST}.
 
\bigskip

{\bf Acknowledgment.} We thank all the institutions and foundations that have supported us in the course of this work: 
Simons Mathematics and Physics  Endowment,  Fields Institute, NSF, NSERC, University of Toronto.  In fall 2005, when M. Feigenbaum saw the negative results of \cite{CLM}, he made computer experiments that suggested that the universal scaling of the attractor is violated very rarely. 
Our paper provides a rigorous justification of Feigenbaum's experiments and conjectures.  We also thank C. Tresser for many valuable renormalization discussions, and R. Schul for interesting comments on \cite{J}.

\comm{
Universality and rigidity are central themes in one-dimensional dynamics.
The position of these notions in two-dimensional dynamics will become 
equally crucial but their role is more delicate. We will discuss renormalization results for infinitely renormalizable H\'enon maps. Lets us first recall the
role of universality and rigidity in one-dimensional dynamics.

A unimodal maps is a smooth map of the interval with only one critical point. The critical point is non-degenerate. A smooth unimodal map $f\in \UU$ is renormalizable if it contains two intervals which are exchanged by the map. The two intervals form the first renormalization 
cycle, $\CC_1=\{I^1_0, I^1_1\}$, where $I^1_0$ contains the critical point $c$ of $f$. 
Let $\UU_0$ be the collection of renormalizable maps. The renormalization of $f\in \UU_0$ is an affinely rescaled version of the first return map to $I^1_0$,
$f^2:I^1_0\to I^1_0$. This defines an operator 
$$
R_c:\UU_0\to\UU
$$
Similarly, one can rescale the first return map to $I^1_1$, the interval which contains the critical value $v$ of $f$. This defines the second renormalization operator
$$
R_v:\UU_0\to \UU.
$$
These renormalization operators are microscopes used to the study the small scale geometry of the dynamics. In particular, $R_cf$ is a unimodal map which describes the dynamics on one scale lower in $I^1_0$. Similarly $R_vf$ describes the geometry one scale smaller in $I^1_1$.

A map is infinitely renormalizable if it can be renormalized infinitely many times. That means for each $n\ge 1$ $R^nf\in \UU_0$. An infinitely renormalizable map has cycles, pairwise disjoint intervals, $\CC_n=\{I^n_i| i=0,1,2,\dots, 2^n-1\}$, with
$f(I^n_i)=I^n_{i+1}$ and
$$
\bigcup\CC_{n+1}\subset \bigcup\CC_n.
$$
This nested sequence of dynamical cycles accumulates at a Cantor set.
$$
\CC=\bigcap\bigcup\CC_n.
$$
This Cantor set attracts almost every orbit. It is called the Cantor attractor of the map. The only points whose orbits are not attracted to this Cantor set are the period point. Th period points have periods of the form $2^n$.  The cycle $\CC_n$ is centered around a period orbit of 
length $2^{n+1}$. It contains all the periodic orbits of period $2^s$ with $s\ge n+1$.

Every small part of the Cantor attractor $\CC$ of some infinitely renormalizable map, say within an interval $I^n_i$ of the $n^{th}-$cycle, can be studied by repeatedly applying one of the renormalization operators $R_c$ or $R_v$. For each interval in the cycle there is a uniquely defined sequence of length $n$ of
choices $w=(c,c,v,c,\dots,v)$ such that the 
$$
R_wf=R_c\circ R_c\circ R_v\circ R_c\circ \dots \circ R_vf
$$
describes the dynamics within the given $I^n_i$. Denote the length of a word $w$ by $|w|$. 

\begin{thm}(Universality) There exists $\rho<1$ such that for any two infinitely renormalizable maps $f,g\in \UU_0$ and any finite word $w$
$$
\text{dist}(R_wf, R_wg)=O(\text{dist}(f,g)\rho^{|w|}).
$$
\end{thm}

The universality has means that the Cantor attractors are all the same on 
asymptotically small scale. However, the actual geometry one observes depends on the place where one zooms in. This universal geometric structure of  
attractor is far from the well-known middle-third Cantor set, where in every place one recovers the same geometry. In the Cantor attractor there are essentially no two places with the same asymptotic geometry (only points which ly on the same orbit can have the same asymptotic geometry). 

Given two infinitely renormalizable maps $f, g\in \UU$, there exists a homeomorphism $h$  between the domains of the two maps which maps orbits to orbits,
$$
h\circ f=g\circ h.
$$
The maps are conjugated, the homeomorphism is called a conjugation. The dynamics of two conjugated maps are the same from a topological point of view.

\begin{thm}(Rigidity) The conjugation between two infinitely renormalizable maps is differentiable on the attractor.
\end{thm}

If a conjugation is differentiable, it means that on small scale the conjugation is essentially affine. This means that the microscopic geometrical properties of corresponding parts of the attractor are the same. One can deform an infinitely renormalizable map to another infinitely renormalizable map. However, the microscopic structure of the Cantor attractor is not changed, rigidity.

\bigskip

The {\it topology} of the system determines the {\it geometry} of the system.

\bigskip

This central idea has been proved in one-dimensional dynamics. It also holds 
when the systems are not of the period doubling type describes above but have
topological characteristics which are tame. We will not discuss the most general statement and omit the precise definition of tame.

\bigskip

Renormalization has been the central tool in the development of one-dimensional 
dynamics. The next step is to develop, if possible, a renormalization theory for higher dimensional systems. The long term goal is to use renormalization to describe the topology of higher dimensional systems, use renormalization to study the geometry of the attractors and finally, use the geometrical theory to explain the probabilistic behavior of higher dimensional systems. This is a very long term project. We will discuss renormalization of period doubling type for H\'enon maps. 

A H\'enon map is a map of the plane of the form
$$
F(x,y)=(f(x)-\epsilon(x,y),x)
$$
where $f\in \UU$ and $\epsilon$ small. These maps are used to describe 
the creation of chaos staring at a homoclinic bifurcation of a dissipative 
map. In such situations we may assume that $\epsilon$ is indeed very small.

A H\'enon map is renormalizable if there are two disjoint domains $B^1_v$ and $B^1_c$ which are exchanged by the map. This is the two-dimensional version of one-dimensional renormalizable maps. Let $\BB_1=\{B^1_v, B^1_c\}$ be the first cycle. As before we can consider infinitely renormalizable maps. They have  a nested sequence of cycles, as in the one-dimensional case,
$$
\BB_n=\{B^n_j| j=0,2,3,\dots, 2^n-1\}
$$
The attractor of an infinitely renormalizable H\'enon map is
$$
\OO_F=\bigcap \bigcup \BB_n.
$$
Indeed, almost every orbit is attracted to this Cantor set. Similarly to the one-dimensional situation we have that the points whose orbits are not accumulating ate the Cantor set converge to periodic orbits of period $2^n$, for some $n\ge 0$. 

The Cantor attractor $\OO_F$ is conjugated to the one-dimensional Cantor attractor of infinitely renormalizable maps. In particular, it has a unique invariant measure $\mu$. This measure assigns to each piece in $B\in \BB_n$ a mass 
$\mu(B)=\frac{1}{2^n}$. This measure describes the statistical distribution of orbits in this Cantors set. Namely, the average time to spent in a piece $B\in \BB_n$ equals $\mu(B)$,
$$
\lim_{n\to\infty} \frac{1}{n} \sum_{i=0}^{n-1} 1_B(F^i(x))=\mu(B),
$$
where $1_B(x)=1$ when $x\in B$ and $0$ otherwise. Historically, this measure has been used to describe the statistical behavior of the system. In two dimensional H\'enon dynamics it will also be used to describe the small scale geometrical properties of the attractor. In particular, the {\it average Jacobian} plays a role.
$$
b_F=e^{\int \ln \text{det}Df d\mu}.
$$
 A very small average Jacobian means that the system is very dissipative. The degenerate H\'enon map
$$
F_0(x,y)=(f(x),x)
$$
has the same dynamics as the corresponding unimodal map. In this case
 $b_{F_0}=0$. Strongly dissipative systems can be thought of as small perturbations of one-dimensional systems. The attractor of the degenerate map $F_0$ lies on a smooth curve, the graph of $f$. The dynamics of the degenerate map can be understood as  one-dimensional dynamics. Although, the H\'enon maps with $b_F>0$ are small perturbations, the geometry of their attractor is surprisingly different from their one-dimensional counterpart. The geometry of two-dimensional 
strongly dissipative infinitely renormalizable systems can not be understood 
directly from the one-dimensional theory, although these systems are small perturbations.

\begin{thm} The attractor $\OO_F$ of a strongly dissipative  infinitely renormalizable H\'enon map with $b_F>0$  does not lie on a smooth curve. 
\end{thm}

There exists a renormalization operator $R$ which studies the attractor of dissipative infinitely renormalizable H\'enon maps. Unfortunately, there is only the counterpart of the unimodal renormalization operator $R_v$ which can be studied 
successfully in the H\'enon case. Each infinitely renormalizable H\'enon map has a {\it tip}, it is the counterpart of the critical value $v$ of a unimodal map. 
The microscopic geometry of the attractor around this tip can be studied by repeatedly applying this H\'enon renormalization operator. Around the tip there is a universal microscopic geometry.

\begin{thm}(Universality) If $F$ is a strongly dissipative infinitely renormalizable H\'enon map then
$$
R^nF\to F_*,
$$
where $F_*(x,y)=(f_*(x),x)$ is the degenerate H\'enon map corresponding the the unimodal renormalization fixed point $f_*$ with $R_cF_*=f_*$.
\end{thm}

Although there is universal geometry around the tip there are other points in the Cantor attractors $\OO_F$ which have a very different microscopic geometry compared to the expected geometry of its one-dimensional counterpart.

\begin{thm}(Non-Rigidity) If $F_1$ and $F_2$ are strongly dissipative infinitely renormalizable H\'enon maps, with $b_{F_1}>b_{F_2}$,  then the attractors $\OO_{F_1}$ and $\OO_{F_2}$ are not smoothly conjugated.  In particular, $\OO_{F_1}$ is not smoothly conjugated to the attractor $\OO_{F_*}$ of the degenerate H\'enon  map. 
\end{thm}

An attractor has bounded geometry if the measurements of the pieces $B_1, B_2\in \BB_{n+1}$ with $B_1\cup B_2\subset B\in \BB_n$ are uniformly comparable to the diameter of $B$. The microscopic geometry of maps with $b_F>0$ might be very different from the one-dimensional situation.

\begin{thm}(Unbounded Geometry) There is a small $b_0>0$ and a set $X\subset [0,b_0]$ of full Lebesgue measure such that the attractor of any $F$ with $b_F\in X\subset (0,b_0)$ has unbounded geometry.
\end{thm}

These Theorems indicate that the discussion of Universality and Rigidity for H\'enon maps is not a straight forward generalization of the one-dimensional theory. Apparently, the topological property of being infinitely renormalizable is not enough to determine the geometry of the attractor. The asymptotic geometry can be dramatically changed by changing the average Jacobian. Fortunately, the global topology structure of the maps is neither determined completely by the property of being infinitely renormalizable. 

The {\it Global attractor} of an infinitely renormalizable H\'enon map $F$ is
$$
\mathcal{A}_F=\bigcap_{n\ge 0} F^n([0,1]^2).
$$ 
The global attractor consists of the Cantor attractor and the unstable manifolds of the period points.

\begin{thm} For every infinitely renormalizable H\'enon map $F$ is
$$
\mathcal{A}_F=\OO_F\cup \bigcup_{n\ge 0} W^u(\text{Orb}(p_n)),
$$ 
where $p_n$ is a periodic point of period $2^n$.
\end{thm}

\begin{thm} The average Jacobian $b_F$ is a topological invariant of the global attractor $\mathcal{A}_F$.
\end{thm}

 Maps with different Jacobians have geometrically different attractors. However, the topology of their global attractors is also different. It still might be true that the topology of the global attractor determines the geometry of the Cantor attractor. 

\bigskip

Infinitely renormalizable H\'enon maps do play a role in models of the real 
world. There has not been observations which noticed the non-rigidity or any 
consequence of the bad geometrical parts of the Cantor attractor.  The quantitative aspects of 
one-dimensional universality correspond well with real world experiments. 

These observations which confirm the presence of one-dimensional geometry is explained by two phenomena: {\it probabilistic universality} and
 {\it probabilistic rigidity}.

\bigskip

Consider a strongly dissipative infinitely renormalizable H\'enon map.
The geometry of a piece $B\in \BB_n$ can be very different from the geometry of the corresponding piece $I$ of the one-dimensional system $F_*$. The pieces of the one-dimensional system are small arcs, almost line segments.
Take a piece $B\in \BB_n$ and the two pieces $B_1, B_2\in \BB_{n+1}$ with $B_1, B_2\subset B$. let $I, I_1, I_2$ be the corresponding pieces of the degenerate
map $F_*$. The piece $B$ of $F$ has {\it $\epsilon-$precision} if after one 
simultaneous  rescaling and translation $A;\Bbb{R}^2\to \Bbb{R^2}$ we have
that the (Hausdorff) distance between $B$ and $A(B)$, $ B_1$ and $A(B_1)$, 
$B_2$ and $A(B_2)$ is at most $\epsilon$. The triples $B_1, B_2\subset B$ and $I_1,I_2\subset I$ are geometrical almost the same. 

Collect the pieces  of the $n^{th}-$cycle with $\epsilon-$precision in
$$
\SS_n(\epsilon)=\{B\in \BB_n| B \text{ has } \epsilon-{precision}\}. 
$$

\begin{defn} The geometry of the Cantor attractor $\OO_F$ of a dissipative infinitely renormalizable H\'enon map is universal in probailistic sense if there exists $\theta<1$ such that
$$
\mu(\SS_n(\theta^n))\ge 1-\theta^n.
$$
\end{defn}

\begin{thm}(Probabilistic universality) The geometry of the Cantor attractor of a strongly dissipative infinitely renormalizable H\'enon map is universal in probabilistic sense.
\end{thm}

\begin{defn}  The Cantor attractor $\OO_F$ of a dissipative infinitely renormalizable H\'enon map is rigid in probabilistic  sense if the conjugation $h:\OO_F\to \OO_{F_*}$ with the attractor $\OO_{F_*}$ of the degenerate map $F_*$ has the following property. There exists $\beta>0$, $\theta<1$ and a sequence 
$$
X_N=\bigcap_{k\ge N} \SS_k(\theta^n)\subset \OO_F,
$$
with $N\ge 1$ and 
$$
\mu(X_N)\ge 1-O(\theta^N)
$$
such that 
$$
h:X_N\to h(X_n)\subset \OO_{F_*}
$$
is H\"older differentiable, $h$ is $C^{1+\beta}$.
\end{defn}

\begin{thm}(Probabilistic Rigidity) The Cantor attractor of a strongly dissipative infinitely renormalizable H\'enon map is rigid in probabilistic sense.
\end{thm}

The Cantor attractor $\OO_F$ is not part of a smooth curve. However, large parts of it, the sets $X_N$ are part of smooth curves. In a probabilistic sense, the parts which destroy, the truly one dimensional structure has smaller and smaller measure once we go to smaller and smaller scale.

\begin{thm} The sets $X_N\subset \OO_F$ are part of a smooth $C^{1+\beta}-$curve.
\end{thm}

Let $\mu_*$ be the invariant measure on $\OO_{F_*}$, the attractor of the degenerate H\'enon map. A consequence of probabilistic rigidity is 

\begin{thm} The Hausdorff dimension is universal
$$
HD_{\mu}(\OO_F)=HD_{\mu_*}(\OO_{F_*}).
$$
\end{thm}

The theory of universality and rigidity became a probabilistic geometrical theory for H\'enon dynamics.

\bigskip

Blur on organisation.

For the reader's convenience, more special notations are collected in the Nomenclature.

\bigskip

\centerline{\bf Acknowledgements}

\bigskip

We thank all the institutions and foundations that have supported us in the course of this work: 
Simons Mathematics and Physics  Endowment,  Fields Institute, NSF, NSERC, University of Toronto. 

}

\section{Outline}\label{out}

\subsection{Infinitely renormalizable H\'enon-like maps}

We will start with  outlining the set-up developed in \cite{CLM,LM1} --
see \S \ref{prelim} for details.

We consider a class $\HH=\HH(\bar\eps)$ of H\'enon-like maps of the form
$$
       F\colon (x, y) \mapsto (f(x)-\eps(x,y), x),
$$
acting on the unit box $B^0 =[0,1]\times [0,1]$,
where $f(x)$ is a unimodal map subject of certain regularity assumptions,
and $\|\eps\|< \bar\eps$ is small (for an appropriate norm).  
If the unimodal map $f$ is renormalizable
then the renormalization $F_1=RF\in \HH$ is defined as $(\Psi^1_0)^{-1} \circ (F^2|_{B^1}) \circ \Psi^1_0$, 
where $B^1$ is a certain box around the {\it tip}, a point which plays the role of the ``critical value'',
and $\Psi^1_0 : \Dom(F_1)\ra B^1$ is an explicit {\it non-linear} change of variables.

Inductively, we can define $n$ times renormalizable maps for any $n\in \N$,
and consequently, {\it infinitely renormalizable} H\'enon-like maps.
For such a map  the $n$-fold renormalization $F_n= R^n F\in \HH$
is obtained as $ (\Psi^n_0)^{-1} \circ (F^{2^n}|_{B^n}) \circ \Psi^n_0$,
where $B^n$ is an appropriate {\it renormalization box},  
$\Psi^n_0: \Dom(F_n)\ra B^n$ is a non-linear change of variables.

These boxes $B^n$ form a nest around the {\it tip} of $F$: 
$$
   B^0\supset B^1\supset\dots\supset B^n\supset\dots \ni \tau  
$$

 Taking the iterates $F^k B^n$, $k=0,1,\dots, 2^n-1$,
we obtain a family $\BB^n$ of $2^n$ pieces $\{B^n_\om\}$, called the {\it $n^{th}$ renormalization level},  that can be naturally 
labelled by strings $\om\in \{c,v\}^n$ in two symbols, $c$ and $v$,
with $B^n_{v^n}\equiv B^n$. See \S \ref{prelim} for details. Then 
$$
  \OO_F = \bigcap_n \bigcup_\om B^n_\om 
$$ 
is an attracting Cantor set on which $F$ acts as the adding
machine. This Cantor set carries a unique invariant measure $\mu$.
This allows us to introduce  
the most important geometric parameter attached to $F$, its {\it average Jacobian}
$$ 
b_F= \exp \int_{\OO_F} \log \Jac F \, d\mu.
$$
Usually, we will denote the average Jacobian with $b$. 

The size of the boxes decays exponentially:
\begin{equation}\label{sigma}
     \diam B^n_\om \leq C \si^n,
\end{equation}
where $\si\in (0,1)$ is the universal scaling factor
(coming from one-dimensional dynamics) while $C=C(\bar \eps)$ depends only on the geometry of  $F$.

A surprising phenomenon discovered in \cite{CLM} is that unlike its one-dimensional counterpart,
the Cantor set $\OO_F$ {\it does not have universal geometry}:  it essentially depends on the average Jacobian $b$.
However, the difference appears only in scale of order $b$: if all the pieces $ B^n_\om$ of level $n$ are much bigger than $ b$ 
then the geometry of the pieces $B^n_\om$ is controlled by one-dimensional dynamics:
the pieces  are aligned along the parabola $x=f(y)$ with thickness of order $b$.
According to (\ref{sigma}), this happens whenever 
\begin{equation}\label{safe scales for F}
  \alpha \si^n \geq  b
\end{equation}
 with sufficienty small (absolute) $\alpha>0$, i.e., when 
\begin{equation}\label{safe scales}
    n \leq  c |\log b| - A, \quad {\mathrm {where}}\ c = \frac 1{|\log\si|}, \ A= \frac {\log \alpha} {\log \si}.
\end{equation}
We will call these levels {\it safe}.

\subsection{Random walk model}\label{random walk}

\newcommand{\depth}{\operatorname{depth}}

To any point $x\in \OO\equiv \OO_F$ we can assign its {\it depth} 
$$
  \depth (x)\equiv  k(x)= \sup \{ k: \ x\in B^k\}\in \N\cup \{\infty\}. 
$$
Here the tip is the only point of infinite depth. If $\depth(x)=k$ then $x\in E^k\equiv B^{k}\setminus B^{k+1}$ (see Figure \ref{figE} and \ref{figEsch}).
\begin{figure}[htbp]
\begin{center}
\psfrag{E0}[c][c] [0.7] [0] {\Large $E^0$}
\psfrag{E1}[c][c] [0.7] [0] {\Large $E^1$}
\psfrag{E2}[c][c] [0.7] [0] {\Large $E^2$}
\psfrag{F}[c][c] [0.7] [0] {\Large $F$}
\psfrag{F2}[c][c] [0.7] [0] {\Large $F^2$}
\psfrag{F4}[c][c] [0.7] [0] {\Large $F^4$}
\psfrag{B1v}[c][c] [0.7] [0] {\Large $B^1$} 
\psfrag{B2vv}[c][c] [0.7] [0] {\Large $B^2$} 
\pichere{0.6}{figE}
\caption{} 
 \label{figE}
\end{center}
\end{figure}
We say that a point $x\in \OO$ is {\it combinatorially closer to $\tau$ than $y\in \OO$}
if $k(x)>k(y)$.  
We will now encode  any point $x\in \OO$ by its {\it closest approaches} to $\tau$
in  {\it backward} time.  Namely, let us consider the backward orbit $\{F^{-t} x\}_{t=0}^\infty$,
and mark the moments $t_m$ ($m=0,1,\dots$) of closest approaches, i.e., at the moment $t_m$ the point $x_m:= F^{-t_m} x$ is combinatorially
closer to $\tau$ than all previous points $F^{-t} x$, $t=0,1,\dots, t_m-1$. Since the dynamics of $F$ on $\OO$ is  the adding machine,
this is an infinite sequence of moments for any $x\not\in \orb (\tau)$.
If  $x=F^t(\tau)$, we terminate the code at the moment $t$.
 Let 
$$
    k_m(x) = k(x_m),\ m=0,1,\dots,
$$
be the sequence of the corresponding depths. 
Obviously, both sequences, $\bar t= \{t_m\}$ and $\bar k= \{k_m\}$ are {\it strictly increasing}.

For any depth $k$, let us consider the {\it first return map} (see Figure \ref{figE} and \ref{figEsch}).
$$ 
  G_k: B^{k+1} \ra B^k, \quad G_k= F^{2^k},
$$
and the {\it first landing map in backward time} 
$$
    L_k: \bigcup_{m=0}^{2^k-1} F^m (B^k) \ra B^k,\quad L_k(x) = F^{-m} x,\ \mbox{for} \ x\in  F^m (B^k).
$$  
Then we have by definition:
$$
                  x_m = G_{k_m(x)} (x_{m+1}),\quad  x_{m} = L_{k_m(x)} (x)  
$$ 
 
Let $\Sigma$ stand for the space of strictly increasing sequences $\bar k= \{k_m\}$ of symbols $k_m\in \N\cup \{\infty\}$
that terminate at moment $m$ if and only if $k_m=\infty$. Endow $\Sigma$ with a weak topology and the measure $\nu$ 
corresponding to the following {\it random walk} on $\N$: the probability of jumping from $k\in \N$ to $l\in \N$
is equal to $1/2^{l-k}$ if $l>k$, and it vanishes otherwise.  The initial distribution on $\N$ is given by
$\nu\{k\}= 1/2^{k+1}$. We let $j_m := k_{m+1}-k_m$ be the {\it jumps} in our random walk.

\begin{lem}
   The coding $x\mapsto \bar k(x)$ establishes a homeomorphism between $\OO$ and $\Sigma$  and a measure-theoretic 
isomorphism between $(\OO,\mu)$ and $(\Sigma,\nu)$.
\end{lem}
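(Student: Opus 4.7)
The plan is to identify the system $(\OO, F, \mu)$ with the dyadic odometer (translation by $1$ on $\Z_2$ equipped with Haar measure) and then recognize that, under this identification, the closest-approach coding is the familiar bijection sending a $2$-adic integer to the strictly increasing sequence of positions of its $1$-digits. Fix a topological conjugacy $\alpha \colon \OO \to \Z_2$ with $\alpha(\tau) = 0$ and $\alpha \circ F = \alpha + 1$; such an $\alpha$ exists because $F|_\OO$ is the dyadic adding machine. Under $\alpha$, the piece $B^k_j = F^j(B^k)$ maps onto the $2$-adic ball $j + 2^k \Z_2$, so $B^k$ corresponds to $2^k \Z_2$, the depth function becomes the $2$-adic valuation $\nu_2$ of $\alpha(\cdot)$, and $\mu$ pulls back to normalized Haar measure.

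The one real calculation needed is the recursion $t_{m+1} - t_m = 2^{k_m}$. Write $a := \alpha(x)$ and $a - t_m = 2^{k_m} c$ with $c \in \Z_2$. If $c = 0$ then $x_m = \tau$ and the code has terminated; otherwise $c$ is odd because the depth of $x_m$ is exactly $k_m$. For $0 < s < 2^{k_m}$ one has $\nu_2(a - t_m - s) = \nu_2(s) < k_m$, so no deeper backward visit occurs in this range; at $s = 2^{k_m}$ one gets $a - t_m - s = 2^{k_m}(c - 1)$ with $c - 1$ even, so the depth strictly increases. Iterating yields $t_m = \sum_{i < m} 2^{k_i}$, and passing to the limit in $\Z_2$ gives $a = \sum_m 2^{k_m}$, the sum being finite precisely when the code terminates at $\infty$. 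Thus the coding records exactly the positions of the $1$-digits of $a$ in its $2$-adic expansion.

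From this explicit formula the remaining claims follow routinely. Bijectivity is the standard identification of $\Z_2$ with subsets of $\N$ via the characteristic function of the $1$-set, with finite subsets corresponding to non-negative integers and matching the prescription that codes terminate at $\infty$ iff they are finite. Continuity in both directions is checked by matching basic neighborhoods: the cylinder in $\Sigma$ fixing a prefix $(k_0, \ldots, k_m)$ corresponds under $\alpha$ to the $2$-adic ball $t_{m+1} + 2^{k_m + 1} \Z_2$, and compactness of $\OO$ and $\Sigma$ upgrades the bijection to a homeomorphism. For the measure, Haar on $\Z_2$ makes the binary digits i.i.d.\ Bernoulli$(1/2)$, so each $i \in \N$ belongs to the $1$-set independently with probability $1/2$; this yields $\nu(k_0 = k) = 1/2^{k+1}$ and $\nu(k_{m+1} = l \mid k_m = k) = 1/2^{l-k}$ for $l > k$, exactly the defining law of $\nu$.

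The main obstacle is the recursion $t_{m+1} - t_m = 2^{k_m}$; once this $2$-adic step is established, everything else is digit bookkeeping built on top of the already-known conjugacy with the adding machine.
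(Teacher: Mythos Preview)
Your argument is correct. The paper does not actually prove this lemma: it appears in the outline section (\S\ref{random walk}) and is left without proof, the point being that it is a standard consequence of the adding-machine structure of $F|_\OO$. Your explicit $2$-adic computation---reducing the closest-approach code to the sequence of $1$-digit positions via the recursion $t_{m+1}-t_m=2^{k_m}$---is exactly the natural way to make this precise, and the measure identification via i.i.d.\ Bernoulli digits is the right observation.

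One minor point: the cylinder/ball correspondence you write down produces only those $2$-adic balls $a+2^n\Z_2$ with bit $n-1$ of $a$ equal to $1$, so it does not directly exhibit every basic open set of $\OO$ as a cylinder in $\Sigma$. For continuity of the inverse you should either note that an arbitrary ball $a+2^n\Z_2$ corresponds in $\Sigma$ to the open set of sequences with a prescribed initial block $(k_0,\dots,k_m)$ together with the condition $k_{m+1}\ge n$, or (cleaner) use that $\OO$ is compact and $\Sigma$ is Hausdorff once one-sided continuity is established. You invoke compactness at the end, but of $\Sigma$ rather than $\OO$; it is the compactness of the domain that does the work here.
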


We can also consider the random walk that {\it stops on depth $n$}.
This means that we consider the orbit $F^{-t} x$ only until the moment it lands in $B^n$. 
The corresponding  (finite) coding sequence $\{\tl k_m \}_{m=0}^T$ is defined as follows:
$\tl k_m= k_m$ whenever $k_m < n$ ($m=0,1\dots, T-1$), while $\tl k_T =n$.   
(In what follows we will skip ``tilde'' in the notation as long as it would not lead to  confusion.) 

\comm{****
Fix a ``control sequence'' $j_n\in \N$, $n=0,1,\dots$.   
We say that a point $x\in \OO$  is {\it $\bar j$-controlled after a moment $N$}
if   $j_n(x) \leq j_n$ for all $n\geq N$. 
(We will say just ``controlled'' if it is clear which sequence $\bar j$ is meant,
or ``eventually controlled'' if we do not care of the precise moment $N$.)

\begin{cor}
  Let $j_n$ be a control sequence satisfying the following summability condition:
$$
   \sum_{n=0}^\infty \frac 1{2^{j_n}} < \infty. 
$$
Then  a.e. $x\in \OO$ is eventually $\bar j$-controlled. 

More precisely, the probability that a point $x\in \OO$ is 
not controlled after a moment $N$  is at most 
$$
   \eps_n:= \sum_{n=N}^\infty \frac 1{2^{j_n}}\to 0.
$$
\end{cor}

In particular, this Corollary is applicable to {\it linearly growing} jumps $j_n\geq a n$. 
In this case, the probability that  $x$ is not controlled after time $N$ is exponentially small,
namely, it is $O(1/2^{an})$.  
***}

Fix an increasing  {\it control function} $s: \N\ra \Z_+$.   
We say that a sequence $\bar k=\{k_m\}_{m=0}^\infty$  is {\it $s$-controlled after a moment $N$}
if   $j_m  \leq s(k_m)$ for all $k_m\geq N$. 
We say that a point $x\in \OO$  is {\it $s$-controlled after moment N} if its code 
$\bar k(x)$ is such. The set of these points is denoted by $K_N$.

\begin{lem}\label{control criterion}
Under the summability assumption
$$
  \sum_{k=0}^\infty \frac 1{2^{s(k)}} < \infty
$$
we have 
$$
  \nu(K_N)\ge 1-O(\sum_{k=N}^\infty \frac 1{2^{s(k)}}).
$$
\end{lem}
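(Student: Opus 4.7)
The plan is to combine the Markov property of the random walk with a simple union bound over the levels $k \geq N$.

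First I would compute the tail of the one-step jump distribution. By the definition of the random walk, whenever the code sits at level $k$, the conditional probability of the next jump exceeding $s(k)$ is
$$
\nu\bigl(j_m > s(k)\,\bigm|\,k_m=k\bigr)=\sum_{l>k+s(k)}\frac{1}{2^{l-k}}=\frac{1}{2^{s(k)}}.
$$
Crucially this bound depends only on the current level $k$, not on the earlier history, because the transition kernel $k\mapsto l$ is memoryless.

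Next I would exploit the fact that the coding sequence $\bar k$ is \emph{strictly increasing}: each level $k\in\N$ is visited by at most one index $m$. Let $V_k$ be the event $\{\exists m: k_m=k\}$ and let $B_k$ be the event that $V_k$ occurs \emph{and} the subsequent jump violates the control, i.e.\ $j_m>s(k)$ at that unique $m$. Combining the above with the Markov property,
$$
\nu(B_k)=\nu(V_k)\cdot\frac{1}{2^{s(k)}}\leq\frac{1}{2^{s(k)}}.
$$

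Now observe that $\Sigma\setminus K_N$ is precisely the event that a bad jump occurs from some level $k_m\geq N$, i.e.
$$
\Sigma\setminus K_N\;\subseteq\;\bigcup_{k\geq N}B_k.
$$
Subadditivity then yields
$$
\nu(\Sigma\setminus K_N)\;\leq\;\sum_{k\geq N}\nu(B_k)\;\leq\;\sum_{k\geq N}\frac{1}{2^{s(k)}},
$$
which is exactly the claimed estimate, and is finite (and tends to $0$ as $N\to\infty$) by the summability hypothesis.

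There is no serious obstacle here: the argument is a one-line Markov plus union-bound computation. The only point requiring care is a notational one—the control condition is imposed at all steps $m$ for which $k_m\geq N$ (not $m\geq N$), and the fact that the sequence is strictly increasing is what lets us upgrade the per-step bound into a summable estimate indexed by levels rather than by times. Once this is observed, the result is immediate.
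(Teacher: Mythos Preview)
Your argument is correct. The paper's own proof takes a slightly different route: it invokes the monotonicity of the control function $s$ to obtain the product lower bound
\[
\nu(K_N)\;\ge\;\prod_{k=N}^{\infty}\Bigl(1-\frac{1}{2^{s(k)}}\Bigr),
\]
and then passes from the product to $1-O\bigl(\sum_{k\ge N}2^{-s(k)}\bigr)$. The idea behind that product is that after first reaching depth $\ge N$, the $i$-th subsequent level is at least $N+i$ (jumps are $\ge 1$), so by monotonicity of $s$ the conditional probability of a good jump at that step is at least $1-2^{-s(N+i)}$; independence of the jumps then gives the product.

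Your union-bound argument is more direct and in fact slightly more general: it never uses that $s$ is increasing, only that the coding sequence is strictly increasing so that each level $k$ can contribute at most one bad event $B_k$, together with the Markov property to factor $\nu(B_k)=\nu(V_k)\cdot 2^{-s(k)}$. Both approaches are elementary and yield the same tail estimate; yours trades the independence/product structure for subadditivity, which is the cleaner route here.
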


\comm{
\begin{lem}\label{control criterion}
 Almost all points $x\in \OO $ are eventually $s$-controlled
under the following summability assumption:
$$
  \sum_{k=0}^\infty \frac 1{2^{s(k)}} < \infty.
$$
\end{lem}
}

\begin{proof}
It follows immediately from the definition of the random walk, using the monotonicity of the control function, that
$$
\nu(K_N)\ge \prod_{k=N}^\infty (1-\frac 1{2^{s(k)}}),
$$
which implies the Lemma.
\end{proof}

\subsection{Geometric estimates}\label{Geom estimates outline}

Our analysis depends essentially on the geometric control of the renormalizations and changes of variables
established in \cite{CLM}. 

The renormalizations have the following nearly {\it universal shape}: 
\begin{equation}\label{universality formula}
    R^n F = (f_n(x) -\,  b^{2^n}\, a(x)\, y\, (1+ O(\rho^n)), \ x\, ),
\end{equation}
where the $f_n$ converge exponentially fast to the universal unimodal map $f_*$, 
 $a(x)$ is a universal  function, and $\rho\in (0,1)$.

The changes of variables $\Psi_k^l: \Dom(F^l)\ra \Dom(F^k)$ have the following form:
\begin{equation}\label{factoring-outline}
\Psi_k^l = D_k^l \circ (\id + {\bf S}_k^l),
\end{equation}
where 
\begin{equation}\label{reshufflingout}
 D_k^l= 
 \left(
\begin{array}{cc}
1 & t_k\\
0 & 1
\end{array}\right)
\left(
\begin{array}{cc}
(\si^2)^{l-k} & 0\\
0 & (-\si)^{l-k}
\end{array}\right) (1+O(\rho^k)). 
\end{equation}  
is a linear map with $t_k\asymp  b_F^{2^k}$,
while  $\id + {\bf S}_k^l: (x,y)\mapsto (x+S_k^l(x,y), y)$ is a horizontal non-linear map with
$$
   | \di_x S^l_k | = O(1),\quad  |\di_y S^l_k | = O(\bar{\eps}^{2^k}).
$$

\subsection{Regular boxes}
In this section we outline the results of \S \ref{pieces}.

For any $x\in \OO$, we let   $B_n(x)$ be the box $B^n_\om\in \BB^n$
containing $x$ (in particular, $B_n(\tau) = B^n$). 
Let $\BB^n_*= \BB^n\sm \{B^n\}$ 
stand for the family of boxes $B^n_\om$ that do not contain the tip.

Notice that the depth of all points $x$ in  any box   $B\in \BB^n_*$ 
is the same, so it can be assigned to the box itself.
In other words,
$$
  \depth (B) = \sup \{ k: \ B\subset  B^k\}\in   \{0,1,\dots n-1\}. 
$$
Let $\BB^n[l]$, $l<n$,  be the family of all boxes of level $n$ whose depth is $l$.
Note that $\BB^n[l]$ contains $2^{n-l-1}$ boxes. 

  We can view the box $B$  in the renormalization coordinates on  various scales. 
Namely, to view $B$ from scale $k\leq n$ means that we consider its preimage $\bB$
under the (nonlinear) rescaling $\Psi_0^k : \Dom(F_k)\ra B^k$.
The main scale from which $B$ will be viewed is its depth $k$,
so from now on $\bB := (\Psi_0^k)^{-1}(B)$ will stand for the corresponding box 
(see Figure \ref{figregpiece}).  This seemingly minor ingredient plays a crucial role in the estimates.

A box $B$ as above is called {\it regular} if the horizontal and vertical projections of $\bB$
are $K$-comparable, where $K>0$ is a universal constant, to be specified in the main body of the paper. 
In other words,  $\mod \bB$ (the ratio of the the vertical and horizontal sizes of $\bB$) is of order 1. 

%

We will control depth by the control function  
\begin{equation}\label{exp control f-n}
          s(k)= a 2^k -A\quad \mathrm{where}\ a= \frac{\log b}{\log \si},\ A=\frac{\log \alpha}{\log \si},
\end{equation}
with a sufficiently  small  universal $\alpha>0$ to be specified in the main body of the paper.
With this choice, we have:
\begin{equation}\label{safe scales for F_k}
  \alpha\si^{l-k} \geq b^{2^k}.
\end{equation}
Comparing it to (\ref{safe scales for F}) and (\ref{safe scales}), 
we see that the level $l-k$ controlled in this way is safe for the renormalization $F_k$. 

We say that the box $B\in \BB^n[l]$ is {\it  not too deep} in scale $B^k$ if
$$
 l-k \leq  s(k).  
$$

There are a number of constant which have to be chosen appropriately, for example $\alpha$ and $K$. 
In the main body of this paper it will be shown how to choose these constants carefully such that all Lemmas 
and Propositions hold. From now on we will assume in this outline that the constants are chosen appropriately and 
will not mention this matter any more. 

We will number the Lemmas and Propositions in this outline as the corresponding statements in the main body. However,
the version in the outline should be viewed as an informal version of the actual statements.

\bigskip

\noindent
{\bf Proposition 4.1.}
 {\it For all sufficiently big levels  $k$, the following is true. 
If a regular box  $B\in \BB^n_*$, $n>k$,  is not too deep in scale $B^k$ then $G_k(B)$ is regular. }
\bigskip

{\it Outline of the proof}. 
  Let $B\in \BB^n[l]$, $n>l>k$. We should view $B$  from scale $l$, 
i.e., consider the piece $\bB$ of level $n-l$ for the renormalization $F_l$,
 see  Figure \ref{mappiece}.  As the piece  $\tilde{B}=G_k(B)$ has depth $k$,
it should  be viewed from this depth. So, we consider the corresponding piece
   $\tilde{\bB}$ of level $n-k$ for the renormalization $F_k$. 
Then  $\tilde{\bB}=F_k\circ \Psi_k^l(\bB)$.      

Using geometric estimates for factorization (\ref{factoring-outline})  we show that 
$$ 
   \mod \Psi_k^l (\bB) \asymp \si^{l-k} \mod \bB,
$$
provided $\bB$ is regular. So $\Psi_k^l(\bB)$  is  highly stretched in the vertical direction.
The nearly  Universal map  $F_k$, see (\ref{universality formula}), will contract the vertical
size by a factor of order $b^{2^k}<<\si^{l-k}$ since the piece is not too deep.
This implies that the image under $F_k$ is essentially the 
image of the horizontal side. We obtain a piece $\tilde\bB$, which is essentially a curve,  that gets roughly aligned with the parabola,
which makes its modulus of order 1.   
\QED

\subsection{Universal sticks}

Given a box $B\in \BB^n[l]$ of a map $F$, 
let $\OO(B): =  \OO_F \cap B$ be the part of the postcritical set $\OO_F$ contained in $B$. 
Respectively, $\OO(\bB)= \OO_{F_l}\cap \bB$, where $\bB$ is the rescaled box corresponding to $B$.    

We say that a box $B\in \BB^n[l]$ is a $\de$-stick if the postcritical set 
$O(\bB)$ is contained in a diagonal strip $\Pi$ of 
 thickness $\de$, relatively the horizontal size of $\bB$. 
 The minimal thickness is denoted by $\de_{\bB}$. See Figure \ref{wid}. 
 
Let us consider the pieces $B_1$ and $B_2$ of level $n+1$ contained in $B$. The corresponding pieces $\bB_1$ and $\bB_2$ occupy fractions 
$\sigma_{\bB_1}$ and $\sigma_{\bB_2}$ of $\bB$, called {\it scaling ratios}, see Figure \ref{sca}. Let $\sigma^*_{\bB_1}$ and $\sigma^*_{\bB_2}$ be the scaling ratios of the corresponding 
pieces for the degenerate renormalization fixed point $F_*$. Let $\Delta \sigma_{\bB}$ be the maximal difference between the corresponding scaling ratios.

A piece $B\in \BB^n$ is called $\eps$-{\it universal}  if $\de_{\bB}\le \epsilon$ and   $\Delta\sigma_{\bB}\le \epsilon$.

\bigskip

Consider very deep pieces $B\in \BB^n[k]$, with $(1-q_0)\cdot n\le k\le n$, at scale $n-k$.  Then we are watching pieces of 
$\BB^{n-k}(F_{k})$ which can be obtained by following the orbit of $B^{n-k}_v(F_{k})$ for $2^{n-k}$ steps. $F_{k}$ is at a distance 
$O(\rho^{k})$ to the degenerate renormalization fixed point $F_*$. When $q_0>0$ is small, these few iterates, $2^{n-k}=2^{q_0\cdot n}$,  with a map $O(\rho^{(1-q_0)\cdot n})$ close to the renormalization fixed can be well approximated by iterates of the renormalization fixed point. At this scale, one-dimensional  dynamics is a good geometrical model.
We call this the {\it one-dimensional regime}.
   
\bigskip

\noindent
{\bf Proposition 7.2.} {\it There exist $\theta<1$, $0<q_0<q_1$ such that  every piece in $\BB^n[k]$, with
$(1-q_1)\cdot n\le k\le (1-q_0)\cdot n$, is   $O(\rho^{n})$-universal. 
}

\bigskip

We are going to refine Proposition 4.1,  in the sense that we are estimating how $\epsilon$-universality is distorted (or even improved!)
when we apply maps $G_k$ to regular pieces which are not too deep in scale $B^k$.  

\bigskip

\noindent
{\bf Proposition 5.1 and 6.1} 
{\it If $B\in\BB^n[l]$ is regular and not too deep in $B^k$
then 
$$
\delta_{\tilde{\bB}}\le \frac12 \cdot \delta_\bB+O(\sigma^{n-l}),
$$
and
$$
\Delta\sigma_{\tilde{\bB}}= \Delta\sigma_{\bB}+O(\delta_{\bB}+\sigma^{n-l}),
$$
where $\tilde{B}=G_k(B)\in \BB^n[k]$ and $\tilde{\bB}=F_k(\Psi^l_k(\bB))$.}

\bigskip

{\it Outline of the proof.}
We  consecutively estimate, using geometric estimates of \S \ref{Geom estimates outline},  
the relative thickness   of the pieces  $B_\diff=(\id+ {\bf S}^l_k)(\bB)$, 
$B_\aff=D^l_k(B_\diff)$ and $\tl\bB= F_k(B_\aff)$,  see Figure \ref{mapfac}.
The first one is comparable with  the thickness of $\bB$, up to an error of order $\si^{n-l}$,
since the horizontal map $\id + {\bf S}_k^l$ has bounded geometry
(where the error $\si^{n-l}\geq \diam \bB$ comes from the second order correction).

Let us now represent the affine map $D^l_k$ as a composition of the diagonal part  $\La$ and 
the the sheer part  $T$, see  (\ref{reshufflingout}).
The diagonal map $\La$ preserves the horizontal thickness,
so the thickness is only effected by the sheer part  $T$, which has order $t_k\asymp b^{2^k}$.
Using this estimate and that $B$ is not too deep in $B^k$, we show that $\de(B_\aff) =O (\de_{B_\diff})$.   
 
Finally, we show that the map $F_k$, being strongly vertically contracting,  improves thickness
again using that $B$ is not too deep in $B^k$. 

The maps $\Psi^l_k$ do not distort the scaling ratios at all as a consequence of the precise defintion of scaling ratios. The piece $\tl\bB$ is the image under $F_k$ of $B_\aff=\Psi^l_k(\bB)$. This map is exponentially close to the degenerate renormalization fixed point. It will not distort the scaling ratios too much.
\qed

\bigskip

Starting with pieces obtained during the one-dimensional regime, we apply repeatedly the maps $G_k$ as long as the new pieces are not too deep. 
This process is called the {\it pushing-up regime}.

The pieces created by the combined one-dimensional and pushing-up regimes are $O(\rho^n)$-universal.
This can be seen as follows.
Proposition 7.2, states that the pieces from the one-dimensional regime are exponentially universal.
These pieces are the starting pieces of the pushing-up regime. Propositions 5.1 and 6.1, state that the 
error in scaling ratios caused by pushing-up is of order of the sum of the ticknesses observed during the 
pushing-up process. Moreover, the thicknesses are essentially contracted each pushing-up step.  

Unfortunately, the pieces generated by the combination of the one-dimensional and pushing-up regimes, do not 
have a total measure which tends to $1$. In particular, Proposition 8.2 states that asymptotically, these pieces will be missing 
a fraction of the order $O(2^k(b^\gamma)^{2^k})$ of $B^k$, where $\gamma>0$. This is an immediate consequence 
of the fact that during the pushing-up regime we only pushed-up pieces which are not too deep.

The solution to this problem is to stop the pushing-up regime at the level $\kappa(n)\asymp \ln n$.
Then $B^{\kappa(n)}$ will be filled except for an exponential small fraction with $O(\rho^n)$-universal pieces. After level
$\kappa(n)$ we start the {\it brute-force} regime, push-up all pieces without considering whether they are too deep or not.
In other words, just apply the original map $F$ for $2^{\kappa(n)}$ steps. But under these iterates the $O(\rho^n)$-universal 
sticks get spoiled at most by factor $O(C^{\kappa(n)})= O(n^c)$ with some $c>0$. Hence,
they are $O(n^c \rho^n)$-universal sticks, and we still see $O(\theta^n)$-universality, for some $\theta<1$.

Denote the pieces in $\BB^n$ generated by combining these three regimes by $\PP_n$. These pieces are $\theta^n$-universal.

\comm{
Finally, the following lemma gives control of the push-forwards of universal sticks:

\begin{prop}\label{push-forward of universal sticks}
  For all sufficiently big levels  $k$, the following is true. 
If a $\de$-universal stick  $B\in \BB_n[l]$, $n>l\geq k$,  is not too deep in scale $B^k$ 
then $G_k(B)$ is an $\eps$-universal stick with 
$$
  \eps= \de + O(|\hat B| + O(\si^{n-l})),
$$ 
where $\hat B$ is the box of level $n-1$ containing $B$. 
\end{prop}
}

\comm{****
\subsection{Satisfactory control}

We say that a control function $s$ is {\it satisfactory} if:

\ssk\nin $\bullet$
 $s$ controls almost all points $x\in O$;

\ssk\nin $\bullet$
  There is a universal $\si \in (0,1)$ such that for any level $k$,
if a $\de-$universal box $B\in \BB_n[l]$, $n> l \geq k$, is not too deep in scale $B^k$
(for the control function $s$),   
then  the box $G_k(B)$  is $(\de +O(\si^{n-l}))$-universal. 

 It follows under these circumstances that if $x\in O$ is $s$-controlled and for some $n\geq k_m$, 
 the box $B_n(x_m)$ is an $\eps$-universal stick on depth $k_m$,
 then $B_n(x)$ is a $O(\eps)$-universal stick. 
Moreover, the same is eventually (for $n$ sufficiently big)  true for  
 eventually $s$-controlled points $x$. 
  
\ssk Lemma  \ref{control criterion}  and Propositions \ref{push-forward of universal sticks} show  that the 
control function 
(\ref{exp control f-n}) is satisfactory. 

\begin{lem}
  If there exists a satisfactory control function then the geometry of $O$ is
probabilistically universal.  
\end{lem}

\begin{proof}
    Take some depth $k$ and some $s$-controlled point $x\in O$.
Let  $\{k_m\}_{m =0}^N$ be the code of $x$ that stops on depth $k$.

    Since the renormalizations $R^k F$ converge to the universal function $F_*$,
there exists a sequence of levels $n_k \to \infty$ such that 
all natural numbers $n$ appear in this sequence  and  all the boxes 
$B_n (x_N) \subset B_k$ of level $n = k + n_m$ viewed from depth $k$
have a $\de_k$-universal shape with $\de_k\to 0$. 
Since the function $s$ is satisfactory,  the corresponding boxes $B_n(x)$ (viewed from depth $0$)
have an $\eps_k$-universal shape with $\eps_k\to 0$.   
\end{proof}

******}

\subsection{Probabilistic  universality}

\comm{
We say that the geometry of $O$ is {\it probabilistically universal}
if for almost all points $x\in O$ the boxes $B_n(x)$ are 
asymptotically universal sticks. 

}
We say that the geometry of $O$ is {\it probabilistically universal} 
if there exists a $\theta\in (0,1)$ such that
the total measure of boxes $B\in \BB^n$ which are  $\theta^n$-universal sticks
is at least $1-O(\theta^n)$. 

\begin{thm}
The geometry of $O$ is probabilistically universal.
\end{thm}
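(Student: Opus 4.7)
The plan is to build, for each sufficiently large level $n$, an explicit subfamily $\PP_n\subset \BB^n$ whose elements are $\theta^n$-universal and whose union carries all but $O(\theta^n)$ of $\mu$. The construction follows the three-regime scheme sketched at the end of \S\ref{out}: one-dimensional seeding, pushing-up, and a short brute-force tail. First, for boxes $B\in \BB^n[l]$ with depth $l$ in the window $[(1-q_1)n,(1-q_0)n]$, Proposition 7.2 immediately furnishes $O(\rho^n)$-universality, since $F_l$ is within $O(\rho^{(1-q_0)n})$ of $F_*$ and is iterated only $2^{n-l}\le 2^{q_0 n}$ times. These boxes serve as seeds.

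Second, fix a threshold $\kappa(n)=C\log n$ with $C$ large. From a seed at depth $l_0$, follow the chain of pushing-up operators $G_{k_1},G_{k_2},\dots$ with decreasing depths $l_0>k_1>k_2>\dots>\kappa(n)$. Regularity along such a chain is preserved by iteration of Proposition 4.1, and Propositions 5.1 and 6.1 give the recursion
\[
\de_{m+1}\le\tfrac12\de_m+O(\si^{n-k_m}),\qquad |\Delta\sigma_{m+1}-\Delta\sigma_m|=O(\de_m+\si^{n-k_m}),
\]
provided each intermediate box is not too deep in the next scale. This last condition is enforced by restricting to backward codes that are $s$-controlled for the exponential control function $s(k)=a2^k-A$ of (\ref{exp control f-n}). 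Telescoping down to depth $\kappa(n)$ yields $\de,\Delta\sigma=O(\rho^n)$. Third, from depth $\kappa(n)$ down to $0$ we abandon the renormalization rescalings and iterate $F$ for at most $2^{\kappa(n)}=n^C$ steps. These iterates decompose into $\kappa(n)$ nested renormalization blocks, each spoiling universality by only a bounded multiplicative factor, so the total degradation is $O(C_0^{\kappa(n)})=O(n^c)$; hence $\PP_n$ consists of $O(n^c\rho^n)=O(\theta^n)$-universal sticks for any fixed $\theta\in(\rho,1)$.

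For the measure estimate, a box of $\BB^n$ fails to lie in $\PP_n$ only if either (i) its backward random walk code violates $s$ somewhere above depth $\kappa(n)$, or (ii) its seed falls in the exceptional subset at depth $\kappa(n)$ flagged by Proposition 8.2. By Lemma \ref{control criterion} the total $\nu$-measure of (i) is $O\bigl(\sum_{k\ge \kappa(n)}2^{-s(k)}\bigr)=O(\theta^n)$; the measure of (ii) is $O(2^{\kappa(n)}(b^\gamma)^{2^{\kappa(n)}})$, super-exponentially small and hence also $O(\theta^n)$. Summing gives $\mu(\PP_n)\ge 1-O(\theta^n)$, proving the theorem.

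The main technical obstacle is the calibration in the pushing-up step: the universal constants $\alpha$, $K$, and $a$ entering (\ref{exp control f-n}) must be chosen so that the contraction factor $\tfrac12$ in the $\de$-recursion genuinely dominates the additive errors $O(\si^{n-k_m})$ uniformly along every admissible chain, while $s$ remains summable enough for Lemma \ref{control criterion} and the polynomial blow-up in the brute-force tail still fits inside a single common $\theta^n$. Coordinating these three constraints is what turns the heuristic into a rigorous exponential bound.
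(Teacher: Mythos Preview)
Your proposal follows the paper's three-regime architecture (one-dimensional seeding, controlled pushing-up, brute-force tail) and is essentially correct in outline, but two steps are imprecise in ways that matter for the actual estimates.

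First, your measure dichotomy (i)/(ii) is not exhaustive. A box of $\BB^n$ can also fail to lie in $\PP_n$ because its backward random walk jumps cleanly over the seeding window $[(1-q_1)n,(1-q_0)n]$: such a path may be perfectly $s$-controlled above $\kappa(n)$ and yet never produce a seed. This is the complement of the set $R_n$ in the paper's outline argument, and it carries measure $O(2^{-(q_1-q_0)n})$. Your (ii), as you quantify it by the super-exponential term $O(2^{\kappa(n)}(b^\gamma)^{2^{\kappa(n)}})$, is in fact already contained in (i) --- that term in Proposition~8.2 arises precisely from $s$-violations. What is missing is the window-skipping term $2^{-(q_1-q_0)n}$, which is the \emph{other} summand in Proposition~8.2. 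The fix is trivial (add a third alternative), but as written your two cases are redundant rather than complementary.

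Second, your brute-force bookkeeping ``$\kappa(n)$ nested renormalization blocks, each spoiling by a bounded factor'' repeats the informal claim from the outline, but the paper does not prove a per-block bound, and without the not-too-deep hypothesis one cannot: a single $G_k$ applied to a piece at depth $l$ with $b^{2^k}\gg\si^{l-k}$ may distort the stick badly, since the tilt $t_k$ then dominates. What the paper's detailed argument (Propositions~7.4 and 8.4) actually establishes is a bounded factor $r>1$ \emph{per iterate of $F$}, hence total degradation $r^{2^{\kappa(n)}}$ with $2^{\kappa(n)}\asymp n\ln(1/\theta)$. This is exponential in $n$, not polynomial; it is beaten only by taking $q_0=Q\ln(1/\theta)$ with $Q$ large enough that $\rho^{q_0 n}\cdot r^{2^{\kappa(n)}}\le\theta^n$. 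So the calibration you allude to in your last paragraph is more delicate than you indicate: $q_0,q_1,\kappa(n)$ must all be chosen as specific multiples of $\ln(1/\theta)$, and $\theta$ then has to be pushed close to $1$ to make all constraints compatible.
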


\begin{proof}
Let $n\ge 1$. The pieces in $\PP_n$ are $\theta^n$-universal. Left is to estimate $\mu(\PP_n)$.

The one-dimensional regime deals with the pieces of $\BB^n$ in $B^k$ with $(1-q_1)\cdot n \leq k \leq (1-q_0)\cdot n$. They occupy a fraction 
$1-O(\frac1{2^{(q_1-q_0)\cdot n}})$ of the measure of $B^{(1-q_1)\cdot n}$.
Push them up until $B^0$ without restriction whether they are too deep or not. They will occupy $\BB^n$ except for 
an exponential small fraction. Let $R_n$ be the corresponding set of paths of the random walk. These are the paths which hit the interval
$[(1-q_1)\cdot n, \leq (1-q_0)\cdot n]$ at least once but are not necessarily $s$-controlled. So 
$$
\nu(R_n)=1-O(\frac1{2^{(q_1-q_0)\cdot n}}).
$$

Recall, the set $K_{\kappa(n)}$ consists of the paths which are $s$-controlled after depth $\kappa(n)\asymp \ln n$. Lemma \ref{control criterion} gives
$$
   \nu(K_{\kappa(n)})= 1- O(\sum_{k=\kappa(n)}^\infty \frac 1{2^{s(k)}}) = 1- O\left(\frac {1} {{2}^{a 2^{\kappa(n)}}}\right) =
             1-O(\rho^n )
$$
for some $\rho\in (0,1)$.

Observe, the set of paths corresponding to $\PP_n$ 
is 
$R_n\cap K_{\kappa(n)}$.
Hence,
$$
\mu(\PP_n)=\nu(R_n\cap K_{\kappa(n)})=1-O(\theta^n),
$$
for some $\theta\in (0,1)$.
\end{proof}

 \comm{

\begin{lem}\label{many universal sticks}
  Let $F$ be a H\'enon map, and let $q_1> q_0 > 0$. 
Then there exists $\theta\in (0,1)$ such that for any level $k$,  the  boxes
$B\in \BB_n$ with  $(1+q_0) k \leq   n\leq   (1+q_1) k$ which 
are $O (\theta^n)$-universal sticks have total measure $\geq 1-O(\theta^n)$ in $B^k$.  
\end{lem}

\begin{proof}
  Take all the boxes $B\in \BB_n$ in $B^k$ with  $(1+q_0) k \leq   n\leq   (1+q_1) k$ 
of depth $l\leq (1+q_0/2) k$. Lemma \ref{universal sticks lem} is applicable to them,
so they are  $O (\theta^n)$-universal sticks. The boxes that are excluded 
are contained in $B^{(1+q_0/2)k}$, so their total measure  (conditioned to $B^k$)
is $O(\theta^n)$. 
\end{proof}

}

\comm{Fix some constants $q_1>q_0>0$. 
Take any $n\in \N$, and select a stopping level $k$ such that $(1+q_0)k \leq n\leq (1+q_1)k $.
By Lemma \ref{many universal sticks}, there exists a $\theta\in (0,1)$ such that the total measure of 
 $O(\theta^n)-$universal sticks in $B^k$ (conditioned to $B^k$) is  $\geq 1- O(\theta^n)$.
Spreading these boxes around by the first landing map $L_k: O\ra B^k$,
we obtain a family $\FF_n\subset \BB_n$ of boxes of total measure $\geq 1- O(\theta^n)$. 
We will show that for any $\theta_*\in (0,1)$, 
 majority of these boxes  are $O(\theta_*^n)-$universal sticks. 

To this end, we will use the exponential control function  (\ref{exp control f-n}).
Consider the set of points $x$ that are controlled starting  depth $N\sim \log n $:  
$$
   X_N= \{x\in O:\ j_m(x)\leq s(k_m(x))\ \mbox{whenever $k_m\geq N$}\}. 
$$
Then
\begin{equation}\label{deviations}
   \mu(X_N)= 1- \sum_{k=N}^\infty \frac 1{2^{s(k)}} = 1- O\left(\frac {1} {{2}^{a2^N}}\right) =
             1-O(\rho^n )\quad \mbox{for some $\rho\in (0,1)$}.
\end{equation}
Hence the family of boxes $B_n(x)\in \FF_n$ with $x\in X_N$ has total measure $\geq 1-O(\rho^n+ \theta^n)$. 
}

\comm{***
By the Law of Big Numbers, 
the average jump $j_m$ for a typical point $x$ is equal to 2:
$$
    \frac 1{N} k_N(x)=  \frac 1{N} \sum_{m=0}^{N-1} j_m(x) \to 
  \sum_{j=1}^\infty \frac j{2^j} =2\quad \mbox{as $N\to\infty$},\ \mbox{for $\mu$-a.e.}\ x\in O.  
$$ 
By the Large Deviations Theorem, \note{check}
for any $\eps>0$ there exists $\theta\in (0,1)$ such that 
\begin{equation}\label{deviations}
  \mu\{x\in O:\  |\frac 1{N} k_N(x) -2 |\geq \eps \} < O(\theta^N).
\end{equation}
Let 
$$
   X_\eps(N)= \{x\in O:\  |\frac 1{k} k_k(x) -2 | < \eps,\quad k=N, N+1 \}. 
$$
By (\ref{deviations}),   $\mu(X_\eps(N)) = 1-O(\theta^N)$. 
But for $\eps$ sufficiently small, any point $x\in X_\eps(N)$ is $s$-controlled after moment $N$ with
                  $s$ given by (\ref{control f-n}).
***}

\comm{
Let us show that each of these boxes is a $O(\theta_*^n)$-universal stick.
Let $x\in \X_N$, with the backward orbit $\{x_m\}$. 
Let $x_s$ be the first landing of this orbit   in $B^N$,
while $x_m$ be the first landing in $B^k$. 
By definition of the family $\FF_n$, the box $B_n(x_m)$ is $O(\theta^n)-$universal stick.
By Proposition \ref{push-forward of universal sticks},
$B_s$ is also $O(\theta^n)-$universal stick.
But under further $N$ push-forwards, 
this stick gets spoiled at most by factor $O(C^N)= O(n^\gamma)$ with some $\gamma>0$. 
Hence $B_n(x)$ is
$O(n^\gamma \theta^n)-$universal stick, and we are done. 
} 

\section{Preliminaries}\label{prelim}

A complete discussion of the following definitions and statements can be found in part I and part II, see \cite{CLM}, \cite{LM1}, of this series on renormalization of H\'enon-like maps.

\bigskip

Let $\Omega^h, \Omega^v\subset \mathbb{C}$ be neighborhoods of $[-1,1]\subset \mathbb{R}$ and $\Omega=\Omega^h\times \Omega^v$.  The set
 $\HH_\Omega(\overline{\epsilon})$ consists of maps $F:[-1,1]^2\to [-1,1]^2$ of the following form.
$$
F(x,y)=(f(x)-\epsilon(x,y), x),
$$
where $f:[-1,1]\to [-1,1]$ is a unimodal map which admits a holomorphic extension to $\Omega^h$ and $\epsilon:[-1,1]^2\to \mathbb{R}$ admits  a holomorphic extension to $\Omega$ and finally, $|\epsilon|\le \overline{\epsilon}$. The critical point $c$ of $f$ is non degenerate, $D^2f(c)<0$. A map in  $\HH_\Omega(\overline{\epsilon})$ is called a {\it H\'enon-like} map. Observe that H\'enon-like maps map vertical lines to horizontal lines.

A unimodal map $f:[-1,1]\to [-1,1]$ with critical point $c\in [-1,1]$ is {\it renormalizable} if $f^2:[f^2(c),f^4(c)]\to [f^2(c),f^4(c)]$ is unimodal and 
$[f^2(c),f^4(c)]\cap f([f^2(c),f^4(c)])=\emptyset$. The renormalization of $f$ 
is the affine rescaling of $f^2|([f^2(c),f^4(c)]$, denoted by $Rf$. The domain of $Rf$ is again $[-1,1]$.  The renormalization operator $R$ has a unique fixed 
point $f_*:[-1,1]\to [-1,1]$.  The introduction of \cite{FMP} presents the history of renormalization of unimodal maps and describes the main results.

The {\it scaling factor} of this fixed point $f_*$ is
$$
\sigma=\frac{|[f_*^2(c),f_*^4(c)]|}{|[-1,1]|}.
$$

A H\'enon-like map is renormalizable if there exists a domain $D\subset [-1,1]^2$ such that
$F^2:D\to D$. The construction of the domain $D$ is inspired by renormalization of unimodal maps. In particular, it should be a topological construction.  However, for small $\overline{\epsilon}>0$ the actual domain $A\subset [-1,1]$, used to renormalize as was done in \cite{CLM}, has an analytical definition. The precise definition can be found in \S 3.5 of part I. If the renormalizable H\'enon-like maps is given by  $F(x,y)=(f(x)-\epsilon(x,y))$ then the 
domain $A\subset [-1,1]$, an essentially vertical strip, is bounded by two curves of the form 
$$
f(x)-\epsilon(x,y)=\text{Const}.
$$
These curves are graphs over the $y$-axis with a slope of the order 
$\overline{\epsilon}>0$. 
The domain $A$ satisfies similar combinatorial properties as the domain of renormalization of a unimodal map:
$$
F(A)\cap A=\emptyset,
$$
and
$$
F^2(A)\subset A.
$$
Unfortunately, the restriction $F^2|A$ is not a H\'enon-like map 
as it does not map vertical lines into horizontal lines. 
This is the reason why the coordinated change needed to define the renormalization of $F$ is not an affine map, 
but it rather has the following form. Let
$$
H(x,y)=(f(x))-\epsilon(x,y),y)
$$
and
$$
G=H\circ F^2\circ H^{-1}.
$$
The map $H$ preserves horizontal lines and it is designed in such a way
 that the map $G$ maps vertical lines into horizontal lines. 
Moreover, $G$ is well defined on a rectangle $U\times [-1, 1]$ of 
full height. Here $U\subset [-1,1]$ is an interval of length $2/|s|$ 
with $s<-1$. 
Let us  rescale the domain of $G$ by the $s$-dilation $\Lambda$, 
such that the rescaled domain is of the form $[-1,1]\times V$,
where $V\subset \mathbb{R}$ is an interval of length $2/|s|$. Define the renormalization of $F$ by
$$
RF= \Lambda\circ G\circ \Lambda^{-1}.
$$
Notice that $RF$ is well defined on the rectangle $[-1,1]\times V$.
The coordinate  change $\psi= H^{-1}\circ \La^{-1}$ maps this rectangle 
onto the
topological rectangle  $A$ of full height.

The set of $n$-times renormalizable maps is denoted by $\HH^n_\Omega(\overline{\epsilon})\subset \HH_\Omega(\overline{\epsilon})$. If 
$F\in \HH^n_\Omega(\overline{\epsilon})$ we 
use the notation
$$
F_n=R^nF.
$$
The set of infinitely renormalizable maps is denoted by
$$
\II_\Omega(\overline{\epsilon})=\bigcap_{n\ge 1} 
\HH^n_\Omega(\overline{\epsilon}).
$$

The renormalization operator acting on $\HH^1_\Omega(\overline{\epsilon})$,
$\overline{\epsilon}>0$ small enough, has a unique fixed point
 $F_*\in \II_\Omega(\overline{\epsilon})$. It is the degenerate map 
$$
F_*(x,y)=(f_*(x), x).
$$
This renormalization fixed point is hyperbolic and the stable manifold has codimension one. Moreover,
$$
W^s(F_*)=\II_\Omega(\overline{\epsilon}).
$$

If we want to emphasize that some set, say $A$, 
is associated with a certain map $F$ we 
use notation like $A(F)$.

The coordinate change which conjugates $F_k^2|A(F_k)$ to $F_{k+1}$ is
 denoted 
by
\begin{equation}\label{phi}
          \psi^{k+1}_v=(\Lambda_k \circ H_k)^{-1}: \Domain(F_{k+1})\to A(F_k). 
\end{equation}
Here $H_k$ is the non-affine part of the coordinate change used to define 
$R^{k+1}F$ and $\Lambda_k$ is the dilation by $s_k<-1$.
Now, for $k<n$, let
\begin{equation}\label{Phi}
\Psi^n_k=\psi^{k+1}_v\circ \psi^{k+2}_v\circ \cdots \circ \psi^{n}_v:
\Domain(F_{n})\to A_{n-k}(F_k),
\end{equation}
where
$$
A_k(F)=\Psi^k_0(\Domain(F_k))\cap B.
$$
Notice, that each $A_k\subset [-1,1]$ is of full height and 
$\Psi^k_0$ conjugates $R^kF$ to $F^{2^k}|A_k$.
Furthermore,
$
A_{k+1}\subset A_k
$.

\bigskip

The change of
coordinates conjugating the renormalization $RF$ to $F^2$ is denoted by
  $\psi_v^1 := H^{-1}\circ \La^{-1}$. To describe the attractor of an infinitely renormalizable H\'enon-like map we also need the map $\psi_c^1= F\circ
\psi^1_v$. The subscripts $v$ and $c$ indicate that these maps are
associated to the critical {\it value} and the {\it critical} point,
respectively.

Similarly, let $\psi^2_v$ and $\psi^2_c$ be the corresponding
changes of variable for $RF$, and let
$$
\psi^2_{vv}= \psi^1_v\circ \psi^2_v, \quad \psi^2_{cv}= \psi^1_c\circ
\psi^2_v, \quad \psi^2_{vc}=\psi^1_v\circ\psi^2_c,\quad \psi^2_{cc}=\psi^1_c\circ \psi^2_c.
$$
and, proceeding this way, for any $n\ge 0$, construct $2^{n}$ maps 
$$ \psi^n_w = \psi^1_{w_1}\circ\dots\circ \psi^n_{w_{n}}, \quad
w=(w_1, \dots, w_{n})\in\{v,c\}^{n}. $$ 
The notation $\psi^n_w(F)$ will also be used to emphasize the map
under consideration, and we will let $W=\{v,c\}$ and
$W^n=\{v,c\}^n$ be the $n$-fold Cartesian product. The following Lemma and its proof can be found in \cite[Lemma 5.1]{CLM}.

\begin{lem}\label{contracting}
Let $F\in \II^c_\Om(\bar\eps)$. There exist $C>0$ 
such that for  $w\in W^n$, 
$
\| D\psi^n_w\|\leq  C \sigma^n,
$
$n\ge 1$.
\end{lem}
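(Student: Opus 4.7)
The plan is to combine (i) a uniform diameter bound on the image pieces with (ii) uniformly bounded distortion of the coordinate changes. This indirect approach is needed because the naive chain-rule bound $\|D\psi^n_w\|\leq\prod\|D\psi^k_{w_k}\|$ would lose a constant factor at every ``critical'' step $\psi^k_c=F_{k-1}\circ\psi^k_v$, since $F_{k-1}$ is bounded but not contracting, yielding only $(C\sigma)^n$ rather than $C\sigma^n$.

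First I would establish the diameter bound $\diam\psi^n_w([-1,1]^2)\leq C\sigma^n$. The image $\psi^n_w([-1,1]^2)$ is the piece $B^n_\om$ of the $n$th renormalization cycle corresponding to the word $w$. One-dimensional renormalization theory gives the analogous bound $\length(I^n_\om)\leq C\sigma^n$ for the 1D shadow. A perturbation argument, using the exponential convergence $R^kF\to F_*$ together with the explicit form of $\Psi^n_k$ in \eqref{Phi}, controls the Hausdorff discrepancy between $B^n_\om$ and $I^n_\om$, yielding the desired 2D bound.

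Next I would prove uniformly bounded distortion of $\psi^n_w$. Each $\psi^k_v=(\Lambda_{k-1}\circ H_{k-1})^{-1}$ has distortion at most $1+O(\rho^{k-1})$ on the renormalization strip $A(F_{k-1})$, a consequence of the explicit formula for $H_{k-1}$ and the exponential convergence to the universal reshuffling $H_*$. Similarly, $\psi^k_c=F_{k-1}\circ\psi^k_v$ has bounded distortion because $F_{k-1}$ is a smooth diffeomorphism on the compact strip. Composing, the cumulative distortion of $\psi^n_w$ is bounded by $\prod_{k\geq 1}(1+O(\rho^{k-1}))<\infty$. Combined with the diameter bound, this forces the singular values of $D\psi^n_w$ to be uniformly $O(\sigma^n)$, whence $\|D\psi^n_w\|\leq C\sigma^n$.

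The main obstacle is the diameter bound uniformly over $w\in W^n$. Since $w$ may contain arbitrarily many $c$-symbols, and each $c$-step applies the noncontracting map $F_{k-1}$, a direct product estimate cannot work. The resolution exploits the geometry: the image $\psi^k_v([-1,1]^2)\subset A(F_{k-1})$ is a thin vertical strip, and the highly dissipative $F_{k-1}$, with Jacobian $O(\bar\eps)$, collapses this strip onto a region of the same diameter rather than expanding it. Together with the 1D shadowing and exponential convergence $R^kF\to F_*$, this yields the diameter bound $O(\sigma^n)$ uniformly in $w$.
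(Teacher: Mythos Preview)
The paper does not prove this lemma; it cites \cite[Lemma 5.1]{CLM}, so there is no in-paper argument to compare against directly.

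Your distortion step has a genuine gap. You assert that each $\psi^k_v=(\Lambda_{k-1}\circ H_{k-1})^{-1}$ has distortion at most $1+O(\rho^{k-1})$, citing convergence of $H_{k-1}$ to $H_*$. But $H_*(x,y)=(f_*(x),y)$ is genuinely nonlinear, so $H_*^{-1}$---and hence every $H_{k-1}^{-1}$---has distortion bounded \emph{below} by some fixed constant $D>1$ on its full domain; convergence to $H_*$ only says these distortions stabilize, not that they tend to $1$. Your product $\prod_k(1+O(\rho^{k-1}))$ is therefore unjustified, and the honest replacement $\prod_k D$ diverges. The same objection applies to $\psi^k_c=F_{k-1}\circ\psi^k_v$: the factor $F_{k-1}$ contributes a fixed amount of nonlinearity at every step.

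The standard remedy is to exploit shrinking of the intermediate images $B_j:=\psi^{j+1}_{w_{j+1}}\circ\cdots\circ\psi^n_{w_n}(\Dom F_n)$: if $\diam B_j=O(\sigma^{n-j})$ then the distortion of $\psi^j_{w_j}$ restricted to $B_j$ is $O(\sigma^{n-j})$, and these contributions sum. But this needs the diameter bound, and your route to the diameter bound (comparison with the 1D shadow via a ``perturbation argument'') is only sketched and, if made precise along the lines of Lemma~\ref{holmt}, itself uses contraction of the $\psi$-maps. As structured, the two steps lean on each other. The clean fix is either to run an induction on $n$ that establishes diameter and derivative bounds simultaneously, or---as \cite{CLM} does---to estimate $\|D\psi^k_{w_k}\|$ directly from the explicit formulas for $H_{k-1}$, $\Lambda_{k-1}$, and $F_{k-1}$ on the relevant subdomains, where the contraction of $\Lambda_{k-1}^{-1}$ by a factor $\asymp\sigma$ dominates.
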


Let  $F\in \II_\Omega(\overline{\epsilon})$ and consider the domains
$$
B^n_\omega=\Im \psi^n_\omega.
$$
The first return maps to the domains
$$
B^n_{v^n}=\Im \Psi^n_0=\Im \psi^n_{v^n}
$$
correspond to the different renormalizations.
Notice, 
$$
B^{n+1}_{v^{n+1}}\subset B^n_{v^n}.
$$
An infinitely renormalizable H\'enon-like  map has an invariant Cantor set:
$$
\OO_F=\bigcap_{n\ge 1} \bigcup_{i=0}^{2^n-1} F^i(B^n_{v^n})=
\bigcap_{n\ge 1} \bigcup_{\omega\in W^n} B^n_\omega.
$$ 
The dynamics on this Cantor set
 is conjugate to an adding machine. Its unique invariant measure is denoted 
by $\mu$. 
The {\it average  Jacobian}
$$
b_F=\exp\int \log \Jac F d\mu
$$ 
with respect to $\mu$ is an important parameter that
influences the geometry of $\OO_F$, see \cite[Theorem 10.1]{CLM}.

The critical point (and critical value) of a unimodal map plays a crucial role 
in its dynamics. The counterpart of the critical value for infinitely renormalizable H\'enon-like maps is the 
{\it tip}
$$
\{\tau_F\}=\bigcap_{n\ge 1} B^n_{v^n}.
$$

\subsection{One-dimensional maps}\label{1D universal f-s} 
Recall that  $f_*\colon [-1,1] \to [-1,1]$ stands for the one-dimensional
renormalization fixed point 
normalized so that $f_*(c_*)=1$ and $f_*^2(c_*)=-1$, where 
$c_*\in [-1,1]$ is the  critical point of $f_*$. 
\comment{
We let $J_c^* =[-1, f_*^4(c_*)]$ be the smallest renormalization interval of $f_*$, 
and   we let $s\colon J_c^*\ra  I$ be the orientation reversing affine rescaling.
The smallest renormalization interval around the critical value is denoted by
 $J_v^*= f_*(J^*_c)=[f_*^3(c_*),1]$.
Then $s\circ f_*: J_v^*\ra [-1,1] $ is an expanding  diffeomorphism.
Consider the inverse contraction  
\[
    g_*\colon [-1,1]\to J^*_v, \quad g_*=f_*^{-1}\circ s^{-1},
\]
where $f_*^{-1}$ stands for the branch of the inverse map that maps $J_c^*$ onto  $J_v^*$. 
The function $g_*$ is the non-affine branch of the so called ``presentation function''
(see~\cite{BMT} and references therein).
Note that $1$ is the unique fixed point of $g_*$.

 Let $J_c^*(n) \subset [-1,1]$ be the smallest periodic interval of period $2^n$ that contains $c_*$ and $J_v^*(n) \subset [-1,1]$ be the smallest periodic interval of period $2^n$ that contains $1$.

Let
$G_*^n\colon [-1,1]\to [-1,1]$ be the diffeomorphism obtained by rescaling
affinely the image of $g_*^n$. The fact that $g_*$ is a contraction implies
that the following limit exists 
\[
u_*=\lim_{n\to \infty} G_*^n \colon [-1,1]\to [-1,1],  
\]
where the convergence is exponential in the $C^3$-topology.
In fact, this function linearizes $g_*$ near the attracting fixed point $1$ 
(see, e.g., \cite[Theorem 8.2]{Mi}). The following Lemma and its proof can be found in \cite[Lemma 7.1]{CLM}.

\begin{lem} \label{ustarfstar} For every $n\ge 1$
\begin{enumerate}
\item [\rm (1)] $J^*_v(n)=g_*^n(I)$,

\item [\rm (2)] $R^n_vf_*= G^n_*\circ f_*\circ (G^n_*)^{-1}$.
\end{enumerate}
Moreover,
\begin{enumerate}
\item [\rm (3)]
$
u_*\circ f_*=f^*\circ u_*.
$
\end{enumerate}
\end{lem}

Along with $u_*$, consider its rescaling 
$$
v_*: [-1,1]\ra \R, \quad v_*(x)=\frac{1}{u'_*(1)}(u_*(x)-1)+1,
$$
normalized so that $v_*(1)=1$ and $\displaystyle \frac{d v_*}{dx} (1)=1$.

\bigskip

The following Lemma is an improvement of Lemma 7.3 in \cite{CLM}. Although the proof of this Lemma is essentially the same, we include it here with the small change.

\begin{lem}\label{convergence to g-star}
  Let $\rho\in (0,1)$, $C>0$.  
Consider a sequence of smooth functions $g_l: [-1,1]\ra [-1,1]$, $l=1,\dots, n$,
 such that 
$\| g_k - g_*\|_{C^3}\leq C \rho^l$. Let
$g^n_k=g_k\circ\dots\circ g_n$, 
and let         
$G^n_k= a^n_k\circ g^n_k: [-1,1]\ra [-1,1]$, where $a^n_k$ is the affine rescaling of 
$\Im g^n_k$
 to $[-1,1]$. 
Then  
$$
\|G^n_k - G_*^{n-k+1}\|_{C^2}\leq C_1 \rho^{\frac{n}{2}},
$$
 where $C_1$ depends only 
on $\rho$ and $C$.
\end{lem}   
 
\begin{proof}   
Let $I_0=[-1,1]$ and $I_j=[x_j,y_j]\subset [-1,1]$ such that 
$g_{n-j}(I_j)=I_{j+1}$. 
Rescale affinely the domain and image of $g_{n-j}\colon I_j\to I_{j+1}$
and denote the normalized diffeomorphism by $h_{n-j}\colon [-1,1]\to [-1,1]$.
Let
$$
I_j^*=[x_j^*,1]=g_*^{j}([-1,1])
$$
and rescale the domain and image of $g_*\colon  I_j^*\to I_{j+1}^*$ and
denote the normalized diffeomorphism by $h_{n-j}^*\colon [-1,1]\to
[-1,1]$. Note that
$$
h_{k}^*\circ h_{k+1}^*\circ \cdots \circ h_n^* \to
u_*,
$$
where the convergence in the $C^2$ topology is exponential in $n-k$. 
In the following estimates we 
will use a uniform constant $\rho<1$
for exponential estimates.
Let $\Delta x_j=x_j-x^*_j$ and $\Delta y_j= y_j-1$  . Use, $g_{n-j}(x_j)=x_{j+1}$,
then
$$
x_{j+1}=g_*(x^*_j)+g_*'(\zeta_j) \cdot \Delta x_j +O(\rho^{n-j}).
$$
We may assume that the maximal derivative of $g_*$ is smaller than $\rho<1$. 
Then
$$
\begin{aligned}
\Delta x_j&\le \sum_{l=0}^{j-1} C\rho^{n-l} \prod_{k=l+1}^{j-1} g_*'(\zeta_k)\\
&\le C \sum_{l=0}^{j-1} \rho^{n-l} \rho^{j-1-l}\\
&= C \rho^{n-j+1} \sum_{l=0}^{j-1} (\rho^2)^{j-l-1}\\
&=O(\rho^{n-j}).
\end{aligned}
$$
Use a similar argument for  $\Delta y_j$ to get
\begin{equation}\label{xy}
|\Delta x_j|, |\Delta y_j|= O(\rho^{n-j}).
\end{equation}
Because, $g_l$ and $g_*$ are contractions we have
\begin{equation}\label{II*}
|I_j|, |I^*_j|=O(\rho^{j}).
\end{equation}
We will represent a diffeomorphism  $\phi:I\to J$ by its nonlinearity
\begin{equation}\label{nonl}
\eta_\phi=\frac{D^2\phi}{D\phi}.
\end{equation}
The use of the nonlinearity is that it allows to control distortion
\begin{equation}\label{intnonl}
\log \frac{D\phi(y)}{D\phi(x)}=\int_x^y \eta.
\end{equation}
Let $\eta_l$ and $\eta^*$ be the nonlinearities of $g_l$ and $g_*$.
Notice that
$$
\|\eta_l-\eta^*\|_{C^1}=O(\rho^l).
$$
Furthermore, let  $\mathbb{I}_j:[-1,1]\to I_j$ and 
$\mathbb{I}^*_j:[-1,1]\to I^*_j$
be the affine orientation preserving rescalings. Using this notation 
$$
\eta_{n-j}(\mathbb{I}_j(x))=\eta^*(\mathbb{I}^*_j(x))+
D\eta^*(\zeta_j)\cdot \left(\mathbb{I}_j(x)-\mathbb{I}^*_j(x)\right)+  O(\rho^{n-j}),
$$
for some $\zeta_j\in [\mathbb{I}_j(x),\mathbb{I}^*_j(x)]$.
Hence,
$$
\eta_{n-j}(\mathbb{I}_j(x))=\eta^*(\mathbb{I}^*_j(x))+
     O(\rho^{n-j}).
$$
The nonlinearities of $h_{n-j}$ and $h^*_{n-j}$ are given by
\begin{equation}\label{etaj}
\eta_{h_{n-j}}=|I_j|\cdot \eta_{n-j}(\mathbb{I}_j),
\end{equation}
and similarly
\begin{equation}\label{eta*j}
\eta_{h^*_{n-j}}=|I^*_j|\cdot \eta^*(\mathbb{I}^*_j).
\end{equation}
Now,
$$
\begin{aligned}
|\eta_{h_{n-j}}(x)-\eta_{h^*_{n-j}}(x)|&=
O((|I_j|-|I^*_j|)+ \rho^{n-j}\cdot |I^*_j|)\\
&=O((|I_j|-|I^*_j|)+\rho^n).
\end{aligned}
$$
Use (\ref{xy}) and (\ref{II*}) 
$$
|\eta_{h_{n-j}}(x)-\eta_{h^*_{n-j}}(x)| =
\left\{
\begin{array}{ccc }
O(\rho^{n-j}) & : & j\leq n/2 
\\ 
O(\rho^{j}) & : & n-k+1\ge j> n/2.
\end{array} \right.
$$
It follows that
$$
\sum_{j=0}^{n-k+1} \|\eta_{h_{n-j}}- \eta_{h^*_{n-j}}\|_{C^0}=O(\rho^{\frac{n}{2}}).
$$
Note that we can estimate $\|\eta_{h_{n-j}}\|_{C^1}$ by using
$$
D\eta_{h_{n-j}}=|I_j|^2 D\eta_{n-j}(\mathbb{I}_j).
$$
This and (\ref{etaj}), (\ref{eta*j}) give a universal bound on 
$\sum_{j=0}^{n-k+1} \|D\eta_{h^{(*)}_{n-j}}\|_{C^1}$.
These bounds, allow use to use
a reshuffling argument,   
see \cite[Lemma 14.1]{CLM},  which finishes the
proof of the Lemma.
\end{proof}
}
The renormalization fixed point $f_*$ has a nested sequence of renormalization cycles $\CC_n$, $n\ge 1$. A cycle consists of the following intervals. The critical point of $f_*$ is $c_*$ and the critical value $v_*=f_*(c_*)$
$$
I^*_j(n)=[f^{j}_*(v_*), f^{j+2^n}_*(v_*)]\in \CC_n,
$$
with $j=0,1,2, \cdots, 2^n-1$. The collection $\CC_n$ consists of pairwise disjoint intervals. Notice, for  $j=0,1,2,\dots, 2^n-2$
$$
f_*(I^*_j(n))=I^*_{j+1}(n),
$$
and
$$
f_*(I^*_{2^n-1}(n))=I^*_0(n).
$$
The interval in $\CC_n$ which contains the critical point is denoted by
$$
U_n=I^*_{2^n-1}(n)\ni c_*.
$$

\bigskip

The {\it nonlinearity} of a $C^2$-diffeomorphism $\phi:I\to \phi(I)\subset \Bbb{R}$, $I\subset \Bbb{R}$, is
\begin{equation}\label{nonlin}
\eta_{\phi}=D\ln D\phi.
\end{equation}
The {\it Distortion} of a diffeomorphism $\phi:I\to J$ between intervals $I,J\subset \Bbb{R}$ is defined as
$$
\text{Dist}(\phi)=\max_{x,y}\log \frac{D\phi(y)}{D\phi(x)}.
$$
If $\eta$ is the nonlinearity of $\phi$ then
\begin{equation}\label{distnonl}
\text{Dist}(\phi)\le \|\eta\|_{C^0}\cdot |I|.
\end{equation}
The distortion of a map does not change if we rescale domain and range.

Given $r>0$. The $r$-neighborhood $T\supset I$ of an interval $I\subset \Bbb{R}$ is the interval such that both components of $T\setminus I$ have length $r|I|$. 

\begin{lem}\label{distortion}  There exist $r>0$ and $D>1$ such that the $r$-neighborhoods $T_j(n)\supset I^*_j(n)$ have the following property. For all $n\ge 1$ the following holds. Let $\zeta_j\in T_j(n)$ then
$$
\prod_{l=k_1}^{k_2-1}\frac{|Df_*(\zeta_j)|}{\frac{|I^*_{j+1}(n)|}{|I^*_j(n)|}}\le D.
$$
with $0\le k_1<k_2<2^n$.
\end{lem}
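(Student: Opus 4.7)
The plan is to reduce the assertion to bounded distortion of a long diffeomorphic branch of $f_*$, and then invoke the Koebe distortion principle. Reading the product as a distortion estimate along an orbit --- that is, setting $\zeta_l = f_*^{l-k_1}(\zeta_{k_1})$ with $\zeta_{k_1}\in T_{k_1}(n)$ --- the chain rule identifies the numerator with $|Df_*^{k_2-k_1}(\zeta_{k_1})|$, while the denominator telescopes to $|I^*_{k_2}(n)|/|I^*_{k_1}(n)|$. Hence the lemma is equivalent to: the diffeomorphism $f_*^{k_2-k_1}\colon T_{k_1}(n)\to f_*^{k_2-k_1}(T_{k_1}(n))$ has distortion bounded by a constant independent of $n$ and of $0\le k_1<k_2<2^n$.

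My first step would be to establish, using the a priori real bounds for the period-doubling renormalization fixed point $f_*$, that the cycle $\CC_n$ has uniformly bounded geometry: neighboring intervals in $\CC_n$ are separated by gaps of comparable size, and $c_*$ is a definite relative distance from $I^*_l(n)$ whenever $l\ne 2^n-1$. This allows me to fix $r>0$ small enough, independently of $n$, so that (i) the neighborhoods $T_l(n)$ are pairwise disjoint, (ii) they avoid $c_*$ for $l<2^n-1$, and (iii) they nest dynamically: $f_*^{l-k_1}(T_{k_1}(n))\subset T_l(n)$ for $k_1\le l<2^n-1$. The self-similarity $R^n f_*=f_*$ and the universal scaling factor $\sigma$ ensure that a single $r$ works at every level $n$.

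The decisive step is the Koebe principle. By the nesting in the previous step, the branch $f_*^{k_2-k_1}$ extends diffeomorphically to a slightly larger interval $\widehat T_{k_1}(n)\supset T_{k_1}(n)$ whose image sits inside a definite Koebe collar around $I^*_{k_2}(n)$; invoking the negative Schwarzian of $f_*$ (or equivalently the holomorphic extension of $f_*$ off the real line) then yields a universal $D_1>1$ bounding the distortion of $f_*^{k_2-k_1}$ on $T_{k_1}(n)$. Combined with the mean value theorem applied on $I^*_{k_1}(n)\subset T_{k_1}(n)$, which furnishes $\xi$ with $|Df_*^{k_2-k_1}(\xi)|=|I^*_{k_2}(n)|/|I^*_{k_1}(n)|$, one deduces $|Df_*^{k_2-k_1}(\zeta)|\le D_1\,|I^*_{k_2}(n)|/|I^*_{k_1}(n)|$ for every $\zeta\in T_{k_1}(n)$, giving the claim with $D=D_1$. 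The main obstacle is verifying the uniformity of the Koebe collar across all levels $n$: I expect to handle this by an inductive argument that propagates Koebe space from level $n-1$ to level $n$ through the affine conjugacy realizing $R^n f_* = f_*$, combined with the universal contraction factor $\sigma$ built into the renormalization structure.
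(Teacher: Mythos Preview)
Your reduction misreads the hypothesis. The lemma does not assume that the points $\zeta_l$ lie on a single $f_*$-orbit; rather, for each $l$ in the range $k_1\le l<k_2$ one is allowed to pick $\zeta_l\in T_l(n)$ \emph{independently}. (The notation in the statement is sloppy --- the product index and the subscript on $\zeta$ should match --- but the intended meaning is clear from the application: in the proof of the displacement lemma the points $\zeta_l$ arise from the mean value theorem applied at each step to $x_l$ versus $x_l^*$, and these are certainly not iterates of a single point.) With your reading, the chain rule collapses the numerator to $|Df_*^{k_2-k_1}(\zeta_{k_1})|$ and Koebe on the long branch finishes the job; with the actual hypothesis there is no long branch to apply Koebe to, and a per-iterate Koebe bound only gives a constant $C>1$ for each factor, which multiplies out to $C^{k_2-k_1}$.

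What is needed is control of $\sum_{l}\bigl|\log|Df_*(\zeta_l)|-\log|Df_*(\xi_l)|\bigr|$ with $\xi_l\in I^*_l(n)$ arbitrary, i.e.\ a bound on the sum over $l$ of the distortion of the \emph{single} map $f_*$ on $T_l(n)$. The paper obtains this by bounding the sum of the sup-norms of the nonlinearities $\eta_l(n)$ of the affinely rescaled maps $f_*\colon I^*_l(n)\to I^*_{l+1}(n)$: the a~priori real bounds give a definite gap between the two level-$(n+1)$ children inside each level-$n$ interval, which forces $\|\eta_l(n+1)\|+\|\eta_{l+2^n}(n+1)\|\le(1-r)\|\eta_l(n)\|$ for the non-critical branches, so that $\sum_{l<2^n-1}\|\eta_l(n)\|$ stays bounded inductively in $n$. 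Each factor in the product then satisfies $\log\bigl(|Df_*(\zeta_l)|\big/(|I^*_{l+1}|/|I^*_l|)\bigr)=O(\|\eta_l(n)\|)$, and summing yields the uniform $D$. Your Koebe argument would recover this summability only implicitly and only along genuine orbits; to cover the statement as used you would still have to prove the nonlinearity sum bound, which is the actual content of the lemma.
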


\begin{proof} The a priori bounds on the cycles $\CC_n$ are described in \cite{MS}, see also
\cite{CMMT}. The a priori bounds state that for some $r>0$ the gap between $I_j(n+1)$ and $I_{j+2^{n+1}}(n)$ satisfies
$$
|I_j(n)\setminus (I_j(n+1)\cup I_{j+2^{n+1}}(n))|\ge 5r\cdot  |I_j(n)|.
$$
Hence, we have  
$T_j(n+1)\cap T_{j+2^{n+1}}(n+1)=\emptyset$ and 
$$
|T_j(n+1)|+|T_{j+2^{n+1}}(n)|\le (1-r)\cdot  |T_j(n)|.
$$
Let $\eta_j(n)$ be the nonlinearity, see (\ref{nonlin}), of the rescaling of $f_*:I^*_j(n)\to I^*_{j+1}(n)$. The rescaling turns domain and range into $[-1,1]$.
Lemma 3.1 in \cite{Ma2} says that
$$
\begin{aligned}
\|\eta_j(n+1)\|_{C_0}&\le \frac{|T_j(n+1)|}{ |T_j(n)|}\cdot \|\eta_j(n)\|_{C_0}, \\
\|\eta_{j+2^{n+1}}(n+1)\|_{C^0}&\le \frac{|T_{j+2^{n+1}}(n+1)|}{ |T_j(n)|}\cdot \|\eta_j(n)\|_{C_0}.
\end{aligned}
$$
Hence, 
$$
\|\eta_j(n+1)\|_{C_0}+\|\eta_{j+2^{n+1}}(n+1)\|_{C^0}\le (1-r)\cdot \|\eta_j(n)\|_{C_0},
$$
for $j=0,1,2,\cdots, 2^{n}-2$.
The a priori bounds also imply a universal bound 
$$
\|\eta_{2^n-1}(n+1)\|_{C_0}\le K.
$$
Inductively, this gives a universal bound
$$
\sum_{j=0}^{2^n-2} \|\eta_{j}(n)\|_{C_0}\le K_0.
$$
Use (\ref{distnonl}) and observe,
$$
\log \frac{|Df_*(\zeta_j)|}{\frac{|I^*_{j+1}(n)|}{|I^*_j(n)|}}=O(\|\eta_j(n)\|_{C_0}).
$$
The Lemma follows.
\end{proof}

\begin{prop}\label{disto} There exists $\rho<1$ such that the following holds. Let  $0<q_0$ and 
$I\in \CC_n$ and $I\subset U_{k}\setminus U_{(1-q_0)\cdot n}$, with $k<(1-q_0)\cdot n$. Let $t_I=2^{k}$ be the first return to 
$U_{k}$. Then for every $j\le t_I$
$$
\text{Dist}(f_*^j|I)=O(\rho^{q_0\cdot n}).
$$
\end{prop}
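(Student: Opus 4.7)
Write $I=I^*_m(n)$, and let $\eta_p(n)$ denote the rescaled nonlinearity of $f_*|I^*_p(n)$ as in the proof of Lemma~\ref{distortion}. Distortion is subadditive under composition and invariant under affine rescalings of domain and range, so applying \eqref{distnonl} to the normalized branch $[-1,1]\to[-1,1]$ gives
\[
\Dist(f_*^j|I)\le \sum_{l=0}^{j-1}\Dist(f_*|I^*_{m+l}(n))\le 2\sum_{l=0}^{j-1}\|\eta_{m+l}(n)\|_{C^0}.
\]
It therefore suffices to bound the last sum by $O(\rho^{q_0\cdot n})$. Since $t_I=2^k$ forces $m\equiv -1\pmod{2^k}$, the indices $\{m,m+1,\dots,m+j-1\}$ with $j\le 2^k$ lie inside a block of $2^k$ consecutive integers. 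Recalling that $I^*_p(n)\subset U_l$ iff $p\equiv -1\pmod{2^l}$, exactly one index in this block has depth $\ge k$, namely $m$ itself, with depth $d$ satisfying $k\le d<(1-q_0)\cdot n$ by hypothesis; every other index has depth strictly less than $k$.

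To control each depth stratum, observe that the depth-$l$ intervals of $\CC_n$ (those in $U_l\setminus U_{l+1}$) are exactly the $2^{n-l-1}$ level-$n$ descendants of the non-critical child $U'_{l+1}:=I^*_{2^l-1}(l+1)$ of $U_l$. Because $U'_{l+1}$ is disjoint from every $U_{l'}$ with $l'>l$, the critical point lies in none of these descendants, so the recursive inequality
\[
\|\eta_a(n+1)\|_{C^0}+\|\eta_{a+2^{n+1}}(n+1)\|_{C^0}\le (1-r)\|\eta_a(n)\|_{C^0}
\]
established inside the proof of Lemma~\ref{distortion} applies to every intermediate parent without exception. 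Iterating it $n-l-1$ times from the universal bound $\|\eta_{U'_{l+1}}(l+1)\|_{C^0}\le K$ (a consequence of the same a priori bounds) yields
\[
\sum_{p:\; I^*_p(n)\text{ has depth }l}\|\eta_p(n)\|_{C^0}\le (1-r)^{n-l-1}K,\qquad 0\le l<n.
\]

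Combining the two steps,
\[
\sum_{l=0}^{j-1}\|\eta_{m+l}(n)\|_{C^0}\le \|\eta_m(n)\|_{C^0}+\sum_{l=0}^{k-1}(1-r)^{n-l-1}K\le (1-r)^{n-d-1}K+\frac{K(1-r)^{n-k}}{r},
\]
and both exponents $n-d-1$ and $n-k$ exceed $q_0\cdot n-1$ by the hypotheses $d,k<(1-q_0)\cdot n$. The right-hand side is therefore $O((1-r)^{q_0\cdot n})$, and one may take $\rho=1-r\in(0,1)$. The main conceptual obstacle is the observation in the second step: once one notices that $U'_{l+1}$ is completely separated from the deeper critical cascade, the recursive contraction cascades without interruption from level $l+1$ all the way down to level $n$, so the whole sum is controlled by two geometric series in $(1-r)$ whose ratios are determined by the depth constraints placed on $I$ by the hypothesis.
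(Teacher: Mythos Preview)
Your argument is correct and follows a genuinely different route from the paper. The paper embeds $I$ into a pull-back interval $J_0\subset U_k$ that maps diffeomorphically onto $U_{(1-q_0)n}$ after the first entry time $s_I\ge t_I$; along this orbit the rescaled nonlinearities of $f_*|J_k$ are summable (by the argument of Lemma~\ref{distortion} applied at level $(1-q_0)n$), and the a~priori bounds give $|I_k|/|J_k|=O(\rho^{q_0 n})$ from the scale gap between $\CC_n$ and $\CC_{(1-q_0)n}$, which is then fed through the restriction inequality (Lemma~3.1 of \cite{Ma2}) to contract every term of the sum. You instead stratify the orbit segment by combinatorial depth and bound the \emph{total} rescaled nonlinearity of each stratum $l$ by $(1-r)^{n-l-1}K$, exploiting that the non-critical child $U'_{l+1}$ is disjoint from every deeper $U_{l'}$ so the recursion inside the proof of Lemma~\ref{distortion} cascades from level $l+1$ all the way to level $n$ without ever hitting the excluded critical index. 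The paper's Koebe-space argument is more portable---it only uses bounded geometry and the restriction lemma, not the internal structure of the recursion---while your stratification is self-contained, avoids constructing the auxiliary pull-back $J_0$, and makes the constant $\rho=1-r$ explicit.
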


\begin{proof} Let $s_I\ge t_I$ be the first return time of $I$ to $U_{(1-q_0)\cdot n}$. There exists $J_0\subset U_{k}$ with $I\subset J_0$ such that
$f_*^{s_I}: J_0 \to U_{(1-q_0)\cdot n}$ diffeomorphically, \cite {Ma1}. Let $J_k=f_*^k(J_0)$ and $I_k=f_*^k(I)$. The {\it a priori} bounds on the geometry of the cycles $\CC_n$ imply
$$
\frac{|I_k|}{|J_k|}=O(\rho^{q_0\cdot n}).
$$
This estimate hold because the intervals $J_k$ are in $\CC_{(1-q_0)\cdot n}$ and the intervals $I_k$ are in $\CC_n$.

The nonlinearity of the rescaled map $f_*:J_k\to J_{k+1}$ which has the unit interval as its domain and range, is denoted by $\eta_k$. As in the proof of Lemma \ref{distortion} we obtain
$$
\sum_{k=0}^{s_I-1} \|\eta_k\|_{C^0}\le K_0.
$$ 
The nonlinearity of the rescaled version of the map $f_*:I_k\to I_{k+1}$ which has the unit interval as its domain and range, is denoted by $\eta^I_k$. Lemma 3.1 in \cite{Ma2} says that the nonlinearity of the restriction $f_*:I_k\to I_{k+1}$ of  $f_*:J_k\to J_{k+1}$ satisfies
$$
\|\eta^I_k\|_{C^0}\le \frac{|I_k|}{|J_k|}\cdot \|\eta_k\|_{C^0}.
$$
Hence,
$$
\sum_{k=0}^{s_I-1} \|\eta^I_k\|_{C^0}=O(\rho^{q_0\cdot n}).
$$ 
The distortion of a map $f_*^t:I_k\to I_{k+t}$ is bounded as follows.
$$
\begin{aligned}
\text{Dist}(f^s_*|I_k)&\le \sum_{j=k}^{k+t-1} \text{Dist}(f_*|I_{j})\\
&\le \sum_{j=0}^{s_I-1} \|\eta^I_j\|_{C^0}=O(\rho^{q_0\cdot n}).
\end{aligned}
$$
This finishes the proof of the Lemma.
\end{proof}

\subsection{Geometrical properties of the Cantor attractor}

\begin{thm}[Universality]\label{universality}
For any $F\in \II_\Om(\bar\eps)$ with sufficiently small $\bar\eps$, we have: 
\[
    R^n F = (f_n(x) -\,  b^{2^n}\, a(x)\, y\, (1+ O(\rho^n)), \ x\, ),
\]
where $f_n\to f_*$ exponentially fast, $b$ is the average Jacobian, $\rho\in (0,1)$,
and $a(x)$ is a universal  function. Moreover, $a$ is analytic and 
positive.   
\end{thm}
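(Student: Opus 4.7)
The strategy combines the exponential convergence of the renormalization iterates $R^n F \to F_*$ on the stable manifold $\II_\Om(\bar\eps) = W^s(F_*)$ with a Taylor expansion of the perturbation in the $y$-variable. Write $R^n F(x,y) = (f_n(x) - \eps_n(x,y),\, x)$. Hyperbolicity of the fixed point $F_*$ (with codimension-one $W^s$) yields $\|R^n F - F_*\|_\Om \le C \rho_0^n$ for some $\rho_0 \in (0,1)$ in an appropriate complex-analytic norm, so $f_n \to f_*$ exponentially and $\|\eps_n\|_\Om \to 0$ exponentially.

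First, I would extract the leading $y$-coefficient. By Cauchy estimates on the complex extension of $\eps_n$, expand
\[
\eps_n(x,y) = e_n(x) + \alpha_n(x)\, y + r_n(x,y), \qquad |r_n(x,y)| = O(\|\eps_n\|_\Om \cdot |y|^2).
\]
Absorb the constant-in-$y$ term $e_n(x)$ into $f_n$ by redefining $f_n \mapsto f_n - e_n$; this preserves the exponential convergence $f_n \to f_*$. The remaining coefficient $\alpha_n(x) = \partial_y \eps_n(x,0)$ is, up to sign, the Jacobian of $R^n F$ along the $x$-axis.

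Next, I would show that $\alpha_n$ has order $b^{2^n}$ with a universal profile. Since $R^n F$ is the conjugate of $F^{2^n}|_{B^n}$ by the non-linear rescaling $\Psi^n_0$, the chain rule expresses $\Jac R^n F(z)$ as a product of $2^n$ Jacobians of $F$ along an orbit, modulated by a factor from $\Psi^n_0$. By the very definition of the average Jacobian, the geometric mean of these products over the $F$-invariant measure is exactly $b^{2^n}$; hence $\alpha_n(x) = b^{2^n} a_n(x)$ with $a_n$ of order one. Comparing $R^n F$ with $R^{n+1} F = R(R^n F)$ and using the exponential convergence of the renormalization gives $a_{n+1} - a_n = O(\rho^n)$ in $C^0$ for some $\rho \in (0,1)$, so the sequence $(a_n)$ is Cauchy with a limit $a$. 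This limit is universal because it depends only on the germ of $R$ at $F_*$; analyticity and positivity of $a$ follow from Cauchy estimates on the complex domain and from strict dissipativity $\partial_y \eps > 0$.

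Finally, the quadratic remainder $r_n$ contributes a relative error of order $|y| \cdot \|\eps_n\|_\Om / (b^{2^n} |a|) = O(\rho^n)$, which is absorbed into the factor $(1 + O(\rho^n))$. The principal obstacle is the middle step: rigorously tracking the Taylor coefficients of $\eps_n$ through the iterated non-linear coordinate changes $\Psi^n_0 = \psi^1_v \circ \cdots \circ \psi^n_v$ while preserving both the precise $b^{2^n}$ scaling and the universality of $a$. The complex-analytic setting is essential here, since it controls all higher coefficients uniformly via Cauchy's inequality; the delicate point is that the leading-order coefficient retains its identity through all these non-linear changes of variables with only an exponentially small error.
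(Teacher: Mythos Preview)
The paper does not prove this theorem: it appears in \S\ref{prelim} (Preliminaries) as a result quoted from \cite{CLM}, and is used throughout as input. So there is no proof in the present paper to compare against.

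Your outline follows the right architecture --- identify $\partial_y \eps_n$ with $\Jac R^n F$, extract the $b^{2^n}$ scaling from the chain rule for $F^{2^n}$, and show the normalized quantity converges --- but two steps do not go through as written. First, your remainder bound: you claim the $y^2$-term contributes a relative error $|y|\cdot\|\eps_n\|_\Om/b^{2^n} = O(\rho^n)$, but in fact $\|\eps_n\|_\Om \asymp b^{2^n}$ (this is exactly what the theorem asserts), so that ratio is $O(|y|) = O(1)$, not $o(1)$. The correct route, taken in \cite{CLM}, is to control $\partial_y \eps_n(x,y) = \Jac R^n F(x,y)$ \emph{uniformly in $y$}, showing it equals $b^{2^n} a(x)(1+O(\rho^n))$ for every $(x,y)$ in the domain, and then integrate in $y$; a generic Cauchy bound on the quadratic Taylor remainder is too crude. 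Second, the step ``$a_{n+1}-a_n = O(\rho^n)$ follows from exponential convergence of $R^n F$'' does not work: the limit $F_*$ is degenerate ($b_{F_*}=0$), so convergence $R^n F \to F_*$ says nothing about the \emph{normalized} quantity $\Jac R^n F / b^{2^n}$. What is actually needed is an explicit recursion for the logarithm of this normalized Jacobian under one application of $R$, together with a contraction estimate for that recursion near $F_*$; this is the substance of the argument in \cite{CLM}, and you have correctly flagged it as the principal obstacle --- but it is not a consequence of the hyperbolicity of $R$ at $F_*$ alone.
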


\begin{cor}\label{bdsFx} There exists  a universal $d_1>0$ such that for $k\ge 1$ large enough \
$$
 \frac{1}{d_1}\le |\frac{\partial F_k}{\partial x}(z)|\le d_1,
$$
for every $z\in B^1_v(F_k)$.
\end{cor}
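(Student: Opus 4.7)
The plan is to apply the Universality Theorem \ref{universality} to write $F_k$ in an explicit, nearly one-dimensional form, and then compute $\partial F_k/\partial x$ directly. Writing $F_k = (F_k^1, F_k^2)$ with $F_k^2(x,y) = x$, the only nontrivial entry of $\partial F_k/\partial x$ is the first-component derivative, which by Theorem \ref{universality} satisfies
$$
\frac{\partial F_k^1}{\partial x}(x,y) = f_k'(x) - b^{2^k}\, a'(x)\, y\, (1+O(\rho^k))
$$
for $k$ large. Since $b^{2^k}$ decays super-exponentially in $k$, the perturbation term is negligible, and both the upper and lower bounds on $|\partial F_k/\partial x|$ reduce to establishing bounds on $|f_k'(x)|$ for $x$ in the horizontal projection of $B^1_v(F_k)$.

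Next I would localize the $x$-coordinate on $B^1_v(F_k)$. By definition $B^1_v(F_k) = \Im\,\Psi^1_0(F_k)$ is the first renormalization box of $F_k$, a small neighborhood of the tip $\tau_{F_k}$. The tip has $x$-coordinate close to the critical value $f_k(c_k)$ of $f_k$, and since $f_k \to f_*$ exponentially with the chosen normalization $f_*(c_*) = 1$, for all $k$ large the projection of $B^1_v(F_k)$ onto the $x$-axis is contained in a small fixed neighborhood $V$ of $1$. Because the critical point $c_* \in (-1,1)$ of $f_*$ satisfies $c_* \neq 1$, $V$ can be chosen to lie at a definite distance from the critical point locus.

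Since $c_*$ is the unique critical point of $f_*$ in $[-1,1]$, the derivative $f_*'$ is continuous and nowhere zero on $V$; hence there exist universal constants $0 < m \le M < \infty$ with $m \le |f_*'(x)| \le M$ on $V$. The exponential convergence $f_k \to f_*$ in the analytic (hence $C^1$) topology transfers the same inequalities, with slightly relaxed constants, to $f_k'$ on $V$ for $k$ sufficiently large. Combined with the super-exponentially small perturbation term coming from the $b^{2^k}\,a'(x)\,y\,(1+O(\rho^k))$ contribution, this yields the claimed bounds $\frac{1}{d_1} \le |\partial F_k/\partial x(z)| \le d_1$.

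The only subtlety, and the only ingredient not immediate from the universal normal form above, is the uniform localization of $B^1_v(F_k)$: one must know that its horizontal projection sits in a neighborhood $V$ of $1$ independent of $k$. This follows from the a priori bounds on the position and diameter of the first renormalization box established in \cite{CLM, LM1}, combined with the exponential convergence of $F_k$ to the degenerate universal map $F_*$.
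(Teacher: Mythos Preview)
Your proposal is correct and matches the paper's intent: the statement is recorded there as an immediate corollary of the Universality Theorem~\ref{universality} with no separate proof given, and your argument---writing out $\partial F_k^1/\partial x = f_k'(x) - b^{2^k} a'(x) y(1+O(\rho^k))$, localizing $B^1_v(F_k)$ near the critical value and hence away from the critical point of $f_*$, and using $f_k\to f_*$ in $C^1$---is exactly the reasoning that justifies calling it a corollary.
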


Let $\tau_n$ be the tip of $F_n=R^nF$ and $\tau^*$ be the tip of $F_*$.

\begin{lem}\label{holmt} There exists $\rho<1$ such the 
conjugations
$$
h_n:\OO_{F_*}\to \OO_{F_n}
$$
with $h_n(\tau_*)=\tau_{n}$ satisfy 
$$
|h_n(z)-z|=O(\rho^n),
$$
for every $z\in \OO_{F_*}$.
\end{lem}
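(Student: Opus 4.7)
The plan is to define $h_n$ through the symbolic coding of the Cantor attractors and to compare points with equal codes using a telescoping argument. Every point of $\OO_{F_*}$ has a unique code $\omega = (\omega_1, \omega_2, \dots) \in W^\N$, and can be written as the limit $z = \lim_{k\to\infty} \psi^k_{\omega|_k}(F_*)(p)$ for any starting point $p$; the limit exists and is independent of $p$ because Lemma \ref{contracting} gives $\|D\psi^k_{\omega|_k}\| \leq C\sigma^k$. The same coding works for $\OO_{F_n}$, and the constant code $(v,v,\dots)$ encodes the tip in both cases. I define $h_n$ to be the map sending the point with code $\omega$ in $\OO_{F_*}$ to the point with the same code in $\OO_{F_n}$; then $h_n(\tau_*) = \tau_n$ automatically.

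To estimate $|h_n(z) - z|$ it suffices to bound $|\psi^k_{\omega|_k}(F_n)(p) - \psi^k_{\omega|_k}(F_*)(p)|$ uniformly in $k$ and $\omega$, and then let $k\to\infty$. The main observation is that the $j$-th factor $\psi^j_{\omega_j}(F_n)$ is built from $R^{j-1}F_n = R^{n+j-1}F$, which by Theorem \ref{universality} is $O(\rho^{n+j-1})$-close to $F_*$ in the relevant $C^r$-topology. Since the nonlinear change $H_k$, its dilation $\Lambda_k$, and hence $\psi^j_v = (\Lambda\circ H)^{-1}$ and $\psi^j_c = F\circ\psi^j_v$ all depend smoothly on the underlying map, this propagates to
\[
  \|\psi^j_{\omega_j}(F_n) - \psi^j_{\omega_j}(F_*)\|_{C^0} = O(\rho^{n+j-1}).
\]

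A standard telescoping decomposition gives
\[
  \psi^k_{\omega|_k}(F_n) - \psi^k_{\omega|_k}(F_*) = \sum_{j=1}^k \Phi_n^{<j} \circ \bigl[\psi^j_{\omega_j}(F_n) - \psi^j_{\omega_j}(F_*)\bigr] \circ \Phi_*^{>j},
\]
where $\Phi_n^{<j} = \psi^1_{\omega_1}(F_n)\circ\dots\circ\psi^{j-1}_{\omega_{j-1}}(F_n)$ and $\Phi_*^{>j} = \psi^{j+1}_{\omega_{j+1}}(F_*)\circ\dots\circ\psi^k_{\omega_k}(F_*)$. By Lemma \ref{contracting} (applied to $F_n$ with uniform constants), $\Phi_n^{<j}$ is Lipschitz with constant $O(\sigma^{j-1})$, so the $j$-th term contributes at most $C\sigma^{j-1}\cdot C'\rho^{n+j-1}$. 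Summing,
\[
  |\psi^k_{\omega|_k}(F_n)(p) - \psi^k_{\omega|_k}(F_*)(p)| \leq C''\rho^n \sum_{j=1}^{\infty}(\sigma\rho)^{j-1} = O(\rho^n),
\]
uniformly in $k$ and $\omega$ (using $\sigma\rho<1$, which is free since any $\rho$ close enough to $1$ in the universality statement will still give $\sigma\rho<1$). Passing to the limit $k\to\infty$ yields $|h_n(z)-z| = O(\rho^n)$.

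The main obstacle is verifying the smooth dependence of the maps $\psi^j_{\omega_j}$ on the underlying map $R^{j-1}F$ in a topology where universality delivers exponential closeness. This amounts to checking that the nonlinear map $H$ (through its defining equation $f(x) - \eps(x,y) = \mathrm{const}$) and the dilation factor $s$ depend Lipschitz-continuously on $F$ in the relevant norm; once this is established the propagation through the telescope is automatic. Since the change $H$ is obtained by an implicit function construction applied to the analytic pair $(f,\eps)$ and $s$ is a geometric normalization of the image domain, the required smoothness is standard and follows from the explicit expressions in \S\ref{prelim}.
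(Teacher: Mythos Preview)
Your argument is correct and follows essentially the same route as the paper: define $h_n$ via the symbolic coding, use Theorem~\ref{universality} to get $\|\psi^j_{\omega_j}(F_n)-\psi^j_{\omega_j}(F_*)\|=O(\rho^{n+j-1})$, and combine this with the contraction of the compositions (Lemma~\ref{contracting}) through a telescoping estimate. The only cosmetic difference is that you write the telescope as a sum of factor-swaps bounded by $\text{Lip}(\Phi_n^{<j})\cdot O(\rho^{n+j-1})$, whereas the paper organizes the same computation as the recursive inequality $|z_k-z_k^*|\le O(\rho^k)+\sigma|z_{k+1}-z_{k+1}^*|$ and iterates it; both yield the same geometric series $O(\rho^n)\sum(\sigma\rho)^{j-1}$.
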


\begin{proof} Choose $z^*\in \OO_{F_*}$ and let $z=h_n(z^*)$. There are unique sequence $w_{n+1},\dots, w_m, \dots$, and $z_n, z_{n+1},\dots, z_m,\dots$, and
$z^*_n,z^*_{n+1},\dots, z^*_m,\dots$ with $w_k\in \{c,v\}$, $z_k\in \OO_{F_k}$, and $z^*_k\in \OO_{F_*}$ such that $z=z_n$, $z^*=z^*_n$ and for $k\ge n$
$$ 
z_k=\psi^{k+1}_{w_{k+1}}(z_{k+1})   
$$
$$ 
z^*_k=(\psi^{k+1}_{w_{k+1}})^*(z^*_{k+1}).   
$$
This follows from the construction of $\OO_F$ in \cite{CLM}. 

Theorem \ref{universality} implies
$$
|\psi^{k+1}_w-(\psi^{k+1}_w)^*|=O(\rho^k)
$$
for some $\rho<1$. The proof of Lemma 5.6 in \cite{CLM} gives that
$(\psi^{k+1}_w)^*=\psi^*_w$ are contractions, $|D\psi^*_w|\le \sigma<1$. Then for $k\ge n$
$$
\begin{aligned}
|z_k-z^*_k|=&|\psi^{k+1}_{w_{k+1}}(z_{k+1})-(\psi^{k+1}_{w_{k+1}})^*(z^*_{k+1})|\\
\le &|\psi^{k+1}_{w_{k+1}}(z_{k+1})-(\psi^{k+1}_{w_{k+1}})^*(z_{k+1})|+\\
&|(\psi^{k+1}_{w_{k+1}})^*(z_{k+1})-(\psi^{k+1}_{w_{k+1}})^*(z^*_{k+1})|\\
\le &O(\rho^k)+\sigma\cdot |z_{k+1}-z^*_{k+1}|
\end{aligned}
$$
Then for every $m>n$ we have
$$
|z_n-z^*_n|\le \sum_{k=n+1}^m O(\rho^k)\cdot \sigma^{k-n-1}+\sigma^{m-n}\cdot |z_m-z^*_m|.
$$
Observe, $|z_m-z^*_m|\le 1$ and the Lemma follows by taking $m>n$ sufficiently large.
\end{proof}

\comment{
\begin{cor}\label{tipdist} There exists $\rho<1$ such that for $\Omega$ there exists $C>0$ such that  
$F\in \HH_\Omega$ 
$$
|\tau_n-\tau^*|\le C |F_n-F_*|_{C^2}=O(\rho^n).
$$
\end{cor}

\note{If we don't use the $C^2$ estimate we can skip the proof, it is a cor of Lemma 2.6}
}

\comment{

\begin{proof} The proof is by collecting facts from \cite{LM}. The following notation is illustrated in Figure 7.1 from \cite{LM}. Given $n\ge 1$ denote the graph of $f_n$, see Theorem \ref{universality}, over the $y$-axis by $\Gamma_0$. The map $F_n$ has two fixed points $\beta_0$ and $\beta_1$, where $\beta_1$ has two negative exponents. The local stable manifold of $\beta_1$ is a graph over the $y$-axis very close to a vertical line. Let $\Gamma_2$ be the curve between the first two intersections of the local unstable manifold of $\beta_0$ and $\Gamma_\infty$ be the curve between the first two intersections of the local unstable manifold of $\beta_2$. The curves $\Gamma_0$, $\Gamma_2$, and $\Gamma_\infty$ can be extended to graphs of functions defined on the same domain in the $y$-axis.  
Corollary 7.2 from \cite{LM} states 
$$
|\Gamma_2-\Gamma_\infty|_{C^2}=O(b^{2^n}).
$$
Proposition 7.1(7.1) from \cite{LM} gives
$$
|\Gamma_0-\Gamma_\infty|_{C^2}=O(b^{2^n}).
$$
Hence, 
$$
|\Gamma_0-\Gamma_2|_{C^2}=O(b^{2^n})
$$
and
$$
|v_0-v_2|=O(b^{2^n}),
$$
where $v_0$ and $v_2$ are the points on $\Gamma_0$, $\Gamma_2$ with a vertical tangency. 
Proposition 7.4(3) from \cite{LM} gives
$$
|\tau_n-v_2|=O(b^{2^n}).
$$
Hence,
$$
|\tau_n-v_0|=O(b^{2^n}).
$$
Observe, $|v_0-\tau^*|=O(F_n-F_*)$ and the Lemma follows.
\end{proof}

}

\subsection{Analytical properties of the coordinate changes}

Fix an infinitely renormalizable H\'enon-like map $F\in \II_\Om(\bar\eps)$ 
to which we can apply the results of \cite{CLM} and \cite{LM1}, $\bar\eps>0$ is small enough. For such an $F$, we have a well defined {\it tip}: 
$$
\tau\equiv \tau(F)=\bigcap_{n\ge 0} B^n_{v^n}
$$
Consider the tips of the renormalizations, $\tau_k=\tau(R^k F)$.
To simplify the notations, we will translate these tips to the origin
by letting
$$
 \Psi_k =  \psi^0_v (R^k F)\,  (z + \tau_{k+1}) - \tau_k.  
$$ 
Denote the derivative of the maps $\Psi_k $ at $0$ by $D_k$
and decompose it into the unipotent and diagonal factors: 
 \begin{equation}\label{Dk}
 D_k= \left(
\begin{array}{cc}
1 & t_k\\
0 & 1
\end{array}\right)
\left(
\begin{array}{cc}
\alpha_k & 0\\
0 & \beta_k
\end{array}\right). 
\end{equation}
Let us factor this derivative out from $\Psi_k$: 
$$
\Psi_k = D_k \circ (\id + {\bf s}_k),
$$
where ${\bf s}_k(z) = (s_k(z) , 0) =  O(|z|^2)$ near 0.
Lemma 7.4 in \cite{CLM} states

\begin{lem} \label{smalls}
There exists $\rho<1$ such that for $k\in \Z_+$ the following estimates hold:
\begin{enumerate}


\item [\rm (1)] $\displaystyle 
  \alpha_k=\sigma^2 \cdot (1+O(\rho^k)),\quad \beta_k=-\sigma \cdot (1+O(\rho^k)), \quad
         t_k=O(\bar\eps^{2^k}); 
$

\item [\rm (2)]
$\displaystyle | \di_x s_k | = O(1),\quad  |\di_y s_k| = O(\bar\eps^{2^k}); $  

\item[\rm (3)]
$\displaystyle
  |\partial^2_{xx} s_k |= O(1),\quad
  |\partial^2_{xy} s_k |= O(\bar{\eps}^{2^k}) ,\quad 
  |\partial^2_{yy} s_k| =O(\bar{\eps}^{2^k}).
$
\end{enumerate}

\end{lem}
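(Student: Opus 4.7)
The plan is to unfold the explicit formula $\psi^1_v = H^{-1}\circ \Lambda^{-1}$ for the one-step change of variables, apply it to $R^kF$, and read off the requested estimates from the quantitative control provided by the Universality Theorem~\ref{universality}. Since $\Psi_k$ is just this change of coordinates translated so that $0 \mapsto 0$, the analysis reduces to studying $H_k^{-1}$ near the tip $\tau_k$ together with the affine rescaling $\Lambda_k^{-1}$.

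To prove (1) I would compute $D_k = D\Psi_k(0)$ directly. Writing $H_k(x,y) = (f_k(x) - \epsilon_k(x,y), y)$, its derivative at the tip is upper triangular with diagonal entries $f_k'(\tau_k^x) - \partial_x \epsilon_k(\tau_k)$ and $1$, and off-diagonal entry $-\partial_y \epsilon_k(\tau_k)$. By Theorem~\ref{universality}, $\epsilon_k(x,y) = b^{2^k} a(x)\, y\,(1+O(\rho^k))$, so the off-diagonal entry is $O(\bar\eps^{2^k})$ and the $(1,1)$ entry converges to its universal limit at rate $\rho^k$ (using Lemma~\ref{holmt} to control $\tau_k \to \tau_*$). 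Inverting $DH_k|_{\tau_k}$ and composing with the diagonal $D\Lambda_k^{-1}$, which is close to $-\sigma\cdot \mathrm{id}$, yields the factorization of $D_k$ into its unipotent and diagonal pieces with $\alpha_k = \sigma^2(1+O(\rho^k))$, $\beta_k = -\sigma(1+O(\rho^k))$, and $t_k = O(\bar\eps^{2^k})$; the specific value $\sigma^2$ (rather than just $\sigma$) for $\alpha_k$ is encoded in the universal normalization of $f_*'$ at the tip.

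For (2) and (3) I would exploit the fact that $H_k$ depends on $y$ \emph{only} through $\epsilon_k$. Implicitly differentiating the defining relation $f_k(X) = u + \epsilon_k(X,v)$ for the first component of $H_k^{-1}$ shows that each $v$-derivative pulls out a factor of a $y$-derivative of $\epsilon_k$. By Universality, every such $y$-derivative of $\epsilon_k$ is $O(\bar\eps^{2^k})$, so $|\partial_y s_k|$, $|\partial^2_{xy} s_k|$, and $|\partial^2_{yy} s_k|$ inherit this smallness. The pure $x$-derivatives $|\partial_x s_k|$ and $|\partial^2_{xx} s_k|$ are merely $O(1)$, reflecting the genuine nonlinearity of $f_k^{-1}$ near the tip, which carries no small factor.

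The main obstacle is ensuring that the implicit function theorem applies with estimates uniform in $k$. This requires $f_k'(\tau_k^x)$ to remain bounded away from zero, which holds because $\tau_k^x$ stays near the \emph{critical value} $v_*$ of $f_*$ (and not the critical point), where $f_*'$ does not vanish. A secondary annoyance is the identification of the tip: one must either invoke Lemma~\ref{holmt} to pin down $\tau_k \to \tau_*$ at rate $\rho^k$, or work systematically in centered coordinates from the outset. Modulo these points, everything reduces to bookkeeping of the Universality expansion of $\epsilon_k$ and of the convergence $f_k \to f_*$.
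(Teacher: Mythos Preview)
The paper does not prove this lemma: it is quoted verbatim as Lemma~7.4 of \cite{CLM}, so there is no in-paper argument to compare against. Your sketch is essentially the proof one finds in \cite{CLM}: unwind $\psi^1_v=H^{-1}\circ\Lambda^{-1}$ for the $k$-th renormalization, use the Universality Theorem to control $\epsilon_k$ and $f_k$, and read off the entries of $D_k$ and the derivatives of $s_k$ from the explicit upper-triangular form of $DH_k$ together with the scalar $\Lambda_k^{-1}$. Your identification of the one nontrivial point---that $f_k'$ stays bounded away from zero because the tip sits near the critical \emph{value} rather than the critical point---is exactly the observation that makes the implicit differentiation uniform in $k$. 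The only place your write-up is slightly glib is the claim that $\alpha_k\to\sigma^2$ is ``encoded in the universal normalization of $f_*'$ at the tip'': concretely, $\beta_k=1/s_k\to-\sigma$ directly, while $\alpha_k=1/(s_k\cdot f_k'(\tau_k^x))(1+O(\bar\eps^{2^k}))$, and it is the functional equation for $f_*$ that forces $f_*'$ at the critical value to equal $-1/\sigma$, yielding $\alpha_k\to\sigma^2$. Once that is spelled out, your argument is complete.
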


\begin{lem}\label{tilt} The numbers $t_k$ quantifying the tilt satisfy$$
t_k\asymp  -b_F^{2^k}.
$$
\end{lem}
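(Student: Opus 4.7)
The plan is to extract $t_k$ by direct computation of $D\Psi_k(0) = D\psi_v^0(R^k F)(\tau_{k+1})$, using that $\psi_v^0(R^k F) = H_k^{-1}\circ \Lambda_k^{-1}$, and then insert the universality formula from Theorem \ref{universality}.

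First I would write $H_k(x,y) = (f_k(x)-\epsilon_k(x,y),\, y)$ and $H_k^{-1}(u,v) = (h_k(u,v),\, v)$, with $h_k$ defined implicitly by $f_k(h_k(u,v)) - \epsilon_k(h_k(u,v), v) = u$. Implicit differentiation yields
\begin{equation*}
   \partial_u h_k = \frac{1}{f_k'(h_k) - \partial_x \epsilon_k(h_k,v)}, \qquad
   \partial_v h_k = \frac{\partial_y \epsilon_k(h_k,v)}{f_k'(h_k) - \partial_x \epsilon_k(h_k,v)}.
\end{equation*}
Composing with $\Lambda_k^{-1}$ (scalar dilation by $1/s_k$), the derivative at $\tau_{k+1}$ is
\begin{equation*}
  D_k = \frac{1}{s_k}\begin{pmatrix} \partial_u h_k & \partial_v h_k \\ 0 & 1 \end{pmatrix},
\end{equation*}
where the partials are evaluated at $\Lambda_k^{-1}(\tau_{k+1})$. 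Matching this against the decomposition in (\ref{Dk}) gives $\beta_k = 1/s_k$, $\alpha_k = \partial_u h_k/s_k$, and most importantly $t_k\beta_k = \partial_v h_k / s_k$, so that $t_k = \partial_v h_k$.

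Second I would plug in universality. From Theorem \ref{universality}, $\partial_y \epsilon_k(x,y) = b^{2^k} a(x)(1+O(\rho^k))$, while $\partial_x\epsilon_k = O(b^{2^k})$. By Lemma \ref{holmt} (applied to $R^k F$) together with the convergence $F_k \to F_*$, the point $h_k(\Lambda_k^{-1}(\tau_{k+1}))$ converges exponentially fast to the $x$-coordinate $x_*$ of the tip $\tau_*$ of $F_*$, which equals $f_*(c_*) = 1$. Thus
\begin{equation*}
   t_k = \partial_v h_k = \frac{b^{2^k}\, a(x_*)\,(1+O(\rho^k))}{f_*'(x_*) + O(\bar{\eps}^{2^k})}.
\end{equation*}
Since $a > 0$ is universal and since $x_* = 1$ is not the critical point of $f_*$, the denominator is bounded away from zero, so $|t_k| \asymp b^{2^k}$. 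For the sign, note that $f_*(c_*) = 1$ is the maximum and $c_* < 1$, hence $f_*$ is decreasing at $x_* = 1$, so $f_*'(x_*) < 0$ while $a(x_*) > 0$; this produces the negative sign, giving $t_k \asymp -b^{2^k}$.

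There is no single hard step here. The only care needed is to verify that the point at which $\partial_v h_k$ is being evaluated sits in the region where universality applies and where $f_k'$ is bounded away from zero — both of which follow from the convergence $\tau_k\to \tau_*$ and the fact that the tip is disjoint from the critical locus of $f_*$.
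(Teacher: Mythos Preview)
Your proof is correct. The paper states this lemma without proof (it is presumably a refinement of the bound $t_k=O(\bar\eps^{2^k})$ from Lemma~\ref{smalls}, extracted from \cite{CLM}), and your direct computation --- reading off $t_k=\partial_v h_k$ from the factorization $\psi^{k+1}_v=H_k^{-1}\circ\Lambda_k^{-1}$ and then inserting the universality formula of Theorem~\ref{universality} together with $f_*'(1)<0$ and $a(1)>0$ --- is exactly the natural argument. The one point worth making explicit is that $h_k(\Lambda_k^{-1}(\tau_{k+1}))$ is precisely the $x$-coordinate of $\tau_k$ (since $\psi^{k+1}_v(\tau_{k+1})=\tau_k$), so the convergence to $x_{\tau_*}=1$ follows directly from Lemma~\ref{holmt}; you essentially say this, but it could be stated more crisply.
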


We will use the following notation
$$
B^{n-k}_{v^{n-k}}(F_k)=\Im \Psi_k^n.
$$
\comment{
The following statements is Corollary 7.5 from \cite{CLM}

\begin{cor}\label{second derivatives} 
 Let $k<n$. For $z \in B_{v^{n-k}}^{n-k}$ we have: 
$$
\left| \partial_x s_k(z)\right|   =O (\sigma^{n-k}), \quad 
\left| \partial_y s_k(z)  \right| =
      O(\bar{\eps}^{2^k}\cdot\sigma^{n-k}).
$$
\end{cor}

\note{did we use this Cor?}
}
Consider the derivatives of the maps $\Psi^n_k$ at the origin:  
$$
D_k^n=D_k\circ D_{k+1}\circ \cdots D_{n-1}.
$$
We can reshuffle this composition and obtain:

 \begin{equation}\label{reshuffling}
 D_k^n= 
 \left(
\begin{array}{cc}
1 & t_k\\
0 & 1
\end{array}\right)
\left(
\begin{array}{cc}
(\si^2)^{n-k} & 0\\
0 & (-\si)^{n-k}
\end{array}\right) (1+O(\rho^k)). 
\end{equation}  
Factoring the derivatives  $D_k^n$ out from $\Psi_k^n$, we obtain: 
\begin{equation}\label{factoring}
\Psi_k^n = D_k^n \circ (\id + {\bf S}_k^n),
\end{equation}
where ${\bf S}_k^n (z) = (S_k^n(z), 0) = O(|z|^2)$ near 0. 

The following Lemma is Lemma 7.6 in \cite{CLM}

\begin{lem}\label{APPsi} For $k<n$,  we have:
\begin{enumerate}

\item [{\rm (1)}]
$ \displaystyle
   | \di_x S^n_k | = O(1),\quad  |\di_y S^n_k | = O(\bar{\eps}^{2^k}); 
$

\item [{\rm (2)}]
$ \displaystyle
|\partial^2_{xx} S^n_k| =O(1), \quad
|\partial^2_{yy} S^n_k| =O(\bar{\eps}^{2^k}), \quad
|\partial^2_{xy} S^n_k| =O(\bar{\eps}^{2^k}\, \si^{n-k}).
$
\end{enumerate}

\end{lem}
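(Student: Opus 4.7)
The plan is to prove Lemma \ref{APPsi} by induction on the difference $m = n-k$. The base case $m=1$ is Lemma \ref{smalls}, since $\Psi^n_k = \Psi_k$ forces $\mathbf{S}^n_k = \mathbf{s}_k$. For the inductive step I would factor
\[
\Psi^n_k = \Psi_k \circ \Psi^n_{k+1} = D_k \circ (\id + \mathbf{s}_k) \circ D^n_{k+1} \circ (\id + \mathbf{S}^n_{k+1}),
\]
extract $D^n_k = D_k \circ D^n_{k+1}$ on the left, and read off the identity
\[
\id + \mathbf{S}^n_k = \bigl[(D^n_{k+1})^{-1}\circ (\id + \mathbf{s}_k)\circ D^n_{k+1}\bigr] \circ (\id + \mathbf{S}^n_{k+1}).
\]
Because $\mathbf{s}_k = (s_k,0)$ has a vanishing second component, and $D^n_{k+1}$ acts as upper-triangular (see (\ref{reshuffling})), the conjugated map is again of the form $(\id + \mathbf{t}^n_k)$ with $\mathbf{t}^n_k = (t^n_k, 0)$, where
\[
t^n_k(x,y) = \tfrac{1}{\alpha_{k+1}^n}\,s_k\!\bigl(\alpha_{k+1}^n x + (\text{sheer})\,y,\ \beta_{k+1}^n y\bigr),
\]
with $\alpha_{k+1}^n \asymp \sigma^{2(n-k-1)}$, $\beta_{k+1}^n \asymp \sigma^{n-k-1}$ and the sheer of order $t_{k+1}\asymp \bar\eps^{2^{k+1}}$.

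Now I would compute the partial derivatives of $t^n_k$ by the chain rule. The crucial arithmetic is that $\partial_x t^n_k$ picks up the factor $\alpha_{k+1}^n/\alpha_{k+1}^n = 1$, giving $|\partial_x t^n_k|=O(1)$ directly from Lemma \ref{smalls}(2). For $\partial_y t^n_k$ the chain rule produces
\[
\partial_y t^n_k = \tfrac{1}{\alpha_{k+1}^n}\bigl[\,(\text{sheer})\,\partial_x s_k + \beta_{k+1}^n\,\partial_y s_k\,\bigr],
\]
so the $\partial_y s_k$ contribution is of order $(\beta_{k+1}^n/\alpha_{k+1}^n)\bar\eps^{2^k}\asymp \sigma^{-(n-k-1)}\bar\eps^{2^k}$, while the sheer contribution is of order $\bar\eps^{2^{k+1}}/\alpha_{k+1}^n$. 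Both are absorbed in $O(\bar\eps^{2^k})$ provided one uses $\bar\eps^{2^k}\ll \sigma^{n-k}$ in the relevant range, which one can arrange by taking $\bar\eps$ small (the tail where $2^k$ is enormous makes everything doubly exponentially small). A parallel but more delicate computation gives the three second-derivative bounds: $\partial^2_{xx}t^n_k$ is $O(1)$ with no rescaling factor; $\partial^2_{xy}$ picks up exactly one factor $\beta_{k+1}^n/\alpha_{k+1}^n\asymp \sigma^{-(n-k-1)}$, balanced by $\partial^2_{xy}s_k = O(\bar\eps^{2^k})$, to yield the desired $O(\bar\eps^{2^k}\sigma^{n-k})$; and $\partial^2_{yy}$ picks up two such factors balanced by $\partial^2_{yy}s_k = O(\bar\eps^{2^k})$, giving $O(\bar\eps^{2^k})$ after absorbing $\sigma^{-2(n-k-1)}\bar\eps^{2^k}$ into $\bar\eps^{2^k}$ for small $\bar\eps$.

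Finally, I would compose $(\id + \mathbf{t}^n_k)$ with $(\id + \mathbf{S}^n_{k+1})$, obtaining
\[
S^n_k(x,y) = t^n_k\!\bigl(x + S^n_{k+1}(x,y),\ y\bigr) + S^n_{k+1}(x,y),
\]
and apply the chain rule using the inductive hypothesis on $S^n_{k+1}$. The first-derivative bounds follow immediately. For the second derivatives, the mixed partial $\partial^2_{xy}S^n_k$ requires the most care: the cross terms of the form $\partial_x t^n_k\cdot \partial^2_{xy} S^n_{k+1}$ inherit the inductive $O(\bar\eps^{2^{k+1}}\sigma^{n-k-1})$ bound, while the genuinely new contribution $\partial^2_{xy}t^n_k$ gives exactly the $O(\bar\eps^{2^k}\sigma^{n-k})$ bound claimed.

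The main obstacle is bookkeeping the mixed partial $\partial^2_{xy}$: several terms appear, each produced with a different combination of the small parameters $\bar\eps^{2^k}$, $\bar\eps^{2^{k+1}}$, $\sigma^{n-k}$ and the sheer $t_{k+1}$, and one must verify that \emph{every} such term fits under the single bound $O(\bar\eps^{2^k}\sigma^{n-k})$. This is precisely where the upper-triangular form of $D^n_{k+1}$ (and in particular that its sheer entry is of the same order $\bar\eps^{2^{k+1}}$ as the $y$-derivatives one needs to control) is indispensable; without it one would lose a factor of $\sigma^{-(n-k)}$ and the estimates would not close.
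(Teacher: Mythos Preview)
The paper does not prove this lemma; it simply cites it as Lemma~7.6 of \cite{CLM}. Your inductive scheme (factor $\Psi^n_k=\Psi_k\circ\Psi^n_{k+1}$, conjugate $\id+\mathbf s_k$ by $D^n_{k+1}$, then compose with $\id+\mathbf S^n_{k+1}$) is the natural one and almost certainly coincides with the argument in \cite{CLM}. The second-derivative bookkeeping you describe is correct.

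There is, however, a genuine gap in your treatment of $\partial_y t^n_k$. Your own computation gives a contribution of order $(\beta^n_{k+1}/\alpha^n_{k+1})\,\bar\eps^{2^k}\asymp\sigma^{-(n-k-1)}\bar\eps^{2^k}$, and the sheer term is no better. This is \emph{not} absorbed in $O(\bar\eps^{2^k})$: the factor $\sigma^{-(n-k-1)}$ blows up as $n-k\to\infty$, and the condition ``$\bar\eps^{2^k}\ll\sigma^{n-k}$'' you invoke would only make the product $O(1)$, not $O(\bar\eps^{2^k})$. The induction does not close as written.

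The missing ingredient is that $s_k(z)=O(|z|^2)$, so $Ds_k(0)=0$; combined with the second-derivative bounds of Lemma~\ref{smalls}(3) this gives $|\partial_x s_k(z)|=O(|z|)$ and $|\partial_y s_k(z)|=O(\bar\eps^{2^k}|z|)$. In your formula $s_k$ is evaluated at $D^n_{k+1}(\id+\mathbf S^n_{k+1})(x,y)$, a point of size $O(\sigma^{n-k-1})$. Hence $|\partial_y s_k|=O(\bar\eps^{2^k}\sigma^{n-k-1})$ at that point, and
\[
|\partial_y t^n_k|\;\le\;\frac{\beta^n_{k+1}}{\alpha^n_{k+1}}\Bigl(|t_{k+1}|\cdot O(\sigma^{n-k-1})+O(\bar\eps^{2^k}\sigma^{n-k-1})\Bigr)
\;=\;O(\bar\eps^{2^k}),
\]
the extra $\sigma^{n-k-1}$ exactly cancelling the dangerous $\sigma^{-(n-k-1)}$. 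Once you insert this (it is essentially the content of the paper's commented-out Corollary~7.5 from \cite{CLM}), your induction closes and the rest of your argument goes through.
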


\begin{lem}\label{bdsS'} There exists  a universal $d_0>0$ such that for $k\ge 1$ large enough \
$$
 \frac{1}{d_0}\le |\frac{\partial(\id + {\bf S}_k^n)}{\partial x}|\le d_0
$$
\end{lem}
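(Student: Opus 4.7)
The plan is to reduce the assertion to a distortion bound for the Jacobian of $\Psi^n_k$ and then exploit the multiplicative structure $\Psi^n_k=\psi^{k+1}_v\circ\cdots\circ\psi^n_v$.

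First, since $(\id+\mathbf S^n_k)(x,y)=(x+S^n_k(x,y),y)$ is upper triangular,
$$D(\id+\mathbf S^n_k)=\begin{pmatrix}1+\partial_x S^n_k & \partial_y S^n_k\\ 0 & 1\end{pmatrix},$$
and $|\partial(\id+\mathbf S^n_k)/\partial x|=|1+\partial_x S^n_k|$ coincides with the Jacobian determinant. Differentiating the factorization $\Psi^n_k=D^n_k\circ(\id+\mathbf S^n_k)$ and taking determinants,
$$1+\partial_x S^n_k(z)=\frac{\det D\Psi^n_k(z)}{\det D^n_k}=\frac{\det D\Psi^n_k(z)}{\det D\Psi^n_k(0)}.$$
So it suffices to prove that $\det D\Psi^n_k$ has universally bounded distortion on $\Dom(F_n)$.

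Next I would expand the Jacobian by the chain rule along the factorization $\Psi^n_k=\psi^{k+1}_v\circ\cdots\circ\psi^n_v$. Each factor splits as $\psi^{l+1}_v=H_l^{-1}\circ\Lambda_l^{-1}$, where $\Lambda_l^{-1}$ is linear and $H_l^{-1}$ is horizontal with Jacobian $(\partial_x F_l\circ H_l^{-1})^{-1}$; hence $\det D\psi^{l+1}_v(w)=(\det D\Lambda_l^{-1})\cdot\bigl(\partial_x F_l(\psi^{l+1}_v(w))\bigr)^{-1}$. The linear determinants $\det D\Lambda_l^{-1}$ cancel in the ratio, and taking logarithms yields
$$\log\Bigl|\frac{\det D\Psi^n_k(z)}{\det D^n_k}\Bigr|=-\sum_{l=k}^{n-1}\bigl[\log|\partial_x F_l(\zeta_l)|-\log|\partial_x F_l(\tau_l)|\bigr],$$
where $\zeta_l\in B^1_v(F_l)$ is the image of $z$ under the tail composition $\psi^{l+1}_v\circ\cdots\circ\psi^n_v$ and reduces to $\tau_l$ when $z=0$.

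Now I would estimate each summand using two ingredients. Corollary \ref{bdsFx} gives $1/d_1\le|\partial_x F_l|\le d_1$ on $B^1_v(F_l)$, and the Universality Theorem \ref{universality} together with $\epsilon_l=b^{2^l}a(x)y(1+O(\rho^l))$ ensures that $\partial_x F_l$ converges in $C^1$ to $f_*'$ on that compact region; consequently $\log|\partial_x F_l|$ has Lipschitz constant bounded by a universal $L$. For the displacement of evaluation points, equation \eqref{reshuffling} gives $\|D^n_{l+1}\|=O(\sigma^{n-l-1})$, and integrating the first-derivative bounds of Lemma \ref{APPsi}(1) from $\mathbf S^n_{l+1}(0)=0$ shows that $\id+\mathbf S^n_{l+1}$ is uniformly bounded on its domain; together these give $|\Psi^n_{l+1}(z)|=O(\sigma^{n-l-1})$, and one additional factor $\|D\psi^{l+1}_v\|=O(\sigma)$ yields $|\zeta_l-\tau_l|=O(\sigma^{n-l})$. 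Each summand is therefore $O(\sigma^{n-l})$, and the full sum is dominated by a convergent geometric series, hence by a universal constant $C$.

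Thus $e^{-C}\le|1+\partial_x S^n_k(z)|\le e^C$, and one takes $d_0=e^C$. Positivity (removing the absolute values) is automatic: $1+\partial_x S^n_k(0)=1$, the quantity is continuous in $z$, and it cannot vanish since $\id+\mathbf S^n_k=(D^n_k)^{-1}\circ\Psi^n_k$ is a diffeomorphism. The only genuinely delicate step is the uniform $C^1$-control of $\partial_x F_l$ over $B^1_v(F_l)$; this is where Universality enters in an essential way, and the rest is a telescoping geometric-series argument built around the cancellation of the purely linear factors $\det D\Lambda_l^{-1}$.
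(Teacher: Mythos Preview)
Your proof is correct but follows a genuinely different route from the paper's. The paper's argument is a two-line soft argument: it cites Proposition~7.8 of \cite{CLM}, which asserts that the diffeomorphisms $\id+\mathbf S^n_k$ lie in a compact family (in fact converge to a universal map), and then reads off both bounds from compactness together with the observation that the triangular Jacobian cannot be singular.

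Your approach is instead a direct distortion estimate: you identify $1+\partial_x S^n_k$ with the Jacobian ratio $\det D\Psi^n_k(z)/\det D\Psi^n_k(0)$, telescope along the factorization $\Psi^n_k=\psi^{k+1}_v\circ\cdots\circ\psi^n_v$, observe that the linear factors $\det D\Lambda_l^{-1}$ cancel, and then control the remaining sum $\sum_{l=k}^{n-1}\bigl|\log|\partial_x F_l(\zeta_l)|-\log|\partial_x F_l(\tau_l)|\bigr|$ by a geometric series using $|\zeta_l-\tau_l|=O(\sigma^{n-l})$ and the uniform Lipschitz bound on $\log|\partial_x F_l|$ furnished by Corollary~\ref{bdsFx} and Universality. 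This is more explicit and avoids invoking the convergence result from \cite{CLM} as a black box; it also yields a concrete constant $d_0=e^C$ with $C=O(\sigma/(1-\sigma))$ times a universal Lipschitz factor. The cost is that you must assemble several ingredients (the reshuffling formula~\eqref{reshuffling}, Lemma~\ref{APPsi}, the $C^1$-control of $F_l$ near $F_*$) rather than appeal to one compactness statement. Both arguments are sound; yours is closer in spirit to the distortion estimates used elsewhere in the paper (e.g.\ Lemma~\ref{distortion}, Proposition~\ref{disto}).
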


\begin{proof} According to proposition 7.8 in \cite{CLM}, the diffeomorphisms 
$\id + {\bf S}_k^n$ stay wihin a compact family of diffeomorphisms. This gives the upperbound on 
the derivative. The partial derivative $\frac{\partial(\id + {\bf S}_k^n)}{\partial x}$ can not be zero in a point because otherwise the derivative of the diffeomorphism  would become singular in point. This gives the positive lower bound on the partial derivative. 
\end{proof}

\comment{

We are now ready to describe the asymptotical behavior of the $\Psi$-functions
using the universal one-dimensional functions from  \S \ref{1D universal f-s}. 
Let us normalize the function $v_*$ so that it fixes $0$ rather than $1$: 
$$
   {\bf v}_*(x) = v_*(x+1)-1.
$$

\begin{lem}\label{ustar}
 There exists $\rho<1$ such that  for all $k<n$ and $y\in I$,
$$
\left|\id +  S^n_k (\cdot,y)-{\bf v}_*(\cdot)\right|=O(\bar{\epsilon}^{2^k}\cdot y+
\rho^{n-k})
$$
and
$$
  \left| 1+ \frac{\partial S_k^n}{\partial x}(\cdot,y)-
\frac{\partial {\bf v}_*}{\partial x}(\cdot)\right|=O(\rho^{n-k}).
$$
\end{lem}

\begin{prop}\label{limit}
 There exists a coefficient $a_k\in \mathbb{R}$, $a_k=O(b^{2^k})$ and an absolute constant $\rho\in (0,1)$ such that 
$$
\left|(x + S^l_k(x,y))-({\bf v}_*(x)+a_k y^2)\right|=O(\rho^{l-k}).
$$
\end{prop}

\begin{proof} By  Lemma \ref{APPsi}, 
$$
\left|\frac{\partial^2 S_k^n}{\partial yx} \right|=O(\bar\eps^{2^k}\sigma^{n-k})=O(\sigma^{n-k})
$$
and
$$
\left|\frac{\partial S_k^n}{\partial y} \right|=O(\bar{\epsilon}^{2^k}).
$$
Hence it is enough to verify the desired convergence on the horizontal section passing through the tip: 
$$
\dist_{C^1}(\id + S_k^n (\cdot, 0 ),\, {\bf v}_*(\cdot))=O(\rho^{n-k}).
$$

Let us normalize $g_*$ so that $0$ becomes its fixed point with $1$ as 
multiplier:   
$$  
          \Bg_*(x) = \frac{g_*(x+1)-1}{g_*'(1)}.
$$   
Now, $\id+S^n_k(\cdot, 0)$ is the rescaling of $\Psi^n_k(\cdot, 0)$ 
normalized so 
that
the fixed point $0$ has  multiplier 1. By Theorem 4.1, 
$$
    \dist_{C^3}(\id+ s_k(\cdot, 0), \Bg_*(\cdot)) = O(\rho^k).
$$
Hence, by  Lemma \ref{convergence to g-star},  
$$
    \dist_{C^1}(\id + S^n_k(\cdot,0), \Bg_*^{n-k}(\cdot)) = O(\rho^{k}) .
$$ 
Since $\Bg^n \to {\bf v}_*$ exponentially fast, the conclusion follows. 
\end{proof}

Let $(S^n_k)^*(x,y)=v_*(x)-x$ be the non affine part of the coordinate changes used to renormaliza $F_*$.

\begin{prop}\label{limit}
 There exists  an absolute constant $\rho\in (0,1)$ such that 
$$
\left|(x + S^n_k(x,y))-(x+(S^n_k)^*(x))\right|=O(\rho^{n},\rho^{n-k}+b^{2^k}\cdot y^2\}), 
$$
with $n>k$.
\end{prop}

The proof is the same as the proof of Proposition 7.8 in \cite{CLM} together with the observation that 
$$
a_F=O(b^{2^k})
$$
when applied to $S^n_k$.

\begin{proof} The image  of the vertical interval  $y\mapsto (0,y)$
under the map $\id + \BS_0^n$ is the graph of a function $w_n:I\to \mathbb{R}$ defined by
$$
w_n(y)=S_0^n(0,y).
$$
By the second part of  Lemma~\ref{ustar} we have: 
$$
\left|(x + S^n_0(x,y))-({\bf v}_*(x) + w_n(y) )\right|
=O(\rho^n).
$$
Let us show that the functions $w_n$ converge to a parabola. 
The identity
$$
   D_0^{n+1}\circ (\id + \BS^{n+1}_0) = \Psi^{n+1}_0=\Psi^n_0\circ \Psi_n= D^n_0\circ(\id +\BS^n_0)\circ D_n\circ (\id+{\bf s}_n),
$$
implies
$$
   \BS^{n+1}_0= {\bf s}_n+ D_n^{-1}\circ \BS^n_0\circ D_n\circ (\id+{\bf S}_n),
$$
so that
\begin{equation}\label{w sub n}
w_{n+1}(y)=s_n(0,y)+ \frac{1}{\alpha_n} S_0^n(\alpha_n s_n(0,y)+\beta_n t_n y,\, \beta_n y),
\end{equation}
where $\alpha_n, \beta_n$ and $t_k$ are the entries of $D_n$, see 
equation~(\ref{Dk}).
The estimate of $\di_y s_n$ from  Lemma \ref{smalls} implies:
\begin{equation}\label{s sub n}
s_n(0,y)=e_n y^2 + O(\bar\eps^{2^n} y^3),
\end{equation}
where $e_n=O(\bar{\epsilon}^{2^n})$.
The estimate of $\di_{xy}^2 S^n_0$ from Lemma \ref {APPsi} implies:
$$
  \frac{\di S_0^n}{\di x}(0,y) = O(\bar\eps^{2^n} y).
$$
Hence
$$
S_0^n(\alpha_n s_n(0,y)+\beta_n t_n y,\, \beta_n y)
$$
$$
= S^n_0(0,\beta_n y)+ 
      \frac{\di S^n_0}{\di x} (0, \beta_n y) (\alpha_n s_n(0,y)+\beta_n t_n y)+O(\bar\eps^{2^n} y^3)
$$
$$
   = S^n_0(0, \beta_n y) + q_n y^2 + O(\bar\eps^{2^n} y^3 )= w_n(\beta_n y) + q_n y^2 + O(\bar\eps^{2^n} y^3 ),
$$
where $q_n= O(\bar\eps^{2^n})$.
Incorporating this and (\ref{s sub n}) into (\ref{w sub n}), we obtain: 
$$
w_{n+1}(y)= \frac{1}{\alpha_n} w_n(\beta_n y)+
             c_n y^2 + O(\bar\eps^{2^n} y^3),
$$
where $c_n=O(\bar{\epsilon}^{2^n})$. 
Writing $w_n$ in the form
$$
w_n(y)=a_ny^2 +A_n(y) y^3, 
$$
we obtain: 
$$
a_{n+1}=\frac{\beta_n^2}{\alpha_n} a_n +c_n
$$
and
$$
\|A_{n+1}\|\le \frac{|\beta_n|^3}{\alpha_n}\|A_n\|+ O(\bar\eps^{2^n}).
$$
Now  the first item of Lemma~\ref{smalls} 
implies that $a_n\to a_F$ and $\|A_n\|\to 0$ exponentially fast.
\end{proof}

}

\begin{lem}\label{deltapsi} There exists $\rho<1$ such that
$$
|\Psi^n_k-(\Psi_k^n)^*|_{C^0}=O(\rho^k).
$$
\end{lem}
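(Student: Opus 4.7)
The plan is to telescope the composition $\Psi^n_k=\psi^{k+1}_v\circ\psi^{k+2}_v\circ\cdots\circ\psi^n_v$ against its one-dimensional counterpart $(\Psi^n_k)^*$, using the two ingredients that already drove the proof of Lemma~\ref{holmt}: exponential convergence of the individual factors and strict contraction of the limit map. First I would record the recursion $\Psi^n_k=\psi^{k+1}_v\circ\Psi^n_{k+1}$ and, observing (as in the proof of Lemma~\ref{holmt}) that the $*$-version $(\psi^j_v)^*=\psi^*_v$ is the same map independent of~$j$, also $(\Psi^n_k)^*=\psi^*_v\circ(\Psi^n_{k+1})^*$.

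Writing $\Delta_k:=|\Psi^n_k-(\Psi^n_k)^*|_{C^0}$ and applying the triangle inequality at the first factor yields
$$
\Delta_k\le |\psi^{k+1}_v-\psi^*_v|_{C^0}+\|D\psi^*_v\|_{C^0}\cdot\Delta_{k+1}.
$$
Theorem~\ref{universality} gives $|R^kF-F_*|=O(\rho^k)$, and since $\psi^{k+1}_v=(\Lambda_k\circ H_k)^{-1}$ is constructed analytically from $R^kF$, this propagates to $|\psi^{k+1}_v-\psi^*_v|_{C^0}=O(\rho^k)$ on the relevant (bounded) domain. The contraction $\|D\psi^*_v\|\le\sigma<1$ is the same one exploited in the proof of Lemma~\ref{holmt}. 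So the recursion reads $\Delta_k\le C\rho^k+\sigma\,\Delta_{k+1}$.

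Iterating down from the base case $\Delta_n=0$ (the empty composition is the identity on both sides) gives
$$
\Delta_k\le C\sum_{j=0}^{n-k-1}\sigma^j\rho^{k+j}=C\rho^k\sum_{j=0}^{n-k-1}(\sigma\rho)^j\le \frac{C\rho^k}{1-\sigma\rho}=O(\rho^k),
$$
which is the claimed bound, and crucially uniform in $n$. The argument is essentially a functional analogue of Lemma~\ref{holmt}.

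The only place I expect any real care is the passage from $|R^kF-F_*|=O(\rho^k)$ to $|\psi^{k+1}_v-\psi^*_v|_{C^0}=O(\rho^k)$: one must check that the implicit data $(H_k,\Lambda_k)$ defining $\psi^{k+1}_v$ depend on $R^kF$ with a modulus of continuity uniform in $k$ on a fixed neighborhood. This is routine given the explicit construction recalled in \S\ref{prelim} (the slopes $s_k$ stay bounded away from $0$ and $\infty$, and $H_k$ is built from the unimodal factor of $R^kF$), so the exponential rate is inherited without loss. Once this is in place, the telescoping estimate above closes the proof.
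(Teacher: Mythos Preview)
Your proof is correct and follows essentially the same approach as the paper's: both telescope the composition $\Psi^n_k=\psi^{k+1}_v\circ\Psi^n_{k+1}$ against its $*$-counterpart, using $|\psi^{k+1}_v-\psi^*_v|_{C^0}=O(\rho^k)$ and the contraction $\|D\psi^*_v\|\le\sigma$, exactly as in Lemma~\ref{holmt}. The only cosmetic difference is that the paper works with the tip-centered versions of the maps, so its base term is $\sigma^{n-k}|z_n-z^*_n|$ with $z_n=z+\tau_n$, $z^*_n=z+\tau^*$ (handled via Lemma~\ref{holmt}), whereas your base case $\Delta_n=0$ corresponds to the uncentered composition~(\ref{Phi}); either way the resulting bound is $O(\rho^k)$.
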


\begin{proof} The proof is a small modification of the proof of 
Lemma \ref{holmt}. Use the same notation: $w_l=v$ for all $l\ge k$.
We have to incorporate the translation which center the maps around the tips.
The estimates in the proof of Lemma \ref{holmt} become
$$
|\Psi^n_k(z)-(\Psi_k^n)^*(z)|
\le O(\rho^k)+\sum_{l=k+1}^n O(\rho^l)\cdot \sigma^{l-k-1}+\sigma^{n-k}\cdot |z_n-z^*_n|,
$$
where $z_n=z+\tau_n$ and $z^*_n=z+\tau^*$.  From Lemma \ref{holmt} we get that
$|z_n-z^*_n|=O(\rho^n)$ and the Lemma follows.
\end{proof}

\subsection{General Notions}

We will use the following general notions and notations throughout the text.

Let $\subset \Bbb{R}^2$ and $Q=[a,a+h]\times [b,b+v]$ be the smallest rectangle containing $X$. Then $h\ge 0$ is the horizontal size of $X$ and $v\ge 0$ the vertical size.

$Q_1\asymp Q_2$ means that $C^{-1}\le Q_1/Q_2\le C$, where $C>0$ is 
an absolute constant or depending on, say $F$. Similarly, we will use $Q_1\gtrsim Q_2$.

\section{Regular Pieces}\label{pieces}

By saying that something depends on the geometry of $F$,
we mean that it depends on the $C^2$-norm of $F$. 
Below, all the constants depend only on the geometry of $F$
unless otherwise is explicitly said.

The tip piece $B^k\equiv B^k_{v^k}$ of level $k\in \N $ contains two pieces of level $k+1$,
the tip one, $B^{k+1}$, and the lateral one,
$$
E^k  =B^{k+1}_{v^{k}c}.
$$
They are illustrated in Figure \ref{figE}, and more schematically, in Figure~\ref{figEsch}.
\begin{figure}[htbp]
\begin{center}
\psfrag{E0}[c][c] [0.7] [0] {\Large $E^0$}
\psfrag{E1}[c][c] [0.7] [0] {\Large $E^1$}
\psfrag{E2}[c][c] [0.7] [0] {\Large $E^2$}
\psfrag{F}[c][c] [0.7] [0] {\Large $F$}
\psfrag{F2}[c][c] [0.7] [0] {\Large $F^2$}
\psfrag{F4}[c][c] [0.7] [0] {\Large $F^4$}
\psfrag{t}[c][c] [0.7] [0] {\Large $\tau$} 
\pichere{0.6}{figEsch} 
\caption{} \label{figEsch}
\end{center}
\end{figure}

For $n\ge k\ge 0$, let
$$
\BB^n[k]\equiv \BB^n(F)[k] =\{B\in \BB^n|\,  B\subset E^k\}.
$$  
We call $k$  the {\it depth} of any piece $B\in \BB^n[k]$. 
A piece $B^n_\omega$ belongs to $\BB^n[k]$ if and only if 
$$
\omega=v^{k}c\omega_{k+2}\dots\omega_n.
$$
Observe 
$$
\mu \left(\bigcup_{B\in \BB^n[k]} B\right) = \mu(E^k) = \frac{1}{2^{k+1}},
$$
where $\mu$ is the invariant measure on $\OO_F$.
%
Let 
$$
G_k = F^{2^k}: \bigcup_{l>k} E^l\to E^k,  \quad k\ge 0. 
$$
Given a piece $B\in \BB^n[k]$, there is a unique sequence 
$$
              k=k_0<k_1<\dots <k_t=n, \quad k_i = k_i(B) 
$$ 
such that
$$
B=G_{k_0}\circ G_{k_1}\circ\cdots \circ G_{k_{t-1}}\circ G_{k_t}(B^n).
$$
To see it, consider the backward orbit $\{F^{-m} B\}$ that brings $B$ to the tip piece  $B^n$.
Let $F^{-m_i}(B)$ be the moments of its closest combinatorial approaches to the tip,
in the sense of the nest $B^0\supset B^1\supset\dots$.  
Then $k_i$ is the depth of $F^{-m_i}(B)$.
Thus,  $F^{-m_i}(B)\in E^{k_i}$, while $F^{-m}(B)\cap B^{k_i}=\emptyset$ for all $m<m_i$, compare with \S \ref{random walk}.  
The pieces 
$$
B^{(i)}:= F^{-m_i}(B) = G_{k_i}\circ \cdots \circ G_{k_{t-1}}\circ G_{k_t}(B^n)\in \BB^n[k_i],
$$
with $i=1,2, \cdots, t$,
are called  {\it predecessors} of $B$.
Let us  view a  piece $B=B^n_{v^{k}c\omega_{k+2}\cdots \omega_n}\in \BB^n[k]$ 
from {\it scale} $k$, i.e., let us consider the following piece $\bB$ of depth $0$ for
the renormalization $F_k \equiv R^k F$: 
\begin{equation}\label{B'}
  \bB=B^{n-k}_{c\omega_{k+2}\cdots \omega_n}(F_k)\in \BB^{n-k}(F_k)[0], \quad \mathrm{so}\ B=\Psi_0^k (\bB),
\end{equation}
see Figure \ref{figregpiece}.

Below, a ``rectangle'' means a rectangle with horizontal and vertical sides.  
Given a piece $B = B^n_\om \in \BB^n$,  let us consider 
the smallest rectangle $Q=Q^n_\om$ containing  $B\cap \OO_F$.
We say that $Q=Q(B)$ is {\it associated with} $B$.

\begin{rem} We are primarily interested in the geometry of the Cantor attractor $\OO_F$.
For this reason we consider  rectangles $Q$ superscribed around $\OO_F\cap B$
rather than the ones superscribed around the actual pieces $B$.
However,  our results apply to the latter rectangles as well.
\end{rem}  

Given $B\in \BB^n[k]$,
let us consider the rectangle $\bQ$ associated to $\bB\in \BB^{n-k}(F_k)$.
Let  $\bh$ and $\bv$  be its  horizontal and  vertical sizes of $\bQ$ respectively.
We also call them the {\it sizes of $B$ viewed from scale $k$}. 
We say that the piece $B\in \BB^n[k]$ is {\it regular} if these sizes are comparable,
or, in other words, if $\mod \bB: = \bh/\bv$ is of order 1: 
\begin{equation}\label{h/v is bounded}
  \frac{1}{C_0}  \le  \mod \bB \le C_0,
\end{equation}
with $C_0=3d_1$, where $d_1>0$ is the bound on $\di F_k/ \di x$  from  
Corollary \ref{bdsFx}
(see~Figure~\ref{figregpiece}).
\begin{figure}[htbp]
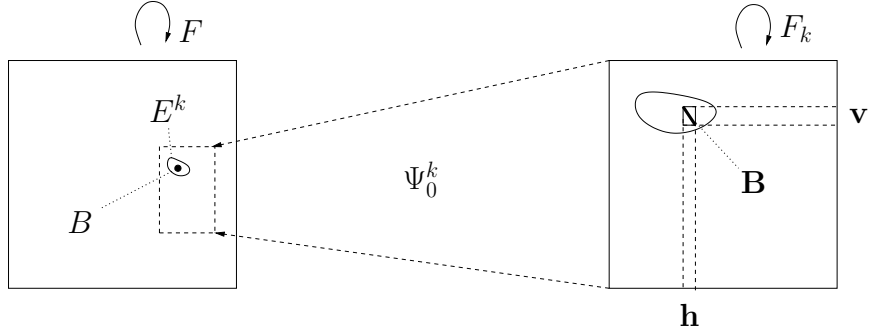

\begin{center}
\psfrag{El}[c][c] [0.7] [0] {\Large $E^k$}
\psfrag{B}[c][c] [0.7] [0] {\Large $B$}
\psfrag{B'}[c][c] [0.7] [0] {\Large $\bB$}
\psfrag{F}[c][c] [0.7] [0] {\Large $F$}
\psfrag{Fl}[c][c] [0.7] [0] {\Large $F_k$}
\psfrag{Ps}[c][c] [0.7] [0] {\Large $\Psi^k_0$}
\psfrag{v}[c][c] [0.7] [0] {\Large $\bv$} 
\psfrag{h}[c][c] [0.7] [0] {\Large $\bh$} 
\pichere{0.9}{regfig} 
\caption{A regular piece} \label{figregpiece}
\end{center}
\end{figure}

Notice that in the degenerate case, $F(x,y)=(f(x),x)$, every piece is regular
since the slope of $f$ in $E^0$ is squeezed in between $d_1$ and $1/d_1$.

Next, we will specify an exponential control function $s(k)=s_\alpha(k)= a2^k-A$, see (\ref{exp control f-n}).
Namely, we let
$$   
 a=  \frac{\ln b}{\ln \sigma}, \quad A = \frac{\ln \alpha}{\ln \sigma},
$$
where $\alpha>0$ is a small parameter.
The actual choice of $\alpha=\alpha(\bar\eps) >0$ depending only on the geometry of $F$ 
will be made in the cause of the paper (see Propositions \ref{regtoreg}, 
\ref{sticktostick}, etc.).

Let $ l(k)= l_\alpha(k) =  s(k) +k.$
If $k\leq l \le l(k)$ we say that the pieces $B\in\BB^n[l]$ are {\it  not too deep in $B^k$}.
The choice of the control function is made so that 
\begin{equation}\label{blalpha}
      b^{2^k}\leq \alpha\, \sigma^{l-k} \quad \mbox{for $l\le l_\alpha(k)$}.
\end{equation}

\begin{prop}\label{regtoreg} There exists $k^*\ge 0$ and $\alpha^*>0$ 
such that for $\alpha<\alpha^*$ and $k\ge k^*$
 the following holds.
If $B\in\BB^n[l]$ is regular and not too deep in  $B^k$, 
$k<l\le l_\alpha(k)$,
then 
$$
     \tilde{B}=G_k(B)\in \BB^n[k]
$$
is regular as well.
\end{prop}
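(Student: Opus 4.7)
The plan is to follow the three-step factorization sketched in Section 2: $\tilde{\bB} = F_k \circ D_k^l \circ (\id + \BS_k^l)(\bB)$, tracking the horizontal size $\bh$ and vertical size $\bv$ of the relevant boxes through each step and showing that at the end $\mod \tilde{\bB} \asymp 1$.

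Step 1 (horizontal diffeomorphism). Let $B_\diff = (\id + \BS_k^l)(\bB)$. Since this map acts only on the first coordinate, $B_\diff$ has the same vertical size as $\bB$. By Lemma \ref{bdsS'}, $|\di_x(x + S_k^l)|$ is pinched between $1/d_0$ and $d_0$, so the horizontal size changes by a bounded multiplicative factor. The mixing term $|\di_y S_k^l| = O(\bar\eps^{2^k})$ contributes at most $O(\bar\eps^{2^k}) \cdot \bv$, which is negligible compared to $\bh \asymp \bv = O(\sigma^{n-l})$. Thus $B_\diff$ still has horizontal and vertical sizes comparable to $\bh$ and $\bv$, and in particular $\mod B_\diff \asymp \mod \bB \asymp 1$.

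Step 2 (linear map). Factor $D_k^l = T \Lambda$ as in \eqref{reshuffling}, with $\Lambda$ diagonal with entries $(\sigma^{2(l-k)}, (-\sigma)^{l-k})(1+O(\rho^k))$ and $T$ the unipotent sheer with off-diagonal $t_k \asymp b^{2^k}$ (Lemma \ref{tilt}). After $\Lambda$, the sizes become $\asymp (\sigma^{2(l-k)}\bh, \sigma^{l-k}\bv)$. The sheer $T$ does not affect the vertical size and adds to the horizontal size a contribution of order $t_k \cdot \sigma^{l-k} \bv \asymp b^{2^k}\sigma^{l-k}\bv$. By the not-too-deep hypothesis \eqref{blalpha}, $b^{2^k} \leq \alpha \sigma^{l-k}$, so this contribution is at most $\alpha \sigma^{2(l-k)}\bv \asymp \alpha\sigma^{2(l-k)}\bh$, a small perturbation once $\alpha$ is small. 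Hence $B_\aff = D_k^l(B_\diff)$ has sizes $\asymp (\sigma^{2(l-k)}\bh, \sigma^{l-k}\bv)$, so its modulus is of order $\sigma^{l-k} \ll 1$: the box has been stretched strongly in the vertical direction.

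Step 3 (the map $F_k$). By Theorem \ref{universality}, $F_k(x,y) = (f_k(x) - b^{2^k} a(x) y (1+O(\rho^k)), x)$. Since the second coordinate of $F_k$ is simply $x$, the new vertical size of $\tilde{\bB} = F_k(B_\aff)$ equals the horizontal size of $B_\aff$, namely $\asymp \sigma^{2(l-k)}\bh$. For the new horizontal size, the $f_k$-term varies by $\asymp \sigma^{2(l-k)}\bh$ across $B_\aff$ by Corollary \ref{bdsFx} (which gives $1/d_1 \le |\di_x f_k| \le d_1$), while the $b^{2^k} a(x) y$-term varies by at most $O(b^{2^k}) \cdot \sigma^{l-k}\bv \le O(\alpha \sigma^{2(l-k)}\bh)$, again by \eqref{blalpha}. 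For $\alpha$ small enough, the horizontal size of $\tilde{\bB}$ is therefore $\asymp \sigma^{2(l-k)}\bh$, and we conclude $\mod \tilde{\bB} \asymp 1$.

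The main delicate point is not the upper bound but the lower bound on the horizontal size of $\tilde{\bB}$: the $b^{2^k} a(x)y$-term could in principle cancel the $f_k(x)$-term. The not-too-deep assumption \eqref{blalpha}, combined with the uniform lower bound $|\di_x f_k| \ge 1/d_1$ from Corollary \ref{bdsFx}, is exactly what is needed to dominate the parasitic $y$-term by the genuine $x$-term. Finally, one chooses $\alpha^* > 0$ and $k^* \in \N$ such that all multiplicative errors ($(1+O(\rho^k))$ in \eqref{reshuffling} and Theorem \ref{universality}, $O(\bar\eps^{2^k})$ in Step~1, and the $O(\alpha)$ perturbations in Steps~2 and~3) combine to keep $\mod \tilde{\bB}$ within the regularity window $[1/C_0, C_0]$ with $C_0 = 3d_1$; this is simply a matter of verifying that the ratio never exceeds the $3d_1$ budget allowed by \eqref{h/v is bounded}.
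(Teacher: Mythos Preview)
Your proof is correct and follows essentially the same three-step factorization as the paper: horizontal diffeomorphism $\id+\BS_k^l$, then the affine part $D_k^l=T\Lambda$, then $F_k$, with the not-too-deep condition \eqref{blalpha} invoked at exactly the same two places (to control the tilt contribution in Step~2 and the parasitic $b^{2^k}a(x)y$ term in Step~3). The paper is slightly more explicit in tracking the constants so that the final bound lands inside $[1/(3d_1),3d_1]$ rather than just ``$\asymp 1$'', but you correctly identify this as bookkeeping in your last paragraph.
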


\begin{proof}
We should view $B$  from scale $l$, 
i.e., consider the piece $\bB \in \BB^{n-l}(F_l)[0]$ defined by (\ref{B'}).
As the puzzle piece  $\tilde{B}=F^{2^k}(B)$ has depth $k$,
it should  be viewed from this depth. So, we consider
\begin{equation}\label{tilde B'}
   \tilde{\bB}\in \BB^{n-k}(F_k)[0], \quad \mathrm{such \quad that}\ \Psi_0^k(\tilde{\bB})=\tilde{B}.
\end{equation}
Observe:
$\tilde{\bB}=F_k\circ \Psi_k^l(\bB)$ (see  Figure \ref{mappiece}).
\begin{figure}[htbp]
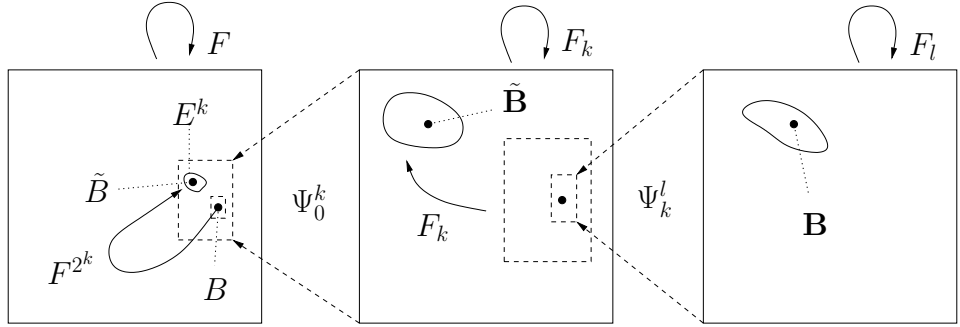

\begin{center}
\psfrag{Ek}[c][c] [0.7] [0] {\Large $E^k$}
\psfrag{B}[c][c] [0.7] [0] {\Large $B$}
\psfrag{B'}[c][c] [0.7] [0] {\Large $\bB$}
\psfrag{B~}[c][c] [0.7] [0] {\Large $\tilde{B}$}
\psfrag{B~'}[c][c] [0.7] [0] {\Large $\tilde{\bB}$}
\psfrag{F}[c][c] [0.7] [0] {\Large $F$}
\psfrag{Fl}[c][c] [0.7] [0] {\Large $F_l$}
\psfrag{Fk}[c][c] [0.7] [0] {\Large $F_k$}
\psfrag{F2k}[c][c] [0.7] [0] {\Large $F^{2^k}$}
\psfrag{Ps0k}[c][c] [0.7] [0] {\Large $\Psi^k_0$}
\psfrag{Pskl}[c][c] [0.7] [0] {\Large $\Psi^l_k$}
\psfrag{v}[c][c] [0.7] [0] {\Large $v$} 
\psfrag{h}[c][c] [0.7] [0] {\Large $h$} 
\pichere{1.0}{mappiece} 
\caption{Pieces $B$ and $\tl B$ viewed from appropriate scales.} \label{mappiece}
\end{center}
\end{figure}

As above, let  $(\bh, \bv)$ be the sizes of $\bB$, 
and let $(\tilde{\bh}, \tilde{\bv})$ be the  sizes of $\tilde \bB$. 
Since $B$ is regular, bound (\ref{h/v is bounded}) hold for $\mod \bB = \bh/\bv$. 
We want to show that the same bound hold for  $\mod \tl \bB = \tl \bh/ \tl \bv$.

The map $\Psi_k^l$ factors into a non-linear and an affine part as described in \S \ref{prelim}:
$$
\Psi_k^l = D_k^l \circ (\id + {\bf S}_k^l).
$$
Figure \ref{mapfac} shows details  of this factorization 
for the map $\Psi^l_k$ from  Figure~\ref{mappiece}.
Let $h_{\text{diff}}$ and $v_{\text{diff}}$ be the sizes of the rectangle $Q_{\text{diff}}$ 
associated with the piece $(\id + {\bf S}_k^l)(\bB)$, see Figure \ref{mapfac}.  
Lemmas \ref{APPsi}(1) and \ref{bdsS'}
imply for $k$ big enough:
$$
  h_{\text{diff}} \leq d_0 \cdot \bh +O(\overline{\epsilon}^{2^k})\cdot \bv \leq 2 d_0\cdot \bh,
$$
where the last estimate takes into account (\ref{h/v is bounded}).
Similarly,
\begin{equation}\label{hdifflow}
   h_{\text{diff}}\geq  \frac 1{2d_0} \bh.
\end{equation}
Moreover, since the map $\id + {\bf S}_k^l$ is horizontal, we have
\begin{equation}\label{vdiffup}
v_{\text{diff}}= \bv\leq C_0\cdot \bh .
\end{equation}

\begin{figure}[htbp]
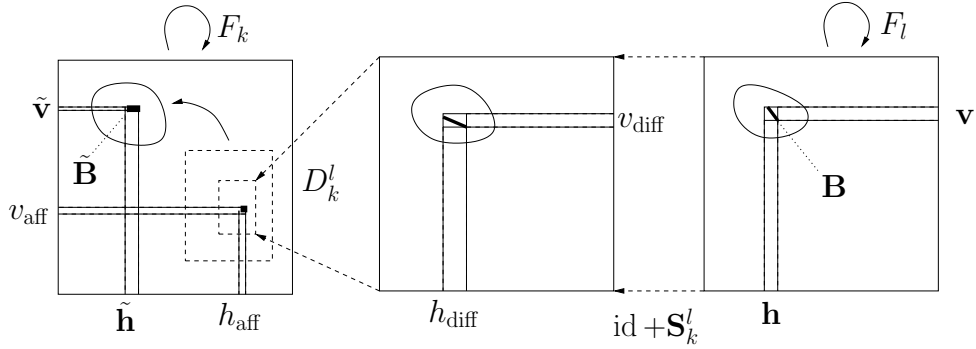

\begin{center}
\psfrag{B'}[c][c] [0.7] [0] {\Large $\bB$}
\psfrag{B~'}[c][c] [0.7] [0] {\Large $\tilde{\bB}$}
\psfrag{Fl}[c][c] [0.7] [0] {\Large $F_l$}
\psfrag{Fk}[c][c] [0.7] [0] {\Large $F_k$}
\psfrag{Fk}[c][c] [0.7] [0] {\Large $F_k$}
\psfrag{D}[c][c] [0.7] [0] {\Large $D^l_k$}
\psfrag{I}[c][c] [0.7] [0] {\Large $\id + {\bf S}_k^l$}
\psfrag{v}[c][c] [0.7] [0] {\Large $v$} 
\psfrag{h}[c][c] [0.7] [0] {\Large $h$}
\psfrag{va}[c][c] [0.7] [0] {\Large $v_{\text{aff}}$} 
\psfrag{ha}[c][c] [0.7] [0] {\Large $h_{\text{aff}}$} 
\psfrag{vd}[c][c] [0.7] [0] {\Large $v_{\text{diff}}$} 
\psfrag{hd}[c][c] [0.7] [0] {\Large $h_{\text{diff}}$}
\psfrag{v~}[c][c] [0.7] [0] {\Large $\tilde{\bv}$} 
\psfrag{h~}[c][c] [0.7] [0] {\Large $\tilde{\bh}$}
\psfrag{v}[c][c] [0.7] [0] {\Large $\bv$} 
\psfrag{h}[c][c] [0.7] [0] {\Large $\bh$}
\pichere{1.0}{mapfac} 
\caption{Factorization of the map $\Psi_k^l$ into horizontal and affine parts. } \label{mapfac}
\end{center}
\end{figure}

Let $h_{\text{aff}}$ and $v_{\text{aff}}$ be the sizes of the  rectangle $Q_{\text{aff}}$ 
associated with the piece $B_{\text{aff}}=\Psi_k^l(\bB)=D_k^l\circ(\id + {\bf S}_k^l)(\bB)$
(which is the piece $B$ viewed from scale $k$).  
Incorporating the above estimates into decomposition (\ref{reshuffling})
and using Lemma \ref{tilt}, we obtain for large $k$ (with $s=l-k$) :
$$
\begin{aligned}
h_{\text{aff}}&\le  (h_{\text{diff}} \cdot \sigma^{2s} +  v_{\text{diff}} \cdot  
    | t_k |\cdot \sigma^s)\cdot (1+O(\rho^k)) \\
& \le   [3d_0 \cdot \sigma^s+ O(b^{2^k})]\cdot \sigma^s \cdot \bh . \\
\end{aligned}
$$
Similarly, we obtain a lower bound for $h_{\text{aff}}$:
$$
h_{\text{aff}} \geq  [\frac 1 {3d_0} \cdot \sigma^s - O(b^{2^k})] \cdot \sigma^s \cdot \bh.
$$
If $B$ is not too deep for scale $k$, then $b^{2^k}\leq \alpha \, \si^s$, and we obtain:
\begin{equation}\label{h_aff}
      h_{\text{aff}}\asymp \sigma^{2s}\cdot \bh, 
\end{equation}
as long as $\alpha$ is small enough (depending on the geometry of $F_k$).

Bounds on $v_{\text{aff}}$ are obtained similarly (in fact, easier):
\begin{equation}\label{v_aff}
v_{\text{aff}} = v_{\text{diff}}\cdot \sigma^{l-k}\cdot (1+O(\rho^k)) = 
  \bv \cdot \sigma^s\cdot (1+O(\rho^k))\asymp \bv\cdot \sigma^s.
\end{equation}
%
%
Thus, 
\begin{equation}\label{mod B_aff}
      \mod B_{\text{aff}}=\mod \Psi_k^l(\bB) \asymp \si^s \mod \bB. 
\end{equation}
it gets roughly aligned with the parabola-like curve inside $E^k$,
which makes its modulus of order 1.   
Furthermore, Theorem \ref{universality} and Corollary \ref{bdsFx} imply, 
for $k$ large enough,  the following bounds on the sizes of  $\tilde{\bB}$: 
$$
\begin{aligned}
  \frac 1{2d_1} h_{\text{aff}} -A_0b^{2^k}\cdot v_{\text{aff}}
                   \leq   \tilde{\bh}&
  \le 2d_1\cdot h_{\text{aff}} +A_0b^{2^k}\cdot v_{\text{aff}},\\
               \tilde{\bv}&= h_{\text{aff}},
\end{aligned}
$$
where $A_0>0$ is an upper bound for $ a(x)\,  (1+ O(\rho^k))$
which controls the vertical derivative of $F_k$. 
Hence
$$
       \mod \tilde \bB \leq 2d_1 + \frac{A_0 b^{2^k}} {\mod  \Psi_k^l(\bB)}  
      \leq 2d_1 + \frac{A_0 b^{2^k}} {\si^s \mod \bB}\leq 2d_1+  A_0 C_0 \alpha\leq 3d_1,
$$
as long as $\alpha$ is small enough.

\begin{rem}
  Notice that $\mod \bB$ appears only in the residual term of the last estimate.
The main term ($2 d_1$) depends only on the geometry of $F$,
which makes the bound for $\mod \tl{\bB}$ as good  as that for $\bB$. 
\end{rem}

The lower estimate, $\mod\tilde \bB \geq (3d_1)^{-1}$, is similar. 
\end{proof}

\comm{***************

\begin{lem}\label{Abvh}  There exists $k^*\ge 0$ and $\alpha^*>0$ 

such that for $\alpha<\alpha^*$ and $k\ge k^*$ and $k<l\le l_k(\alpha)$ the following holds.
$$

\frac{1}{2d_1}-A_0b^{2^k}\frac{v_{\text{aff}}}{h_{\text{aff}}}\ge C_0

$$

and 

$$

A_0b^{2^k}\frac{v_{\text{aff}}}{h_{\text{aff}}}=O(\alpha).

$$

\end{lem}

\begin{proof}  For $k\ge 0$ large enough we have

$$

\begin{aligned}

A_0b^{2^k}\frac{v_{\text{aff}}}{h_{\text{aff}}}&\le 

2 A_0b^{2^k}\cdot \frac{v\cdot \sigma^{l-k}}{[[\frac{1}{d_0}  +O(\overline{\epsilon}^{2^k})]\cdot \sigma^{2(l-k)}+ O(\sigma^{l-k}\cdot b^{2^k})]\cdot h}\\

&\le 2 \frac{A_0}{C_0}b^{2^k}\cdot \frac{1}{[[\frac{1}{d_0}  +O(\overline{\epsilon}^{2^k})]\cdot \sigma^{(l-k)}+ O(b^{2^k})]}\\

&\le 2 \frac{A_0}{C_0}\cdot \frac{\frac{ b^{2^k}}{\sigma^{(l-k)}}}{[[\frac{1}{d_0}  +O(\overline{\epsilon}^{2^k})]+ O(\frac{ b^{2^k}}{\sigma^{(l-k)}})]}\\

&=O(\alpha),

\end{aligned}

$$

when 

$$

\frac{ b^{2^k}}{\sigma^{(l-k)}}\le \alpha

$$

is small enough. This holds when $l\le l_k$. The first inequality holds when $\alpha>0$ is small enough.

\end{proof}

This preparation allows us to prove that $\tilde{B}$ is also regular. Namely, for $k\ge 0$ large enough and $\alpha>0$ small enough, Lemma \ref{Abvh} gives

$$

\frac{\tilde{h}}{\tilde{v}}\ge \frac{1}{2d_1}-A_0b^{2^k}\frac{v_{\text{aff}}}{h_{\text{aff}}}

\ge C_0.

$$

Similarly, one shows  $\frac{\tilde{h}}{\tilde{v}}\le 3d_1=\frac{1}{C_0}$.

We finished the proof of Proposition \ref{regtoreg}.

**************}

\section{Sticks}\label{stic}

Let us consider a piece $B\in \BB^n[l]$ and the corresponding piece $\bB\in \BB^{n-l}(F_l)[0]$, 
see (\ref{B'}) and Figure \ref{figregpiece}. In the degenerate case, most   
of the pieces  $\bB\cap \OO_{F_l}$ get squeezed in a narrow strip  around the diagonal
of the associated rectangle $\bQ$. 
We will show that this is also the case for many pieces of H\'enon perturbations.
To this end, let us quantify the thickness of the pieces in question. 

Let us first introduce two  standard  strips of thickness $\de$:  
$$
\Delta_{\delta}^\pm =\{(x,y)\in [0,1]^2\, | \, \,|y \pm  x|\le \frac{\delta}{2}\}
$$
(oriented ``north-west'' and ``north-east'' respectively.)

Given a piece $B\in \BB^n$ and the associated rectangle $Q=Q(B)$, 
let $L: [0,1]^2\ra Q$ be the diagonal affine map. 
Let $\De(B)= L(\De_\de)^\pm$, where:

\ssk\nin
$\bullet$  we select the ``$+$''-sign if $B$ comes from the upper branch of the parabola $x=f(y)$, 
and ``$-$''-sign otherwise. 

\ssk\nin $\bullet$  
 $\de=\de_B$ is selected to be the smallest one such that  $\De(B)\supset B\cap \OO$. 

\ssk This $\de_B$ is called the {\it (relative) thickness} of $B$. 
The horizontal size  $h\de_B$ of $\De(B)$ is called the {\it absolute thickness} of $B$.
$\De(B)$ is called the associated diagonal strip.
We let $\BDe \equiv \Delta_\bB$ and call it the  {\it regular stick} associated with $B$, 
see Figure \ref{wid}.  

\begin{figure}[htbp]
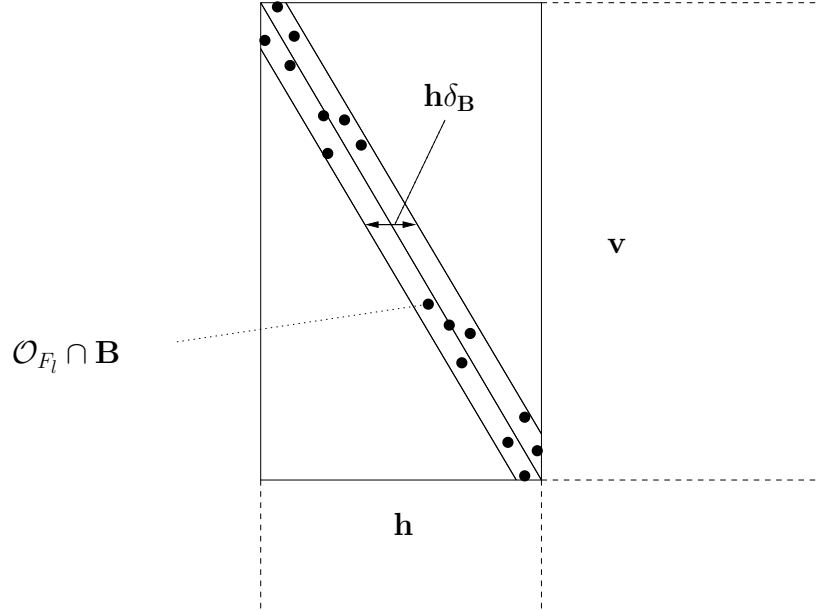

\begin{center}
\psfrag{O}[c][c] [0.7] [0] {\Large $\OO_{F_l}\cap \bB$}
\psfrag{v}[c][c] [0.7] [0] {\Large $\Bv$}  
\psfrag{h}[c][c] [0.7] [0] {\Large $\Bh$}
\psfrag{hd}[c][c] [0.7] [0] {\Large $\Bh\delta_\bB$}
\pichere{0.8}{wid} 
\caption{Regular stick} \label{wid}
\end{center}
\end{figure}

\begin{prop}\label{sticktostick}  
There exists $k^*\ge 0$ and $\alpha^*>0$ such that for $\alpha<\alpha^*$ and $k^*\leq k$ the following holds.
If $B\in\BB^n[l]$ is regular and not too deep in $B^k$, $k<l\le l_\alpha(k)$,
then 
$$
\delta_{\tilde{\bB}}\le \frac12 \cdot \delta_\bB+O(\sigma^{n-l}),
$$
where $\tilde{B}=G_k(B)\in \BB^n[k]$ and $\tilde{\bB}=F_k(\Psi^l_k(\bB))$.
\end{prop}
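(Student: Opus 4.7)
Following the outline in \S\ref{out}, I would factor the change of variables as
$$
\Psi^l_k=D^l_k\circ(\id+\BS^l_k),\qquad D^l_k=T_k\circ\Lambda^l_k,
$$
with $T_k$ the unipotent shear and $\Lambda^l_k$ the diagonal piece in (\ref{reshuffling}), and track the relative thickness of the Cantor set piece through the three intermediate bodies
$$
B_{\diff}=(\id+\BS^l_k)(\bB),\quad B_{\aff}=D^l_k(B_{\diff}),\quad \tilde\bB=F_k(B_{\aff}).
$$

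\emph{Stage 1 (horizontal diffeomorphism).} By Lemma~\ref{APPsi}, $|\partial_y S^l_k|=O(\bar\eps^{2^k})$ and $|\partial_{xx}^2 S^l_k|=O(1)$, so $\id+\BS^l_k$ is essentially a one-variable $C^2$-map in $x$ of uniformly bounded geometry. A bounding diagonal line of the stick $\Delta_\bB$ is therefore carried to a curve whose deviation from the best-fitting diagonal chord is quadratic in the horizontal diameter of $\bB$, that is $O((\diam\bB)^2)=O(\sigma^{2(n-l)})$ in absolute terms, hence $O(\sigma^{n-l})$ in relative thickness (using Lemma~\ref{bdsS'} to convert absolute to relative). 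So $\de_{B_{\diff}}\le \de_\bB+O(\sigma^{n-l})$.

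\emph{Stage 2 (affine map).} The diagonal $\Lambda^l_k$ commutes with normalization of rectangles to the unit square, so it \emph{preserves} relative thickness exactly; only the shear $T_k$ can distort the stick. By Lemma~\ref{tilt} the shear entry satisfies $|t_k|\asymp b^{2^k}$, and the ``not too deep'' condition (\ref{blalpha}) gives $b^{2^k}\le \alpha\sigma^{l-k}$. Computing directly, the horizontal displacement that $T_k$ adds across $\Lambda^l_k(B_{\diff})$ is $O(|t_k|v_{\aff})$, whereas the horizontal size is $h_{\aff}\asymp\sigma^{2(l-k)}\bh$ from (\ref{h_aff})--(\ref{v_aff}); the ratio is $O(b^{2^k}/\sigma^{l-k})\cdot(v_{\diff}/h_{\diff})=O(\alpha)$ by regularity of $\bB$. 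This yields $\de_{B_{\aff}}\le C_1(\de_\bB+O(\sigma^{n-l}))$ with $C_1$ universal.

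\emph{Stage 3 (H\'enon contraction).} By Theorem~\ref{universality} the map $F_k$ has the form $(f_k(x)-b^{2^k}a(x)y(1+O(\rho^k)),x)$. Its vertical contraction is of order $b^{2^k}$, while its horizontal action has derivative bounded away from $0$ and $\infty$ by Corollary~\ref{bdsFx}. Thus the absolute thickness of the image strip is dominated by $b^{2^k}$ times the vertical extent of $B_{\aff}$, plus a curvature contribution from $f_k$ which is $O((\diam B_{\aff})^2)$ and hence contributes $O(\sigma^{n-l})$ after normalization by $\diam\tilde\bB\asymp\sigma^{l-k}\bh$ (Proposition~\ref{regtoreg}). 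Dividing absolute thickness by the horizontal size of $\tilde\bB$ and invoking $b^{2^k}\le\alpha\sigma^{l-k}$ once more, the relative thickness picks up a multiplicative factor $O(\alpha)$ on top of $\de_{B_{\aff}}$; choosing $\alpha^*$ small enough (depending only on the geometry of $F$) so that $C_1\cdot C_2\cdot\alpha\le 1/2$ yields the desired inequality
$\de_{\tilde\bB}\le\tfrac12\de_\bB+O(\sigma^{n-l})$.

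\emph{Main obstacle.} The delicate point is Stage~3: both the orientation and the base rectangle change at each stage (the shear $T_k$ tilts the strip, then $F_k$ effectively ``rotates'' it into the horizontal parabola), so one must redefine carefully what the new diagonal strip and new associated rectangle are and verify that the Hausdorff distance to the Cantor set stays within the claimed bound. The factor $1/2$ is not automatic; it requires that the small prefactor from $b^{2^k}/\sigma^{l-k}\le\alpha$ absorb the universal geometric constants $C_1,C_2$ produced by stages~1 and~2, which is exactly where the choice of $\alpha^*$ enters.
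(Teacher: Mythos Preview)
Your approach is essentially the same as the paper's: factor $\Psi^l_k$ into its horizontal and affine parts, track the relative thickness through $B_\diff$, $B_\aff$, and $\tilde\bB$, and exploit the strong vertical contraction of $F_k$ together with the control $b^{2^k}\le\alpha\sigma^{l-k}$ to gain the factor $O(\alpha)$ in Stage~3.

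There is one imprecision in Stage~3 worth tightening. You write that the absolute thickness of the image strip is dominated by ``$b^{2^k}$ times the vertical extent of $B_\aff$''. If ``vertical extent'' means $v_\aff$, the resulting bound is only $b^{2^k}v_\aff/h_\aff\asymp b^{2^k}\sigma^{-(l-k)}=O(\alpha)$, a constant independent of $\de_{B_\aff}$; that would \emph{not} yield $\tfrac12\de_\bB+O(\sigma^{n-l})$ when $\de_\bB$ is already small. What you need is the \emph{vertical thickness} $u_\aff$ of the diagonal strip $\De(B_\aff)$: at each fixed $x$ the strip has vertical extent $\le u_\aff$, and $F_k$ contracts this to horizontal width $O(b^{2^k})u_\aff$. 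The paper records the key relation (\ref{u_aff}),
\[
u_\aff\asymp \sigma^{-(l-k)}\,w_\aff,
\]
coming from the steep slope of $B_\aff$; combined with $b^{2^k}\sigma^{-(l-k)}\le\alpha$ this gives $b^{2^k}u_\aff/h_\aff=O(\alpha)\,\de_\aff$, which is the multiplicative factor you claim. Your final conclusion is correct, but make explicit that it is $u_\aff$, not $v_\aff$, that enters. (Also, $\diam\tilde\bB\asymp h_\aff\asymp\sigma^{2(l-k)}\bh$, not $\sigma^{l-k}\bh$; this slip is harmless for the curvature term.)
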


\begin{proof}
We will use the notation from \S \ref{pieces}.  
Let $\Bde\equiv \delta_\bB$,
and let  $\Bw=\Bde \cdot \Bh$ be the absolute thickness  of  $\bB$.  
The relative thickness of $\tilde{\bB}$ is denoted by  $\tilde{\Bde}\equiv \de_{\tilde\bB}$. 
To estimate $\tilde{\Bde}$, we will decompose $\Psi_k^l$ as in \S \ref{pieces}.  
Let $w_{\text{diff}}$ be the absolute thickness of  $B_\diff \equiv (\id + {\bf S}_k^l) (\bB)$. 
Then
\begin{equation}\label{wdiff} 
  w_{\text{diff}} =  O( \Bw +  \Bh\cdot\sigma^{n-l}).
\end{equation}
Indeed, let $\Gamma_y$ be the horizontal section of 
$(\id + {\bf S}_k^l)(\Delta_\bB)$ on height $y$, 
and let $\boldsymbol{\Gamma}_y= (\id + {\bf S}_k^l)^{-1} (\Gamma_y)$. 
Then 
$$
  |\Gamma_y|\leq    |\BGa_y|\cdot \|\id+{\bf S}_k^l\|_{C^1}  = O(\Bw), 
$$ 
where the last estimate follows from Lemma \ref{APPsi}(1). 

Furthermore, let us consider a  boundary curve of 
$(\id + {\bf S}_k^l)(\Delta_\bB)$. 
Its horizontal deviation from any of its tangent lines is bounded by
\begin{equation}\label{curvature}
   \frac 12  \|\id+{\bf S}_k^l\|_{C^2}\cdot (\diam \bB)^2   = O(\si^{n-l})\cdot \Bh , 
\end{equation}
where the last estimate  follows from  Lemma \ref{APPsi} (2), bounded modulus of $\bB$ (\ref{h/v is bounded}), 
and Lemma \ref{contracting}.
Hence 
$$
  w_\diff \leq \max_y|\Gamma_y| + O(\si^{n-l})\cdot \Bh,
$$
 and (\ref{wdiff}) follows.   
Together with (\ref{hdifflow}), it implies:
\begin{equation}\label{de_diff}
    \de_\diff = O(\Bde + \si^{n-l}). 
\end{equation}
\def\diag{{\mathrm{diag}}}
Let us now consider the piece 
$B_\aff\equiv \Psi^l_k(\bB)=D^l_k(B_\diff)$,  see Figure~\ref{mapfac}. 
Let $D^l_k=  T \circ \La$, where $\La=\La^l_k$ and $T=T^l_k$ are the diagonal and sheer parts of $D^l_k$ respectively, see  (\ref{reshuffling}).
Let us consider a box $B_\diag= \La(B_\diff)$,
 and let  $h_\diag=\si^{2(l-k)}h_\diff $ and  $v_\diag=\si^{l-k} v_\diff$ be its horizontal and vertical sizes.  
Since diagonal  affine maps preserve the horizontal thickness,
the thickness is only effected by the sheer part  $T^l_k$, which has order $t_k\asymp b^{2^k}$, 
see Lemma \ref{tilt}, namely: 
\begin{equation}\label{de_aff}
\begin{aligned}
    \de_\aff &\le \delta_{\diff}\cdot \frac{1}{1-\frac{v_{\diag}}{h_{\diag}} \cdot t_k}\\
    &=\delta_{\diff} \cdot \frac{1}{1-\frac{v_{\diff}}{h_{\diff}} \cdot \si^{-(l-k)} \cdot t_k}  \\ 
    &=O(\de_\diff) =  O(\Bde + \si^{n-l}). 
\end{aligned}
\end{equation}
where the passage to the last line comes from  (\ref{blalpha}), (\ref{hdifflow}), (\ref{vdiffup}) and Lemma \ref{tilt}.
Let us also consider the absolute {\it vertical thickness}  $u_{\text{aff}}$ of $B_\aff$,
i.e., the vertical size of the stick $\De(B_\aff)$.  From triangle similarity, we have:
$$
\frac{u_\aff} {v_\aff}=
\frac{w_\aff}{h_\aff} 
$$
So
\begin{equation}\label{u_aff}
u_\aff= \frac {w_\aff}{\mod B_\aff} \asymp  \frac {w_\aff}{\si^s \mod \bB} \asymp \si^{-s} \cdot w_\aff,\quad s=l-k,
\end{equation}
where the last estimate follows from regularity of $B$ while the previous one comes from (\ref{mod B_aff}).

We are now prepared to apply the map $F_k : (x,y)\mapsto (f_k(x)-\epsilon_k(x,y), x)$,
where $\|\eps_k\|_{C^2}= O(2^{b^k})$, see  Theorem \ref{universality}. 
Let $\tilde{\Bw}$ be the absolute thickness of $\tilde{\bB}$. 
By (\ref{h_aff})--(\ref{v_aff}),
the rectangle $Q_\aff$  associated with $B_{\text{aff}}$ has sizes
$$
 v_{\text{aff}}\asymp \sigma^{s}\bv \quad  \mathrm{and}\  
 h_{\text{aff}}\asymp \sigma^{2s}\bh.
$$                                   
Let us use  affine parametrization for the diagonal $Z$ of $B_\aff$:
$$
 x=x_0+t, \quad y=y_0+\frac{C}{\sigma^s}t,\quad 0\le t\le h_{\text{aff}},
$$
where  $x_0,y_0$ is its corner  where the stick $\De_\aff$ begins.
Restrict $F_k$ to this diagonal:
$$
F_k(x(t),y(t))=(A+Bt+E(t), x_0+t),
$$
where $E(t)$ is the second order deviation of $F_k(Z)$ from the straight line. 
We obtain:
$$
\begin{aligned}
E(t)&=O(\|\frac{\partial^2 (f_k-\epsilon_k)}{\partial x^2}\|+
\|\frac{\partial^2 \epsilon_k}{\partial xy}\cdot \sigma^{-s}\|+
\|\frac{\partial^2 \epsilon_k}{\partial y^2}\|\cdot (\sigma^{-s})^2))\cdot h_{\text{aff}}^2\\
&=O(h_{\text{aff}}+b^{2^k}\sigma^{-s}h_{\text{aff}}+
(b^{2^k}\sigma^{-s})\cdot (\sigma^{-s}h_{\text{aff}}))\cdot h_{\text{aff}}.
\end{aligned}
$$
From Lemma \ref{contracting} we have $h_{\text{aff}}=O(\sigma^{n-k})$. 
Hence,
$$
\begin{aligned}
E(t)&=O(\sigma^{n-k}+b^{2^k}\sigma^{-(l-k)+n-k}+
\alpha\cdot \sigma^{-(l-k)+n-k)})\cdot h_{\text{aff}}\\
&=O(\sigma^{n-l})\cdot h_{\text{aff}}
\end{aligned}
$$
where we used that $l$ is not too deep for $k$, i.e.  $b^{2^k}\sigma^{-s}\le \alpha$.
It follows that 
$$
\begin{aligned}
\tilde{\Bw} &= O(\sigma^{n-l}\cdot h_{\text{aff}}+b^{2^k}\cdot u_{\aff})\\
&= O(\sigma^{n-l}\cdot h_{\text{aff}}+b^{2^k}\sigma^{-s}\cdot w_{\aff}) \\
&= O(\sigma^{n-l}\cdot h_{\text{aff}}+ \alpha \cdot w_{\aff}) ,
\end{aligned}
$$
where we used (\ref{u_aff}).

\begin{rem}
This was the moment where the thickness improves.
\end{rem}

From the regularity of $\tilde{\bB}$ we get
$\tilde{\bh}\asymp \tilde{\bv}=h_{\text{aff}}$. Thus,
$$
\begin{aligned}
\tilde\Bde &=O(\sigma^{n-l}+\alpha\cdot \de_\aff )\\
&= O(\alpha\cdot \Bde + \si^{n-l})\\
\end{aligned}
$$
where we used (\ref{de_aff}). 
The Proposition follows
as long as $\alpha$ is sufficiently small.
\end{proof}

\section{Scaling}\label{scaling}

Let $B\in \BB^n[k]$ and $\hat{B}\in\BB^{n-1}[k]$ with $B\subset \hat{B}$. 
Say,
$$
B=B^n_{\omega\nu}\subset \hat{B}=B^{n-1}_{\omega}\subset E^k.
$$
Let $\bB$ and $\hat{\bB}$ be the corresponding rescaled pieces, 
so  $B=\Psi_0^k(\bB)$ and $\hat{B}=\Psi_0^k(\hat{\bB})$. 
The horizontal and vertical sizes of the associated rectangles are called $\Bh,\Bv>0$ and $\hat{\Bh}, \hat{\Bv}>0$ respectively.

The {\it scaling number} of $B$ is 
$$
\sigma_\bB=\frac{\Bv}{\hat{\Bv}}.
$$

\begin{figure}[htbp]
\begin{center}
\psfrag{OB}[c][c] [0.7] [0] {\Large $\OO_{F_k}\cap \bB$}
\psfrag{OhatB}[c][c] [0.7] [0] {\Large $\OO_{F_k}\cap \hat{\bB}$}
\psfrag{hatv}[c][c] [0.7] [0] {\Large $\hat{\Bv}$}
\psfrag{hath}[c][c] [0.7] [0] {\Large $\hat{\Bh}$}
\psfrag{v}[c][c] [0.7] [0] {\Large $\Bv=\sigma_\bB \hat{\Bv}$} 
\psfrag{h}[c][c] [0.7] [0] {\Large $\Bh$} 
\psfrag{hd}[c][c] [0.7] [0] {\Large $\delta_{\hat{\bB}}\hat{\Bh} $}
\pichere{0.8}{sca} 
\caption{} \label{sca}
\end{center}
\end{figure}

\begin{rem}\label{prosca}
The scaling number can be expressed directly in terms of the original pieces $B$ and $\hat{B}$. 
Indeed, since the diffeomorphism $\Psi^k_0$ is a horizontal map, 
we have $ \sigma_\bB= v/\hat v$, where $v$ and $\hat{v}$ are the vertical sizes of $B$ and $\hat{B}$.
We will use the notation $\sigma_B$ when we refer to the corresponding measurement in the domain of $F$. 
This formal distinction will only play a role in (\ref{deltasigmapro}).
\end{rem}

\begin{rem}
There are many  possible ways to define the scaling number. 
The proof of the probabilistic universality will show that the  relative thickness of most pieces  asymptotically vanishes. 
Because of this, most definitions of the scaling  number become equivalent. 
\end{rem}

For $B= B^n_{\omega\nu}(F)$ as above, 
let $B^*= B^n_{\omega\nu}(F_*)$  be the corresponding degenerate
piece for the renormalization fixed point $F_*$.
The {\it proper scaling} for $B$ is
$$
\sigma^*_{\bB}=\sigma_{B^n_{\omega\nu}(F_*)}.
$$
 The function
$$
\underline{\sigma}: B\mapsto \sigma_\bB
$$
is called the {\it scaling function} of $F$. The universal scaling function 
$\underline{\sigma}^*$ of $F_*$  is injective, as was shown in \cite{BMT}.

\begin{rem}\label{1Dscalingstar} Given a piece $B\in \BB^{n+1}(F^*)$. Let $\hat{B}^*\in \BB^n(F_*)$ which contains $B$. 
For some $\hat{i}<2^n$ we have
$$
\pi_1(\hat{B}^*)=I^*_{\hat{i}}(n)\in \CC_n.
$$
Similarly, $\pi_1(B^*)=I^*_{i}(n+1)\in \CC_{n+1}$, for $i=\hat{i}$ or $i=\hat{i}+2^n$. The scaling ratios $\sigma_\bB$ are 
vertical measurements of pieces. Using that H\'enon like maps take vertical lines to horizontal lines, $y'=x$, we have
$$
\sigma^*_{\bB}=\frac{|I^*_{i-1}(n+1)|}{|I^*_{\hat{i}-1}(n)|}.
$$
\end{rem}

\begin{prop}\label{scapush}
 There exists $k^*\ge 0$ and $\alpha^*>0$ such that for $\alpha<\alpha^*$ and $k\ge k^*$ the following holds.
If a piece $B\in\BB^n[l]$ is regular and  not too deep for $E_k$, i.e. $k<l\le l_\alpha(k)$, 
then 
$$
\sigma_{\tilde{\bB}}= \sigma_\bB + O(\delta_{\hat{\bB}}+\sigma^{n-l}),
$$
where $\tilde{B}=G_k(B)\in \BB^n[k]$ and $B\subset \hat{B}=\Psi^l_0(\hat{\bB})\in \BB^{n-1}[l]$.
\end{prop}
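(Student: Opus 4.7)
The plan is to reduce $\sigma_{\tilde\bB}$ to a ratio of horizontal sizes and exploit a cancellation in the linearization of $\Psi^l_k$. Since $F_k(x,y)=(\cdot,x)$, the vertical size of $\tilde\bB=F_k(B_\aff)$ equals the horizontal size $h_\aff$ of $B_\aff=\Psi^l_k(\bB)$, and similarly for $\hat\bB$; hence $\sigma_{\tilde\bB}=h_\aff/\hat h_\aff$. Writing $s=l-k$, the task reduces to showing
$$h_\aff/\hat h_\aff=\sigma_\bB+O(\delta_{\hat\bB}+\sigma^{n-l}).$$

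First I would translate $\sigma_\bB=\bv/\hat\bv$ into a horizontal statement. Since $\bB\cap\OO_{F_l}\subset\Delta_{\hat\bB}$, the strip parameterization gives $x=x_0\pm(\hat\bh/\hat\bv)(y-y_0)+\eta$ with $|\eta|\le\hat\bh\delta_{\hat\bB}/2$; the two vertical extremes of $\bB$ (whose spread is $\bv$) then force $\bh=(\hat\bh/\hat\bv)\bv+O(\hat\bh\delta_{\hat\bB})$, i.e., $\bh/\hat\bh=\sigma_\bB+O(\delta_{\hat\bB})$.

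Next I would linearize $\Psi^l_k=D^l_k\circ(\id+{\bf S}^l_k)$ at a reference point $z_0\in\hat\bB$. Combining Lemma \ref{APPsi}(2) with $\diam\hat\bB=O(\sigma^{n-l})$ from Lemma \ref{contracting}, the second-order remainder in the horizontal component has absolute size $O(\sigma^{2s}\cdot\sigma^{2(n-l)})=O(\sigma^{2(n-k)})$. The first row $(a,b)$ of the linear part satisfies $a=\sigma^{2s}\lambda(1+O(\rho^k))$ with $\lambda=1+\partial_x S^l_k(z_0)\in[1/d_0,d_0]$ by Lemma \ref{bdsS'}, while $b=\sigma^{2s}\partial_y S^l_k(z_0)+t_k(-\sigma)^s(1+O(\rho^k))$; Lemma \ref{APPsi}(1), Lemma \ref{tilt}, and (\ref{blalpha}) give $|b|/a=O(\bar\eps^{2^k}+b^{2^k}/\sigma^s)=O(\alpha)$. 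Substituting the diagonal parameterization into $ax+by$ yields an affine function of $y$ alone, with slope $C:=\pm a\hat\bh/\hat\bv+b$ and oscillation $|C|\bv+O(a\hat\bh\delta_{\hat\bB}+\sigma^{2(n-k)})$ over $\bB$; the identical formula holds for $\hat\bB$ with $\bv$ replaced by $\hat\bv$ and the \emph{same} coefficient $C$, since $C$ is determined by $D^l_k$ and the diagonal slope of $\hat\bB$ alone.

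Dividing the two estimates, $C$ cancels exactly; using $|C|\hat\bv=a\hat\bh(1+O(\alpha))\asymp\hat h_\aff$ (with $\hat h_\aff\asymp\sigma^{2s}\hat\bh$ and $\hat\bh\asymp\sigma^{n-l}$), the absolute errors $a\hat\bh\delta_{\hat\bB}$ and $\sigma^{2(n-k)}$ become relative errors $\delta_{\hat\bB}$ and $\sigma^{n-l}$, yielding the required estimate. The main obstacle is the shear contribution $t_k(-\sigma)^s$ to $b$: because $|b|/a\asymp\alpha$ is \emph{not} controlled by $\sigma^{n-l}$, a naive treatment of the shear would leave an $O(\alpha)$ error in the ratio. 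The resolution is precisely the cancellation of $C$ between numerator and denominator, made possible because both $\bB$ and its parent inherit the diagonal slope $\pm\hat\bh/\hat\bv$ from $\Delta_{\hat\bB}$; the shear-corrupted coefficient $C$ acts identically on both pieces in the main term and disappears in the ratio, leaving only the strip-width and second-order errors.
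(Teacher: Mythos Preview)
Your argument is correct, but the paper's proof is considerably shorter and more conceptual. The key observation you are working around rather than exploiting directly is that $\sigma_\bB$ is \emph{exactly} preserved by $\Psi^l_k$: since $(\id+{\bf S}^l_k)$ is horizontal and $D^l_k$ scales the vertical by a single factor, one has $v_\aff/\hat v_\aff=\bv/\hat\bv=\sigma_\bB$ on the nose (this is Remark~\ref{prosca}). The paper then applies the diagonal-strip geometry at the $B_\aff$ level: letting $\gamma$ be the angle of the diagonal of $\hat Q_\aff$ with the vertical, one has $v_\aff\tan\gamma=\hat h_\aff\cdot\sigma_\bB$, and since $B_\aff$ lies in the strip of $\hat B_\aff$, $|h_\aff-v_\aff\tan\gamma|\le\hat w_\aff$; dividing by $\hat h_\aff$ gives $|\sigma_{\tilde\bB}-\sigma_\bB|\le\hat\delta_\aff$, and the estimate $\hat\delta_\aff=O(\delta_{\hat\bB}+\sigma^{n-l})$ is simply cited from (\ref{de_aff}) in the proof of Proposition~\ref{sticktostick}.

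Your approach rediscovers this through explicit linearization: the ``cancellation of $C$'' you identify is precisely the analytic manifestation of the exact preservation of vertical ratios under $\Psi^l_k$, and your absolute error terms $a\hat\bh\delta_{\hat\bB}$ and $\sigma^{2(n-k)}$ are, after division, the same $\hat\delta_\aff$ the paper invokes. Your route has the merit of being self-contained (it does not defer to \S\ref{stic}), but it re-proves the thickness propagation inline rather than reusing it. One small point: to turn the second-order remainder $\sigma^{2(n-k)}$ into the relative error $\sigma^{n-l}$ you use $\hat\bh\asymp\sigma^{n-l}$, which needs regularity of $\hat B$ (not just of $B$); the paper's proof implicitly uses the same, via (\ref{de_aff}) applied to $\hat B$, and in the application (Lemma~\ref{regPP}) all predecessors are indeed regular.
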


\begin{proof} As above in \S \ref{pieces}, let $h_{\text{aff}}$ stand for the horizontal length of $B_\aff =\Psi_k^l(\bB)$, see Figure \ref{mapfac}. 
We will use the similar notation  $\hat{h}_{\text{aff}}$ and $\hat{w}_{\text{aff}}$ for the corresponding measurements of  
the piece $\hat B_\aff : = \Psi_k^l(\hat{\bB})$.
\begin{figure}[htbp]
\begin{center}
\psfrag{O}[c][c] [0.7] [0] {\Large $\OO_{F_k}\cap \hat{B}_{\text{aff}}$}
\psfrag{hatvaff}[c][c] [0.7] [0]
 {\Large $\hat{v}_{\text{aff}}$}
\psfrag{hathaff}[c][c] [0.7] [0] {\Large $\hat{h}_{\text{aff}}$}
\psfrag{va}[c][c] [0.7] [0] 
{\Large $v_{\text{aff}}=\sigma_\bB\hat{v}_{\text{aff}}$} 
\psfrag{ha}[c][c] [0.7] [0] {\Large $h_{\text{aff}}$} 
\psfrag{sighathaff}[c][c] [0.7] [0] {\Large $\sigma_{\bB}\hat{h}_{\text{aff}}$}
\psfrag{Dsig}[c][c] [0.7] [0]
 {\Large $|\sigma_{\tilde{\bB}}-\sigma_\bB|\hat{h}_{\text{aff}} $}
\psfrag{w}[c][c] [0.7] [0] {\Large $\hat{w}_{\text{aff}} $}
\pichere{0.85}{figsca} 
\caption{} \label{figsca}
\end{center}
\end{figure}

Since $F_k$ maps vertical lines to horizontal lines, we have
$$
\sigma_{\tilde{\bB}}=\frac{h_{\text{aff}}}{\hat{h}_{\text{aff}}}.
$$
Let $\gamma$ be the angle between the diagonal of $\hat B_\aff$ and the vertical side, so $\tg\gamma = \mod \hat B_\aff$.
Then
$$
      v_\aff \cdot \tg \gamma =  \hat h_\aff\,  \frac{v_\aff}{\hat v_\aff} =  \hat h_\aff\cdot \si_\bB,
$$
Now Figure \ref{figsca} shows:
$$
   |h_\aff - v_\aff \cdot \tg \gamma|\leq  \hat w_\aff. 
$$
Dividing by $\hat h_\aff$ (taking into account the two previous formulas and definition of the  relative thickness
$\hat \de_\aff= \hat w_\aff/\hat h_\aff$), we obtain:
$$
   | \sigma_{\tilde{\bB}} - \sigma_\bB| \leq   \hat{\de}_{\text{aff}}.
$$
Now the Proposition follows from (\ref{de_aff}).
\end{proof}

\section{Universal Sticks}\label{pres}

\subsection{Definition and statement} \label{defstat}

Let us consider a piece $B\in \BB^n$ and the two pieces $B_1, B_2\in \BB^{n+1}$ of level $n+1$  contained in $B$.
Rotate it  to make it horizontal  and then rescale it to horizontal size 1; denote the corresponding linear conformal map by $A$.  
Let  $\delta, \sigma_{B_1}, \sigma_{B_2}\ge 0$ be the smallest numbers such that:
\begin{itemize}
\item[(1)] $A(B\cap \OO_F)\subset [0,1]\times [0,\delta]$,
\item[(2)]$A(B_1\cap \OO_F)\subset [0,\sigma_{B_1}]\times [0,\delta]$,
\item[(3)]$A(B_2\cap \OO_F)\subset [1-\sigma_{B_2},1]\times [0,\delta]$,
\end{itemize}
for the optimal choice of $A$. The numbers $\sigma_{B_1}$, and  $\sigma_{B_2}$ are called {\it scaling factors} of $B_1$ and $B_2$. 

\begin{rem}\label{scaprec} The scaling factor $\sigma_\bB$ of a piece $B$ is a measurement of the corresponding $\bB$. 
The scaling factor $\sigma_B$ of $B$ reveres to measurements of the actual piece in the domain of $F$. The difference between the scaling factors $\sigma_B$ and $\sigma_\bB$ is estimated in Proposition \ref{precision}.
\end{rem}

We say that $B$ is {\it $\epsilon$-universal}  if 
$$
|\sigma_{B_1}-\sigma^*_{\bB_1}|\le \epsilon, \quad
|\sigma_{B_2}-\sigma^*_{\bB_2}|\le \epsilon, \quad
\text{and} \quad
\delta\le \epsilon.
$$
The {\it precision } of the piece $B$ is the smallest $\epsilon>0$ for which $B$ is $\epsilon$-universal.
The optimal $A^{-1}([0,1]\times[0,\delta])$ is called the {\it $\epsilon$-stick} for $B$.  We will revere to the {\it (relative) length} and {\it (relative) height} of such a stick.
Let $\SS^n(\epsilon)\subset \BB^n$ be the collection of $\epsilon$-universal pieces.

\begin{figure}[htbp]
\begin{center}
\psfrag{l}[c][c] [0.7] [0] {\Large $l$}
\psfrag{dl}[c][c] [0.7] [0] {\Large $\delta l$}
\psfrag{s1l}[c][c] [0.7] [0] {\Large $\sigma_1 l$}
\psfrag{s2l}[c][c] [0.7] [0] {\Large $\sigma_2 l$}
\psfrag{s1*}[c][c] [0.7] [0] {\Large $\sigma^*_{\bB_1} l$}
\psfrag{S2*}[c][c] [0.7] [0] {\Large $\sigma^*_{\bB_2} l$}
\psfrag{O}[c][c] [0.7] [0] {\Large $\OO_{F}\cap B_2$}
\pichere{0.7}{stick} 
\caption{} \label{stick}
\end{center}
\end{figure}

\begin{defn}\label{defunidist} 
The Cantor attractor $\OO_F$ of an infinitely renormalizable H\'enon-like map $F\in \HH_\Omega(\overline{\epsilon})$ 
is {\it probabilistically universal}  if  there is $\theta<1$ such that 
$$
\mu(\SS^n(\theta^n))\ge 1-\theta^n,\quad n\ge 1.
$$
\end{defn}

Now we can formulate the main result of this paper: 

\begin{thm}\label{distuni} 
The Cantor attractor $\OO_F$ is probabilistically universal.
\end{thm}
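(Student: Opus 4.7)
The plan is to combine three regimes of analysis (one-dimensional, pushing-up, brute-force) to build a family $\PP_n \subset \BB^n$ of $\theta^n$-universal pieces whose $\mu$-measure exceeds $1 - \theta^n$. First, I fix small parameters $0 < q_0 < q_1 < 1$ and the exponential control function $s(k) = a \cdot 2^k - A$ from equation (\ref{exp control f-n}), with stopping level $\kappa(n) \asymp \log n$ chosen so that $1/2^{s(\kappa(n))} = O(\rho^n)$ for some $\rho < 1$. In the \emph{one-dimensional regime} I apply Proposition 7.2 to get, for each $k$ with $(1-q_1)n \leq k \leq (1-q_0)n$, that every piece in $\BB^n[k]$ is already $O(\rho^n)$-universal. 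These pieces serve as seeds.

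In the \emph{pushing-up regime}, I propagate universality from depth $k \leq (1-q_0)n$ down to depth $\kappa(n)$ by iterating the maps $G_k$. The key point is that at each step I only push up pieces that are regular and not too deep, so Propositions \ref{regtoreg}, \ref{sticktostick}, and \ref{scapush} apply: regularity is preserved, the thickness $\delta_{\bB}$ contracts geometrically (gaining a factor $1/2$ at each step, modulo an additive error of order $\si^{n-l}$), and scaling ratios $\sigma_{\bB}$ change by at most $O(\delta_{\hat\bB} + \si^{n-l})$. Summing this geometric series shows that both $\delta$ and $|\sigma_\bB - \sigma^*_\bB|$ remain $O(\rho^n)$ all the way down to depth $\kappa(n)$; I will still need a short bridge lemma translating the internal measurements $\si_\bB, \de_\bB$ into the precision $\epsilon$ used in the definition of $\SS^n(\epsilon)$. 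Finally, in the \emph{brute-force regime}, I apply the original map $F$ for at most $2^{\kappa(n)}$ steps to reach $\BB^n$ from $\BB^n[\kappa(n)]$. Each application of $F$ can spoil the universality only by a bounded factor $C$, so $\kappa(n) \asymp \log n$ steps give degradation $C^{\kappa(n)} = n^{O(1)}$, leaving pieces that are $O(n^{c}\rho^n) = O(\theta^n)$-universal for some $\theta \in (\rho,1)$.

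To estimate $\mu(\PP_n)$ I pass to the random walk model of \S\ref{random walk}. Let $R_n$ denote the set of codes $\bar k$ whose trajectory visits depth in $[(1-q_1)n, (1-q_0)n]$ at some moment: by the geometric distribution of jumps, $\nu(R_n) \geq 1 - O(1/2^{(q_1-q_0)n})$. Let $K_{\kappa(n)}$ denote the codes that are $s$-controlled after moment $\kappa(n)$: Lemma \ref{control criterion} gives
\begin{equation*}
\nu(K_{\kappa(n)}) \geq 1 - O\!\left(\sum_{k=\kappa(n)}^\infty \frac{1}{2^{s(k)}}\right) = 1 - O\!\left(\frac{1}{2^{a \cdot 2^{\kappa(n)}}}\right) = 1 - O(\rho^n).
\end{equation*}
By construction the pieces produced by the three regimes correspond exactly (up to the brute-force reshuffling) to $R_n \cap K_{\kappa(n)}$, so $\mu(\PP_n) \geq 1 - O(\theta^n)$ after absorbing both error terms into $\theta < 1$.

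The main obstacle lies in the pushing-up regime: one must guarantee that the three-way balance in Propositions \ref{sticktostick} and \ref{scapush} — between the geometric contraction factor $1/2$, the additive error $O(\si^{n-l})$, and the ``not too deep'' constraint $b^{2^k} \leq \alpha \si^{l-k}$ — is preserved all the way along the telescoping sequence of predecessors $B^{(i)} \in \BB^n[k_i]$. This requires choosing the universal constants $\alpha$, $K$, and $k^*$ in the correct order so that the estimates from Propositions \ref{regtoreg}--\ref{scapush} apply simultaneously at every level $k_i \geq \kappa(n)$ along any $s$-controlled code. Once this simultaneous applicability is secured, the summation of the thickness errors is geometric and yields the desired $\theta^n$-universality; the rest is combining measure estimates as above.
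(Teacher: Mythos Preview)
Your proposal is correct and follows essentially the same three-regime strategy as the paper; indeed it tracks the paper's own outline in \S\ref{out} almost step for step, including the random-walk measure estimate $\mu(\PP_n)=\nu(R_n\cap K_{\kappa(n)})$. The paper's formal proof in \S\ref{conv} recasts the measure argument as a direct inductive count on the controlled pieces $\PP_n(k;q_0,q_1)$ (Lemma~\ref{pushupregime}, Proposition~\ref{meas}) rather than via the random-walk formalism, but this is only a change of bookkeeping; your ``bridge lemma'' is Proposition~\ref{precision}, and the brute-force degradation bound is Proposition~\ref{badpart}, where the per-step spoiling is actually affine ($\epsilon_{j+1}\le r(\epsilon_j+\rho^{q_0 n})$) rather than purely multiplicative---a harmless refinement of your ``bounded factor~$C$''.
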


After careful choices of $\theta<1$, $q_0<q_1$ and $\kappa(n)=-\text{Const}+ \ln n$, one distinguishes three regimes where pieces in $\SS_n(\theta^n)\cap E^k$ are discovered by different techniques. 

The {\it one-dimensional regime}: all the pieces in $\BB^n[k]$ with $(1-q_1)\cdot n\le k\le (1-q_0)\cdot n$ are in $\SS_n(\theta^n)$. These very deep pieces are controlled by the one-dimensional renormalization fixed point: they are perturbed versions the corresponding pieces of $F_*$ and their relative displacements are exponentially small, see Lemma \ref{displacement} and Proposition \ref{asympwidth}. We have to exclude the pieces in $\BB^n[k]$ with $k>(1-q_0)\cdot n$ because they do not have a small thickness. Viewed from their scale $k$, they are relatively large pieces close to the graph of $f_*$. The curvature of the graph of $f_*$ causes this pieces to have a large thickness. 

The {\it pushing-up regime}: the pieces from the one-dimensional regime can be pushed up without being distorted too much, using the Propositions \ref{sticktostick} and \ref{scapush}, as long as they are not too deep. The resulting pieces have exponentially small precision, see Proposition \ref{precision}. In this way one finds pieces in $\SS_n(\theta^n)\cap E^k$ for 
$0\le k<(1-q_1)\cdot n$. Unfortunately, the relative measure of these pieces in $\SS_n(\theta^n)\cap E^k$ obtained by pushing up, is only exponentially close to $1$, for $k\ge \kappa(n)\asymp \ln n$, see Proposition \ref{meas} . That is why the pushing-up regime is restricted to $\kappa(n)\le k< (1-q_1)\cdot n$ where these pieces occupy $E^k$ except for an exponential small relative part.

The {\it brute-force regime}:   the pieces obtained in the one-dimensional and pushing-up regimes are in $B^{\kappa(n)}$. 
They will be spread around by brute-force iteration of the original map until returning.
The time to go from $B^{\kappa(n)} $ and return by iterating the original map is $2^{\kappa(n)}$. 
The depth $\kappa(n)$  is the largest integer such that $2^{\kappa(n)}\le  Kn\ln 1/\theta$. 
The pieces in the one-dimensional and pushing-up regime have exponentially small precision. Each of the
brute-force return steps used to spread around the pieces from the deeper regimes, will distort their exponential precision $\theta^n$, see Proposition \ref{badpart}.  The total distortion along such a return orbit can be bounded by $O(r^{2^{\kappa(n)}})=O(r^{Kn\ln 1/\theta})$, with $r\gtrsim 1/b>>1$. 
However, this distortion can not destroy the exponential precision when $\theta<1$ is chosen close enough to $1$.

The pushing-up regime is split into two parts. Let $\kappa_0(n)$ be the smallest integer such that $l(\kappa_0(n))\ge n$. As long as $\kappa_0(n)\le k<(1-q_1)\cdot n$ the pieces in $\BB^n[l]$, $l>k$, are not too deep and can be pushed up into $E^k$. Indeed, $\kappa_0(n)\asymp \ln n$ is uniquely defined and can not be adjusted. Unfortunately, we can not use $\kappa(n)=\kappa_0(n)$ because the corresponding return time $2^{\kappa_0(n)}$ used to fill the brute-force regime might be too large.  Too large in the sense that it might build up too much distortion, which is of the order $O(r_0^{n})$ for some definite $r_0>1$. We have to choose $\kappa(n)\asymp \ln n $ much smaller than $\kappa_0(n)$ to get an arbitrarily slow growing rate for the distortion during the brute-force regime. The rate should be small enough such that the exponential decaying precision in the deeper regimes can not be destroyed.
In the regime $\kappa(n)\le k<\kappa_0(n)$ we have 
$l(k)<(1-q_1)\cdot n$ which means that we can not push up all previously recovered pieces in $B^n[l]$ with $l>l(k)$. This is responsible for the super-exponential loss  term in Proposition \ref{meas}.

\newpage

\subsection{Universal sticks created in the one-dimensional regime}

\begin{prop}\label{asympwidth} There exist $\rho<1$, $q^*>0$ with the following property. For every $0<q_0<q_1\le q^*$ there exists  $n^*>0$ such that for $n\ge n^*$ and $(1-q_1)\cdot n\le k\le n$ 
\begin{itemize}
\item [(1)] every $B\in \BB^n[k]$ is regular.
\item [(2)]  for every $B\in \BB^{n+1}[k]$ 
$$
|\sigma_\bB-\sigma^*_\bB|=O(\rho^{q_1\cdot n}), 
$$
where $B=\Psi^{k+1}_0(\bB)$.
\item [(3)]  for every $B\in \BB^n[k]$ with $(1-q_1)\cdot n\le k\le (1-q_0)\cdot n$ 
$$
\delta_\bB=O(\rho^{q_0\cdot n}),
$$
where $B=\Psi^k_0(\bB)$.
\end{itemize}
\end{prop}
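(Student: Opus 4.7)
The plan is to view deep pieces $B \in \BB^n[k]$ from their natural scale $k$: the rescaled piece $\bB = (\Psi^k_0)^{-1}(B)$ belongs to $\BB^{N}(F_k)[0]$ with $N := n - k \leq q_1 n$. Since $k \geq (1-q_1)n$ is large, $F_k$ is $O(\rho^k)$-close in $C^2$ to the degenerate fixed point $F_*$ by Theorem \ref{universality}. The key observation is that since $N$ is small compared to $k$, the pieces of $F_k$ at level $N$ differ from the corresponding pieces of $F_*$ by a quantity exponentially small in $n$, so the problem reduces to the one-dimensional bounds collected in Subsection \ref{1D universal f-s}.

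First I set up the comparison. Write $\bB = F_k^{j}(B^N_{v^N}(F_k))$ for the appropriate $0 \leq j < 2^N$ determined by the combinatorics of $B$, and let $\bB^* := F_*^{j}(B^N_{v^N}(F_*))$ be the corresponding piece for $F_*$. By Lemma \ref{deltapsi} the tip pieces are $O(\rho^k)$-close. To propagate this closeness to $\bB$ vs $\bB^*$ I estimate the derivative of $F_*^j$ along the relevant orbit: projecting via $\pi_1$ reduces this to $Df_*^j$ on an interval of $\CC_N$, which by Lemma \ref{distortion} is bounded by a universal constant times $|I^*_j(N)|/|I^*_0(N)|$. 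Since each cycle interval has diameter $O(\sigma^N)$ by Lemma \ref{contracting}, this gives amplification at most $O(\sigma^{-N})$. Combining with the per-step perturbation $\|F_k - F_*\|_{C^2} = O(\rho^k)$, a Gronwall-type argument yields
\[
  |\bB - \bB^*|_{\mathrm{H}} = O\!\left(\rho^k \cdot \sigma^{-N} \cdot 2^N\right) = O\!\left(\rho^{(1-q_1)n}(2/\sigma)^{q_1 n}\right).
\]
Choosing $q^* > 0$ so that $\rho^{1-q^*}(2/\sigma)^{q^*} < 1$ makes this bound $O(\rho_0^n)$ for some $\rho_0 \in (\rho, 1)$ and all $0 < q_0 < q_1 \leq q^*$.

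Second I derive the three claims. For (1), the horizontal and vertical sides of the rectangle $\bQ^*$ associated with $\bB^*$ correspond (modulo a second-order correction) to two consecutive intervals of $\CC_N$; the depth-$0$ hypothesis keeps these intervals uniformly away from the critical point $c_*$, so their sizes are comparable by the a priori bounds implicit in Lemma \ref{distortion}. The closeness above then transfers this regularity to $\bB$. For (3), $\bB^*$ lies on the smooth curve $x = f_*(y)$, with $f_*'$ bounded away from zero and $f_*''$ bounded on the relevant interval (again because depth $0$ keeps us away from $c_*$); a Taylor expansion gives relative thickness $\delta_{\bB^*} = O(|\pi_2(\bB^*)|) = O(\sigma^N)$, which for $k \leq (1-q_0)n$ is $O(\sigma^{q_0 n})$. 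Adding the perturbation error from the previous step yields $\delta_\bB = O(\rho^{q_0 n})$ after setting $\rho := \max(\sigma, \rho_0)$. For (2), by Remark \ref{1Dscalingstar} the universal ratio $\sigma^*_\bB$ equals $|I^*_{i-1}(N+1)|/|I^*_{\hat{i}-1}(N)|$, and the vertical extents of $\bB$ and $\hat{\bB}$ differ from the corresponding one-dimensional measurements by $O(\rho_0^n)$; dividing gives $|\sigma_\bB - \sigma^*_\bB| = O(\rho_0^n) = O(\rho^{q_1 n})$ after adjusting $\rho$.

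The main obstacle is controlling the amplification of the $O(\rho^k)$ per-step perturbation over the $2^N$ iterations required to reach $\bB$ from the tip piece $B^N_{v^N}(F_k)$. A naive Lipschitz estimate blows up, so the distortion bounds of Lemma \ref{distortion} are essential. The translation from the one-dimensional derivatives of $f_*^j$ to the two-dimensional derivatives of $F_*^j$ exploits that $F_*(x,y) = (f_*(x), x)$ is essentially one-dimensional; the further passage to $F_k$ is the perturbation step. The admissible range $q^* > 0$ is determined by the balance $\rho^{1-q^*}(2/\sigma)^{q^*} < 1$, i.e.\ roughly $q^* < |\log\rho|/(|\log\rho| + \log(2/\sigma))$, which is exactly the condition that keeps the accumulated error in the final display exponentially small in $n$.
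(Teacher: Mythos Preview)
Your argument is essentially the paper's own: compare $\bB = F_k^j(B^N_{v^N}(F_k))$ with $\bB^* = F_*^j(B^N_{v^N}(F_*))$ using $\|F_k - F_*\| = O(\rho^k)$, control the accumulated amplification over $2^N$ steps via the one-dimensional distortion bound of Lemma~\ref{distortion}, and then read off regularity, thickness, and scaling from the resulting Hausdorff closeness. The one point the paper makes explicit that you leave inside the phrase ``Gronwall-type argument'' is the bootstrap: Lemma~\ref{distortion} only applies while the perturbed points $\zeta_j$ remain in the $r$-neighborhoods $T_j(N)$, so the paper proves an inductive Claim establishing $|\Delta x_j|/|I^*_j| \le r/D$ along the way---you should state this, and note that the lower bound on $|I^*_j|$ comes from the a~priori bounds rather than from Lemma~\ref{contracting} (which gives only upper bounds).
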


Choose,
$(1-q_1)\cdot n\le k\le n$
and $B\in \BB^n[k]$. Let $\bB\in \BB^{n-k}(F_k)$ be such that $B=\Psi_0^k(\bB)$.
Let $\tau_n$ be the tip of $F_n$ and $\tau_*$ the tip of $F_*$.  In the next part we will have to compare the maps $\Psi^n_k$ related to $F$ and the maps $(\Psi^n_k)^*$ corresponding to $F_*$.  Let
$$
\bB_0=B_{v^{n-k}}^{n-k}(F_k)=\Psi^n_k(\Dom(F_n))
$$
and
$$
\bB^*_0=B_{v^{n-k}}^{n-k}(F_*)=(\Psi^{n-k}_0)^*(\Dom(F_*)).
$$
where $(\Psi^{n-k}_0)^*$ is the change of coordinates used to construct $R^{n-k}F_*$.
Then $\bB=F_k^j(\bB_0)$ for some $0\le j<2^{n-k}$ and $j$ is odd. Let 
$\bB_j=F_k^j(\bB_0)$ and $\bB^*_j=F_*^j(\bB^*_0)$ for  $0\le j<2^{n-k}$. We will analyse the relative positions of $\bB_j$ and $\bB^*_j$.
Let 
$$
I_j=\pi_1(\bB_j) 
\quad  \mathrm{and} \quad
J_j=\pi_2(\bB_j).
$$
The intervals in the $n^{th}$ cycle of $f_*$ are denoted by $I^*_j(n)$, see 
\S \ref{1D universal f-s}. Observe,
$$
I^*_j\equiv I^*_j(n-k)=\pi_1(\bB^*_j), \quad  0\le  j<2^{n-k}.
$$
and 
$$ 
J^*_j=\pi_2(\bB^*_j)=I^*_{j-1}(n-k), 
\quad  0< j<2^{n-k}.
$$

Consider the conjugations
$$
h_n:\OO_{F_*}\to \OO_{F_n}
$$
with $h_n(\tau_*)=\tau_{n}$. These conjugations allow us to label the points in 
$\OO_{F_n}$. 
Choose, $z^*\in \OO_{F_*}$ and let 
$z=h_n(z^*)$. 
Let $(x_0,y_0)=\Psi^n_k(z)\in \bB_0$ and
$(x^*_0,y^*_0)=(\Psi^n_k)^*(z^*)\in \bB^*_0$. 
The points in the orbits are
$$
(x_j, y_j)=F_k^j(x_0,y_0)
\quad \mathrm{and}\quad 
(x^*_j, y^*_j)=F_*^j(x^*_0,y^*_0),
$$
with  $0\le j<2^{n-k}$. The first estimates will be on the relative displacements
$\frac{\Delta x_j}{|I^*_j|}$ and $\frac{\Delta y_j}{|J^*_j|}$ where
$
\Delta x_j=x_j-x^*_j 
$
and 
$\Delta y_j=y_j-y^*_j$.

\begin{lem}\label{displacement} 
There exist $\rho<1$, $q^*>0$ with the following property. For every $0<q\le q^*$ there exists  $n^*>0$ such that for $n\ge n^*$, $(1-q)\cdot n\le k\le n$, and $0\le j<2^{n-k}$
$$
\frac{|\Delta x_j|}{|I^*_j|}=O(\rho^{q \cdot n}),
\quad\mathrm{and}\quad
\frac{|\Delta y_j|}{|J^*_j|}=O(\rho^{q \cdot n}).
$$
\end{lem}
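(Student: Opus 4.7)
The plan is to compare the orbits $(x_j, y_j)$ and $(x_j^*, y_j^*)$ step by step, using the universality formula of Theorem \ref{universality} to reduce the comparison to a one-dimensional linear recurrence and then invoking the distortion estimate of Lemma \ref{distortion} to control its iteration. Since $y_{j+1} = x_j$ for both $F_k$ and $F_*$, we have $\Delta y_{j+1} = \Delta x_j$ and $|J^*_{j+1}| = |I^*_j|$, so it suffices to bound $|\Delta x_j|/|I^*_j|$; the initial $y$-estimate is handled identically to the initial $x$-estimate.

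\emph{Base case.} I would first estimate $|\Delta x_0|$ via the triangle inequality
\[
|\Delta x_0| \le \bigl|\Psi^n_k(z) - (\Psi^n_k)^*(z)\bigr| + \bigl|(\Psi^n_k)^*(z) - (\Psi^n_k)^*(z^*)\bigr|.
\]
The first term is $O(\rho^k)$ by Lemma \ref{deltapsi}, while the second is $O(\sigma^{n-k}\rho^n)$ by Lemmas \ref{contracting} and \ref{holmt}, hence negligible. Since $|I^*_0| \asymp \sigma^{n-k}$ from the \emph{a priori} bounds on the cycles of $f_*$, one obtains
\[
\frac{|\Delta x_0|}{|I^*_0|} = O\!\left(\rho^k/\sigma^{n-k}\right) \le O\!\left(\bigl(\rho^{(1-q)/q}/\sigma\bigr)^{q n}\right),
\]
which is $O(\bar\rho^{\,qn})$ for some $\bar\rho<1$ provided $q^*$ is small enough that $\rho^{(1-q^*)/q^*}<\sigma$.

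\emph{Recurrence and distortion.} By Theorem \ref{universality}, $x_{j+1} = f_k(x_j) - b^{2^k} a(x_j)\, y_j(1+O(\rho^k))$ with $\|f_k - f_*\|_{C^2} = O(\rho^k)$; subtracting from $x_{j+1}^* = f_*(x_j^*)$ and applying the mean value theorem gives
\[
\Delta x_{j+1} = f_*'(\xi_j)\, \Delta x_j + \varphi_j, \qquad |\varphi_j| = O(\rho^k + b^{2^k}) = O(\rho^k),
\]
for some $\xi_j$ between $x_j$ and $x_j^*$. Since both orbits remain $O(\rho^k)$-close to the corresponding orbit of $f_*$ in the cycle $\CC_{n-k}$, the points $\xi_l$ lie in the distortion-safe neighborhoods $T_l(n-k)$ of Lemma \ref{distortion} for $k$ large, so the partial products satisfy $\prod_{l=a}^{b-1} f_*'(\xi_l) \asymp |I^*_b|/|I^*_a|$ with a uniform factor $D$. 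Solving the linear recurrence and dividing by $|I^*_j|$,
\[
\frac{|\Delta x_j|}{|I^*_j|} \le D\,\frac{|\Delta x_0|}{|I^*_0|} + D \sum_{m=0}^{j-1} \frac{|\varphi_m|}{|I^*_{m+1}|} \le D\,\frac{|\Delta x_0|}{|I^*_0|} + C\,\rho^k\, (2/\sigma)^{n-k},
\]
where $j \le 2^{n-k}$ and $|I^*_m| \asymp \sigma^{n-k}$ have been used.

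The main obstacle is the interplay between the $2^{n-k}$ accumulated one-step errors and the $\sigma^{n-k}$ smallness of the cycle intervals: the bad factor $(2/\sigma)^{n-k} \le (2/\sigma)^{qn}$ must be dominated by the good factor $\rho^k \le \rho^{(1-q)n}$, which forces $q^*$ to be small relative to the universal constants $\rho$ and $\sigma$. This is the same smallness condition on $q$ that appeared in the base case, so a single choice of $q^*$ suffices to yield the claimed $O(\rho^{qn})$ bound (after renaming constants). A minor bootstrap is needed to verify that $\xi_l \in T_l(n-k)$, but this is automatic once the displacement estimates themselves are in force.
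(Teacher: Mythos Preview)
Your proposal is correct and follows essentially the same route as the paper: reduce to $\Delta x_j$ via $y_{j+1}=x_j$, bound $|\Delta x_0|$ through Lemmas \ref{deltapsi}, \ref{contracting}, \ref{holmt}, derive the one-step recurrence from Theorem \ref{universality}, and iterate using the distortion control of Lemma \ref{distortion}, with the same smallness condition on $q^*$ balancing $\rho^{(1-q)n}$ against $(2/\rho_0)^{qn}$. The only difference is presentational: the paper packages the ``minor bootstrap'' you mention at the end as an explicit inductive Claim, verifying at each step $j$ that the accumulated displacement keeps $\zeta_l$ inside the $r$-neighborhoods required by Lemma \ref{distortion} before proceeding to step $j+1$; your sentence ``since both orbits remain $O(\rho^k)$-close \dots\ the points $\xi_l$ lie in the distortion-safe neighborhoods'' is exactly the statement that needs this induction, so it should not appear before the bootstrap is set up.
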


\begin{proof} Recall, $y_{j+1}=x_{j}$. Hence,
$$
\frac{|\Delta y_{j+1}|}{|J^*_{j+1}|}= \frac{|\Delta x_{j}|}{|I^*_{j}|},
$$
we only have to estimate the displacements $\Delta x_j$ and $\Delta y_0$.
 Since, $F_k\to F_*$ exponentially fast controlled by some $\rho<1$, see Theorem \ref{universality}, we have  
$$
\begin{aligned}
x_{j+1}&=f_*(x_j)+ O(\rho^k)\\
&=f_*(x^*_j)+Df_*(\zeta_j)\Delta x_j +O(\rho^k).
\end{aligned}
$$
Hence,
$$
\Delta x_{j+1}= Df_*(\zeta_j)\Delta x_j +O(\rho^k).
$$
 There exists $K>1$ such that

\begin{equation}\label{delatxyest}
\frac{|\Delta x_{j+1}|}{|I^*_{j+1}|}\le  \frac{Df_*(\zeta_j)}{\frac{|I^*_{j+1}|}{|I^*_j|}}   \cdot \frac{ |\Delta x_j|}{|I^*_j|} +
K\frac{\rho^k}{\rho_0^{n-k}},
\end{equation}
where we used the {\it a priori} bounds: $|I^*_{j+1}|\ge \rho_0^{n-k}$ for some $\rho_0<1$. 

We will use (\ref{delatxyest}) repeatedly but to do so we first need to estimate $|\Delta x_0|$. 
Let $\Delta z=z-z^*$ and use  the Lemmas \ref{deltapsi}, \ref{contracting}, and  \ref{holmt} in the following estimate
$$
\begin{aligned}
|(x_0,y_0)-(x^*_0, y^*_0)|&\le |\Psi^n_k(z)-(\Psi_k^n)^*(z^*)|\\
&\le |\Psi^n_k-(\Psi_k^n)^*|+|(\Psi^n_k)^*(z)-(\Psi^n_k)^*(z^*)|\\
&\le O(\rho^k)+ |D(\Psi^n_k)^*|\cdot |\Delta z|\\
&=O(\rho^k+ \sigma^{n-k}\cdot \rho^n )\\
&=O(\rho^k).
\end{aligned}
$$ 
Thus,
\begin{equation}\label{deltax0}
 \frac{ |\Delta x_0|}{|I^*_0|}=O(\frac{\rho^k}{\rho_0^{n-k}} )
\end{equation}
and
\begin{equation}\label{deltay0}
 \frac{ |\Delta y_0|}{|J^*_0|}=O(\frac{\rho^k}{\rho_0^{n-k}}).
\end{equation}

Let $r>0$ and $D>1$ be given as in Lemma \ref{distortion} and $K>1$ as defined above.
For $q>0$ small enough and $n\ge 1$ large enough we have
\begin{equation}\label{deltax0}
 \frac{ |\Delta x_0|}{|I^*_0|}=O(\frac{\rho^k}{\rho_0^{n-k}})=O((\frac{\rho^{1-q}}{\rho_0^{q}})^n)=O(\rho^{q\cdot n})\le \frac{r}{2D}.
\end{equation}
and
\begin{equation}\label{deltax02}
DK(\frac{2}{\rho_0})^{n-k}\cdot \rho^k=O((\frac{\rho^{1-q}}{(\rho_0/2)^{q}})^n)=O(\rho^{q\cdot n})\le \frac{r}{2}.
\end{equation}

One has to be careful when applying (\ref{delatxyest}) repeatedly.  The points $\zeta_j$ should not be too far from $I^*_j$ to be able to control distortion. 

\begin{clm} For $q>0$ small enough and $n>1$ large enough
$$
 \frac{ |\Delta x_j|}{|I^*_j|}\le DK(\frac{2}{\rho_0})^{n-k}\cdot \rho^k+D \frac{ |\Delta x_0|}{|I^*_0|},
$$
for $0 \le j<2^{n-k}$.
\end{clm}

\begin{proof} 
The proof is by induction: the statement holds for $j=0$ because $D>1$.  Suppose it holds 
up to $j<2^{n-k}-1$. The $r-$neighborhoods $U_l(n)\supset I^*_l$ were introduced in 
Lemma \ref{distortion}. The induction hypothesis together with (\ref{deltax0}) and (\ref{deltax02}) imply that
$$
\zeta_l\in U_l(n-k)
$$
for $l\le j$.  Now repeatedly apply (\ref{delatxyest}) and Lemma \ref{distortion} to get
$$
\begin{aligned}
 \frac{ |\Delta x_{j+1}|}{|I^*_{j+1}|}&\le  \sum_{l=1}^{j+1} (\prod_{k=l}^{j} \frac{Df_*(\zeta_k)}{\frac{|I^*_{k+1}|}{|I^*_k|}} )\cdot K\frac{ \rho^k}{\rho_0^{n-k}}+ (\prod_{k=0}^{j} \frac{Df_*(\zeta_k)}{\frac{|I^*_{k+1}|}{|I^*_k|}} )\cdot\frac{ |\Delta x_0|}{|I^*_0|}\\
 &\le (j+1) D\index{\cite{\footnote{\cite{\footnote{\pageref{\ref{•}}}}}}}K\frac{ \rho^k}{\rho_0^{n-k}}+D \frac{ |\Delta x_0|}{|I^*_0|}\\
&\le  DK(\frac{2}{\rho_0})^{n-k}\cdot \rho^k+D \frac{ |\Delta x_0|}{|I^*_0|}. 
\end{aligned}
$$
This estimate finishes the induction step.
\end{proof}

Now incorporate the estimates (\ref{deltax0}), (\ref{deltax02}) in the Claim and together with (\ref{deltay0}), Lemma \ref{displacement} follows.
\end{proof}

\noindent
{\it Proof of Proposition \ref{asympwidth}.} 
Let $(1-q_1)\cdot n\le k\le n$ and assume that the conditions of Lemma \ref{displacement} are satisfied. Choose $B\in \BB^n[k]$. Let $\bB\in \BB^{n-k}(F_k)$ be such that $B=\Psi_0^k(\bB)$, say $\bB=\bB_j$ with $0< j<2^{n-k}$ odd. 

The pieces $\bB^*_j\in \BB^{n-k}(F_*)$, $0< j<2^{n-k}$ odd,  are curves on the graph of $f_*$ contained in $B^1_c(F_*)$, 
that is, they have a bounded slope. This bounded slope implies that 
$$
|I^*_j|\asymp |J^*_j|.
$$ 
This bound and Lemma \ref{displacement} imply that the Hausdorff distance between $\bB_j$ and $\bB^*_j$ is 
$O(\rho^{q_0\cdot n}\cdot |I^*_j|)$. We get that 
 $B_j=\Psi_0^k(\bB_j)$ is regular, which  proves Proposition \ref{asympwidth}(1).

Let $B\in \BB^{n+1}[k]$, say  $B=\Psi_0^k(\bB)$ with
$\bB\in \BB^{n-k+1}(F_k)$ and $\bB\subset \bB_j\in \BB^{n-k}(F_k)$, for some $0< j<2^{n-k}$.
Recall that the scaling ratio of $B\in \BB^{n+1}[k]$ is a measurement in vertical direction in the domain of $F_k$. 
The relative displacement of every point $z^*\in \OO_{F_*}$ is estimated in Lemma \ref{displacement}. These bounds imply
$$
|\sigma_\bB-\sigma_{\bB^*}|=O(\rho^{q_0\cdot n}).
$$
This finishes the proof of Proposition \ref{asympwidth}(2).

To control the thickness associated to $B\in \BB^n[k]$ we have to restrict ourselves to 
$(1-q_1)\cdot n\le k\le (1-q_0)\cdot n$.
The piece $\bB\equiv \bB_j$, which determines the relative thickness of $B=\Psi_0^k(\bB)$ has a Hausdorff distance  $O(\rho^{q_0\cdot n}\cdot |I^*_j|)$ to $\bB^*_j$, Lemma \ref{displacement}.  This piece  $\bB^*_j$ is a curve
 in the graph of $f_*$ contained in $B^1_c(F_*)$. 
This curve has a bounded slope. Hence, its relative thickness is
proportional to its diameter, 
which is of the order $\sigma^{n-k}\le \sigma^{q_0\cdot n}$,  see
Lemmas \ref{contracting}. 
The control of the Hausdorff distance and the small relative thickness of $\bB^*_j$ implies
$$
\delta_\bB=O(\rho^{q_0\cdot n})
$$
We finished the proof of Proposition \ref{asympwidth}(3).
\qed

\subsection{Universal sticks created in the pushing-up regime}

\begin{defn}\label{controlled}
Given $0<q_0<q_1$, the collection $\PP_n(k; q_0,q_1)$ of {\it $(q_0,q_1)$-controlled} pieces
consists of $B\in \BB^n[k]$ with the following property. If $B^{(i)}$, $i=0,1,2,\cdots,t$, are the predecessors of $B=B^{(0)}$ with
$$
k=k_{0}(B)<k_1(B)<k_{2}(B)<\dots< k_{t-1}(B)<k_t(B)<n.
$$
then
\begin{itemize}
\item[(1)] $k_{i+1}\le l(k_{i})$, $i=0,1,2,3,\dots, t-1$,
\item[(2)] there exists $0\le s\le t$ such that
$ 
(1-q_1)\cdot n\le k_s(B)\le (1-q_0)\cdot n,
$
and
\item[(3)] 
$  k_{s-1}(B)\le (1-q_1)\cdot n.$
\end{itemize}
\end{defn}

\begin{rem}\label{topcontrolled} The definition of controlled pieces is a combinatorial definition. It does not depend on $F$ but only on the average Jacobian $b_F$ which is a 
topological invariant, \cite{LM1}. If $B$ is a $(q_0,q_1)$-controlled piece of $F$ then the corresponding piece $B^*$ is  $(q_0,q_1)$-controlled piece of $F_*$. 
\end{rem}

The definition of controlled pieces implies
\begin{equation}\label{PP}
\bigcup_{k<l\le l(k)} G_k(\PP_n(l; q_0,q_1))=\PP_n(k; q_0,q_1).
\end{equation}

Proposition \ref{asympwidth} introduced the constants $\rho<1$, and $q^*>0$. The constants $\alpha^*>0$ and $k^*>0$ are the optimal choice given by the Propositions \ref{regtoreg}, \ref{sticktostick} and \ref{scapush}. 
Now Proposition \ref{regtoreg} and Proposition \ref{asympwidth}(1) imply

\begin{lem}\label{regPP} 
Let $\alpha<\alpha^*$. For every $q^*>q_1>q_0>0$  there exists $n^*\ge 1$ such that 
every $B\in \PP_n(k; q_0,q_1)$ and all its predecessors are regular when $n\ge n^*$ and $k\ge k^*$.
\end{lem}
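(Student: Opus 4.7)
The plan is to combine Proposition \ref{asympwidth}(1), which handles the very deep pieces, with Proposition \ref{regtoreg}, which handles the push-up steps, and to assemble them by backward induction along the chain of predecessors. Let $B\in \PP_n(k;q_0,q_1)$ with predecessors
$$
   B^{(0)}=B,\ B^{(1)},\dots, B^{(t)}, \qquad k=k_0<k_1<\dots<k_t<n,
$$
and let $s$ be the index furnished by condition (2) of Definition \ref{controlled}, so that $(1-q_1)n\le k_s\le (1-q_0)n$ and (by condition (3)) $k_{s-1}\le (1-q_1)n$.

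First, I would dispose of the deep part of the chain. For every $i\ge s$ the depth satisfies $k_i\ge k_s\ge (1-q_1)n$, hence Proposition \ref{asympwidth}(1) applies directly (after choosing $n^*$ at least as large as the $n^*$ produced by that proposition for the given pair $(q_0,q_1)$), and we conclude that $B^{(s)}, B^{(s+1)},\dots, B^{(t)}$ are all regular.

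Next, I would push up from $B^{(s)}$ toward $B^{(0)}$ by backward induction. Assume, for some $i$ with $0\le i<s$, that $B^{(i+1)}$ is already known to be regular. Condition (1) of Definition \ref{controlled} gives $k_i<k_{i+1}\le l_\alpha(k_i)$, i.e.\ $B^{(i+1)}\in \BB^n[k_{i+1}]$ is not too deep in $B^{k_i}$. Since $\alpha<\alpha^*$ and $k_i\ge k_0=k\ge k^*$, Proposition \ref{regtoreg} applies and yields that
$$
   B^{(i)}=G_{k_i}(B^{(i+1)})\in \BB^n[k_i]
$$
is regular. Iterating from $i=s-1$ down to $i=0$, we obtain regularity of every predecessor, including $B=B^{(0)}$.

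There is no real obstacle here beyond bookkeeping: the whole content of Definition \ref{controlled} was tailored so that condition (2)--(3) supplies a regular starting piece inside the ``one-dimensional regime'' of Proposition \ref{asympwidth}, while condition (1) is exactly the hypothesis needed to iterate Proposition \ref{regtoreg}. The only care required is the choice of $n^*$: it must be large enough for Proposition \ref{asympwidth} to apply with the prescribed $(q_0,q_1)$, but no further dependence on $n$ appears in the inductive step because Proposition \ref{regtoreg} needs only $\alpha<\alpha^*$ and $k_i\ge k^*$, both uniform in $n$.
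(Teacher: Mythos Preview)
Your proof is correct and follows exactly the approach the paper has in mind: the paper simply records that the lemma is a consequence of Proposition \ref{regtoreg} and Proposition \ref{asympwidth}(1), without spelling out the induction. Your write-up is precisely that spelled-out argument, using Proposition \ref{asympwidth}(1) for the predecessors with $k_i\ge (1-q_1)n$ and then iterating Proposition \ref{regtoreg} via condition (1) of Definition \ref{controlled} for the remaining ones.
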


\begin{lem}\label{wdeltas} 
Let $\alpha<\alpha^*$. For every $q^*>q_1>q_0>0$  there exists $n^*\ge 1$ such that 
 for every $\hat{B}\in \PP_n(k;q_0,q_1)$ and $B\in \BB^{n+1}[k]$ with $B\subset \hat{B}$
$$
\delta_{\hat{\bB}}=O(\rho^{q_0\cdot n})
$$
and
$$
|\sigma_\bB-\sigma^*_\bB|=O(\rho^{q_0\cdot n})
$$
when $n\ge n^*$ and $k\ge k^*$.
\end{lem}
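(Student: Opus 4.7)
The plan is to exploit the chain of predecessors $\hat{B}^{(0)} = \hat{B}, \hat{B}^{(1)}, \ldots, \hat{B}^{(t)}$ of a controlled piece, at depths $k_0 = k < k_1 < \cdots < k_t$. Conditions (2)--(3) of Definition \ref{controlled} furnish an index $s$ with $(1-q_1)\cdot n \le k_s \le (1-q_0)\cdot n$, so $\hat{B}^{(s)}$ lies in the one-dimensional regime; condition (1) guarantees that $\hat{B}^{(i+1)}$ is not too deep in $B^{k_i}$ at every step; and Lemma \ref{regPP} ensures that all predecessors are regular. Since $B \subset \hat{B}$ at the same depth, their backward closest approaches synchronize, so there is a parallel chain $B^{(i)} \subset \hat{B}^{(i)}$ at level $n+1$ and the same depths. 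Remark \ref{topcontrolled} gives an analogous chain $\hat{B}^{(i)*}, B^{(i)*}$ for the universal map $F_*$. Along both chains I will iterate Propositions \ref{sticktostick} and \ref{scapush} from the starting point at depth $k_s$ up to depth $k_0 = k$.

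I first obtain initial estimates at the bottom of the chain via Proposition \ref{asympwidth}: part (3) gives $\delta_{\hat{\bB}^{(s)}} = O(\rho^{q_0 \cdot n})$ and part (2) gives $|\sigma_{\bB^{(s)}} - \sigma^*_{\bB^{(s)}}| = O(\rho^{q_1 \cdot n})$. For the thickness, Proposition \ref{sticktostick} at step $i+1 \to i$ reads
$$
\delta_{\hat{\bB}^{(i)}} \le \tfrac12\, \delta_{\hat{\bB}^{(i+1)}} + O(\sigma^{n-k_{i+1}}).
$$
Iterating down from $s$, the contraction absorbs the initial $O(\rho^{q_0 \cdot n})$ term, and the strict monotonicity of $k_1 < \cdots < k_s$ yields $\sum_{j=1}^s \sigma^{n-k_j} \le \sigma^{n-k_s}/(1-\sigma) = O(\sigma^{q_0 \cdot n})$. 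After enlarging $\rho$ to absorb $\sigma$, this proves $\delta_{\hat{\bB}^{(i)}} = O(\rho^{q_0 \cdot n})$ for every $i \le s$, which for $i=0$ is the first claim of the lemma.

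For the scaling, Proposition \ref{scapush} at step $i+1 \to i$ reads
$$
\sigma_{\bB^{(i)}} = \sigma_{\bB^{(i+1)}} + O(\delta_{\hat{\bB}^{(i+1)}} + \sigma^{n-k_{i+1}}),
$$
and the analogous identity (with $\delta^*_{\hat{\bB}^{(i+1)}}$) holds along the $F_*$-chain. Subtracting, telescoping from $s$ down to $0$, and plugging in the thickness bounds already obtained, I get
$$
|\sigma_\bB - \sigma^*_\bB| \le O(\rho^{q_1 \cdot n}) + s \cdot O(\rho^{q_0 \cdot n}) + O(\sigma^{q_0 \cdot n}).
$$
Since $s \le n$, the polynomial factor $s$ is absorbed into a slight enlargement of $\rho < 1$, yielding the claimed bound $|\sigma_\bB - \sigma^*_\bB| = O(\rho^{q_0 \cdot n})$.

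The delicate point is legitimizing the parallel application of Propositions \ref{sticktostick} and \ref{scapush} to $F_*$, which is what lets the absolute push-up estimates be converted into a bound on the gap from the universal value. This is however straightforward: $F_*$ belongs to $\II_\Om(\bar\eps)$ and $b_{F_*}=0$, so the ``not too deep'' hypothesis becomes vacuous, the tilt term $t_k$ vanishes, and the geometric estimates for coordinate changes and $F_k$ go through with the same formal shape, uniformly in the combinatorial data supplied by Remark \ref{topcontrolled}.
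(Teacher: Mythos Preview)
Your proof is correct and follows essentially the same route as the paper: set up the parallel chains of predecessors $B^{(i)}\subset\hat B^{(i)}$ and their $F_*$-counterparts via Remark~\ref{topcontrolled}, seed them at depth $k_s$ with Proposition~\ref{asympwidth}, and push up using Propositions~\ref{sticktostick} and~\ref{scapush}, noting that for $F_*$ one has $b_{F_*}=0$ so the not-too-deep hypothesis is vacuous and every piece is regular. The only cosmetic difference is in the bookkeeping for the scaling estimate: the paper sums the contraction inequality directly to get $\sum_{i=0}^{s}\delta_{\hat{\bB}^{(i)}}\le 2\delta_{\hat{\bB}^{(s)}}+O(\sigma^{n-k_s})$ and hence avoids any factor of $s$, whereas you bound each $\delta_{\hat{\bB}^{(i)}}$ individually and then sum, picking up a harmless factor $s\le n$ that you correctly absorb into a slightly enlarged $\rho$.
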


\begin{proof}
Let us call the predecessors of $\hat{B}$ and $B$ 
$$
B^{(i)}\subset \hat{B}^{(i)},
$$
$i=0,1,2,\dots, t$.
 Let $k_i=k_i(\hat{B})=k_i(B)$ and 
$\delta_i$ the relative thickness of $\hat{\bB}^{(i)}$, where $\hat{B}^{(i)}=\Psi^{k_i}_0(\hat{\bB}^{(i)})$, and $\sigma_i=\sigma_{\bB^{(i)}}$,  the scaling number of $B^{(i)}\Psi^{k_i}_0(\bB^{(i)})$, $i=0,1,2,\dots, t$. Observe, the piece $B$ 
might have  one predecessor more than $\hat{B}$.

 Apply 
 Propositions \ref{sticktostick} and \ref{scapush}.  In particular, 
\begin{equation}\label{iterw}
\delta_{i-1}\le \frac12 \delta_i+O(\sigma^{n-k_{i}})
\end{equation}
and 
\begin{equation}\label{iters}
|\sigma_{i-1}-\sigma_i|=O(\delta_i +\sigma^{n-k_i}  )
\end{equation}
for $i=1,2,\dots,t$.

Iterating estimate (\ref{iterw}) we obtain
\begin{equation}\label{sumdelta}
\begin{aligned}
\sum_{i=0}^s \delta_i
&\le 2\delta_s+O(\sigma^{n-k_s})\\
&= O(\rho^{q_0\cdot n})+O(\sigma^{q_0\cdot n}),
\end{aligned}
\end{equation}
where we used Proposition \ref{asympwidth}(3) and property (2) of 
Definition \ref{controlled}. We may assume $\sigma<\rho<1$.
The first estimate of the Lemma follows: 
$$
\delta_{\hat{\bB}}=\delta_0\le \sum_{i=0}^s \delta_i=O(\rho^{q_0\cdot n}).
$$
To establish the second estimate of the Proposition, first observe that
$$
\begin{aligned}
\sigma_{\bB^{(0)}}&=\sigma_{\bB^{(s)}}+
\sum_{i=0}^{s-1}( \sigma_{\bB^{(i)}}-\sigma_{\bB^{(i+1)}}).
\end{aligned}
$$
Hence, by using (\ref{iters}) and (\ref{sumdelta}),
$$
\begin{aligned}
|\sigma_{B^{(0)}}-\sigma_{B^{(s)}}|&\le 
\sum_{i=0}^{s-1}| \sigma_{B^{(i)}}-\sigma_{B^{(i+1)}}|\\
&= O(\sum_{i=1}^s( \delta_i+ \sigma^{n-k_{i}}))\\
&=O(\rho^{q_0\cdot n}+\sigma^{n-k_s})
=O(\rho^{q_0\cdot n}).
\end{aligned}
$$
If $B\in \PP_n(k,q_0, q_1)$ and $B^*$ is the corresponding piece of
$F_*$,
 then $B^*$ is also controlled. Namely, each $l(k)=\infty$ because  $b_{F_*}=0$. Hence, we have the same estimate for the proper scaling
$$
|\sigma^*_{\bB^{(0)}}-\sigma^*_{\bB^{(s)}}|
=O(\rho^{q_0\cdot n}).
$$
This finishes the proof.  Namely, $B^{(s)}\in \BB^{n+1}[k_s]$ with $(1-q_1)\cdot n\le k_s\le n$ and we can apply 
Proposition \ref{asympwidth}(2),
$$
\begin{aligned}
|\sigma_\bB-\sigma^*_{\bB}|&=|\sigma_{\bB^{(0)}}-\sigma^*_{\bB^{(0)}}|\\
&\le |\sigma_{\bB^{(0)}}-\sigma_{\bB^{(s)}}|+
 |\sigma_{\bB^{(s)}}-\sigma^*_{\bB^{(s)}}|+|\sigma^*_{\bB^{(s)}}-\sigma^*_{\bB^{(0)}}|\\
&=O(\rho^{q_0\cdot n}).
\end{aligned}
$$
\end{proof}

The measurements of the pieces, such as scaling and thickness, are geometrical quantities observed when viewing a piece from its scale, they are geometrical measurements of $\bB$ and not $B$ itself. The next Proposition states that the actual pieces $B$ inherit exponentially small estimates for their precision. The Proposition is also a preparation for the brute-force regime which concerns iteration of the original map.

\begin{prop}\label{precision} Let $\alpha<\alpha^*$. For every $q^*>q_1>q_0>0$  there exists $n^*\ge 1$ such that 
$$
\PP_n(k;q_0,q_1)\subset \SS_n(O(\rho^{q_0\cdot n}))
$$
when $n\ge n^*$ and $k\ge k^*$.
\end{prop}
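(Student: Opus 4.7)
The plan is to transfer the estimates of Lemma \ref{wdeltas} from the rescaled piece $\hat\bB$ to the original piece $\hat B$ in the domain of $F$, via the change of coordinates $\Psi^k_0 = D^k_0 \circ (\id + \mathbf{S}^k_0)$, in order to verify $\epsilon$-universality in the sense of \S \ref{defstat}.  Fix $\hat B \in \PP_n(k; q_0, q_1)$ together with sub-pieces $B_1, B_2 \in \BB^{n+1}$, $B_i \subset \hat B$.  By Definition \ref{controlled} the depth satisfies $k \leq (1-q_0) n$, so $\si^{n-k} \leq \si^{q_0 n}$ is exponentially small.  Lemma \ref{wdeltas} yields $\de_{\hat\bB} = O(\rho^{q_0 n})$ and $|\si_{\bB_i} - \si^*_{\bB_i}| = O(\rho^{q_0 n})$.

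For the thickness $\de_{\hat B}$, I would use the affine norms from (\ref{reshuffling}) together with the $C^2$ bounds of Lemma \ref{APPsi}.  The eigenvalues of $D^k_0$ are of order $\si^{2k}$ and $\si^k$, so $\hat B$ is strongly anisotropic: $h_{\hat B} \asymp \si^{n+k}$, $v_{\hat B} \asymp \si^n$, with near-vertical diagonal.  Since $\|D^2 \Psi^k_0\| = O(\si^k)$ and $\diam \hat\bB = O(\si^{n-k})$ (Lemma \ref{contracting}), any $\Psi^k_0$-image of a line segment in $\hat\bB$ deviates from its tangent line by $O(\si^{2n-k})$ in absolute terms.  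A direct computation shows that $D^k_0$ contracts perpendicular-to-diagonal displacements in $\hat\bB$ by order $\si^{2k}$, so the absolute perpendicular thickness of the image of the diagonal strip is bounded by $O(\si^{n+k} \de_{\hat\bB} + \si^{2n-k})$.  Dividing by the diagonal length $v_{\hat B} \asymp \si^n$ gives the \S \ref{defstat} thickness $\de_{\hat B} = O(\si^k \de_{\hat\bB} + \si^{n-k}) = O(\rho^{q_0 n})$, after replacing $\rho$ by $\max(\rho, \si)$.

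For the scaling factors, Remark \ref{prosca} identifies $\si_{\bB_i}$ with the vertical size ratio $v_{B_i}/v_{\hat B}$ in the $F$-domain.  If $\hat B \cap \OO_F$ lies in a thin strip at angle $\phi$ with horizontal (and $B_i \cap \OO_F$ is similarly aligned), then the identity $v_{B_i}/v_{\hat B} = (L_i \sin\phi + W \cos\phi)/(L \sin\phi + W \cos\phi)$ expands to $L_i/L + O(\de_{\hat B} \cot\phi)$, where $L_i, L, W$ denote the stick lengths and perpendicular thickness.  Since $\hat B$ is near-vertical, $\cot\phi \asymp \si^k$, and $L_i/L$ coincides with the \S \ref{defstat} scaling factor $\si_{B_i}$.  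Therefore $|\si_{B_i} - v_{B_i}/v_{\hat B}| = O(\rho^{q_0 n})$, and combining with Lemma \ref{wdeltas} gives $|\si_{B_i} - \si^*_{\bB_i}| = O(\rho^{q_0 n})$, so $\hat B$ is $O(\rho^{q_0 n})$-universal.

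The main technical difficulty is handling the anisotropy of $D^k_0$: the \S \ref{stic} and \S \ref{defstat} notions of thickness differ by a factor of $\si^k$ for the elongated piece $\hat B$, so one must normalize the \S \ref{defstat} thickness by the long axis $v_{\hat B}$ (rather than the short axis $h_{\hat B}$) in order to keep the curvature error $O(\si^{n-k})$ exponentially small uniformly over all controlled depths $k$.
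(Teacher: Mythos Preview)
Your proposal is correct and follows essentially the same route as the paper's proof: invoke Lemma~\ref{wdeltas} to get $\de_{\hat\bB}=O(\rho^{q_0 n})$ and $|\si_{\bB_i}-\si^*_{\bB_i}|=O(\rho^{q_0 n})$, then push these estimates through $\Psi^k_0=D^k_0\circ(\id+\mathbf S^k_0)$ to control the \S\ref{defstat} stick. The paper organizes the computation by tracking the \emph{horizontal} sizes $w_\diff,h_\diff,w_\aff,h_\aff,v_\aff$ (so that the tilt $t_0$ is absorbed into $h_\aff$), obtaining $w_\aff/h_\aff=O(\si^k\Bde+\si^n)$, and then reads off the rotated-stick quantities $l',w',\si'$ via Figure~\ref{actualstick}. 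Your organization instead bounds the \emph{perpendicular} thickness directly via $\|D^2\Psi^k_0\|$ and an eigenvector computation for $D^k_0$; this is the same estimate rearranged.

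One minor inaccuracy: your claim $h_{\hat B}\asymp\si^{n+k}$ (hence $\cot\phi\asymp\si^k$) neglects the tilt $t_0\asymp b_F$ in $D^k_0$ (see (\ref{reshuffling}) and Lemma~\ref{tilt} with $k=0$). In fact $h_{\hat B}\asymp\si^{2k}h_\diff+|t_0|\si^k v_\diff\asymp\si^{n+k}+b\,\si^n$, so for $k$ large one only has $\cot\phi\asymp b$, not $\si^k$. This does not damage your conclusion, since $\cot\phi=O(1)$ is all you use, and your perpendicular-contraction factor $\si^{2k}$ for $D^k_0$ is indeed unaffected by the shear (as a direct computation confirms). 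The paper's horizontal bookkeeping avoids this issue automatically because the shear preserves horizontal widths; that is the only practical difference between the two write-ups.
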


The estimates in the proof of this Proposition are like the estimates used to prove the Propositions \ref{regtoreg}, \ref{sticktostick},
 and \ref{scapush}.

\begin{proof} 
Let $\hat{B}\in \PP_n(k;q_0,q_1)$ and $B\in\BB^{n+1}[k]$ with 
$B\subset \hat{B}$. 
Let $\bB$ and $\hat{\bB}$ be such that $B=\Psi_0^k(\bB)$ and
 $\hat{B}=\Psi_0^k(\hat{\bB})$. The horizontal and vertical size of the smallest rectangle which contains $\hat{\bB}$ are $\bh,\bv>0$. Let $\Bde>0$ be the relative thickness of $\hat{\bB}$, the absolute thickness of  
$\hat{\bB}$ is $\bw=\Bde\cdot \bh$.   From Lemma \ref{contracting} we get
$$
\bh,\bv =O(\sigma^{n-k}).
$$
Moreover, the regularity of $\hat{B}$ gives
$$
\bh\asymp \bv.
$$
The situation allows to apply Lemma \ref{wdeltas} :
\begin{equation}\label{deltasigmaestimate}
\Bde=O(\rho^{q_0\cdot n})
\quad \mathrm{and} \quad
|\sigma_\bB-\sigma^*_{\bB}|=O(\rho^{q_0\cdot n}).
\end{equation}
We have to show that $\hat{B}\cap \OO_{F}=\Psi^k_0(\hat{\bB}\cap
\OO_{F_k})$ is contained in a $O(\rho^{q_0\cdot n})$-stick. As before
we will decompose $\Psi_0^k$ into its diffeomorphic part 
$(\id+{\bf S}_0^k)$ and its affine part. Let 
$h_{\text{diff}}, v_{\text{diff}}>0$ be the horizontal and vertical size of the smallest rectangle containing the image of $\hat{\bB}$ under  $(\id+{\bf S}_0^k)$ and 
$w_{\text{diff}}>0$ the absolute thickness of its stick and $\sigma_{\text{diff}}>0$ the scaling factor of the image of $\bB$ under the same diffeomorphism. Then we have
$$
\begin{aligned}
\sigma_{\text{diff}}&=\sigma_\bB,\\
v_{\text{diff}}&=\bv
\end{aligned}
$$
and, by recalling (\ref{wdiff}),
$$
\begin{aligned}
w_{\text{diff}}&=O( \bw +\sigma^{n-k} \cdot \bh ),\\
h_{\text{diff}}&\asymp \bh.
\end{aligned}
$$
The last two estimates rely on $\bv\asymp \bh$. The term $\bh \cdot \sigma^{n-k}$ reflects the distortion of  $(\id+{\bf S}_0^k)$ on $\hat{\bB}$ determined by the 
diameter of $\hat{\bB}$ which is of the order $\sigma^{n-k}$. The next step is to apply the affine part of $\Psi_0^k$. Denote the measurements after this step by $h_{\text{aff}}, v_{\text{aff}}, w_{\text{aff}}, \sigma_{\text{aff}}>0$ resp. 
Equation (\ref{reshuffling}) and Lemma \ref{tilt} yield
\begin{equation}\label{oldaffestimates}
\begin{aligned}
w_{\text{aff}}&\asymp \sigma^{2k}w_{\text{diff}},\\
\sigma_{\text{aff}}&=\sigma_{\text{diff}}=\sigma_\bB,\\
h_{\text{aff}}&\asymp \sigma^{2k} h_{\text{diff}}+ \sigma^kv_{\text{diff}},\\
\end{aligned}
\end{equation}
Use the above estimates in the following 
\begin{equation}\label{waffhaff}
\frac{w_{\text{aff}}}{h_\text{aff}}=O(\frac{\sigma^{2k} \cdot [\bw+\sigma^{n-k}\cdot \bh ]}
{\sigma^{2k} \cdot \bh +\sigma^{k}\cdot \bv})= O(\sigma^k\cdot \Bde +\sigma^{n})=O(\rho^{q_0\cdot n}).
\end{equation}

\begin{figure}[htbp]
\begin{center}
\psfrag{vaff}[c][c] [0.7] [0] {\Large $v_{\text{aff}}$}
\psfrag{haff}[c][c] [0.7] [0] {\Large $h_{\text{aff}}$}
\psfrag{saffl}[c][c] [0.7] [0] {\Large $\sigma_{\text{aff}} l'$}
\psfrag{waff}[c][c] [0.7] [0] {\Large $w_{\text{aff}}$}
\psfrag{dl}[c][c] [0.7] [0] {\Large $\Delta l'$}
\psfrag{l}[c][c] [0.7] [0] {\Large $l'$}
\psfrag{w'}[c][c] [0.7] [0] {\Large $w'$}
\psfrag{s'l}[c][c] [0.7] [0] {\Large $\sigma' l'$}
\psfrag{sb}[c][c] [0.7] [0] {\Large $\sigma_{\bB} v_{\text{aff}}$}
\psfrag{BOF}[c][c] [0.7] [0] {\Large $B \cap \OO_F$}
\pichere{0.9}{actualgeo} 
\caption{} \label{actualstick}
\end{center}
\end{figure}

Consider the smallest conformal image of a rectangle aligned along the diagonal of the rectangle containing $\hat{B}=\Psi^k_0(\hat{\bB})$,
see Figure \ref{actualstick}. The precision of $\hat{B}$ will be better than the precision based on the measurements of this approximation of the 
stick. Let $l'>0$ be the length, $w'>0$ be the absolute thickness and $\sigma'>0$ be the scaling factor of $B\subset \hat{B}$ within this rectangle. Then
\begin{equation}\label{l'}
l'=\sqrt{h_{\text{aff}}^2+v^2_{\text{aff}}},
\end{equation}
and 
\begin{equation}\label{w'}
w'\le w_{\text{aff}}.
\end{equation}
First we will estimate the precision of $\sigma'$. Let $\gamma$ be the angle between the diagonal of the rectangle and the horizontal.  Observe,
$$
\cos \gamma=\frac{h_{\text{aff}}}{\sqrt{h_{\text{aff}}^2+v^2_{\text{aff}}}},
$$
see Figure \ref{actualstick}. The projection $\Delta l'$ of the horizontal interval of length $w_{\text{aff}}$ onto the diagonal has length
$$
\Delta l'=w_{\text{aff}} \cdot \cos \gamma.
$$
Observe,
$$
|\sigma' \cdot l'-\sigma_{\text{aff}}\cdot l'|\le \Delta l'=w_{\text{aff}} \cdot \frac{h_{\text{aff}}}{\sqrt{h_{\text{aff}}^2+v^2_{\text{aff}}}}.
$$
Then, by using (\ref{waffhaff}) and (\ref{l'}), 
\begin{equation}\label{delsigma}
|\sigma' -\sigma_{\text{aff}}|\le \frac{w_{\text{aff}} }{h_{\text{aff}} }\cdot \frac{h^2_{\text{aff}}}{h_{\text{aff}}^2+v^2_{\text{aff}}}\le \frac{w_{\text{aff}} }{h_{\text{aff}} }=
 O(\rho^{q_0\cdot n}).
\end{equation}
Use (\ref{deltasigmaestimate}), (\ref{oldaffestimates}), and (\ref{delsigma}) to estimate the precision of $\sigma'$
\begin{equation}\label{delsig}
|\sigma'-\sigma_{\bB}^*|\le | \sigma'-\sigma_{\text{aff}}|+|\sigma_{\text{aff}}-\sigma_{\bB}^*|= O(\rho^{q_0\cdot n}).
\end{equation}
The estimate (\ref{w'}) says that the height of the stick containing $\hat{B}$ is at most $w_{\text{aff}}$. 
The relative height is estimated by
\begin{equation}\label{relthick}
\frac{w'}
              {l'}\le \frac{w_{\text{aff}}}
                                {\sqrt{h_{\text{aff}}^2+v^2_{\text{aff}}}}\le 
\frac{w_{\text{aff}}}
       {h_{\text{aff}}}=O(\rho^{q_0\cdot n}),
\end{equation}
where we used (\ref{waffhaff}) and (\ref{l'}).
The estimates (\ref{delsig}) and (\ref{relthick}) confirm that
$
\hat{B}\in \SS_n(\rho^{q_0\cdot n})),
$
which finishes the proof of the Proposition.
\end{proof}

\subsection{Universal sticks created in the brute-force regime}

\begin{prop}\label{badpart} There exists $\epsilon^*>0$, and $q^*>0$ such that the following holds.
Let $\epsilon<\epsilon^*$, and $0<q_0<q_1<q^*$ then there exists $n^*\ge 1$ such that
 if for $0\le j< 2^{(1-q_1)\cdot n}$
$$
F^j(B)\in\SS_n(\epsilon),
$$
 with
$B\in \BB^n[k]$, $(1-q_1)\cdot n\le k\le (1-q_0)\cdot n$, and $n\ge n^*$, then
$$
F^{j+1}(B)\in \SS_n(O(\epsilon+\rho^{q_0\cdot n})).
$$
\end{prop}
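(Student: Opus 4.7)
The plan is to exploit the smallness of pieces in $\BB^n$ together with the $C^2$-smoothness of $F$ to show that a single application of $F$ is an exponentially small perturbation of an affine map, and that affine maps essentially preserve the stick structure defining $\epsilon$-universality.

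First I would record that $\Pi := F^j(B) \in \BB^n$ satisfies $\diam \Pi \leq C\sigma^n$ by Lemma~\ref{contracting}. A Taylor expansion of $F$ at a base point $p_0 \in \Pi$ then gives, for all $p \in \Pi$,
\[
F(p) = L(p) + R(p), \quad L(p) := F(p_0) + DF(p_0)(p - p_0), \quad |R(p)| = O(\sigma^{2n}),
\]
so $F|_\Pi$ is $O(\sigma^{2n})$-close in $C^0$ to the affine map $L$. I would then apply $L$ to the $\epsilon$-stick $\mathcal{R}$ encoding the $\epsilon$-universality of $\Pi$: since $L$ is affine, $L(\mathcal{R})$ is a parallelogram, and one-dimensional ratios along any fixed direction are preserved, so the scaling factors of the sub-pieces are unchanged under $L$. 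Inscribing $L(\mathcal{R})$ into the smallest rotated rectangle $\mathcal{R}'$ whose long axis is $DF(p_0)u$ (the image of the long axis $u$ of $\mathcal{R}$), the relative thickness of $\mathcal{R}'$ is bounded by $C\epsilon$ with $C$ depending only on the geometry of $F$. Bringing the quadratic remainder $R$ back in adds $O(\sigma^{2n})$ in absolute terms, which, after dividing by the new length $|\mathcal{R}'| \asymp \sigma^n$, contributes $O(\sigma^n)$ both to the relative thickness and to the sub-piece scaling factors.

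Next, to compare with the universal proper scalings $\sigma^*_{\overline{F(\Pi_i)}}$, I would repeat this analysis for the one-dimensional renormalization fixed point $F_*$ acting on the combinatorially identical piece; since the two curvature corrections agree to leading order, subtracting yields $|\sigma_{F(\Pi_i)} - \sigma^*_{\overline{F(\Pi_i)}}| \leq |\sigma_{\Pi_i} - \sigma^*_{\overline{\Pi_i}}| + O(\sigma^n) \leq \epsilon + O(\sigma^n)$. Combining with the thickness estimate, $F^{j+1}(B) \in \SS_n(O(\epsilon + \sigma^n)) \subset \SS_n(O(\epsilon + \rho^{q_0 n}))$ for any $\rho$ chosen with $\sigma \leq \rho^{q_0}$. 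The main obstacle is controlling the constant $C$ in the bound $\textrm{thickness}(\mathcal{R}') \leq C \cdot \textrm{thickness}(\mathcal{R})$: because the singular values of $DF$ range from $\asymp 1$ down to $\asymp b$, if the long axis $u$ of $\mathcal{R}$ were aligned with the minor singular vector of $DF(p_0)$ the constant $C$ would blow up like $1/b$. The hypothesis $F^j(B) \in \SS_n(\epsilon)$ is crucial here because it forces $u$ to be nearly tangent to the attractor curve $x = f(y)$, which makes a bounded angle with the major singular direction of $DF(p_0)$; the constraints $(1-q_1)n \leq k \leq (1-q_0)n$ and $j < 2^{(1-q_1)n}$, together with Proposition~\ref{asympwidth}, ensure this tangency uniformly, so $C$ is a geometry-of-$F$ constant absorbable into the $O$-notation provided $\epsilon^* > 0$ is chosen small enough.
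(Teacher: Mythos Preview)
Your overall strategy --- Taylor-expand $F$ on a piece of diameter $O(\sigma^n)$, treat the affine part exactly, and absorb the $O(\sigma^{2n})$ quadratic remainder into a relative error --- is the paper's approach, and the thickness bound $w'/l'=O(\epsilon+\sigma^n)$ goes through. Two steps, however, are not right as stated.

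First, your tangency claim is false. The starting piece $B\in E^k$ with $k\ge(1-q_1)n$ sits near the tip, where the curve $x=f(y)$ has \emph{vertical} tangent, and the vertical direction is precisely the minor singular direction of $DF$ (it is sent to a vector of length $O(b)$). The paper makes no attempt to avoid this: it simply uses the crude lower bound $l'\ge ml-O(l^2+\epsilon l)$ with $m\asymp b>0$, so the hidden constant in the resulting $O(\epsilon+\sigma^n)$ is of order $1/b$ --- still a fixed geometry-of-$F$ constant. Your conclusion survives, but not for the reason you give; Proposition~\ref{asympwidth} concerns only $B$ itself, not its iterates.

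Second --- and this is the real gap --- you cannot ``repeat this analysis for $F_*$'' to get $|\sigma^*_{j+1}-\sigma^*_j|=O(\sigma^n)$. The degenerate map $F_*$ has $m_*=0$ (it collapses vertical lines to points), so the lower bound on $l'$ that worked for $F$ gives nothing for $F_*$; near the tip the directional derivative along the stick is as small as $\sigma^s$ (where $s$ is the depth of the piece), and the single-step change in $\sigma^*$ is only $O(\sigma^{n-s})$. The paper handles this with a separate one-dimensional distortion estimate, Proposition~\ref{disto} via Remark~\ref{1Dscalingstar}, yielding $|\sigma^*_{\bB_s}-\sigma^*_{\bB}|=O(\rho^{q_0 n})$. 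This is exactly where the hypothesis $k\le(1-q_0)n$ is used: it places the corresponding one-dimensional interval in $U_k\setminus U_{(1-q_0)\cdot n}$, keeping the $f_*$-orbit away from the critical point so that $\Dist(f_*^j|I)$ is controlled. It is also why the error term in the statement is $\rho^{q_0 n}$ rather than $\sigma^n$. Your argument never uses the upper bound on $k$, and ``the two curvature corrections agree to leading order'' has no content here since $F$ and $F_*$ are at a fixed $C^2$-distance.
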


\begin{proof}
 Choose $\hat{B}\in \BB^n[k]$ 
with $(1-q_1)\cdot n\le k\le (1-q_0)\cdot n$ and $B\in \BB^{n+1}$ with $B\subset \hat{B}$. The iterates under the original map are denoted by
$B_j=F^j(B)$ and $\hat{B}_j=F^j(\hat{B})$, $j\le 2^{(1-q_1)\cdot n}$.
Assume that for some  $j\le 2^{(1-q_1)\cdot n}$
$$
\hat{B}_j\in \SS_n(\epsilon).
$$
The piece $\hat{B}_j$ is contained in an $\epsilon$-stick.
 Say $\hat{B}_j\cap \OO_F$ is contained in a rectangle of length $l>0$ and height
 $w\le \epsilon l$. The smaller rectangle which contains $B_j\cap \OO_F$ has length $\sigma_{j} l$, where $\sigma_j=\sigma_{B_j}$ and 
 $|\sigma_j-\sigma^*_{\bB_j}|\le \epsilon$.  Notice that we have to estimate the scaling factor $\sigma_{B_j}$ and not $\sigma_{\bB_j}$, compare remark \ref{prosca}.

Apply $F$ to this rectangle. The stick which contains $\hat{B}_{j+1}$ has length $l'>0$ and height $w'>0$. The relevant scaling factor of $B_{j+1}$ is $\sigma_{j+1}=\sigma_{B_{j+1}}$.

Choose, $M, m>0$ such that 
$$
m |v| \le |DF(x,y)v|\le M |v|.
$$
This is possible because $F$ is a diffeomorphism onto its image. However, $m=O(b)$.
Let $K>0$ be the maximum norm of the Hessian of $F$. The diameter of $\hat{B}_j\cap \OO_F$, which is proportional to $l$,  is of the order $\sigma^n$, see Lemma \ref{contracting}. We can estimate the sizes $l', w'$ and $\sigma'$ by applying the derivative of $F$ and correcting for 
distortion which is bounded by $K l^2$. Let $D$ be the absolute value of the directional derivative of $F$ in the direction of the rectangle containing $\hat{B}_j$, measured in a corner of the rectangle.
Then
$$
\begin{aligned}
l'&\ge Dl-2K l^2-2M w,\\
w'&\le M w+2K l^2,\\
\end{aligned}
$$
Observe,
$$
|\sigma_{j+1} \cdot l'-D \cdot \sigma_{j} \cdot l|\le 2M w+2K l^2.
$$
Let us first estimate the relative height of the stick of $\hat{B}_{j+1}$. Use $w\le \epsilon l$,
\begin{equation}\label{epsw}
\begin{aligned}
\frac{w'}{l'}&\le \frac{ M\epsilon l+2K l^2}{ ml-2K l^2-2M\epsilon l}\\
&\le \frac{M}{m-2K l-2M\epsilon }\cdot \epsilon+2\frac{K}{m-2K l-2M\epsilon } \cdot l\\
&=O(\epsilon+\sigma^n)=O(\epsilon+\rho^{q_0\cdot n}),
\end{aligned}
\end{equation}
when $\epsilon<\epsilon^*$, $q_0< q^*_1$ small enough, and $n\ge n^*$ large enough. Similarly, 
\begin{equation}\label{deltasigma}
|\sigma_{j+1}-\sigma_{j}|=O(\epsilon+\rho^{q_0\cdot n}).
\end{equation}

Use remark \ref{1Dscalingstar}  and apply Proposition \ref{disto} to get 
\begin{equation}\label{deltasigmapro}
|\sigma^*_{\bB_s}-\sigma^*_{\bB}|=O(\rho^{q_0\cdot n}),
\end{equation}
with $0\le s< 2^{(1-q_1)\cdot n}$.

We need to estimate the scaling factor $\sigma_{j+1}$ of $B_{j+1}$.
 Use (\ref{deltasigma}) and (\ref{deltasigmapro}) 
and and the notation  $\sigma^*_j=\sigma^*_{\bB_j}$. Then 
\begin{equation}\label{epssig}
\begin{aligned}
|\sigma_{j+1}-\sigma^*_{j+1}|&\le
 |\sigma_{j+1}-\sigma_j|+|\sigma_j-\sigma^*_{j}|+|\sigma^*_j-\sigma^*_{j+1}|\\
&\le O(\epsilon+\rho^{q_0\cdot n})+\epsilon+O(\rho^{q_0\cdot n})\\
&=O(\epsilon+\rho^{q_0\cdot n}),
 \end{aligned}
\end{equation}
for $\epsilon\le \epsilon^*$, $0<q_0<q^*$ small enough and $n\ge n^*$ large enough. The estimates (\ref{epsw}) and (\ref{epssig}) together finish the
proof.
\end{proof}

\section{Probabilistic Universality}\label{conv}

In this section we are going to estimate the measure of the pieces created in the three regimes, see Proposition \ref{Pn1mintheta}.
Let $\alpha=\alpha^*, \epsilon^*>0$, and $0< q^*_1<1/3$ small and $k^*\ge 1$ large enough to allow the use of the Propositions \ref{precision}, and \ref{badpart}.   

\bigskip

For each $n\ge 1$, let $\kappa_0(n)\asymp \ln n$ be the smallest integer such that
$$
l(\kappa_0(n))\equiv2^{\kappa_0(n)} \cdot \frac{\ln b}{\ln \sigma}-\frac{\ln \alpha}{\ln \sigma}+\kappa_0(n)\ge n.
$$
For $n\ge 1$ large enough we have 
\begin{equation}\label{kapn}
\kappa_0(n)\le \frac{\ln n}{\ln 2}.
\end{equation}


\begin{lem}\label{pushupregime} Given $q_0<q_1$. There exists $n^*\ge 1$ such that for $n\ge n^*$ and
$\kappa_0(n)\le k<(1-q_0)\cdot n$,
$$
\mu(\PP_n(k; q_0,q_1))\ge [1-\frac{1}{2^{(q_1-q_0)\cdot n+1}}] \cdot \mu(E^k).
$$
\end{lem}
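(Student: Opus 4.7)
My plan is to translate the problem into the random walk model of \S \ref{random walk}. Each piece $B \in \BB^n[k]$ corresponds bijectively to a strictly increasing predecessor sequence $(k_0 = k, k_1, \ldots, k_t)$ with $k_t < n$, and carries $\mu$-mass $1/2^n$. Conditional on $k_0 = k$, an event of $\mu$-mass $\mu(E^k) = 1/2^{k+1}$, the sequence is distributed as the random walk with geometric jumps $P(j = m) = 1/2^m$ stopped upon first passage to $[n, \infty)$. Consequently,
\[
\mu(\PP_n(k; q_0, q_1)) = \mu(E^k)\cdot P\bigl[(1)\wedge (2)\wedge (3)\bigm| k_0 = k\bigr],
\]
and the lemma reduces to bounding the conditional failure probability by $2^{-(q_1-q_0)n - 1}$.

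The first step is to observe that condition (1) of Definition \ref{controlled} is automatically satisfied whenever $k \ge \kappa_0(n)$. Indeed, the function $l(k) = s(k) + k = a\,2^k - A + k$ is strictly increasing, and $l(\kappa_0(n)) \ge n$ by the very definition of $\kappa_0(n)$. Hence for every predecessor depth $k_i \ge k \ge \kappa_0(n)$ one has $l(k_i) \ge n > k_{i+1}$, so the bound $k_{i+1}\le l(k_i)$ holds automatically.

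It remains to handle conditions (2), (3). When $k \in [(1-q_1)n, (1-q_0)n]$, the index $s = 0$ satisfies (2) with (3) vacuous, so the conditions hold with probability $1$. When $k < (1-q_1)n$, set $\tau := \min\{i : k_i \ge (1-q_1)n\}$; then (2) and (3) hold with $s = \tau$ iff $k_\tau \le (1-q_0)n$, because the strict monotonicity of the sequence precludes any later return to the target interval. The only failure mode is therefore the overshoot event $\{k_\tau > (1-q_0)n\}$.

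The crux is a memoryless overshoot property: for any starting point $k < L := \lceil (1-q_1)n \rceil$, the overshoot $O := k_\tau - L + 1$ is geometrically distributed with parameter $1/2$, independently of $k$. I would prove this by strong induction on $\ell := L - k$, decomposing on the first jump and using the telescoping identity $\sum_{j=1}^{\ell-1} 2^{-j}\cdot 2^{-m} + 2^{-(\ell + m - 1)} = 2^{-m}$. Granting this,
\[
P(k_\tau > (1-q_0)n) \;=\; P\bigl(O > (q_1 - q_0)n + 1\bigr) \;=\; 2^{-(q_1 - q_0)n - 1},
\]
which establishes the lemma. The main delicate point is verifying this overshoot identity; it is routine but essential, since any weaker bound on the failure probability would cost the tight factor of $2$ in the exponent of the stated inequality.
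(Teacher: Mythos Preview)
Your proof is correct and takes a genuinely different route from the paper's. The paper argues by downward induction on $k$: it introduces the complement measures $\beta_n(k;q_0,q_1) = \mu\bigl(E^k \setminus \PP_n(k;q_0,q_1)\bigr)$, observes they vanish on $[(1-q_1)n,(1-q_0)n]$ and equal $\mu(E^k)$ for $k > (1-q_0)n$, and then uses the recursion $\PP_n(k) = \bigcup_{l=k+1}^{l(k)} G_k(\PP_n(l))$ (valid here since $l(k) \ge n$) to propagate the bound one step at a time. You instead translate to the random walk model, note that condition~(1) is automatic in the range $k\ge\kappa_0(n)$, identify the sole remaining failure mode as the first-passage overshoot past level $\lceil(1-q_1)n\rceil$, and compute its probability exactly via the memoryless property of geometric jumps.

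Your argument is cleaner for this lemma and yields the bound in one shot --- in fact as an equality, up to the integer rounding the paper also suppresses. The paper's inductive approach is more computational, but it has the advantage that the same machinery (the measures $\beta_n$ and the push-up recursion) carries over verbatim to Proposition~\ref{meas}, where $k<\kappa_0(n)$ forces $l(k)<n$ and the recursion picks up a genuine loss term $\mu(B^{l(k)+1})$ at each step; your overshoot formulation would need to be extended there to account for the additional failure mode of a jump exceeding $l(k_i)-k_i$.
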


\begin{proof} Let $\beta_n(k; q_0,q_1)=\mu(E^k\setminus\PP_n(k; q_0,q_1) )$ be the measure of the uncontrolled pieces.
The construction implies immediately
\begin{equation}\label{betamuEk}
\beta_n(k; q_0,q_1)=\mu(E^k), \quad (1-q_0)\cdot n< k\le n,
\end{equation}
 and 
\begin{equation}\label{beta0}
\beta_n(k; q_0,q_1)=0, \quad (1-q_1)\cdot n\le k\le (1-q_0)\cdot n,
\end{equation}
every piece in the one-dimensional regime is controlled. The Lemma holds for $(1-q_1)\cdot n\le k\le (1-q_0)\cdot n$.
This implies that the fraction of the uncontrolled part in $\cup_{l\ge (1-q_1)\cdot n} E^l$ is
\begin{equation}\label{qq}
\frac{\sum_{l=(1-q_1)\cdot n}^n\beta_n(l; q_0,q_1)} 
{\mu(B^{(1-q_1)\cdot n})}\le \frac{1}{2^{(q_1-q_0)\cdot n+1}}.
\end{equation}
Observe,
$$
\begin{aligned}
l((1-q_1)\cdot n-1)&= 2^{(1-q_1)\cdot n-1}\cdot \frac{\ln b}{\ln \sigma}-
\frac{\ln \alpha}{\ln \sigma}+(1-q_1)\cdot n-1\\
&\gtrsim 2^{(1-q_1)\cdot n-1}\ge n,
\end{aligned}
$$
holds when $n\ge 1$ is large enough. All pieces in $\BB^n[k]$, with $k\ge (1-q_1)\cdot n$ are not too deep for level 
$(1-q_1)\cdot n-1$. Hence, equation (\ref{PP}) reduces to 
$$
\PP_n((1-q_1)\cdot n-1; q_0,q_1)=\bigcup_{(1-q_1)\cdot n\le l\le n} G_{(1-q_1)\cdot n-1}(\PP_n(l; q_0,q_1)).
$$
Hence, using (\ref{qq}),
$$
\begin{aligned}
\beta_n((1-q_1)\cdot n-1; q_0,q_1)&=\sum_{l=(1-q_1)\cdot n}^n\beta_n(l; q_0,q_1)\\
&\le \frac{1}{2^{(q_1-q_0)\cdot n+1}}\cdot\mu(B^{(1-q_1)\cdot n})\\
&=\frac{1}{2^{(q_1-q_0)\cdot n+1}}\cdot \mu(E^{(1-q_1)\cdot n-1}).
\end{aligned}
$$
Now we finish the proof by induction. The Lemma is proved for $k=(1-q_1)\cdot n-1$.  Assume the Lemma holds from $(1-q_1)\cdot n-1$ down 
to $k+1\le (1-q_1)\cdot n-1$. Because $k\ge \kappa_0(n)$ we have $l(k)\ge n$.  Hence, again by using 
(\ref{PP}), (\ref{betamuEk}),  (\ref{beta0}),  and $\mu(E^l)=\frac{1}{2^{l+1}}$, $l\ge 0$, we get
$$
\begin{aligned}
\mu(\PP_n(k; q_0,q_1))&=\mu(\bigcup_{l=k+1}^n G_k(\PP_n(l; q_0,q_1)))\\
&= \sum_{l=k+1}^{ (1-q_1)\cdot n-1} \mu(\PP_n(l; q_0,q_1)))+
\sum_{l=(1-q_1)\cdot n}^{(1-q_0)\cdot n} \mu(E^l)\\
&\ge (1-\frac{1}{2^{(q_1-q_0)\cdot n+1}})\cdot [
\sum_{l=k+1}^{ (1-q_1)\cdot n-1}  \mu(E^{l})+  \frac{1}{2^{(1-q_1)\cdot n}}]\\
&= (1-\frac{1}{2^{(q_1-q_0)\cdot n+1}})\cdot [
\sum_{l=k+1}^{ (1-q_1)\cdot n-1}  \mu(E^{l})+\sum_{l=(1-q_1)\cdot n}^{\infty} \mu(E^l)]\\
&=(1-\frac{1}{2^{(q_1-q_0)\cdot n+1}})\cdot \mu(E^k).
\end{aligned}
$$
\end{proof}

\begin{prop}\label{meas}   Given $q_0<q_1<q^*_1$. There exists $n^*\ge 1$ such that for $n\ge n^*$ and $k\le (1-q_0)\cdot n$
$$
\mu(\PP_n(k; q_0,q_1))\ge [1- \frac{1}{2^{(q_1-q_0)\cdot n+1}}
-2^{\frac{\ln \alpha\sigma}{\ln \sigma}}\sum_{l= k}^{\infty} 2^l (b^\gamma)^{2^l}]\cdot 
\mu(E^k),
$$
where $\gamma=- \frac{\ln 2}{\ln \sigma}\in (0,1)$.
\end{prop}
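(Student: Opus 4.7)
The plan is a downward induction on $k$. Equation (\ref{PP}) together with the $F$-invariance of $\mu$ (the first-return map $G_k=F^{2^k}$ is injective on $\OO_F$ with $\mu(G_k(\PP_n(l)))=\mu(\PP_n(l))$, and its images for different $l$ are pairwise disjoint) yields
$$
\pi_k := \mu(\PP_n(k; q_0,q_1)) = \sum_{l=k+1}^{l(k)} \pi_l.
$$
Combining with the analogous identity $\mu(E^k) = \sum_{l=k+1}^{\infty}\mu(E^l)$ and $\mu(E^l)=1/2^{l+1}$, the defect $d_k := \mu(E^k) - \pi_k$ satisfies
$$
d_k = \sum_{l=k+1}^{l(k)} d_l + \frac{1}{2^{l(k)+1}},
$$
where the residual $1/2^{l(k)+1}=\mu(B^{l(k)+1})$ is the absolute measure of pieces too deep to be reached by a single controlled jump from depth $k$. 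When $k\ge \kappa_0(n)$ one has $l(k)\ge n$, so the cutoff is vacuous and Lemma \ref{pushupregime} directly gives $d_k\le\delta^*\mu(E^k)$ with $\delta^*:=1/2^{(q_1-q_0)n+1}$. The content of Proposition \ref{meas} is therefore the regime $k<\kappa_0(n)$.

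For this range the plan is to introduce the ansatz
$$
u_k := 2^A \sum_{l=k}^{\infty} 2^{l-k}(b^\gamma)^{2^l}, \qquad A=\frac{\ln\alpha}{\ln\sigma},
$$
and show by downward induction that $d_k\le\delta^*\mu(E^k)+u_k$. The base case $k\ge\kappa_0(n)$ is free because $u_k\ge 0$. For the inductive step, swapping the order of summation in the double sum (equivalently, using $u_k=2^A(b^\gamma)^{2^k}+2u_{k+1}$) gives
$$
\sum_{l=k+1}^{\infty} u_l \;\le\; u_k - 2^A (b^\gamma)^{2^k}.
$$
On the other hand, $l(k)=k+a2^k-A$ with $2^{-a}=b^\gamma$ implies
$$
\frac{1}{2^{l(k)+1}} = \frac{2^A(b^\gamma)^{2^k}}{2^{k+1}} \;\le\; 2^A(b^\gamma)^{2^k},
$$
and adding these two inequalities yields the key estimate $\sum_{l=k+1}^{l(k)} u_l+1/2^{l(k)+1}\le u_k$. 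Inserting the induction hypothesis into the recursion for $d_k$ produces
$$
d_k \;\le\; \delta^*\sum_{l=k+1}^{\infty}\mu(E^l) + \sum_{l=k+1}^{l(k)} u_l + \frac{1}{2^{l(k)+1}} \;\le\; \delta^*\mu(E^k) + u_k,
$$
closing the induction.

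Finally, dividing by $\mu(E^k)=1/2^{k+1}$ converts the additive correction $u_k$ into the relative correction $2^{k+1}u_k = 2^{A+1}\sum_{l=k}^{\infty}2^l(b^\gamma)^{2^l}$; since $A+1=\ln(\alpha\sigma)/\ln\sigma$, this matches the constant in the displayed bound of Proposition \ref{meas} on the nose. The only delicate choice in the entire argument is identifying the self-reproducing ansatz $u_k$; once it is in hand the double-exponential decay of $(b^\gamma)^{2^l}$ makes every step of the induction soft, and no geometric input beyond Lemma \ref{pushupregime} and the definition of the control function $l(k)$ is required. The main obstacle is conceptual rather than computational, namely recognizing that the extra additive loss accrued at each depth $k<\kappa_0(n)\asymp\ln n$ must be weighted by the factor $2^{l-k}$ inside the sum in order for the bound to close against itself under the recursion.
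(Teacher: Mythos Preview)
Your proof is correct and follows the same downward induction as the paper; your additive ansatz $u_k$ together with the swap of summation order makes the bookkeeping cleaner than the paper's version (which bounds the inner sum by its worst term instead). One small point you pass over, which the paper checks explicitly as (\ref{lk}): for $k<\kappa_0(n)$ one has $l(k)<(1-q_1)\cdot n$, so that every depth $l\le l(k)$ appearing in your recursion lies in the range $l\le(1-q_0)\cdot n$ where Lemma~\ref{pushupregime} (for $l\ge\kappa_0(n)$) or the induction hypothesis (for $l<\kappa_0(n)$) actually furnishes the bound $d_l\le\delta^*\mu(E^l)+u_l$; without this check the base case would not cover all terms in your sum.
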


\begin{proof}
According to Lemma \ref{pushupregime}, the Proposition holds for $\kappa_0(n)\le k\le (1-q_0)\cdot n$. 
 The proof for the lower values of $k<\kappa_0(n)$ is by induction. 
 Assume by induction 
$$
\beta_n(k; q_0,q_1))\le [\frac{1}{2^{(q_1-q_0)\cdot n+1}}
+2^{\frac{\ln \alpha\sigma}{\ln \sigma}}
\sum_{l= k}^{\kappa_0(n)-1} 2^l (b^\gamma)^{2^l}]\cdot \mu(E^k),
$$
which holds for  $k=\kappa_0(n)$. Suppose it holds from $\kappa_0(n)$ down to 
$k+1\le \kappa_0(n)$.
Observe,
$$
\frac{1}{2^{l(k)}}=2^{\frac{\ln \alpha}{\ln \sigma}}\cdot 2^{-[\frac{k}{2^k}+\frac{\ln b}{\ln \sigma}]\cdot 2^k}
\le 2^{\frac{\ln \alpha}{\ln \sigma}}\cdot 2^{-\frac{\ln b}{\ln \sigma}\cdot 2^k}.
$$
Hence,
\begin{equation}\label{2lk}  
\frac{1}{2^{l(k)}}\le   2^{\frac{\ln \alpha}{\ln \sigma}}\cdot (b^\gamma)^{2^k}.
\end{equation}
Use (\ref{kapn}) and observe,
$$
\begin{aligned}
l_{\kappa_0(n)-1}&=2^{\kappa_0(n)-1}\cdot \frac{\ln b}{\ln \sigma}-
\frac{\ln \alpha}{\ln \sigma}+\kappa_0(n)-1\\
&=\frac12 (n+\frac{\ln \alpha}{\ln \sigma}-\kappa_0(n))
-\frac{\ln \alpha}{\ln \sigma}+\kappa_0(n)-1\\
&\le \frac12 n(1+\frac{\kappa_0(n)}{n})+O(1)\\
&\le  \frac12 n(1+\frac{\ln n}{n\ln 2})+O(1)\\
&<(1-q_1)\cdot n,
\end{aligned}
$$
holds when $n\ge n^*$ large enough because $q^*_1<\frac13$.
Hence, for $n\ge 1$ large enough, we have
\begin{equation}\label{lk}
l(k)\le l(\kappa_0(n)-1)< (1-q_1)\cdot n.
\end{equation}
Use (\ref{PP}), the induction hypothesis, (\ref{2lk}), and (\ref{lk}) in the following 
 estimates.
$$
\begin{aligned}
\beta_n(k; q_0,q_1))&\le \sum_{l=k+1}^{l(k)} \beta_n(l; q_0,q_1)+
\mu(B^{l(k)+1})\\
&\le  [\frac{1}{2^{(q_1-q_0)\cdot n+1}}
+2^{\frac{\ln \alpha\sigma }{\ln \sigma}}
\sum_{l= k+1}^{\kappa_0(n)-1} 2^l (b^\gamma)^{2^l}]\cdot \sum_{l=k+1}^{l(k)}\mu(E^l)
+\frac{1}{2^{l(k)}}\\
&\le 
 [\frac{1}{2^{(q_1-q_0)\cdot n+1}}
+2^{\frac{\ln \alpha\sigma }{\ln \sigma}}
\sum_{l= k+1}^{\kappa_0(n)-1} 2^l (b^\gamma)^{2^l}]\cdot \mu(E^k)+
 2^{\frac{\ln \alpha}{\ln \sigma}} (b^\gamma)^{2^k}\\
&= [\frac{1}{2^{(q_1-q_0)\cdot n+1}}
+2^{\frac{\ln \alpha\sigma}{\ln \sigma}}
\sum_{l= k}^{\kappa_0(n)-1} 2^l (b^\gamma)^{2^l}]\cdot \mu(E^k),
\end{aligned}
$$
where the last equality uses $\mu(E^k)=\frac{1}{2^{k+1}}$.
\end{proof}

For each $K>0$ and $\theta<1$,  let $\kappa(n)$ be the largest integer such that
$$
2^{\kappa(n)}\le Kn\ln 1/\theta.
$$
 
\begin{lem}\label{kn} There exists $K>0$ such that for every $\theta<1$ there exists $n^*\ge 1$ such that $\kappa(n)\ge k^*$ for $n\ge n^*$ and 
$$
2^{\frac{\ln \alpha\sigma}{\ln \sigma}}\sum_{l= \kappa(n)}^{\infty} 2^l (b^\gamma)^{2^l}\le \frac13 \theta^n.
$$
\end{lem}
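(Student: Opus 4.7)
The plan is to observe that the series $\sum_{l\ge\kappa(n)} 2^l (b^\gamma)^{2^l}$ is super-exponentially convergent and essentially equal to its first term, so the whole bound reduces to verifying that a single term $2^{\kappa(n)}(b^\gamma)^{2^{\kappa(n)}}$ is dominated by $\theta^n$ once $K$ is chosen correctly. Write $\beta=\gamma|\ln b|>0$, so that $(b^\gamma)^{2^l}=e^{-\beta\cdot 2^l}$. The trick is that $K$ must be chosen depending only on $F$ (via $b$ and $\sigma$), not on $\theta$; I set
$$
K=\frac{4}{\beta}=\frac{4}{\gamma|\ln b|},
$$
which is a universal constant depending on the geometry of $F$.

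With this choice, the definition of $\kappa(n)$ gives $2^{\kappa(n)+1}>Kn|\ln\theta|$, hence $2^{\kappa(n)}>Kn|\ln\theta|/2=2n|\ln\theta|/\beta$. Therefore
$$
(b^\gamma)^{2^{\kappa(n)}}=e^{-\beta\cdot 2^{\kappa(n)}}\le e^{-2n|\ln\theta|}=\theta^{2n}.
$$
Next I observe that for $l\ge\kappa(n)$ the ratio of consecutive terms satisfies
$$
\frac{2^{l+1}(b^\gamma)^{2^{l+1}}}{2^l (b^\gamma)^{2^l}}=2\,(b^\gamma)^{2^l}\le 2\theta^{2n},
$$
which is $\le 1/2$ for $n$ sufficiently large. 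By geometric summation the tail is at most twice the leading term:
$$
\sum_{l=\kappa(n)}^\infty 2^l (b^\gamma)^{2^l}\le 2\cdot 2^{\kappa(n)}(b^\gamma)^{2^{\kappa(n)}}\le 2Kn|\ln\theta|\cdot\theta^{2n}.
$$
Multiplying by the absolute constant $C_0=2^{\ln(\alpha\sigma)/\ln\sigma}$ yields a bound of the form $C_0\cdot 2Kn|\ln\theta|\cdot\theta^n\cdot\theta^n$. Since $n\theta^n\to 0$ as $n\to\infty$, the prefactor $C_0\cdot 2Kn|\ln\theta|\cdot\theta^n$ drops below $1/3$ for all $n\ge n^*(\theta)$, which delivers the desired bound $\le\tfrac{1}{3}\theta^n$. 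The condition $\kappa(n)\ge k^*$ is automatic for large $n$ since $2^{\kappa(n)}>Kn|\ln\theta|/2\to\infty$.

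There is no real obstacle here; the only point requiring a bit of care is the order of quantifiers. One has to choose $K$ before $\theta$, so the inequality $K\ge 4/\beta$ must be stated in terms of data of $F$ alone and independent of $\theta$. Once that is done, the exponent $\beta\cdot 2^{\kappa(n)}$ outruns $n|\ln\theta|$ by a fixed factor of $2$, which gives an extra $\theta^n$ of slack to absorb the polynomial prefactor $n|\ln\theta|$ and the constant $C_0$. The super-exponential decay of $(b^\gamma)^{2^l}$ takes care of everything else.
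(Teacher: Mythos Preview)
Your proof is correct and follows essentially the same approach as the paper's: both reduce the tail to its first term $2^{\kappa(n)}(b^\gamma)^{2^{\kappa(n)}}$ via the super-exponential decay, then use the two-sided bound $\tfrac12 Kn|\ln\theta|<2^{\kappa(n)}\le Kn|\ln\theta|$ to show this term is $o(\theta^n)$ once $K$ is large enough relative to $1/(\gamma|\ln b|)$. The only differences are cosmetic: you give an explicit value $K=4/\beta$ and work directly with exponentials, whereas the paper takes logarithms and writes the sufficient condition as $\tfrac12 K\ln b^\gamma+1<0$ without naming a specific $K$; your explicit emphasis on the order of quantifiers ($K$ chosen before $\theta$, depending only on $b,\sigma$) is a useful clarification of a point the paper leaves implicit.
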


\begin{proof} Observe,
$$
\sum_{l= \kappa(n)}^{\infty} 2^l (b^\gamma)^{2^l}=O(2^{\kappa(n)}(b^\gamma)^{2^{\kappa(n)}}).
$$
To achieve the property of the Lemma it suffices to satisfy
$$
\ln 2^{\kappa(n)}+ 2^{\kappa(n)}\ln b^\gamma +O(1)\le n\ln \theta.
$$
In turn, this holds when
$$
n\ln 1/\theta \cdot [\frac12 K \ln b^\gamma +1]+O(1)\le 0.
$$
This holds for large $n\ge 1$ when $K>0$ is chosen large enough.
\end{proof}

In the sequel we will fix $K>0$ according to the previous Lemma.
For each $Q>0$ and $\theta<1$,  define $q_0$ by
$$
q_0=Q\ln 1/\theta.
$$
and 
$$
q_1=[Q+\frac{3}{2 \ln 2}]\cdot \ln 1/\theta.
$$
\begin{lem}\label{q0q1} For every $\theta<1$ there exists $n^*\ge 1$ such that for $Q>0$ and $n\ge n^*$
$$
\frac{1}{2^{(q_1-q_0)\cdot n +1}}\le \frac13 \theta^n.
$$
\end{lem}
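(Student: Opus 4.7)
The plan is to do a direct computation, since the statement reduces to a one-line inequality in the single variable $n$ (note that $Q$ drops out entirely).

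First I would compute $q_1 - q_0$. By definition,
$$
q_1 - q_0 = \frac{3}{2\ln 2}\,\ln(1/\theta),
$$
so
$$
2^{(q_1-q_0)\cdot n} \;=\; \exp\!\bigl((q_1-q_0)\, n\, \ln 2\bigr) \;=\; \exp\!\Bigl(\tfrac{3n}{2}\,\ln(1/\theta)\Bigr) \;=\; \theta^{-3n/2}.
$$
Notice that $Q$ has disappeared, which matches the ``for $Q>0$'' quantifier in the statement. Hence
$$
\frac{1}{2^{(q_1-q_0)\cdot n+1}} \;=\; \tfrac12\,\theta^{3n/2}.
$$

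Next I would compare this to the desired bound $\tfrac13\,\theta^n$. The required inequality becomes
$$
\tfrac12\,\theta^{3n/2} \;\le\; \tfrac13\,\theta^{n},
$$
which is equivalent to $\theta^{n/2}\le 2/3$. Since $0<\theta<1$, the left-hand side tends to $0$, so the inequality holds as soon as $n\ge 2\ln(3/2)/\ln(1/\theta)$. Taking $n^*$ to be any integer at least this large yields the Lemma.

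There is no real obstacle; the only thing to double-check is that the power of $\theta$ on the left (namely $3n/2$) strictly exceeds that on the right (namely $n$), so that the ratio $\theta^{3n/2}/\theta^n = \theta^{n/2}$ indeed shrinks. This is guaranteed by the extra $\frac{3}{2\ln 2}\ln(1/\theta)$ built into the definition of $q_1$, which is exactly the gap designed to produce a factor of $\theta^{n/2}$ of slack over the target rate $\theta^n$.
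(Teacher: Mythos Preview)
Your proof is correct; the paper itself omits the proof entirely, treating the lemma as an immediate consequence of the definitions of $q_0$ and $q_1$, which is exactly the direct computation you carried out.
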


The brute-force regime consists of iterates of $\bigcup_{l=\kappa(n)}^{(1-q_0)\cdot n}    \PP_n(l;q_0,q_1) $  up to just one step before the moment of return to $B^{\kappa(n)}\equiv \bigcup_{l=\kappa(n)}^\infty E^l$. The return uses exactly  $2^{\kappa(n)}$ steps. Thus we obtain for each choice $Q>0$ and $\theta<1$, the collection
\begin{equation}\label{PPn} 
\PP_n=\bigcup_{j=0}^{2^{\kappa(n)} -1}F^j(  \bigcup_{l=\kappa(n)}^{(1-q_0)\cdot n} 
\PP_n(l;q_0,q_1)         )
\end{equation}

\begin{prop}\label{muPnSn}  There exist $Q>0$ and $\theta^*<1$ such that the following holds. For $\theta^*\le \theta<1$ there exists $n^*\ge 1$ such that for $n\ge n^*$
$$
\PP_n \subset \SS_n(\theta^n).
$$
\end{prop}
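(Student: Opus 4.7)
The plan is to combine the exponentially small initial precision supplied by Proposition \ref{precision} with the one-step distortion bound of Proposition \ref{badpart}, iterated along the brute-force orbit of length $2^{\kappa(n)}$. By definition (\ref{PPn}), any $B \in \PP_n$ has the form $B = F^j(B_0)$ with $B_0 \in \PP_n(l; q_0, q_1)$ for some $\kappa(n) \le l \le (1-q_0) n$ and $0 \le j < 2^{\kappa(n)}$. Proposition \ref{precision} gives $B_0 \in \SS_n(\epsilon_0)$ with $\epsilon_0 = C_0 \rho^{q_0 n}$ for a universal $C_0 > 0$, so the starting precision is already exponentially small.

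Iterating Proposition \ref{badpart} along the orbit $B_0, F(B_0), \dots, F^j(B_0)$ yields the recursion $\epsilon_{i+1} \le C_1(\epsilon_i + \rho^{q_0 n})$ for the precision $\epsilon_i$ of $F^i(B_0)$, where $C_1 > 0$ is the constant implicit in Proposition \ref{badpart}. Unwinding the recursion gives
\[
   \epsilon_j \le C_1^j \epsilon_0 + C_1 \rho^{q_0 n} \sum_{i=0}^{j-1} C_1^i \le 2 C_1^{j+1} \rho^{q_0 n},
\]
and since $j < 2^{\kappa(n)} \le K n \ln 1/\theta$,
\[
   \epsilon_j \le 2 C_1 \bigl(C_1^{K \ln 1/\theta} \rho^{q_0}\bigr)^n.
\]
To force $\epsilon_j \le \theta^n$ for $n \ge n^*$, the base must satisfy $C_1^{K \ln 1/\theta} \rho^{q_0} \le \theta$; substituting $q_0 = Q \ln 1/\theta$ and taking logarithms, this reduces to the parameter constraint $Q \, |\ln \rho| \ge K \ln C_1 + 1$. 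Accordingly, choose $Q$ exceeding $(K \ln C_1 + 1)/|\ln \rho|$, and then fix $\theta^* < 1$ close enough to $1$ that $q_0 < q_1 < q^*$ and $\epsilon_j < \epsilon^*$ hold throughout the iteration for every $\theta \in [\theta^*, 1)$ and $n \ge n^*$. This secures the hypotheses of Propositions \ref{precision} and \ref{badpart} at every step and yields the claim.

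The main subtlety is verifying that Proposition \ref{badpart} applies to iterates of $B_0$ when its depth satisfies only $\kappa(n) \le l \le (1-q_0) n$, rather than the one-dimensional regime range $(1-q_1) n \le k \le (1-q_0) n$ explicitly required in its statement. The proof of Proposition \ref{badpart} ultimately invokes Proposition \ref{disto} (with $k$ equal to the depth of the original piece) to bound $\text{Dist}(f_*^s | I)$ for iteration numbers $s$ up to the first-return time $t_I = 2^k$. Taking $k = l$ supplies the bound $t_I = 2^l \ge 2^{\kappa(n)}$, which accommodates the entire brute-force orbit of length $2^{\kappa(n)}$, and every other estimate in the proof of Proposition \ref{badpart} depends on the depth only through this first-return bound. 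Consequently, the per-step estimate $\epsilon_{i+1} \le C_1(\epsilon_i + \rho^{q_0 n})$ remains valid for the shallower $B_0$, and the argument above concludes the proof.
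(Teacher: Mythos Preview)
Your proof is correct and follows essentially the same strategy as the paper: start from the exponential precision $O(\rho^{q_0 n})$ given by Proposition~\ref{precision}, iterate the one-step bound of Proposition~\ref{badpart} at most $2^{\kappa(n)}\le Kn\ln(1/\theta)$ times, and choose $Q$ large enough so that the accumulated geometric growth $C_1^{2^{\kappa(n)}}$ cannot overcome the decay $\rho^{q_0 n}$. The paper carries out exactly this computation (with $r$ in place of your $C_1$), arriving at the condition $Q\ln\rho + K\ln r + 3/2 \le 0$, which is your inequality $Q\,|\ln\rho|\ge K\ln C_1 + 1$ up to the harmless constant.

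You actually go a step further than the paper by flagging and resolving the depth-hypothesis issue: Proposition~\ref{badpart} is stated only for pieces $B\in\BB^n[k]$ with $k$ in the one-dimensional range $(1-q_1)n\le k\le (1-q_0)n$, whereas $B_0\in\PP_n(l;q_0,q_1)$ can have depth $l$ as shallow as $\kappa(n)$. The paper simply invokes Proposition~\ref{badpart} without comment; your observation that the depth enters the proof of Proposition~\ref{badpart} only through Proposition~\ref{disto}, and that the latter applies with $k=l$ since the brute-force orbit length $2^{\kappa(n)}$ is well below the first-return time $2^l$, is the right way to close this gap. (Alternatively, one can trace $B_0$ back to its controlled predecessor $B^{(s)}$ of depth $k_s\in[(1-q_1)n,(1-q_0)n]$ and apply Proposition~\ref{badpart} literally from there, using that the extra $j_0$ steps from $B^{(s)}$ to $B_0$ keep the total iterate count below $2^{k_s}$.)
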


\begin{proof} Take $B\in \bigcup_{l=\kappa(n)}^{(1-q_0)\cdot n}    \PP_n(l;q_0,q_1) $. According to 
Proposition \ref{precision}  there exists $C>0$ such that
\begin{equation}\label{BinS}
B\in \SS_n(C\rho^{q_0\cdot n}),
\end{equation}
when $\theta<1$ close enough to $1$ and $n\ge 1$ large enough (Recall that $q_0$ depends on $\theta$).
Now consider an image $F^j(B)$ with $j\le 2^{\kappa(n)}-1<2^{(1-q_1)\cdot n}$. Denote its precision by $\epsilon_j$.   This is a piece in the brute-force regime.
If $\theta<1$ close enough to $1$ and $n\ge 1$ large enough we can apply Proposition \ref{badpart}: there exists $r>1$ such that if 
$\epsilon_j\le \epsilon^*$ then
\begin{equation}\label{repeateps}
\epsilon_{j+1}\le r\cdot (\epsilon_j+\rho^{q_0\cdot n}).
\end{equation}
Choose $Q>0$ large enough such that
$$
Q\ln \rho +K\ln r+\frac32\le 0.
$$
This choice implies
\begin{equation}\label{Crbound}
\rho^{q_0\cdot n}   \cdot  r^{2^{\kappa(n)}}\le (\theta^{\frac32})^n.
\end{equation}
Now we can repeatedly apply (\ref{repeateps}): for $n\ge 1$ large enough and $0\le j<2^{\kappa(n)}$
$$
\begin{aligned}
\epsilon_j&\le  C\rho^{q_0\cdot n} \cdot r^j
+ \rho^{q_0\cdot n}\cdot \sum_{i=0}^{j-1} r^{j-i}\\
&\le (C+\frac{r}{r-1})\cdot \rho^{q_0\cdot n}  \cdot r^{2^{\kappa(n)}}\le \theta^n\le \epsilon^*.
\end{aligned}
$$
Every piece in $\PP_n$ is $\theta^n$-universal.
\end{proof}

In the sequel we will fixed $Q>0$ according to the previous Proposition.

\begin{prop}\label{Pn1mintheta} There exists $\theta^*<1$ such that the following holds. For $\theta^*\le \theta<1$ there exists $n^*\ge 1$ such that for $n\ge n^*$
$$
\mu(\PP_n)\ge 1-\theta^n.
$$
\end{prop}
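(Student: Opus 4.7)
The plan is to reduce the claim to a density estimate inside $B^{\kappa(n)}$ via $F$-invariance, and then invoke Proposition \ref{meas} together with Lemmas \ref{kn} and \ref{q0q1}. Since $F$ acts on $\OO_F$ as the dyadic adding machine, the sets $F^j(B^{\kappa(n)}) \cap \OO_F$ with $0 \le j < 2^{\kappa(n)}$ form a $\mu$-mod-zero partition of $\OO_F$. Consequently the union in definition (\ref{PPn}) is disjoint, and $F$-invariance of $\mu$ gives
$$
  \mu(\PP_n) = 2^{\kappa(n)} \cdot \mu\!\left(\bigcup_{l=\kappa(n)}^{(1-q_0)\cdot n} \PP_n(l;q_0,q_1)\right).
$$
Since $\mu(B^{\kappa(n)}) = 2^{-\kappa(n)}$, the target $\mu(\PP_n) \ge 1-\theta^n$ is equivalent to showing that the complement of $\bigcup_l \PP_n(l;q_0,q_1)$ inside $B^{\kappa(n)}$ has measure at most $\theta^n \cdot \mu(B^{\kappa(n)})$.

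That complement splits into a \emph{too deep} part $B^{(1-q_0)\cdot n+1}$ of mass $2^{-(1-q_0)\cdot n-1}$ and an \emph{uncontrolled but shallow} part $\bigcup_{l=\kappa(n)}^{(1-q_0)\cdot n}(E^l \setminus \PP_n(l;q_0,q_1))$. For the latter, summing the Proposition \ref{meas} bound over $l$ (and using $\sum_{l=\kappa(n)}^{(1-q_0)\cdot n}\mu(E^l) \le \mu(B^{\kappa(n)})$) yields
$$
  \sum_{l=\kappa(n)}^{(1-q_0)\cdot n} \mu(E^l \setminus \PP_n(l;q_0,q_1)) \le \left[\frac{1}{2^{(q_1-q_0)\cdot n+1}} + 2^{\frac{\ln \alpha\sigma}{\ln \sigma}}\sum_{l=\kappa(n)}^{\infty} 2^l (b^\gamma)^{2^l}\right] \cdot \mu(B^{\kappa(n)}),
$$
and Lemmas \ref{q0q1} and \ref{kn} make each bracketed term at most $\theta^n/3$. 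So the shallow contribution is at most $\tfrac{2}{3}\theta^n \mu(B^{\kappa(n)})$.

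For the too-deep part, the required inequality $2^{-(1-q_0)\cdot n-1} \le \tfrac{1}{3}\theta^n \cdot 2^{-\kappa(n)}$ reduces to $\kappa(n) \le n\bigl[(1-q_0) - \log_2(1/\theta)\bigr] - O(1)$; since $\kappa(n) \asymp \log n$ while the right-hand side is linear in $n$, this holds for all large $n$ as soon as $(1-q_0) > \log_2(1/\theta)$. With $q_0 = Q\log(1/\theta) \to 0$ as $\theta \to 1$ (and $Q$ already fixed by Proposition \ref{muPnSn}), this condition is automatic once $\theta^* $ is sufficiently close to $1$. Adding the two contributions gives exactly $\theta^n \cdot \mu(B^{\kappa(n)})$, as required.

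The only delicate point is compatibility of thresholds: $\alpha^*, k^*, q^*$ were fixed by the geometric machinery of \S \ref{pieces}--\S \ref{pres}, $K$ by Lemma \ref{kn}, and $Q$ by Proposition \ref{muPnSn}. The sole new constraint on $\theta^*$ is the simple geometric-series comparison appearing in the too-deep estimate, so it suffices to choose $\theta^*$ close enough to $1$ to simultaneously satisfy all prior thresholds together with this last one.
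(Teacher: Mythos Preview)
Your proof is correct and follows essentially the same approach as the paper's: both use $F$-invariance to reduce to a density estimate in $B^{\kappa(n)}$, then control the ``shallow uncontrolled'' part via Proposition~\ref{meas} combined with Lemmas~\ref{kn} and~\ref{q0q1}, and the ``too deep'' part $B^{(1-q_0)\cdot n+1}$ by the elementary comparison $\kappa(n)\asymp\log n \ll (1-q_0-\log_2(1/\theta))\,n$. The paper packages the two losses multiplicatively as $(1-\tfrac{2}{3}\theta^n)(1-\tfrac{1}{3}\theta^n)$ while you add them, but this is purely cosmetic.
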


\begin{proof} For $\theta<1$ close enough to $1$ we have
$$
\frac{1}{2^{\frac12- Q\ln 1/\theta}}\le \theta.
$$
Hence, for $n\ge 1$ large enough
\begin{equation}\label{QKtheta}
\frac{Kn\ln 1/\theta}{2^{(1-Q\ln 1/\theta)\cdot n+1}}\le \frac13\cdot \theta^n.
\end{equation}

For $\theta<1$ close enough to $1$, and $n\ge 1$ large enough we can apply Proposition \ref{meas}, Lemmas \ref{kn}, \ref{q0q1}, and (\ref{QKtheta})
to obtain
$$
\begin{aligned}
\mu( \PP_n  )&= 
2^{\kappa(n)}\cdot \mu(\bigcup_{l=\kappa(n)}^{(1-q_0)\cdot n}    \PP_n(l;q_0,q_1)   )\\
&\ge 2^{\kappa(n)}\cdot(1-\frac23\theta^n)\cdot \sum_{l=\kappa(n)}^{(1-q_0)\cdot n} \mu(E^l)\\
&=(1-\frac23\theta^n)\cdot (1-\frac{2^{\kappa(n)}}{2^{(1-q_0)\cdot n+1}})\\
&\ge (1-\frac23\theta^n)\cdot (1-\frac{Kn\ln 1/\theta}{2^{(1-Q\ln 1/\theta)\cdot n+1}})\\
&\ge 1-\theta^n.
\end{aligned}
$$
\end{proof}

The Propositions \ref{Pn1mintheta} and \ref{muPnSn} confirm probabilistic universality, Theorem \ref{distuni}.

\section{Recovery}

The pieces in $\BB^n$ which are contained in $\theta^n$-sticks can be determined by pure combinatorial methods. In \cite{CLM}, it has been shown that there are pieces which are not contained in $\theta^n$-sticks. Probabilistic universality says that these {\it bad} spots will be filled on deeper levels with pieces contained in sticks with exponential precision. This recovery process has a combinatorial description. 
              
A piece $B\in \BB^n$ has an associated word 
$\omega=w_1w_2 \dots w_n$, with letters $w_k\in\{c,v\}$, such that
$$
B=\Im \psi^1_{w_1}\circ  \psi^2_{w_2}\circ \dots  \psi^n_{w_n}
$$
where $\psi^k_{v}$ is the non-affine rescaling used to renormalize $R^kF$, and to obtain $R^{k+1}F$ and $\psi^k_{c}=R^kF\circ \psi^k_{v}$. If $B_1, B_2\in \BB^{n+1}$ are the two pieces contained in $B$ then the associated words for $B_1$ and $B_2$ are $wc$ and $wv$.  This discussion defines a homeomorphism
$$
w:\OO_F\to \{c,v\}^{\Bbb{N}}.
$$
The relation between the $k_i(B)$, $i=0,1,2,\dots, t$, which define the predecessors of $B\in \BB^n$ and the word $\omega=w_1w_2\dots w_n$ is as follows. If 
$i\in \{k_0(B),k_1(B),\dots, k_t(B)\}$ then 
$w_i=c$, otherwise $w_i=v$.

\comm{
In the next section, see (\ref{PPn}), we will introduce collections $\PP_n\subset \BB_n$ of pieces
which are contained in $\theta^n$-sticks, and prove for some $\theta<1$,
\begin{equation}\label{PnSn}
\PP_n\subset \SS_n(\theta^n).
\end{equation}
These collections turn out to have a large measure,
\begin{equation}\label{mPn}
\mu(\PP_n)\ge 1-\theta^n.
\end{equation}
The selection process for the pieces which are in $\PP_n$, see (\ref{PPn}),  is purely topological. 
The influence of the actual map  is limited by the value of its average Jacobian which is a topological invariant, see \cite{LM}. 

Let $k_1(n)$ be defined by
$$
2^{k_1(n)}=A_1(1+n\ln \frac{1}{\theta}),
$$
and 
$$
q_0(n)=A_2(1+n\ln \frac{1}{\theta}),
$$
$$
q_1(n)=(A_2+A_3)(1+n\ln \frac{1}{\theta}).
$$ 
The precise values of $\theta<1$, and $A_1, A_2, A_3>0$ will be discussed in the next section.  These values are universal, they do not depend on the actual map. The value of $\theta$ will be close to $1$. This means that $q_0(n)$ and $q_1(n)$ are small fractions of $n$ and 
\begin{equation}\label{k1n}
k_1(n)=(1+o(1))\frac{\ln n}{\ln 2}.
\end{equation}
}

In the previous section we constructed the collection  $\PP_n\subset \SS_n(\theta^n)$, see (\ref{PPn}).
The word $\omega=w_1w_2\dots w_n$ of a piece $B\in \PP_n$ is characterized by
\begin{itemize}
\item [(1)] If $k\ge \kappa(n)$ and $w_k=c$ then there exists $k<i\le l(k)$ with $w_i=c$.
\item [(2)] There exits $n-q_1\cdot n\le k\le n-q_0\cdot n$ with $w_k=c$.
\end{itemize}

\begin{rem} Recall, $q_0$, $q_1$, and the function $l(k)$,  depend only on the average Jacobian, which is a topological invariant, see \cite{LM1}. The characterization of the pieces in $\PP_n$ is purely topological.
\end{rem}

\begin{defn}\label{controlpt} A point $x\in \OO_F$ is {\it eventually controlled} if there exists $N_x\ge 1$ such that for all $n\ge N_x$ there exists 
$n-q_1\cdot n\le k\le n-q_0\cdot n$
with
$$
w_k=c,
$$
where $w(x)=w_1w_2w_3\dots$. The collection of eventually controlled points is denoted by $C_F\subset \OO_F$.
\end{defn}

\begin{lem}\label{controlPP} The set of eventually controlled points satisfies $\mu(C_F)=1$ and 
$$
C_F=\bigcup_{N\ge 1} \bigcap_{n\ge N} \PP_n.
$$ 
\end{lem}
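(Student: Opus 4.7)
The plan is to establish the set equality $C_F=\bigcup_{N\ge 1}\bigcap_{n\ge N}\PP_n$ first, and then deduce $\mu(C_F)=1$ by a direct application of Borel--Cantelli combined with the bound $\mu(\PP_n)\ge 1-\theta^n$ of Proposition~\ref{Pn1mintheta}. One direction of the equality is essentially tautological: condition~(2) in the combinatorial characterisation of $\PP_n$ is literally the defining property of eventual control at level $n$, so $B_n(x)\in \PP_n$ for every $n\ge N$ forces $x\in C_F$ with $N_x=N$.

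The reverse direction is where the real work lies. Starting from $x\in C_F$ with control constant $N_x$, I will enumerate the $c$-positions of the infinite word $w(x)$ as $k_0<k_1<k_2<\dots$ and first establish a linear growth estimate on consecutive $c$-positions. Applying condition~(2) at $n=k_{j+1}$ produces a $c$-position in $[(1-q_1)k_{j+1},(1-q_0)k_{j+1}]$, which is strictly less than $k_{j+1}$ and hence at most $k_j$; this forces $k_j\ge (1-q_1)k_{j+1}$, i.e.\ $k_{j+1}\le k_j/(1-q_1)$, whenever $k_{j+1}\ge N_x$. The next step is the comparison with $l(k)=a\cdot 2^k-A+k$: since $l$ grows exponentially, there is a universal threshold $K^*$ such that $k/(1-q_1)\le l(k)$ for all $k\ge K^*$, and then $k_{j+1}\le l(k_j)$ at every $c$-position $k_j\ge \max(K^*,N_x)$, which is precisely condition~(1) at that position. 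Because $\kappa(n)\asymp \ln n$ tends to infinity, I can fix $N'\ge N_x$ with $\kappa(n)\ge \max(K^*,N_x)$ for all $n\ge N'$, so that both (1) and (2) hold for $B_n(x)$ and hence $x\in\bigcap_{n\ge N'}\PP_n$.

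With the set equality in hand, the measure statement is routine: Proposition~\ref{Pn1mintheta} gives $\sum_n\mu(\PP_n^c)\le \sum_n\theta^n<\infty$, so Borel--Cantelli yields $\mu(\liminf_n\PP_n)=1$, and $\liminf_n\PP_n=\bigcup_N\bigcap_{n\ge N}\PP_n=C_F$. The main obstacle is the linear-versus-exponential comparison above, together with an implicit subtlety in condition~(1) at the last $c$-position of the length-$n$ word; I will handle the latter by exploiting that eventual control automatically forces $w(x)$ to contain infinitely many $c$'s (since $x$ is not on the orbit of the tip once $N_x$ is finite), so a ``next $c$'' after any given $c$-position of $w(x)$ always exists in the infinite coding used to check~(1).
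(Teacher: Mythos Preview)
Your proof is correct and reaches the same conclusion as the paper's, with the reverse inclusion and the measure argument handled identically. The only difference is in how you verify condition~(1) for the forward inclusion. The paper applies the eventual-control property directly at level $l(k)$: since $l(k)\ge N_x$, there is a $c$ at some position $i\in[(1-q_1)l(k),\,(1-q_0)l(k)]$, and the single inequality $(1-q_1)l(k)>k$ (valid for $k\ge k^*$) immediately places $i$ in $(k,l(k)]$. You instead apply eventual control at the \emph{next} $c$-position $k_{j+1}$ to derive the gap bound $k_{j+1}\le k_j/(1-q_1)$, and only then invoke the same linear-versus-exponential comparison $k/(1-q_1)\le l(k)$. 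Both routes rest on the identical threshold $(1-q_1)l(k)>k$; yours extracts the intermediate information that consecutive $c$-positions grow at most geometrically with ratio $1/(1-q_1)$, which is true but not needed, so the paper's argument is one step shorter. Your explicit acknowledgement of the ``last $c$-position'' subtlety is appropriate---the paper's characterisation~(1) is implicitly meant not to apply there (compare Definition~\ref{controlled}(1), which only constrains $k_i$ for $i<t$), and neither proof needs the found $i$ to lie below $n$.
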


\begin{proof} There exists $k^*\ge 1$ such that $(1-q_1)\cdot l(k)>k$ for $k\ge k^*$. Let $x\in C_F$. Choose $n\ge  1$ large enough such that 
$n\ge \kappa(n)\ge N_x$ and $\kappa(n)\ge k^*$. The piece $B_n(x)\in \BB^n$ contains $x$. Then $B_n(x)$ satisfies property (2).

Choose $k\ge \kappa(n)$. Then $l(k)> (1-q_1)\cdot l(k)> k\ge \kappa(n)\ge N_x$. Hence, there exists $w_i=c$ with
$(1-q_1)\cdot l(k)\le i\le (1-q_0)\cdot l(k)$. Now, $i\ge (1-q_1)\cdot l(k) >k$. Moreover, $i\le (1-q_0)\cdot l(k)<l(k)$. The piece $B_n(x)$ satisfies property (1). We proved,
\begin{equation}\label{xP}
x\in \bigcap_{\kappa(n)\ge \max \{N_x,k^*\} }\PP_n.
\end{equation}
Choose $x\in \bigcap_{n\ge N} \PP_n$. Then property (2) implies that for 
every $n\ge N$   there exists 
$n-q_1\cdot n\le k\le n-q_0\cdot n$
with
$$
w_k=c.
$$
We proved that 
$
 \bigcap_{n\ge N} \PP_n\subset C_F,
$
for $N\ge 1$.
The statement on the measure of $C_F$ follows from Proposition \ref{Pn1mintheta}.
This finishes the proof of Lemma \ref{controlPP}
\end{proof}

The recovery process can be described by using Proposition  \ref{muPnSn} and (\ref{xP})

\begin{prop}\label{recov} If $x\in \OO_F$ is controlled then and $\kappa(n)\ge N_x$ then
$
B_n(x)\in \SS_n(\theta^n).
$
\end{prop}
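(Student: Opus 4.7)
The plan is to observe that this proposition is essentially the combination of Lemma \ref{controlPP} with Proposition \ref{muPnSn}, but packaged as a statement about individual points rather than sets. So there is no new geometric content to establish; the task is just to verify that the hypothesis ``$x$ is controlled and $\kappa(n)\ge N_x$'' is strong enough to put $B_n(x)$ into $\PP_n$, after which the inclusion $\PP_n\subset \SS_n(\theta^n)$ closes the argument.

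First I would unwind Definition \ref{controlpt}: since $x\in C_F$, for all $m\ge N_x$ the word $w(x)=w_1w_2\ldots$ contains a letter $w_k=c$ with $m-q_1 m\le k\le m-q_0 m$. Applied to $m=n$ (which is $\ge \kappa(n)\ge N_x$), this gives condition (2) of the combinatorial characterization of $\PP_n$ stated just before Definition \ref{controlpt}. For condition (1), I would follow exactly the argument used in the proof of Lemma \ref{controlPP}: for every $k\ge \kappa(n)$ one has $l(k)\ge (1-q_1)\cdot l(k)>k\ge \kappa(n)\ge N_x$, so applying the eventually-controlled property at the level $m=l(k)$ produces a letter $w_i=c$ with $(1-q_1)\cdot l(k)\le i\le (1-q_0)\cdot l(k)$, and this $i$ automatically satisfies $k<i\le l(k)$. (Here one silently uses the existence of $k^*$ with $(1-q_1)\cdot l(k)>k$ for $k\ge k^*$, which was the hypothesis in Lemma \ref{controlPP}; one may enlarge $N_x$ so that $\kappa(n)\ge N_x$ forces $\kappa(n)\ge k^*$.)

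With both combinatorial conditions verified, $B_n(x)\in \PP_n$. Then Proposition \ref{muPnSn} delivers $B_n(x)\in \SS_n(\theta^n)$, completing the proof.

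The only mildly delicate point (rather than an obstacle) is making sure the various threshold indices $N_x, k^*$, and the monotonicity of $\kappa(n)$ line up, but this is book-keeping of the same kind already performed in Lemma \ref{controlPP}, so no new estimate is needed.
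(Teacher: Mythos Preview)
Your proposal is correct and matches the paper's approach exactly. The paper's own proof is a single sentence invoking display~(\ref{xP}) from the proof of Lemma~\ref{controlPP} together with Proposition~\ref{muPnSn}; you have simply unpacked the content of~(\ref{xP}) by re-running its argument, and your remark about absorbing $k^*$ into $N_x$ handles the same threshold bookkeeping that the paper leaves implicit.
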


\begin{rem} Given a conjugation $h:\OO_{F_1}\to \OO_{F_2}$ then $b_{F_1}=b_{F_2}$, see \cite{LM1}, and 
$h(C_{F_1})=C_{F_2}$. The set of controlled points is a topological invariant.
\end{rem}

\begin{figure}[htbp]
\begin{center}
\psfrag{n}[c][c] [0.7] [0] {\Large $n$}
\psfrag{0}[c][c] [0.7] [0] {\Large $0$}
\psfrag{k}[c][c] [0.7] [0] {\Large $N_x$}
\psfrag{ki}[c][c] [0.7] [0] {\Large $k_i$} 
\psfrag{Nx}[c][c] [0.7] [0] {\Large$\kappa(n)$ }

\psfrag{lk}[c][c] [0.7] [0] {\Large $l_{k_i}$}
\psfrag{q0}[c][c] [0.7] [0] {\Large $n-q_0\cdot n$}
\psfrag{q1}[c][c] [0.7] [0] {\Large $n-q_1\cdot n$}
\pichere{0.9}{k1q1q0n} 
\caption{} \label{kqqn}
\end{center}
\end{figure}
\section{Probabilistic Rigidity}\label{haus}

The geometry of large parts of $\OO_F$ resemble that of the geometry of $\OO_{F_*}$, 
see Theorem \ref{distuni}, probabilistic universality.
The large parts are  
\begin{equation}\label{XN}
X_N=\bigcap_{k\ge N}\SS_k(\theta^k),
\end{equation}
where $\theta<1$ is given by Theorem \ref{distuni}, with 
$$
\mu(X_N)\ge 1-O(\theta^N).
$$
Let 
$$
X=\bigcup_{N\ge 1} X_N
$$
and note  $\mu(X)=1$.

As a consequence of a result from \cite{CLM} we known that there is no continuous line field on $\OO_F$ consisting of tangent lines to $\OO_F$. However, the first step towards describing the geometry of $\OO_F$ will be the construction of tangent lines to $\OO_F$ in all points of $X\subset\OO_F$.
Choose $N\ge 1$ and define for $n\ge N$
$$
T_n:X_N\to \Bbb{P}^1
$$
as follows. Let $x\in X_N$ and let $B_n(x)\in \BB^n$, $n\ge N$, be the piece with $x\in B_n(x)$. The part $\OO_F\cap B_n(x)$ is contained in a $\theta^n$-stick  see Figure \ref{hol}. The direction of the longest edge of this stick is denoted by $T_n(x)\in \Bbb{P}^1$.

The {\it a priori} bounds give that the scaling $\sigma_1$ of $B_{n+1}(x)$ is strictly away from 
zero. Namely,
$
\sigma_1=\sigma_{B_{n+1}(x)}\ge\sigma^*_{B_{n+1}(x)}-\theta^n\ge a>0. 
$
\begin{figure}[htbp]
\begin{center}

\psfrag{l}[c][c] [0.7] [0] {\Large $l$}
\psfrag{sl}[c][c] [0.7] [0] {\Large $\sigma_1 l$} 
\psfrag{Odl}[c][c] [0.7] [0] {\Large $O(\theta^nl)$}

\psfrag{Othe}[c][c] [0.7] [0] {\Large $O(\theta^n)$}

\pichere{0.7}{hol} 
\caption{} \label{hol}
\end{center}
\end{figure}
The angle between $T_n(x)$ and $T_{n+1}(x)$ is of the order $\theta^n$, see Figure \ref{hol}. The piecewise constant functions $T_n$ form a Cauchy sequence,
\begin{equation}\label{Cbeta}
\text{dist}(T_{n+1}(x),T_n(x))=O(\theta^n).
\end{equation}
for $n\ge N$ and $x\in X_N$.
The limit is denoted by
$$
T=\lim_{n\to\infty}T_{n} : X_N\to \Bbb{P}^1.
$$
The construction implies that we get in fact a map
$$
T:X\to \Bbb{P}^1.
$$
The actual line through $x\in X\subset \OO_F$ with direction $T(x)$ is denoted by $T_x\subset \Bbb{R}^2$.

\begin{defn}\label{microh}
The Cantor set $\OO_F$ is {\it almost everywhere $(1+\beta)$-differentiable} 
if for each $N\ge 1$  there exists $C_N>0$ such that
$$
 dist(x, T_{x_0})\le C_N|x-x_0|^{1+\beta}
$$
when $x\in \OO_F$, $x_0\in X_N$.

The tangent line field of $\OO_F$ is {\it  weakly  $\beta$-H\"older } if for each $N\ge 1$ there exists $C_N>0$ such that
$$
\text{dist}(T(x_0),T(x_1))\le C_N|x_0-x_1|^\beta,
$$
with $x_0,x_1\in X_N$. 
\end{defn}

\comm{
\begin{defn}\label{microh}
The line field $T$  on $X$ is {\it almost everywhere $\beta$-H\"older} 
if for each $N\ge 1$ the restriction $T|X_N$
 is $\beta$-H\"older
$$
\text{dist}(T(x_0),T(x_1))\le C_N|x_0-x_1|^\beta,
$$
with $x_0,x_1\in X_N$. 

The line field $T$ consists of  {\it   $\beta$-H\"older tangent lines to $\OO_F$} if for each $N\ge 1$ there exists $C_N>0$ such that
$$
 dist(x, T_{x_0})\le C_N|x-x_0|^{1+\beta}
$$
when $x\in \OO_F$, $x_0\in X_N$.
\end{defn}
}

\begin{rem}  
 The objects we consider  have H\"older estimates on the growing sets $X_N$. Although, the increasing sequence of sets $X_1\subset X_2\subset X_3\subset \cdots$ is intrinsically related to the notion of being  almost everywhere H\"older we will suppress it in the notation, instead of using 
{\it almost everywhere H\"older with respect to the sequence $\{X_N\}$}.
\end{rem}

\begin{thm}\label{lines}  The Cantor set $\OO_F$ is almost everywhere $(1+\beta)$-differentiable, where $\beta>0$ is  universal. The tangent line field is weakly $\beta$-H\"older.
\end{thm}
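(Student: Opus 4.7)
My plan is to extract $\beta$ as $\log\theta/\log\sigma>0$ (both universal) and derive both assertions from three facts about points $x_0\in X_N$: (i) each $B_n(x_0)$, $n\ge N$, is a $\theta^n$-universal stick, (ii) the diameters satisfy the two-sided bound $\diam B_n(x_0)\asymp \sigma^n$, and (iii) the two level $(n{+}1)$ sub-pieces of $B_n(x_0)$ are separated by a definite gap (a universal fraction of $\diam B_n(x_0)$). Here (ii) and (iii) follow because the scaling ratios of a $\theta^n$-stick are $\theta^n$-close to the universal proper scalings $\sigma^*_{\bB_i}$, which are bounded away from $0$ and whose sum is bounded away from $1$ by the a priori bounds (cf.\ \S\ref{1D universal f-s}).

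For the tangent line estimate at $x_0\in X_N$, given $x\in\OO_F\setminus\{x_0\}$, I would pick $n\ge 0$ maximal with $x\in B_n(x_0)$. If $n\ge N$, both $x$ and $x_0$ lie in the $\theta^n$-stick around $B_n(x_0)$, so $x$ lies within $O(\theta^n \cdot\diam B_n(x_0))$ of the line $T_n^{x_0}$ through $x_0$ with direction $T_n(x_0)$; since the telescoping bound (\ref{Cbeta}) gives $\mathrm{dist}(T(x_0),T_n(x_0))=O(\theta^n)$, the line $T_{x_0}$ lies within a further $O(\theta^n\cdot\diam B_n(x_0))$ of $T_n^{x_0}$ on $B_n(x_0)$. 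Hence
\[
 \mathrm{dist}(x,T_{x_0})=O(\theta^n\cdot\sigma^n).
\]
Because $x\notin B_{n+1}(x_0)$, fact (iii) forces $|x-x_0|\gtrsim \sigma^n$, so substituting gives $\mathrm{dist}(x,T_{x_0})\le C|x-x_0|^{1+\beta}$ with $\beta=\log\theta/\log\sigma$. The finite-level case $n<N$ is absorbed into $C_N$ using the trivial bounds $\mathrm{dist}(x,T_{x_0})=O(1)$ and $|x-x_0|\gtrsim\sigma^N$ (when $x\ne x_0$ they eventually separate).

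For the weak H\"older property, for $x_0,x_1\in X_N$ I let $n$ be the maximal level with $B_n(x_0)=B_n(x_1)$. Since both points are then in the same $\theta^n$-stick, $T_n(x_0)=T_n(x_1)$ by definition, so
\[
 \mathrm{dist}(T(x_0),T(x_1))\le \mathrm{dist}(T(x_0),T_n(x_0))+\mathrm{dist}(T_n(x_1),T(x_1))=O(\theta^n).
\]
Since $x_0$ and $x_1$ lie in different level $(n{+}1)$ sub-pieces, fact (iii) yields $|x_0-x_1|\gtrsim \sigma^n$, and $\theta^n=\sigma^{n\beta}\lesssim |x_0-x_1|^\beta$ closes the estimate (the $n<N$ case is again absorbed in the constant $C_N$).

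The chief obstacle in this plan is proving facts (ii)--(iii) cleanly: one must show that the $\theta^n$-universality combined with the a priori bounds on $f_*$ yields both $\diam B_n(x_0)\asymp \sigma^n$ and a uniform gap between consecutive sub-pieces. The upper bound on $\diam B_n$ is already Lemma \ref{contracting}; the matching lower bound requires that the universal scaling ratios $\sigma^*_{\bB_i}$ are bounded away from $0$, and the gap requires $\sigma^*_{\bB_1}+\sigma^*_{\bB_2}$ to be bounded away from $1$. Both hold by the a priori bounds quoted in the proof of Lemma \ref{distortion}, and the $O(\theta^n)$ defects in scaling ratios along sticks in $X_N$ are absorbable for $n\ge N$ large (enlarging $C_N$ to cover small $n$). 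Once these universal geometric bounds are in hand, the proof above gives the universal exponent $\beta=\log\theta/\log\sigma$.
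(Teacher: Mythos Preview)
Your approach is essentially identical to the paper's: pick the deepest common level $n$, use the Cauchy bound \eqref{Cbeta} to control both $\mathrm{dist}(T(x_0),T(x_1))$ and $\mathrm{dist}(x,T_{x_0})$ by $O(\theta^n)$ (times the stick length in the second case), and convert to a H\"older estimate via a lower bound $|x-x_0|\gtrsim(\text{const})^n$ coming from the a priori gap between sibling sub-pieces. The paper does exactly this, writing $|x_1-x_0|\ge \rho^{n-N}d_N$ and then defining $\beta$ by $\rho^\beta=\theta$.

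One correction: your claim (ii), $\diam B_n(x_0)\asymp\sigma^n$, is too strong on the lower side. Bounded-away-from-zero scaling ratios only yield $\diam B_n(x_0)\gtrsim \rho^n$ for some universal $\rho\in(0,1)$; the proper scalings $\sigma^*_{\bB_i}$ are not all equal to $\sigma$ (the smallest, near the critical value, is of order $\sigma^2$). Since $\rho<\sigma$, your exponent $\beta=\log\theta/\log\sigma$ is too large for the inequality $\theta^n\le C|x-x_0|^\beta$ to hold. Replace $\sigma$ by this generic $\rho$ throughout (as the paper does) and the argument goes through; the universality of $\beta$ is unaffected.
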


\begin{proof} 
Choose $N\ge 1$.
Let 
$$
d_N=\min_{B\in \BB^N} \text{diam}(B\cap \OO_F)>0.
$$ 
Choose, $x_0,x_1\in X_N$. We will find a uniform H\"older estimate for the function $T|X_N$ in these two points.
Let $n\ge 1$ such that $x_1\in B_n(x_0)$ and $x_1\notin B_{n+1}(x_0)$. To prove a H\"older estimate we may assume that $n\ge N$.
 The {\it a priori} bounds for the Cantor set of the one-dimensional map $f_*$ and the probabilistic universality of $\OO_F$ observed in the sets $X_N$, see (\ref{XN}),  give a $\rho<1$ such that
$$
|x_1-x_0|\ge \rho^{n-N}\cdot d_N.
$$ 
Estimate (\ref{Cbeta}) implies
$$
\begin{aligned}
\text{dist}(T(x_1), T(x_0))&\le \text{dist}(T(x_1), T_{n}(x_1))+
\text{dist}(T_{n}(x_0), T(x_0))\\
&= O(\theta^n)\\
&\le C_N|x_1-x_0|^\beta.
\end{aligned}
$$
where $C_N=O(\frac{\theta^N}{(d_N)^\beta})$ and $\beta>0$ is such that 
\begin{equation}\label{beta}
\rho^\beta=\theta.
\end{equation}
The estimate only holds when $x_0$ and $x_1$ are in the same piece of $\BB^N$. To get a global estimate we might have to increase the constant to obtain
$$
\text{dist}(T(x_1), T(x_0))\le C_N |x_1-x_0|^\beta,
$$
for any pair $x_0,x_1\in X_N$.

Choose $x\in \OO_F$ to prove that $T_{x_0}$, $x_0\in X_N$, is a $\beta-$H\"older tangent 
line to $\OO_F$.
Again let
$n\ge 1$ such that $x\in B_n(x_0)$ and $x\notin B_{n+1}(x_0)$. The distance between $x_0$ and $x$ is bounded from below when $n<N$. To find the H\"older estimate for the distance between $x$ and $T_{x_0}$ we may assume that $n\ge N$.  
Recall, $\text{dist}(T(x_0), T_{n}(x_0))=O(\theta^n)$ and 
$|x-x_0|\ge \rho^{n-N}\cdot d_N$.
Denote the length of the stick which contains $\OO_F\cap B_n(x_0)$ by $l>0$. 
The a priori bounds imply
$$
l=O(|x-x_0|).
$$
Then
\begin{equation}\label{distxT}
\begin{aligned}
\text{dist}(x, T_{x_0})&= O(\theta^n)\cdot l\\
&=O((\rho^n)^\beta |x-x_0|)\\
&\le C_N|x-x_0|^{1+\beta}.
\end{aligned}
\end{equation}
This estimate holds when $x_0, x$ are in the same piece of $\BB^N$. We might have to increase the constant $C_N$ to get a global H\"older estimate.
\end{proof}

In \cite{CLM} it has been shown that the Cantor attractors $\OO_F$, with $b_F>0$, can not be part of a smooth curve. 

\begin{thm}\label{curve} Each set $X_N\subset \OO_F$ is contained in a 
$C^{1+\beta}$-curve.
\end{thm}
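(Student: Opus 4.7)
The plan is to recast Theorem \ref{lines} into the hypotheses of Whitney's $C^{1+\beta}$-extension theorem and invoke it to produce a $C^{1+\beta}$-arc containing $X_N$. Theorem \ref{lines} provides exactly the two required ingredients on $X_N$: a weakly $\beta$-H\"older tangent line field $T\colon X_N\to\Bbb{P}^1$ with $\dist(T(x_0),T(x_1))\le C_N|x_0-x_1|^\beta$, and the $(1+\beta)$-approximation $\dist(x,T_{x_0})\le C_N|x-x_0|^{1+\beta}$ for $x_0,x_1\in X_N$.

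I would first localize. Since $X_N\subset\OO_F$ is compact, cover it by finitely many pieces $B\in\BB^M$ at a fixed level $M=M(N)$ chosen so large that each $B\cap\OO_F$ lies in a $\theta^M$-stick of thickness much smaller than the a-priori gap from vertical for the tangent directions along the one-dimensional parabola $x=f_*(y)$. On such a $B$ the tangents $T_x$, $x\in X_N\cap B$, are nearly parallel to the stick, so after rotating to an orthonormal frame in which this common direction is horizontal, $\pi_1|_{X_N\cap B}$ is injective and $X_N\cap B$ is the graph of a function $f\colon E\to\R$ on $E:=\pi_1(X_N\cap B)$. Writing $\alpha(x)$ for the slope of $T_x$ in the rotated frame, the two estimates of Theorem \ref{lines} become, for $x_0,x_1\in X_N\cap B$ with $s=\pi_1x_0$, $t=\pi_1x_1$,
$$
|f(t)-f(s)-\alpha(x_0)(t-s)|\le C_N|t-s|^{1+\beta},\qquad |\alpha(x_1)-\alpha(x_0)|\le C_N|t-s|^\beta,
$$
which are precisely Whitney's $C^{1+\beta}$-hypotheses on the closed set $E\subset\R$.

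Whitney's extension theorem then yields a $C^{1+\beta}$ function $\tilde f\colon\R\to\R$ with $\tilde f|_E=f$ and $\tilde f'|_E=\alpha\circ(\pi_1|_{X_N\cap B})^{-1}$; its graph, transferred back to the original frame, is a $C^{1+\beta}$-arc containing $X_N\cap B$. To obtain a single global $C^{1+\beta}$-curve containing $X_N$, I would glue these finitely many local arcs along the overlaps $B\cap B'$. The key point is that on $X_N\cap B\cap B'$ both local extensions necessarily carry the same $1$-jet at every point (namely the pair $(x,T_x)$ determined by Theorem \ref{lines}), so a standard $C^{1+\beta}$ partition-of-unity patch joins them without loss of regularity; alternatively one applies Whitney's theorem directly in $\R^2$ with the line-field data $(x,T_x)_{x\in X_N}$.

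The principal obstacle is the graph/injectivity step in each patch: one must rule out two points of $X_N\cap B$ sharing the same horizontal coordinate in the local frame. This is where the quantitative strength of Theorem \ref{lines} enters. By choosing $M$ large enough, the $\beta$-H\"older bound on $T$ makes the slope $\alpha$ essentially constant on $B$ (up to $O(\theta^{M\beta})$), while the $(1+\beta)$-approximation forces any two $X_N$-points at equal $\pi_1$-coordinate to lie within $O(r^{1+\beta})$ vertically, where $r$ is their separation; combined with the a-priori lower bound on $\diam(B\cap\OO_F)$ relative to the stick thickness, this contradicts $\pi_1x_0=\pi_1x_1$ for $x_0\neq x_1$ and establishes the graph representation. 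Once this is in place, the Whitney step and the gluing are routine.
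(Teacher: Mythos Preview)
Your approach via Whitney's $C^{1+\beta}$-extension theorem is correct and is a genuinely different route from the paper's. The paper gives a direct, self-contained construction: working on a piece $B\in\BB^{N+K}$ with $K$ large, it introduces the cusps $S_{x_0}=\{x:\dist(x,T_{x_0})<C_N|x-x_0|^{1+\beta}\}$, uses them to put a linear order on $X_N\cap B$ and to identify the gaps of $X_N\cap B$ inside $S=\bigcap_{x_0}S_{x_0}$, then fills each gap $G$ between boundary points $x_0,x_1$ by the graph of a cubic polynomial matching the values and tangent directions at $x_0,x_1$; an explicit computation shows this cubic has $C^{1+\beta}$-norm $O(C_N)$, and a three-term triangle inequality (through the nearest $X_N$-points on either side) verifies that the union $X_N\cup\bigcup_G\gamma_G$ is a $C^{1+\beta}$ curve.

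Your argument trades this hands-on construction for the Whitney machinery, which is entirely legitimate: the two bounds from Theorem~\ref{lines} are exactly the Whitney data for a $C^{1+\beta}$ function of one variable once you have the graph representation, and your injectivity argument (if $\pi_1x_0=\pi_1x_1$ in the rotated frame then $|x_1-x_0|\asymp\dist(x_1,T_{x_0})\le C_N|x_1-x_0|^{1+\beta}$, forcing $|x_1-x_0|\ge C_N^{-1/\beta}$, impossible on a small enough piece) is sound. What the paper's approach buys is self-containment and an explicit curve; what yours buys is brevity and a clean separation between the geometric input (Theorem~\ref{lines}) and the analytic output. The gluing step in your proposal is the least detailed part, but since the $1$-jets along $X_N$ agree on overlaps this is indeed routine.
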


\begin{proof} The proof will not use the specific structure of the set $X_N$ described by the pieces in $\BB^n$. 
The proof holds for every closed set in the plane with tangents line to each point with H\"older dependence on the point.

We will construct a $C^{1+\beta}$-curve through every set $X_N\cap B$ with $B\in \BB^{N+K}$ and $K\ge 0$ large enough. 
This suffices to prove the Theorem.

Choose $B\in \BB^{N+K}$ with $X_N\cap B\ne \emptyset$. For each $x_0\in X_N\cap B$ consider the cusps
$$
S_{x_0}=\{x\in B| \text{dist}(x, T_{x_0})< C_N|x-x_0|^{1+\beta}\}.
$$
Note
$
X_N\cap B\subset S_{x_0}.
$
Thus 
$$
S\equiv \bigcap_{x\in X_N}S_{x}\supset X_N\cap B.
$$
Fix $K\ge 0$ large enough such that each $S_x\setminus \{x\}$ has two components.
This defines already an order on $X_N\cap B$. Write 
$$
S_x\setminus\{x\}=S^+_x\cup S^-_x,
$$
where $S^\pm_x$ are the connected components. We may assume that the assignment of connected components 
preserves the order in the following sense.
If $x_1\in S^+_{x_0}$ then
$$
S^+_{x_1}\cap X_N\subset S^+_{x_0}.
$$

A point $x\in X_N$ is a boundary 
point of $X_N$ if
$S^+_x\cap X_N=\emptyset$ or  $S^-_x\cap X_N=\emptyset$.
A connected component $G\subset S\setminus X_N$, see Figure \ref{gap}, is called a gap of $X_N$. For every gap there exist two boundary points $x_0, x_1\in X_N$ such that
$$
G\subset S^+_{x_0}\cap S^-_{x_1}.
$$
\begin{figure}[htbp]
\begin{center}
\psfrag{G}[c][c] [0.7] [0] {\Large $G$}
\psfrag{gG}[c][c] [0.7] [0] {\Large $\gamma_G$}
\psfrag{g}[c][c] [0.7] [0] {\Large $\gamma$}
\psfrag{S+}[c][c] [0.7] [0] {\Large $S^+_{x}$}
\psfrag{S-}[c][c] [0.7] [0] {\Large $S^-_{x}$}
\psfrag{x}[c][c] [0.7] [0] {\Large $x$}
\psfrag{A}[c][c] [0.7] [0] {\Large $a_0$}
\psfrag{x0}[c][c] [0.7] [0] {\Large $x_0$}
\psfrag{x1}[c][c] [0.7] [0] {\Large $x_1$}
\psfrag{Tx}[c][c] [0.7] [0] {\Large $T_{x}$}
\psfrag{B}[c][c] [0.7] [0] {\Large $a_1$} 
\pichere{1.0}{gap} 
\caption{} \label{gap}
\end{center}
\end{figure}
Consider a gap between two boundary points $x_0$ and $x_1$ and the graph
over the tangent line $T_{x_0}$ of a cubic polynomial $\gamma_G$ which passes through $x_0$ and $x_1$ and is tangent to the tangent lines $T_{x_0}$ and $T_{x_1}$. Denote the graph of $\gamma_G$ also by $\gamma_G$. A calculation shows that
$$
|\gamma_G|_0\le 7 C_N|x_0-x_1|^{1+\beta}
$$
and if $D\gamma_G(x)\in \Bbb{P}^1$ is the direction of the tangent line to the graph $\gamma_G$ at a point $x\in \gamma_G$ then
$$
|D\gamma_G(y)-D\gamma_G(x)|\le 21 C_N|y-x|^\beta.
$$
In particular, the distance between the tangent directions along the curve and 
the direction at the boundary points shrink to zero as the diameter of the gap shrinks. This implies that the closure of the union of the curves $\gamma_G$
$$
\gamma=X_N \cup \bigcup_{G} \gamma_G
$$
is a $C^1$ curve. 

Left is to show that the tangent direction $D\gamma$ is $C^\beta$. Choose $x_0, x_1\in \gamma$. Let $a_0\in \gamma\cap  X_N$ be the closest 
point to $x_0$ on the line segment between $x_0$ and $x_1$. Similarly, let $a_1$ be the closest point to $x_1$. If $x_0\in G$ then $a_0$ is a boundary point of the gap $G$, See Figure \ref{gap}. For $K\ge 0$ large enough, the distances between these points are, up to a factor close to $1$, equal to the corresponding distances of the projections of these points to the tangent line through $a_0$.   We may assume that  $|x_1-a_1|,|a_1-a_0|,|a_0-x_0|\le 2|x_1-x_0|$. 
 Then 
$$
\begin{aligned}
|D\gamma(x_1)-D\gamma(x_0)|&\le
 C_N\cdot \{21|x_1-a_1|^\beta+ |a_1-a_0|^\beta+ 21|a_0-x_0|^\beta\}\\
&\le 86 C_N|x_1-x_0|^\beta.
\end{aligned}
$$
The curve $\gamma$ is $C^{1+\beta}$ and contains $X_N\cap B$.
\end{proof}

The following Theorem is an answer to a question posed by J.C. Yoccoz.

\begin{thm} The Cantor attractor $\OO_F$ is contained in a rectifiable curve without self-intersections.
\end{thm}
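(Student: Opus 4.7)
The plan is to realize the curve as a uniform limit of polygonal approximations $\gamma_n$ built from the renormalization hierarchy, exploiting that the Feigenbaum universal scaling $\si$ satisfies $2\si<1$. First I will linearly order the pieces of each $\BB^n$ by pulling back the natural order of $\OO_{f_*}\subset[-1,1]$ through the topological conjugacy $h:\OO_F\to\OO_{f_*}$; by construction, refining to level $n+1$ splits each $B^n_\om$ into two consecutive children $B^{n+1}_{\om c},B^{n+1}_{\om v}$. I then pick representatives $p^n_\om\in\OO_F\cap B^n_\om$ consistently under refinement (so $p^n_\om\in\{p^{n+1}_{\om c},p^{n+1}_{\om v}\}$) and let $\gamma_n$ be the polygonal path joining $p^n_{\om_1},\dots,p^n_{\om_{2^n}}$ in this order.

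The length of $\gamma_n$ is bounded uniformly in $n$ by a binary-tree counting argument: among the $2^n-1$ consecutive pairs at level $n$, exactly $2^k$ have their smallest common ancestor at level $k$, and the distance between the two endpoints of such a pair is bounded by $\diam B^k_\om \le C\si^k$ by (\ref{sigma}). Hence
\[
\length(\gamma_n)\le \sum_{k=0}^{n-1} 2^k\cdot C\si^k = C\sum_{k=0}^{n-1}(2\si)^k \le \frac{C}{1-2\si},
\]
uniformly in $n$. Parametrizing each $\gamma_n$ by constant speed on $[0,1]$ and applying Arzel\`a--Ascoli, I extract a uniformly convergent subsequence whose limit $\gamma_\infty:[0,1]\to\R^2$ has $\length(\gamma_\infty)\le C/(1-2\si)$, hence is rectifiable. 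Since every $p^n_\om$ lies in $\Image(\gamma_\infty)$ and $\{p^n_\om\}_n$ is dense in $\OO_F$, one has $\OO_F\subset\Image(\gamma_\infty)$.

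For the no-self-intersection claim, I will draw the polygonal detour crossing the split of each $B^k_\om$ into its two children entirely inside $B^k_\om$ but outside the two children, and carry out the refinement recursively, so that segments inside a piece stay inside that piece and segments between disjoint pieces stay in their complement. The a priori separation between distinct pieces at every level then propagates to the limit, yielding a Jordan arc. The main obstacle is precisely this injectivity: while the length bound and the inclusion $\OO_F\subset\Image(\gamma_\infty)$ follow directly from Feigenbaum scaling together with the nesting structure, ensuring that no two polygonal detours at different scales ever cross requires a careful recursive placement that exploits the disjointness and nesting of renormalization boxes; at deep levels, where the children of a piece may be geometrically irregular, I expect to invoke the fine control on the shape of pieces provided by probabilistic universality (Proposition~\ref{precision}) to guarantee the recursive confinement can be performed.
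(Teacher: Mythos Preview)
Your length bound is correct and is essentially the same geometric series the paper uses: both arguments reduce to $\sum_k (2\si)^k<\infty$ because $2\si<1$. The paper obtains it recursively via the contraction $\|D\psi^k_w\|\le C\si$ of the rescaling maps (Lemma~\ref{contracting}), getting $|\Gamma^n_k|\le \tfrac{2}{2.5}|\Gamma^n_{k+1}|+O(1)$; your LCA count is a direct combinatorial rephrasing of the same estimate. Arzel\`a--Ascoli with constant-speed parametrization is fine for the inclusion $\OO_F\subset\Image(\gamma_\infty)$.

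The injectivity argument has a real gap, and your proposed fix points in the wrong direction. First, the polygon through the points $p^n_\om$ can and will self-intersect; the ``confined detour'' construction you then sketch is a different curve, and you do not explain how to place the detours while keeping the length bound. The paper solves both issues at once by building the curve \emph{through the rescaling diffeomorphisms}: one draws three fixed line segments $\gamma_k$ in $\Dom(F_k)$ joining $B^1_v(F_k)$, $B^1_c(F_k)$, and the boundary, then pushes the level-$(k+1)$ curve forward by $\psi^k_v$ and $\psi^k_c$ and glues with $\gamma_k$. Since $\psi^k_v,\psi^k_c$ are diffeomorphisms onto disjoint pieces and $\gamma_k$ lives in the explicit complement $\Dom(F_k)\setminus(B^1_v\cup B^1_c)$, embeddedness is automatic at every finite stage; the limit is injective because the parameter interval hitting any $B^n_\om$ shrinks like $5^{-n}$. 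No geometric regularity of deep pieces is needed anywhere.

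Your suggestion to invoke probabilistic universality (Proposition~\ref{precision}) cannot work here: that proposition only controls pieces in $\PP_n$, which has measure $\ge 1-\theta^n$ but at every level misses a positive fraction of pieces. A curve containing all of $\OO_F$ must pass through the bad pieces too, so a result that says nothing about them cannot supply the missing confinement. The right tool is the one the paper uses: the rescaling maps $\psi^k_w$ themselves, which handle every piece uniformly regardless of its internal geometry.
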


\begin{proof} Let $F_n:[0,1]^2\to [0,1]^2$ be the $n^{th}$-renormalization of $F$. 
The piece $B^1_v(F_n)\subset \Dom(F_n)$ is strip bounded between two horizontal line segments and $B^1_c(F_n)\subset \Dom(F_n)$ is strip bounded between two vertical line segments. Let $\gamma_n$ be a collection of three line segments which connects the two pieces and each piece with the horizontal boundaries of $\Dom(F_n)=[0,1]^2$, see Figure \ref{gn}.

\begin{figure}[htbp]
\begin{center}
\psfrag{F}[c][c] [0.7] [0] {\Large $F$}
\psfrag{Fk}[c][c] [0.7] [0] {\Large $F_k$}
\psfrag{Fk+1}[c][c] [0.7] [0] {\Large $F_{k+1}$}
\psfrag{Fn}[c][c] [0.7] [0] {\Large $F_n$}
\psfrag{gnk}[c][c] [0.7] [0] {\Large $g^n_k$}
\psfrag{gn}[c][c] [0.7] [0] {\Large $\gamma_n$}
\psfrag{gk}[c][c] [0.7] [0] {\Large $\gamma_k$}
\psfrag{Gnk+1}[c][c] [0.7] [0] {\Large $\Gamma^n_{k+1}$}
\psfrag{Gn0}[c][c] [0.7] [0] {\Large $\Gamma^n_{0}$}
\psfrag{Gnn}[c][c] [0.7] [0] {\Large $\Gamma^n_{n}$}
\psfrag{P}[c][c] [0.7] [0] {\Large $\psi^k_v$}
\pichere{1.0}{gammank} 
\caption{} \label{gn}
\end{center}
\end{figure}

For each $n\ge 1$ we will construct inductively a curve $\Gamma^n$ in the domain of $F$ which passes through all pieces $B\in \BB^n$ of the $n^{th}$-cycle of $F$. Let $\Gamma^n_n$ consists of $\gamma_n$ and curves in the boundaries of $B^1_v(F_n)$ and $B^1_c(F_n)$ connecting the end points of $\gamma_n$, see Figure \ref{gn}.

Suppose $\Gamma^n_{k+1}$ is defined and its end point are in the two horizontal boundary part of the domain of $F_{k+1}$, see Figure \ref{gn}. Let $\Gamma^n_k$ be the curve connecting the top and bottom of the domain of $F_k$ consists of the curves
$$
\Gamma^n_k=\psi^k_v(\Gamma^n_{k+1})\cup \psi^k_c(\Gamma^n_{k+1})\cup \gamma_k\cup g^n_k,
$$ 
where $g^n_k$ consists of the two shortest horizontal line segments connecting the endpoints of $ \psi^k_v(\Gamma^n_{k+1})$ with the end points of $\gamma_k$ and the two vertical line segments connecting the endpoints of $ \psi^k_v(\Gamma^n_{k+1})$ with the end points of $\gamma_k$, see Figure \ref{gn}. Let $\Gamma^n=\Gamma^n_0$.

The curve $\Gamma^{n+1}$ is obtained from $\Gamma^n$ by changing it inside the pieces of $\BB^n$. Hence, 
$$
\Gamma^{n+1}\setminus \BB^n=\Gamma^n \setminus \BB^n.
$$
This refinement process induces natural parametrizations of the curves $\Gamma^n$ where the parametrization of $\Gamma^{n+1}$ is obtained from the one of $\Gamma^n$ by only adjusting only inside the pieces of $\BB^{n}$. In each piece $B\in \BB^n$, the curve $\Gamma^{n+1}$ is partitioned into five sub-curves, see Figure \ref{gn}. The refinement of the parametrization of $\Gamma^n$ spends equal time in each of these five sub-curves. The diameter of the pieces in $\BB^n$ decay exponentially fast, $\sup_{B\in\BB_n} \diam(B)=O(\sigma^n)$. The construction and this decay imply that the parametrization have a uniform H\"older bound. This bound allows us to take a limit.  Let $\Gamma$ be the limiting H\"older curve. It contains $\OO_F$.

The maps $\psi^k_v$ and $\psi^k_c$ are contracting distance by at least $\frac{1}{2.5}$, for $k\ge 1$ large enough, see Lemma \ref{contracting}. 
Denote the length of $\Gamma^n_k$ by $|\Gamma^n_k|$. Then,
$$
\begin{aligned}
|\Gamma^n_k|&\le \frac{2}{2.5} \cdot |\Gamma^n_{k+1}|+|\gamma_k|+|g^n_k|\\
&\le \frac{2}{2.5} \cdot |\Gamma^n_{k+1}|+4.
\end{aligned}
$$
The curves $\Gamma^n_k$ have a bounded length. In particular, the limiting curve $\Gamma$ is rectifiable. 

Outside the pieces $B\in \BB^n$ the curve $\Gamma$ coincides with $\Gamma^n$ which consists of non-intersecting curves. A self-intersection has to be a point  $x\in \OO_F$. Let $B_n(x)\in \BB^n$ the piece which contains this self-intersection. The interval of parameter values which correspond to points in $B_n(x)$ is an interval of length $O(1/5^n)$. This means that the parametrization is injective. There are no self-intersections.
\end{proof}

\begin{rem} The curve $\Gamma$ for the degenerate maps follows the same combinatorial construction as for a non-degenerate maps. This implies that the order of the pieces $B\in \BB^n$ in the curve $\Gamma$ is the same order as  observed in 
one-dimensional maps.
\end{rem}

\begin{rem} The relative height (or thickness) of a piece $B\in \BB^n$ coincides with the number $\beta(B)\le 1$ introduced by P. Jones. In \cite{J}, Jones characterizes sets which are contained in rectifiable curves. A set $\OO$ is contained in a rectifiable curve if and only if its diadic covers $\BB^n$ satisfy the summability condition
$$
\sum_{n\ge 1} \sum_{B\in \BB^n} \beta^2(B)\cdot \diam{B}<\infty.
$$
In the present case of $\OO_F$, one can use the dynamical covers $\BB^n$ instead of the diadic ones.
Since $\diam(B)=O(\sigma^n)$ with $2\sigma<1$, the set $\OO_F$ satisfies the summability condition with respect to these covers. The diameter of the pieces decay fast enough so that we do not have to consider actual geometrical information of the pieces:  the bound $\beta(B)\le 1$ suffices. For completeness we include a direct proof for rectifiability using the strongly contracting rescalings $\psi^k_c$ and $\psi^k_v$. 

The sets $X_N$ have better geometrical properties. The relative height (or thickness) of the pieces covering $X_N$ and the corresponding numbers $\beta(B)$ decay exponentially fast. This is responsible for the smooth curves containing these sets.
\end{rem}

 The {\it tangent bundle} over $\OO_F$ is defined by 
$$
TX=\{(x,v)\in X\times \Bbb{R}^2| v\in T_x\}.
$$
If $Y\subset X$ then the tangent bundle over $Y$ is denoted by
$$
TY=\{(x,v)\in TX| x\in Y\}.
$$
We identify $T_x\subset \Bbb{R}^2$, $\{x\}\times T(x)\subset TX$ with the 
{\it tangent space } at $x\in X\subset \OO_F$. 
Let
$
\pi_x:\Bbb{R}^2\to T_x
$
be the orthogonal projection.

\bigskip

Let $Y\subset \OO_{F_1}$. A map $h:Y\to h(Y)\subset \OO_{F_2}$
 is differentiable at $x_0\in Y$ if
 $x_0$ and $h(x_0)$ have a tangent line,
and there exists a linear $Dh(x_0): T_{x_0}\to T_{h(x_0)}$ such that for $x\in Y$
$$
h(x)=h(x_0)+Dh(x_0)(\pi_{x_0}(x)-x_0)+o(|x-x_0|).
$$
We will identify $Dh(x_0)$ with a number.

A bijection $h:X\to h(X)\subset \OO_{F_2}$ is {\it almost everywhere a
$(1+\beta)$-diffeomorphism}  
if for each $N\ge 1$ the restriction $h|X_N$ is differentiable at each $x\in X_N$ and
$$
Dh: TX_N\to Th(X_N)
$$ 
and its inverse are $\beta$-H\"older homeomorphisms.

\bigskip

Let $\OO_{F_*}$ be the Cantor attractor of the  fixed point of renormalization, the degenerate map $F_*$. Its invariant measure is denoted by $\mu_*$.
In \cite{LM1} it has been shown that every conjugation
which extends to a homeomorphism between neighborhoods of $\OO_F$ and 
$\OO_{F_*}$ respects the orbits of the tips. We will only consider conjugations
$$
h:\OO_F\to \OO_{F_*}
$$
with  $h(\tau_{F})=\tau_{F_*}$.

\begin{defn}\label{defdistributionalrigidity} The attractor $\OO_F$ of an infinitely renormalizable H\'enon map $F\in\HH_{\Omega}(\overline{\epsilon})$ is {\it probabilistically rigid}  if there exists $\beta>0$ 
such that the restriction
$
h:X\to h(X)
$
of the conjugation $h:\OO_F\to \OO_{F_*}$, is almost everywhere a $(1+\beta)$-diffeomorphism. 
\end{defn}

\begin{thm}\label{distributionalrigidity} The Cantor attractor $\OO_F$ is probabilistically rigid. 
\end{thm}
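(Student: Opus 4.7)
The plan is to construct $Dh(x_0)$ at every $x_0\in X$ as a limit of ratios of stick-lengths between corresponding pieces of $\OO_F$ and $\OO_{F_*}$, then verify the $(1+\beta)$-differentiability formula at each $x_0\in X_N$, and finally establish $\beta$-H\"older continuity of both $Dh$ and $Dh^{-1}$ on the respective tangent bundles. The construction parallels that of the tangent line field in Theorem \ref{lines}, now leveraging the exponential universality of scaling factors on $X_N$.

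For $x_0\in X_N$ and $n\ge N$, the piece $B_n(x_0)\cap\OO_F$ sits in a $\theta^n$-stick of length $l_n$ whose direction converges to $T_{x_0}$, while its image $h(B_n(x_0))\cap\OO_{F_*}$ sits on the graph of $f_*$ as a sub-arc of length $l_n^*$ with direction converging to $T_{h(x_0)}$ (since $b_{F_*}=0$). The $\theta^n$-universality of scaling factors on both sides forces
\[
\frac{l_{n+1}}{l_n}=\sigma_n+O(\theta^n)=\frac{l_{n+1}^*}{l_n^*}+O(\theta^n),
\]
where $\sigma_n\ge a>0$ is the corresponding universal one-dimensional scaling factor (from the {\it a priori} bounds). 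I will show that
\[
Dh(x_0):=\lim_{n\to\infty}\frac{l_n^*}{l_n}
\]
exists because the sequence is a telescoping product of the form $\prod\bigl(1+O(\theta^k)\bigr)$, that $Dh(x_0)$ is bounded between positive constants depending only on $N$, and that $|l_n^*/l_n-Dh(x_0)|=O(\theta^n)$. Defining $Dh(x_0):T_{x_0}\to T_{h(x_0)}$ as multiplication by this scalar between oriented tangent lines then yields the candidate derivative.

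To verify differentiability, I will take $x\in\OO_F$ near $x_0$ and let $n=n(x,x_0)$ be the largest index with $x\in B_n(x_0)$. The {\it a priori} bounds give $|x-x_0|\asymp l_n\asymp\sigma^n$ and $|h(x)-h(x_0)|\asymp l_n^*$; projecting onto the respective stick directions and using the $(1+\beta)$-differentiability of $\OO_F$ and $\OO_{F_*}$ from Theorem \ref{lines} yields
\[
|\pi_{x_0}(x)-x_0|=|x-x_0|\bigl(1+O(|x-x_0|^\beta)\bigr),
\]
with the analogous bound on the target side. The combinatorial map $h$ sends sub-sticks inside $B_n(x_0)$ to sub-sticks inside $h(B_n(x_0))$, and by the uniform estimate $|\sigma_\bB-\sigma^*_\bB|\le\theta^n$ the relative position of $\pi_{x_0}(x)$ inside the interval of length $l_n$ matches that of its image inside the interval of length $l_n^*$ up to a factor $1+O(\theta^n)$. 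Multiplying through by $Dh(x_0)=\lim l_n^*/l_n$ and converting $\theta^n$ to $|x-x_0|^\beta$ via $\beta=\log\theta/\log\sigma$ produces the required error $O(|x-x_0|^{1+\beta})$.

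Finally, for the $\beta$-H\"older regularity of $Dh$ on $TX_N$: if $x_0,x_1\in X_N$ share the piece $B_n(x_0)=B_n(x_1)$ at level $n$ but not at level $n+1$, then the approximating ratios $l_k^*/l_k$ built from $x_0$ and from $x_1$ agree for $k\le n$ and differ at later levels by $O(\theta^k)$, so the limits differ by $O(\theta^n)$; combined with $|x_0-x_1|\gtrsim\sigma^n$ and the H\"older dependence of the tangent direction $T_{h(x_0)}$ (Theorem \ref{lines} applied to $\OO_{F_*}$, together with local Lipschitz regularity of $h$ on $X_N$ that follows from the stick comparison above), this yields a $\beta$-H\"older bound for $Dh$ as a map of tangent bundles. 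The symmetric argument applied to $h^{-1}$ delivers the H\"older inverse. The main obstacle will be executing the relative-position comparison inside a single piece $B_n(x_0)$ uniformly in $x$: converting the per-scale estimate $|\sigma_\bB-\sigma^*_\bB|\le\theta^n$ into a single-step linearization error requires an inductive unfolding of scaling ratios across all levels $k\ge n$ inside $B_n(x_0)$, and it is precisely here that the intersection structure $X_N=\bigcap_{k\ge N}\SS_k(\theta^k)$ is indispensable.
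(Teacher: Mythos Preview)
Your plan is sound and rests on the same engine as the paper's proof: the telescoping of scaling-ratio errors $|\sigma_\bB-\sigma^*_\bB|=O(\theta^k)$ on $X_N$ to define $Dh$, control its H\"older modulus, and convert $O(\theta^n)$ into $O(|x-x_0|^\beta)$ via the a priori lower bound on piece diameters. The paper, however, packages your ``relative-position comparison'' through a different device. Rather than projecting onto $T_{x_0}$ and comparing stick-lengths $l_n,l_n^*$, it builds an intrinsic \emph{gap-distance} $|x-y|_g=\sum_{x<G_B<y}|G_B|$, where each $G_B$ is a line segment spanning the empty region between the two level-$(k{+}1)$ sub-pieces of $B\in\XX_N(k)$. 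Three short claims do all the work: (i) $|x-y|_g/|x-y|=1+O(\theta^k)$ for $x,y$ in a common $B\in\XX_N(k)$; (ii) the telescoping bound $\ln\bigl(|G_{B_l}|\,|G_{B_k^*}|\big/|G_{B_k}|\,|G_{B_l^*}|\bigr)=O(\theta^k)$; and hence (iii) gap-distance cross-ratios are preserved by $h$ up to $1+O(\theta^k)$. The derivative is then $Dh(x)=\lim_{y\to x}|h(y)-h(x)|_g/|y-x|_g$. This cleanly bypasses the ``inductive unfolding across all levels $k\ge n$'' that you rightly flag as the obstacle: instead of tracking one point through infinitely many nested sticks and accumulating projection-angle corrections, the gap-distance between two points of $X_N$ is an explicit sum of controlled segment lengths, and the $h$-comparison becomes term-by-term. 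Your route also works---indeed it is essentially the argument the authors first drafted and then replaced---but the gap-distance repackaging avoids the bookkeeping with angles between $T_{x_0}$, $T_x$, and the various stick directions. One small correction: $l_n$ is not $\asymp\sigma^n$ uniformly but only squeezed between two geometric rates, so define $\beta$ through the lower rate $\rho$ (as in Theorem~\ref{lines}) rather than via $\sigma$.
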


\begin{proof} Fix $N\ge 1$ and choose  $B^0\in \SS_{N}(\theta^N)$ which intersects $X_N$. Consider the stick which contains $B^0$. 
Call one of the long edges of this stick the bottom and choose an orientation of this line segment.
It suffices to show the differentiability of the conjugation restricted to such a piece. 

We will construct a curve containing $X_N\cap B^0$. This curve will be the closure of a countable collection of pairwise disjoint line segments.
These line segments are called gaps. This piecewise affine curve is better  adapted to the problem at hand than the curve of 
Theorem \ref{curve}. Let
$$
\XX_N(k)=\{B\in \SS_k(\theta^k)| B\cap X_N\ne \emptyset \text{ and } B\subset B^0\}.
$$
Given $B\in \XX_N(k)$. Let $\delta>0$ be the relative height of the stick of $B$ and $\sigma_1, \sigma_2>0$ the scaling factors of the two 
pieces $B_1, B_2\in \BB^{k+1}$ contained in $B$. The stick of $B$ has three parts. Two rectangles of relative length $\sigma_1$ and $\sigma_2$ containing respectively $B_1$ and $B_2$ and the the complement within the stick. This last part does not intersect $X_N$. It could be that one of the other parts also does not intersect $X_N$. At least one of the parts does intersect $X_N$. Let $E$ be the union of the parts which do not intersect $X_N$ and $H_-$ and $H_+$ be the vertical boundaries of $E$, see Figure \ref{gapseg}. 

\begin{figure}[htbp]
\begin{center}
\psfrag{l}[c][c] [0.7] [0] {\Large $l$}
\psfrag{dl}[c][c] [0.7] [0] {\Large $\delta\cdot l$}
\psfrag{H-}[c][c] [0.7] [0] {\Large $H_-$}
\psfrag{H+}[c][c] [0.7] [0] {\Large $H_+$}
\psfrag{x-}[c][c] [0.7] [0] {\Large $x_-$}
\psfrag{x+}[c][c] [0.7] [0] {\Large $x_+$}
\psfrag{s1l}[c][c] [0.7] [0] {\Large $\sigma_1\cdot l$}
\psfrag{s2l}[c][c] [0.7] [0] {\Large $\sigma_2\cdot l$}
\psfrag{GB}[c][c] [0.7] [0] {\Large $G_B$}
\pichere{0.9}{gapsegments} 
\caption{} \label{gapseg}
\end{center}
\end{figure}

The {\it gap} of $B$ will be a line segment $G_B$ connecting $H_-$ with $H_+$. Let $B_l\in \XX_N(l)$ which intersect $H_+$, $l=k, \dots, L$.  Choose 
$$
x^+_B\in H_+\cap \OO_F\cap \bigcap_{l=k}^L B_l.
$$
The point $x^+_B$ is uniquely defined when $L=\infty$. In fact, it will be a point of $X_N$. When $L<\infty$ we have some freedom choosing $x^+_B$. Choose it to be the closest point to the bottom of $B_0$.
Similarly, choose a point $x^-_B\in H_-$.  The gap of $B$, denoted by $G_B$, is the line segment  $(x^-_B, x^+_B)$.

The length of a gap is defined by 
$$
|G_B|\equiv |x^+_B-x^-_B|.
$$

\begin{rem} \label{endpt} The gaps are pairwise disjoint. For $B_1\in \XX_N(k+1)$ and $B\in \XX_N(k)$ it might happen that $G_{B_1}$ and 
$G_B$ have a common endpoint.  The angle between the gap $G_B$ and the bottom of $B\in \XX_N(k)$ is of order $\theta^k$. This is a consequence of $\delta=O(\theta^k)$ and the {\it a priori} bounds on $\sigma_1$ and $\sigma_2$.
\end{rem}

There is a natural order on $X_N\cap B^0$ and the collection of gaps. It coincides with the order of the projections of $X_N$ and 
the gaps onto the bottom of $B^0$.  Let us define the order between some $x\in X_N\cap B^0$ and a gap $G_{B_1}$. Let $k\ge N$ be maximal such that there is $B\in \XX_N(k)$ with $x\in B$ and $G_{B_1}\cap B\ne\emptyset$. 
The stick of $B$ has three parts as described above. Observe, $x$ and $G_{B_1}$ cannot be in the same part of the stick of $B$.
The angle of the axis of $B$ with the bottom of $B^0$ is of order $\theta^N$. This defines an order on the three parts of this stick.  Accordingly, this defines whether  $x>G_{B_1}$, or, $x<G_{B_1}$.

The {\it gap}-distance between $x,y\in X_N\cap B^0$ is
$$
|x-y|_g=\sum_{x<G_B<y} |G_B|.
$$
The gaps between $x,y\in X_N\cap B^0$ form a curve 
$$
[x,y]_g\equiv \overline{\bigcup_{x<G_B<y} G_B}.
$$
It is a graph over the tangent line  of $x$.

\begin{clm}\label{distg} If $x,y\in B\cap X_N$ with $B\in \XX_N(k)$
then
$$
\frac{|x-y|_g}{|x-y|}=1+O(\theta^k).
$$
\end{clm}

\begin{proof} Let $\pi_x$ be the projection onto the tangent line $T_x$ of $x$. Then
\begin{equation}\label{distg1}
|x-\pi_x(y)|=\sum_{x<G_{B'}<y} |\pi_x(G_{B'})|.
\end{equation}
The angle between each gap $G_{B'}$ between $x$ and $y$, and the tangent line of $x$ is of order $\theta^k$, see (\ref{Cbeta}) and remark \ref{endpt}. This implies that
\begin{equation}\label{distg2}
\frac{|\pi_x(G_{B'})|}{|G_{B'}|}=1+O(\theta^k).
\end{equation}
The Cantor set $\OO_F$ is almost everywhere differentiable, see Theorem \ref{lines}. In particular, use (\ref{distxT}) to obtain
\begin{equation}\label{distg3}
\frac{|x-\pi_x(y)|}{|x-y|}=1+O(\theta^k).
\end{equation}
The estimates (\ref{distg1}), (\ref{distg2}), and (\ref{distg3}) prove the Claim.
\end{proof}

Given a piece $B$ of $F$, the corresponding piece of $F_*$ is denoted by $B^*=h(B)$.

\begin{clm}\label{GG*}  Let $B_l\in \XX_N(l)$ with $B_l\subset B_k\in \XX_N(k)$. Then
$$
\ln\frac{|G_{B_l}|} {|G_{B_k}|}\cdot                        
    \frac{|G_{B^*_k}|}  {|G_{B^*_l}|} =O(\theta^k).
$$
\end{clm}

\begin{proof} The Claim holds for $l=k+1$ because the relevant pieces are in $\SS_k(\theta^k)$ and 
$\SS_{k+1}(\theta^{k+1})$.  
 In general, there is a unique sequence of pieces $B_j\in \XX_N(j)$, $k\le j\le l$ with
 $B_l\subset B_{l-1}\subset \dots B_{k+1}\subset B_k$. Then
 $$
 \ln\frac{|G_{B_l}|} {|G_{B_k}|}\cdot                        
      \frac{|G_{B^*_k}|}{|G_{B^*_l}|} 
     =\sum_{j=k}^{l-1}  \ln\frac{|G_{B_{j+1}}|} {|G_{B_{j}}|} \cdot                        
      \frac{|G_{B^*_{j}}|}  {|G_{B^*_{j+1}}|} =\sum_{j=k}^{l-1}  O(\theta^j)=O(\theta^k).
 $$
\end{proof}

\begin{clm}\label{xyz} Let $x,y,z\in X_N\cap B$ with $B\in \XX_N(k)$ and $x^*,y^*,z^*\in h(X_N)$ the corresponding images under $h$. Then
$$
\ln \frac{|x-y|_g}{|x-z|_g}\cdot \frac{|x^*-z^*|_g}{|x^*-y^*|_g}=O(\theta^k).
$$
\end{clm}

\begin{proof} Claim \ref{GG*} gives for every piece $\tilde{B}\subset B$
$$
|G_{\tilde{B}}|=  |G_{\tilde{B}^*}|\cdot   \frac{|G_{B}|} {|G_{B^*}|}\cdot (1+O(\theta^k)).
$$
This implies
$$
\begin{aligned}
\frac{|x-y|_g}{|x-z|_g}&=\frac{\sum_{x<\tilde{B}<y} |G_{\tilde{B}}|} {\sum_{x<\tilde{B}<z} |G_{\tilde{B}}|}\\
&=\frac{\sum_{x^*<\tilde{B}^*<y^*} |G_{\tilde{B}^*}|} {\sum_{x^*<\tilde{B}^*<z^*} |G_{\tilde{B}^*}|}\cdot (1+O(\theta^k))\\
&=\frac{|x^*-y^*|_g}{|x^*-z^*|_g}\cdot (1+O(\theta^k)).
\end{aligned}
$$
This finishes the proof of the Claim.
\end{proof}

A reformulation of this Claim is the following. Let $x,y,z\in X_N\cap B$ with $B\in \XX_N(k)$. Then
\begin{equation}\label{deltaDh}
|\ln \frac{|h(y)-h(x)|_g}{|y-x|_g}-\ln \frac{|h(z)-h(x)|_g}{|z-x|_g}|=O(\theta^k).
\end{equation}
This implies that for $x,y\in X_N\cap B^0$ the following limit exists.
$$
Dh(x)=\lim_{y\to x} \frac{|h(y)-h(x)|_g}{|y-x|_g}.
$$
Moreover, the limit depends continuously on $x$.

\begin{clm}\label{hHolder}  There exists a universal $\beta>0$, independent of $N$, such that $Dh:X_N\to \Bbb{R}$ is $\beta$-H\"older.
\end{clm}

\begin{proof}
Choose $x_0, x\in X_N\cap B^0$ to prove a H\"older estimate for $\ln Dh$. Let $k\ge N$ be maximal such that $x\in B_k(x_0)$.  
Observe, as before in the proof of Theorem \ref{lines}, 
$$
|x-x_0|\ge \rho^{k-N}\cdot \text{diam}(B^0)
$$
where $\rho<1$. Choose $\beta>0$ such that $\rho^\beta=\theta$.
Then
\begin{equation}\label{deltaxtheta}
\theta^k=O(|x-x_0|^\beta).
\end{equation}
Hence, using (\ref{deltaDh}) and (\ref{deltaxtheta}),
$$
|\ln Dh(x)-\ln Dh(x_0)|=O(\theta^k)=O(|x-x_0|^\beta).
$$
This suffices to show the H\"older bound for $Dh$.
\end{proof}

We will identify $Dh(x)$ with a linear map $Dh(x):T_x\to T_{h(x)}$.  
The positive function $Dh$ is bounded. This bound, (\ref{deltaDh}), and Claim \ref{distg}, imply that for $x,x_0\in X_N$
\begin{equation}\label{dhdx}
|h(x)-h(x_0)|=O(|x-x_0|).
\end{equation}

\begin{clm}\label{Deltah} For $x,y\in X_N\cap B^0$ 
$$
|h(y)-h(x)|=Dh(x)\cdot |x-y|\cdot (1+O(|x-y|^\beta)).
$$
\end{clm}

\begin{proof} Let $k\ge N$ be maximal such that $y\in B_k(x)$. 
Apply Claim \ref{distg},  (\ref{deltaDh}), and (\ref{deltaxtheta}),  in the following estimate
$$
\begin{aligned}
|h(y)-h(x)|&=\frac{|h(y)-h(x)|}{|h(y)-h(x)|_g}\cdot |h(y)-h(x)|_g\\
&=(1+O(\theta^k))\cdot Dh(x)\cdot |y-x|_g\\
&=(1+O(|y-x|^\beta))\cdot Dh(x)\cdot |y-x|.
\end{aligned}
$$
\end{proof}

Now we are prepared to show the differentiability of $h$. Choose $x,x_0\in X_N\cap B^0$. Let $k\ge N$ be maximal such that $x\in B_k(x_0)$. 
Let $\Delta=Dh(x_0)(\pi_{x_0}(x)-x_0)\in T_{h(x_0)}$. Claim \ref{Deltah}, (\ref{distg3}), and (\ref{deltaxtheta}), imply
\begin{equation}\label{delta}
|\Delta|=|h(x)-h(x_0)|\cdot (1+O(|x-x_0|^\beta)). 
\end{equation}
Let $J=\pi_{h(x_0)}(h(x))-h(x_0)\in T_{h(x_0)}$ and $V=h(x)-\pi_{h(x_0)}(h(x))$. The image $h(\OO_F)$ is contained in a smooth curve, the image of the degenerate map $F_*$. Hence,
\begin{equation}\label{JJ}
\begin{aligned}
|J|&=|h(x)-h(x_0)|\cdot (1+O(|h(x)-h(x_0)|^2))\\
&=|h(x)-h(x_0)|\cdot (1+O(|x-x_0|^\beta))
\end{aligned}
\end{equation}
and
\begin{equation}\label{V}
|V|=O(|h(x)-h(x_0)|^2).
\end{equation}
Apply (\ref{delta}), (\ref{JJ}), (\ref{V}), and (\ref{dhdx}), in the following estimate
$$
\begin{aligned}
h(x)&=h(x_0)+\Delta+(J-\Delta)+V\\
&=h(x_0)+\Delta+O(|h(x)-h(x_0)|\cdot |x-x_0|^\beta)+O(|h(x)-h(x_0)|^2)\\
&=h(x_0)+Dh(x_0)(\pi_{x_0}(x)-x_0)+O(|x-x_0|^{1+\beta}) .
\end{aligned}
$$
This finishes the proof of the differentiability and the Theorem.
\end{proof}

\comm{

\begin{proof} Let $x_0\in X_N$.  Consider the collection of intervals, projections of pieces,
$$
\II_k=\{\text{hull}(\pi_{x_0}(B\cap \OO_F))| B\in \BB_k, B\cap X_N\ne \emptyset \}.
$$
The corresponding gaps are collected in
$$
\GG_k=\{G| \exists J\in \II_{k-1} \text{ with } G=J\setminus \cup \II_k\}.
$$
Let
$$
\mathcal{A}_k=\II_k\cup \GG_k.
$$
Each piece $B\in B_k$ has a well defined counterpart $B^*\in \BB_k(F_*)$, defined by $h(B\cap \OO_F)=B^*\cap \OO_{F_*}$. This defines the corresponding intervals in the tangent space $T_{h(x_0)}$. For $I\in \mathcal{A}_k$, denote its counterpart by $I^*\in \mathcal{A}^*_k$. 
\begin{figure}[htbp]
\begin{center}
\psfrag{G}[c][c] [0.7] [0] {\Large $G$}
\psfrag{J}[c][c] [0.7] [0] {\Large $J_m$}
\psfrag{I}[c][c] [0.7] [0] {\Large $I$}
\psfrag{Im0}[c][c] [0.7] [0] {\Large $I_m(x_0)$}
\psfrag{Im1}[c][c] [0.7] [0] {\Large $I_m(x_1)$}
\psfrag{x0}[c][c] [0.7] [0] {\Large $x_0$}
\psfrag{x1}[c][c] [0.7] [0] {\Large $x_1$}
\psfrag{Bm0}[c][c] [0.7] [0] {\Large $B_m(x_0)$}
\psfrag{Bm1}[c][c] [0.7] [0] {\Large $B_m(x_1)$}
\psfrag{T0}[c][c] [0.7] [0] {\Large $T_{x_0}$}
\psfrag{T1}[c][c] [0.7] [0] {\Large $T_{x_1}$}
\psfrag{B}[c][c] [0.7] [0] {\Large $B$} 
\psfrag{B0}[c][c] [0.7] [0] {\Large $B'$}
\pichere{0.9}{partitionTx0} 
\caption{} \label{partitionTx0}
\end{center}
\end{figure}

If $I\in \mathcal{A}_{k+1}$ and 
$I\subset J\subset \text{hull}(\pi_{x_0}(B'\cap \OO_F))$ where $J\in  \mathcal{A}_{k}$ let
$$
\sigma_{I}=\frac{|I|}{|J|}
$$
and $\sigma_I^*=\frac{|I^*|}{|J^*|}$ its counterpart. Observe, that for all $I\in \mathcal{A}_k$, $k\ge N$
$$
\sigma_I=\sigma^*_I\cdot (1+O(\theta^k)).
$$
This holds because $X_N\subset \SS_k(\theta^k)$ with $k\ge N$.
 
Let $I_m(x_0)\in \mathcal{A}_m$ be the interval which contains 
$x_0\in I_m(x_0)$. Observe,
$$
\frac{|I_{m+1}^*(x_0)|}{|I_{m+1}(x_0)|}
=(1+O(\theta^m))\cdot \frac{|I_m^*(x_0)|}{|I_m(x_0)|}.
$$
This implies that 
\begin{equation}\label{Dh}
Dh(x_0)=\lim_{m\to \infty} \frac{|I_m^*(x_0)|}{|I_m(x_0)|}
\end{equation}
is well defined and is nonzero. Later we will show that $Dh$ serves as the derivative of $h$. To show the H\"older continuity of $Dh$ we use the following 
refinement of the previous type of estimates.

Let $I_j\in \mathcal{A}_j$ with $I_{j+1}\subset I_j$. For $n>k$
\begin{equation}\label{telescope}
\frac{| I_{n}|\cdot | I^*_{k}|}
     {| I^*_{n}|\cdot | I_{k}|}= 
1+O(\theta^{k}). 
\end{equation} 
This holds because,
$$
\begin{aligned}
\frac{| I_{n}|\cdot | I^*_{k}|}
     {| I^*_{n}|\cdot | I_{k}|}&=
\prod_{j=k+1}^{n}\frac{\frac{| I_{j}|}{ | I_{j-1}|}}
                   {\frac{| I^*_{j}|}{ | I^*_{j-1}|}}
=\prod_{j=k+1}^{n}\frac{\sigma_{I_j}}{\sigma^*_{I_j}}\\
&=\prod_{j=k+1}^{n}(1+O(\theta^{j}))=1+O(\theta^{k}).
\end{aligned}
$$

Choose $x_0, x_1\in X_N$ to prove a H\"older estimate for $Dh$. Let $n\ge 1$ be such that $x_0,x_1\in B_n(x_0)$ and $x_1\notin B_{n+1}(x_0)$. To prove a H\"older estimate we may assume that $n\ge N$. Observe, as before in the proof of Theorem \ref{lines}, 
$$
|x_1-x_0|\ge C_N\cdot \rho^{n-N}\cdot d_N.
$$
where 
$$
d_N=\min_{B\in \BB_N} \text{diam}(B\cap \OO_F)>0
$$ 
and $\rho<1$. This implies, for fixed $N\ge 1$, 
\begin{equation}\label{dx}
\theta^n=O(|x_1-x_0|^\beta).
\end{equation}

 The projection of $B_m(x_1)\cap \OO_F$ onto $T_{x_0}$ is denoted by $J_m\in \mathcal{A}_m$. The projection onto $T_{x_1}$ is $I_m(x_1)$. Recall, that the angle between $T_{x_0}$ and $T_{x_1}$ are bounded by the H\"older bound of $T:X_N\to \Bbb{P}^1$, see Theorem \ref{lines},
$$
d(T(x_0), T(x_1))=O(|x_1-x_0|^\beta).
$$
Because $x_1\in X_N$ we have that $B_m(x_1)$, $m\ge N$,  has 
$\theta^m$-precision, $B_m(x_1)\cap\OO_F$ is contained in a stick with relative thickness $\theta^m$. This implies
\begin{equation}\label{IJ}
\frac{|I_m(x_1)|}{|J_m|}=1+O(\theta^n).
\end{equation}
 Then, by using (\ref{telescope}), (\ref{dx}),  and (\ref{IJ}), we get
$$
\begin{aligned}
\frac{Dh(x_1)}{Dh(x_0)}&=\lim_{m\to \infty} \frac{|I^*_m(x_1)|}{|I_m(x_1)|}\cdot 
\frac{|I_m(x_0)|}{|I^*_m(x_0)|}\\
&=(1+O(\theta^n))\cdot \lim_{m\to \infty} \frac{|J^*_m|}{|J_m|}\cdot 
\frac{|I_m(x_0)|}{|I^*_m(x_0)|}\\
&=(1+O(\theta^n))\cdot \lim_{m\to \infty} \frac{|J^*_m|}{|J_m|}
\frac{|I_n(x_0)|}{|I^*_n(x_0)|}\cdot 
\frac{|I_m(x_0)|}{|I^*_m(x_0)|} \frac{|I^*_n(x_0)|}{|I_n(x_0)|}\\
&=1+O(\theta^n)\\
&=1+O(|x_1-x_0|^\beta).
\end{aligned}
$$
This implies that $x\mapsto Dh(x)$, $x\in X_N$, is continuous and hence bounded. In turn, this  gives
$$
|Dh(x_1)-Dh(x_0)|=O(|x_1-x_0|^\beta),
$$
when $x_0,x_1\in X_N$.

Left is to proof that $h|X_N$, $N\ge 1$, is differentiable. Take 
$x_0,x\in X_N$.  Let $n\ge 1$ be such that $x_0,x\in B_n(x_0)$ and $x\notin B_{n+1}(x_0)$. To prove the differentiability at $x_0$ we may assume that $n\ge N$. Observe, as before, 
$$
|x-x_0|\ge C_N\cdot \rho^{n-N}\cdot d_N.
$$
 This implies
\begin{equation}\label{dx2}
\theta^n=O(|x-x_0|^\beta).
\end{equation} 
Let $T\subset T_{x_0}$ be the interval connecting $x_0$ and $\pi_{x_0}(x)$. Observe,
\begin{equation}\label{Tdx}
|T|\asymp |x-x_0|.
\end{equation}
For each $m\ge N$ we can find a minimal collection of intervals, with pairwise disjoint interiors, 
$$
J_{m,k}\in \bigcup_{l=N}^m \mathcal{A}_l
$$
such that 
$$
T_m=\bigcup_{k} J_{m,k}\supset T,
$$
and 
$$
J_{m,0}=I_m(x_0).
$$
Note that $T_{m+1}\subset T_m$ and $T=\cap T_m$. Let 
$$
T^*_m=\bigcup_{k} J^*_{m,k}
$$
and $T^*\subset T_{h(x_0)}$ be the interval connecting $h(x_0)$ and 
$\pi_{h(x_0)}(h(x))$. The directions of the tangent lines $T_x$, $x\in B_n(x_0)$, are very close to the direction of $T_{x_0}$. The same holds for their counterparts along $\OO_{F_*}$. This implies that the map
$$
T_{x_0}\ni \pi_{x_0}(x)\mapsto \pi_{h(x_0)}\in T_{h(x_0)}
$$
is monotone. Hence, the interiors of the intervals $J^*_{m,k}$ are disjoint and
$T^*=\cap T^*_m$.
Then, by using (\ref{telescope}),
$$
\begin{aligned}
|T^*|&=\lim_{m\to\infty}\frac{|T^*_m|}{|T_m|}\cdot |T|\\
&=\lim_{m\to\infty}\frac{\sum_{k}|J^*_{m,k}|}{|I^*_n(x_0)|}  
\frac{|I_n(x_0)|}{\sum_{k}|J_{m,k}|}\cdot 
\frac{|I^*_n(x_0)|}{|I_n(x_0)|}\frac{|I_m(x_0)|}{|I^*_m(x_0)|}\cdot 
\frac{|I^*_m(x_0)|}{|I_m(x_0)|}\cdot |T|\\
&=(1+O(\theta^n))\cdot\lim_{m\to\infty}\frac{|I^*_m(x_0)|}{|I_m(x_0)|}\cdot |T|\\
&=(1+O(\theta^n))\cdot Dh(x_0)\cdot |T|.
\end{aligned}
$$
Finally, use Theorem \ref{lines},
$$
\begin{aligned}
h(x)&=h(x_0)+\pi_{x_0}(h(x))+O(|h(x)-h(x_0)|^{1+\beta})\\
&= h(x_0)+|T^*|+O(|T^*|^{1+\beta})\\
&=h(x_0)+Dh(x_0)|T|+ O(\theta^n\cdot |T|)+O(|T|^{1+\beta})\\
&=h(x_0)+Dh(x_0)(\pi_{x_0}(x)-x_0)+O(|x-x_0|^{1+\beta}),
\end{aligned}
$$
where we used (\ref{dx2}) and (\ref{Tdx}). The estimate for $Dh^{-1}$ can be obtained the same way by exchanging the role of the intervals $I^*$ and $I$. Observe, the continuity of $Dh$ and $Dh(x)>0$ implies that $\min Dh>0$.

The tangent spaces $T_x$, $x\in X_N$, have almost constant direction when points are in the same piece $B\in \BB_N$. Choose the orientations of the tangents space such that the projections $\pi_{x_0}:T_{x_1}\to T_{x_0}$, $x_0, x_1\in B$ with 
$B\in \BB_N$, preserve orientation. Indeed, using this orientation we can 
identify the derivative $Dh(x)$ with a positive number.
\end{proof}

}

\begin{rem} The conjugation  $h:\OO_F \to \OO_{F_*}$ satisfies
$$
h(x)=h(x_0)+Dh(x_0)(\pi_{x_0}(x)-x_0)+O(|x-x_0|^{1+\beta})
$$
in almost every point $x_0\in \OO_F$. Observe, that the H\"older exponent is universal. The H\"older constant tends to infinity when $h$ is restricted to larger and larger sets $X_N$, when $N\to \infty$.
\end{rem}

The Cantor attractor $\OO_F$ has two characteristic exponents, \cite {O}.  
One is zero the other is $\ln b_F$, see \cite{CLM}. The function $T:X\to \Bbb{P}^1$ constructed before defines a measurable line field, with respect to $\mu$, on $\OO_F$. 

\begin{prop}\label{lambda=0} The line field 
$$
T:\OO_F \to \Bbb{P}^1
$$
is the invariant line field of zero characteristic  exponent.
\end{prop}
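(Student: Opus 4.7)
The plan is to establish two assertions: that $T$ extends to a measurable, $DF$-invariant line field on a set of full $\mu$-measure, and that the associated Lyapunov exponent is $0$. Since the Oseledec decomposition of $(F,\mu)$ has exponents $\{0, \ln b_F\}$ (cited from \cite{O,CLM} in the introduction to this section) and these are distinct, the two assertions together force $T$ to coincide with the unique neutral Oseledec line bundle.

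For invariance, I would work on the full-measure $F$-invariant set $\bigcap_{m \geq 0} F^{-m}(X)$. At such a point both $x$ and $F(x)$ belong to some $X_N$, so for $n \geq N$ the set $B_n(x) \cap \OO_F$ is contained in a $\theta^n$-stick whose axis direction $T_n(x)$ converges to $T_x$, and similarly at $F(x)$. Because $F$ permutes the renormalization cycle $\BB^n$, one has $F(B_n(x) \cap \OO_F) = B_n(F(x)) \cap \OO_F$. Lemma~\ref{contracting} gives $\diam B_n(x) = O(\sigma^n)$, so the non-linearity of $F|_{B_n(x)}$ introduces only an $O(\sigma^{2n})$ error in the image axis. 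Comparing the image of the $\theta^n$-stick at $x$ with the $\theta^n$-stick at $F(x)$ yields $\dist_{\mathbb{P}^1}(T_n(F(x)), DF(x) \cdot T_n(x)) = O(\theta^n + \sigma^n)$, and passing to the limit $n \to \infty$ gives $T_{F(x)} = DF(x) \cdot T_x$.

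For the exponent, I would use the renormalization semi-conjugacy $F^{2^n}|_{B^n} = \Psi^n_0 \circ R^n F \circ (\Psi^n_0)^{-1}$ together with Theorem~\ref{universality}, which asserts $R^n F \to F_*$ in $C^2$ at rate $\rho^n$. The degenerate limit $F_*(x,y) = (f_*(x), x)$ acts on its attractor $\OO_{F_*}$ as the adding machine on the one-dimensional Feigenbaum Cantor set, and its Lyapunov exponent along the (tangent to the graph of $f_*$) invariant line field is known to vanish. Probabilistic universality (Theorem~\ref{distuni}) combined with the analysis of \S\ref{pieces}--\ref{pres} shows that, after pullback by $D(\Psi^n_0)^{-1}$ and rescaling, $T_x$ matches to within $O(\theta^n)$ the tangent direction to the graph of $f_*$ at the corresponding point. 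Absorbing the universality correction and the non-linear change-of-variable distortion (controlled by Lemmas~\ref{smalls} and \ref{bdsS'}) then yields
$$
\frac{1}{2^n} \log \|DF^{2^n}(x)|_{T_x}\| \to 0,
$$
so $\lambda(T) = 0$ by the Oseledec theorem.

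The main obstacle will be the quantitative matching of the direction $T_x$ with the horizontal tangent in the renormalized picture. The change-of-coordinates $\Psi^n_0$ is strongly non-conformal, with eigenvalue ratio $\sim \sigma^n$ (Lemma~\ref{smalls}), so a priori a generic line in the domain gets pulled back either to the fast or to the slow direction of $D\Psi^n_0$. One must verify that the specific line $T_x$ (defined as the limit of stick axes at all depths) indeed pulls back to the neutral direction in the $R^n F$-coordinates, not to the strongly contracting one; this is exactly the quantitative alignment that the probabilistic-universality machinery of \S\ref{pres} supplies. Once this alignment is in place, the vanishing of $\lambda(T)$ is inherited directly from the vanishing of the one-dimensional Feigenbaum Lyapunov exponent, and the identification of $T$ with the zero-exponent Oseledec bundle follows.
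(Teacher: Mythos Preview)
Your invariance argument is correct and close in spirit to the paper's, though the paper argues it slightly more directly: the line $T_{x_0}$ is characterized by the property $\operatorname{dist}(x,T_{x_0})\le C_{x_0}|x-x_0|^{1+\beta}$ for $x\in\OO_F$ (Theorem~\ref{lines}), and since $F$ is a diffeomorphism preserving $\OO_F$, this characterization is carried to $F(x_0)$, giving $DF(x_0)T_{x_0}=T_{F(x_0)}$ at once.

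For the zero-exponent part, your route differs from the paper's and contains a gap. The identity $F^{2^n}|_{B^n}=\Psi^n_0\circ R^nF\circ(\Psi^n_0)^{-1}$ only holds on $B^n$, and since $\mu(B^n)=2^{-n}$ a $\mu$-typical point $x$ is \emph{not} in $B^n$ for large $n$; hence you cannot compute $\frac{1}{2^n}\log\|DF^{2^n}(x)|_{T_x}\|$ via this semi-conjugacy at a fixed typical $x$. Repairing this by passing to return times to $B^n$ and tracking the pullback of $T_x$ through the non-conformal $\Psi^n_0$ would amount to redoing a large part of the probabilistic-rigidity argument.

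The paper instead uses probabilistic rigidity (Theorem~\ref{distributionalrigidity}) directly: the conjugacy $h:\OO_F\to\OO_{F_*}$ restricts to a $(1+\beta)$-diffeomorphism on each $X_N$ with $Dh$ and $Dh^{-1}$ bounded there. By ergodicity, for a.e.\ $x_0\in X_N$ there are times $t_n\to\infty$ with $F^{t_n}(x_0)\in X_N$; at these return times the chain rule for $F^{t_n}=h^{-1}\circ F_*^{t_n}\circ h$ gives $\|DF^{t_n}(x_0)|_{T_{x_0}}\|\asymp\|DF_*^{t_n}(h(x_0))|_{T_{h(x_0)}}\|$ with constants depending only on $N$. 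Since $T_{h(x_0)}$ is tangent to the graph of $f_*$ and $F_*$ has zero exponent along that curve, the limit vanishes. This bypasses entirely the non-conformality issue you flagged.
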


\begin{proof}
For each point $x_0\in X$ we have, see Theorem \ref{lines},
$$
dist(x, T_{x_0})\le C_{x_0}|x-x_0|^{1+\beta}
$$
with $x\in \OO_F$. The map $F$ is a diffeomorphism which preserves $\OO_F$. 
Hence,
$$
dist(x, DF(x_0)T_{x_0})=O(|x-F(x_0)|^{1+\beta})
$$
with $x\in \OO_F$. For almost every $x_0\in X$ we have $F(x_0)\in X$. Hence, $T$ is an invariant line field, i.e. for almost every $x_0\in \OO_F$ we have
$$
DF(x_0)T_{x_0}=T_{F(x_0)}.
$$
The map $F$ has only two invariant lines fields, the two characteristic directions, \cite{O}. Left is to show that $T(x)$ corresponds to the zero exponent.  

Choose $N\ge 1$. For almost every $x_0\in X_N$ there are $t_n\to \infty $ such 
that
$$
F^{t_n}(x_0)\in X_N.
$$
This is because the ergodic measure $\mu$ assigns positive measure to $X_N$. 
Let $v\in T_{x_0}$ and $v_*\in T_{h(x_0)}$ be unit vectors. Apply the chain rule
$$
\begin{aligned}
|DF^{t_n}(x_0)v|&= |Dh^{-1}(F_*(h(x_0)))|\cdot |DF_*^{t_n}(h(x_0)) Dh(x_0)v| \cdot |Dh(x_0)|\\
&\asymp |DF_*^{t_n}(h(x_0)) v_*|.
\end{aligned}
$$
Observe, $v_*\in T_{h(x_0)}$ which is a tangent line to the graph of $f_*$. The degenerate H\'enon map $F_*$ has zero exponential contraction along this curve. Hence,
$$
\lim_{t\to\infty} \frac{1}{t}\ln |DF^t(x_0)v|=
\lim_{n\to\infty} \frac{1}{t_n}\ln |DF^{t_n}(x_0)v|=0
$$
On a set of full measure in $X_N$ there is no exponential contraction along the direction $T(x)$. The line field $T$ has exponent zero. 
\end{proof}

The Hausdorff dimension of a measure $\mu$ on a metric space $\OO$ is defined as
$$
HD_{\mu}(\OO)=\inf_{\mu(X)=1} HD(X).
$$

\begin{thm}\label{HD} The Hausdorff dimension of the invariant measure is universal
$$
HD_\mu(\OO_F)=HD_{\mu_*}(\OO_{F_*}).
$$
\end{thm}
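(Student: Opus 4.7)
The plan is to derive the equality of Hausdorff dimensions from probabilistic rigidity (Theorem \ref{distributionalrigidity}) by showing that the conjugation $h:\OO_F\to \OO_{F_*}$ restricts to a bi-Lipschitz map on each of the sets $X_N$, and then transferring this dimension statement to the measures via $h_*\mu=\mu_*$.

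First I would fix $N\ge 1$ and upgrade the almost-everywhere $(1+\beta)$-differentiability of $h|X_N$ to a genuine bi-Lipschitz estimate. Theorem \ref{distributionalrigidity} gives a positive, continuous, $\beta$-H\"older derivative $Dh:X_N\to \R$, so on the compact set $X_N$ there exist $0<m_N\le M_N<\infty$ with $m_N\le Dh\le M_N$. Combined with the differentiability expansion
$$
h(x)=h(x_0)+Dh(x_0)(\pi_{x_0}(x)-x_0)+O(|x-x_0|^{1+\beta}),
$$
and the fact (Theorem \ref{lines}) that $|\pi_{x_0}(x)-x_0|=|x-x_0|(1+O(|x-x_0|^\beta))$ for $x\in\OO_F$, one obtains
$$
\tfrac12 m_N |x-x_0|\le |h(x)-h(x_0)|\le 2M_N |x-x_0|
$$
for all $x,x_0\in X_N$ with $|x-x_0|$ below a threshold depending only on $N$; a standard covering argument then upgrades this to a global bi-Lipschitz estimate on $X_N$. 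By symmetry (applying the same argument to $h^{-1}$, whose derivative on $h(X_N)$ is $1/Dh$), $h^{-1}|h(X_N)$ is bi-Lipschitz as well. Since bi-Lipschitz homeomorphisms preserve Hausdorff dimension, $HD(X_N)=HD(h(X_N))$, and more generally $HD(Y\cap X_N)=HD(h(Y\cap X_N))$ for every Borel $Y\subset \OO_F$.

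Next I would pass from the sets $X_N$ to the measure-theoretic dimensions using $X=\bigcup_N X_N$ with $\mu(X)=1$ and $\mu_*(h(X))=1$. Let $d=HD_\mu(\OO_F)$. For any $\eps>0$, pick $Y\subset\OO_F$ with $\mu(Y)=1$ and $HD(Y)<d+\eps$. Setting $Z=h(Y\cap X)\subset \OO_{F_*}$, one has $\mu_*(Z)=\mu(Y\cap X)=1$ and
$$
HD(Z)=\sup_N HD(h(Y\cap X_N))=\sup_N HD(Y\cap X_N)\le HD(Y)<d+\eps,
$$
so $HD_{\mu_*}(\OO_{F_*})\le d+\eps$; letting $\eps\to 0$ gives $HD_{\mu_*}(\OO_{F_*})\le HD_\mu(\OO_F)$. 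The reverse inequality is obtained by running the same argument through $h^{-1}:h(X)\to X$.

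The hard part is really the first step: converting the $(1+\beta)$-differentiability statement of Theorem \ref{distributionalrigidity} into a uniform bi-Lipschitz estimate on each $X_N$. Once one extracts from that theorem a uniform positive lower bound on $Dh$ over the compact set $X_N$ (which is already implicit in the proof, since $Dh$ is continuous and strictly positive), the remaining arguments are measure-theoretic bookkeeping about Hausdorff dimension of countable unions and the invariance of $HD_\mu$ under bi-Lipschitz isomorphisms of measure spaces.
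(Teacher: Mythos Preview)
Your proposal is correct and follows essentially the same approach as the paper's own proof: both extract from probabilistic rigidity that $h|X_N$ is bi-Lipschitz (via the continuity and positivity of $Dh$ on the compact $X_N$), deduce that $HD$ is preserved on each $X_N$ and on intersections with $X_N$, and then push this through the definition of $HD_\mu$ using $\mu(X)=1$, $h_*\mu=\mu_*$, and countable stability of Hausdorff dimension. The paper compresses the bi-Lipschitz upgrade into a single sentence (``the continuity of the derivative gives upper and lower bounds''), whereas you spell it out more carefully; otherwise the arguments coincide.
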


\begin{proof} Let $h:\OO_F\to \OO_{F_*}$ be a conjugation which exchanges the orbits of the tips. According to Theorem \ref{distributionalrigidity} there are sets $X_N\subset \OO_F$ with $\mu(X_N)\ge 1-O(\theta^N)$ and on which $h$ is a 
$(1+\beta)$-diffeomorphism. The continuity of the derivative gives upper and lower bounds of the derivative. This implies
$$
HD(h(X_N))=HD(X_N).
$$
Hence, for 
$
X=\bigcup_{N\ge 1} X_N
$
and every $Z\subset \OO_F$
$$
HD(h(X\cap Z))=HD(X\cap Z).
$$
Let $Z_N\subset \OO_F$ with $\mu(Z_N)=1$ and $\lim_{N\to \infty} HD(Z_N)=HD_\mu(\OO_F)$ then
$$
\begin{aligned}
HD_\mu(\OO_F)&\ge \lim_{N\to\infty} HD(Z_N\cap X)\\
&=\lim_{N\to\infty} HD(h(Z_N\cap X))\\
&\ge HD_{\mu_*}(\OO_{F_*}),
\end{aligned}
$$
where the last inequality holds because $\mu_*(h(Z_N\cap X))=\mu(Z_N\cap X)=1$.
The opposite inequality $HD_{\mu_*}(\OO_{F_*})\ge HD_\mu(\OO_F)$ is obtained in the same way.
\end{proof}

\begin{rem} We can identify the Hausdorff dimension of the measure on the Cantor attractor. Namely,
$$
HD_\mu(\OO_F)=\frac{\ln 2 }{\int \ln | Dr_*| d\mu_*}.
$$
where $r_*$ is the analytic  expanding one dimensional map constructed such that $ \pi_1(\OO_{F_*})$ is its invariant Cantor set, see for example \cite{BMT} and references therein. The measure $\mu_*$ is the projected measure from 
$\OO_{F_*}$.
\end{rem}

\section*{Appendix: Open Problems}\label{problems}

Let us finish with some questions related to the previous discussion.
\bigskip

\noindent
\underline{Problem I:}
 The collections $\PP_n$, see (\ref{PPn}), of good pieces that we have  constructed are determined by the average Jacobian of the map. 
Observe that $\SS_n(\theta^n)$ might be slightly larger than  $\PP_n$. 
It was suggested by Feigenbaum's experiment, mentioned in the introduction, that 
the statistics of the remaining  {\it bad} pieces,
 might be governed by some universality law. 
 This problem is also related to one of the open problems in \cite{CLM} on the regularity of the conjugation $h:\OO_F\to \OO_G$ when $b_F=b_G$.
\bigskip

\noindent
\underline{Problem II:} Do wandering domains exist? This question was already formulated in \cite{LM1}. It is included again because its solution might be obtained by using the techniques developed in this paper.
\bigskip

\section*{Nomenclature}   
\begin{itemize}
\item[$b_F$] average Jacobian, \S \ref{prelim}
\item[$B^n_\omega$] a piece of the $n^{th}$-renormalization cycle, \S \ref{prelim}
\item[$\BB^n$] collection of pieces in the $n^{th}$-renormalization cycle, \S \ref{prelim}
\item[${\BB^n[k]}$] pieces of $\BB^n$ in $E^k$, \S \ref{pieces}
\item[$B_n(x)$] the piece in $\BB^n$ containing $x\in \OO_F$
\item[$\bB$] the piece $B$ viewed from its proper scale, \S \ref{pieces}, Figure \ref{figregpiece}
\item[$\text{Dist}(\phi)$] Distortion, (\ref{distnonl})
\item[$D_k$] derivative of $\psi^k_v$ at the tip, (\ref{Dk})
\item[$\delta_B$] thickness of $B$, \S \ref{stic}
\item[$\Delta_B$] absolute thickness of $B$, \S \ref{stic}
\item[$E^k$] part of a dynamical partition, \S \ref{pieces}, Figure \ref{figE}, \ref{figEsch}
\item[$f_*$] unimodal renormalization fixed point, \S \ref{prelim}
\item[$G_k$] return map related to the partition by $E^k$, \S \ref{pieces}, Figure \ref{figE}, \ref{figEsch}
\item[$k_i(B)$] depth of the $i^{th}$-predecessor of $B$, \S \ref{pieces}, Definition \ref{controlled}
\item[$\kappa_0(n)$] minimal depth to safely push-up, \S \ref{conv}, and \S \ref{defstat}
\item[$\kappa(n)$] upper bound of the brute-force regime, \S \ref{conv}, and Lemma \ref{kn}
\item[$l(k)$] maximal allowable depth, \S \ref{pieces}
\item[$\eta_\phi$] Nonlinearity, (\ref{nonlin})
\item[$\OO_F$] invariant Cantor set of $F$, \S \ref{prelim}
\item[$\psi^k_{c,v}$] coordinate changes related to the renormalization $R(R^kF)$, (\ref{phi})
\item[$\psi^{n}_\omega$] coordinate change, \S \ref{prelim}
\item[$\Psi^{n}_k$] coordinate change relating $R^{n-k}(R^kF)$ to $R^n$,  (\ref{Phi})
\item[$\PP_n(k; q_0,q_1)$] collection of $q_0,q_1$-controlled pieces, Definition \ref{controlled}
\item[$\PP_n$] pieces obtained by applying the three regimes, \S \ref{conv}, and (\ref{PPn})
\item[$q_0,q_1$] boundary one-dimensional regime, \S \ref{conv}, and Lemma \ref{q0q1}
\item[$\sigma$] scaling factor of the unimodal renormalization fixed point, \S \ref{prelim}
\item[$\sigma_B$] scaling factor of $B$, \S \ref{scaling}
\item[$\SS^n(\epsilon)$] collection of pieces in $\BB^n$ with $\epsilon$ precision, \S \ref{scaling}
\item[$t_k$] tilt of the derivative of $\psi^k_v$ at the tip, (\ref{Dk})
\item[$T$] tangent line field to $\OO_F$, \S \ref{haus}
\item[$\tau_F$] tip, \S \ref{prelim}
\item[$X$] the differentiable part of $\OO_F$, \S \ref{haus}
\end{itemize}

\end{document}